\documentclass[12pt, reqno]{amsart}

\usepackage{amsfonts}
\usepackage{amssymb}
\usepackage{amscd}
\usepackage{amsmath}

\usepackage{tikz}
\usetikzlibrary{calc}

\usepackage{subcaption}

\usetikzlibrary{decorations.markings, arrows.meta, arrows}
\tikzset{
  equals/.style={double=none, double distance=2pt}, 
  cdarr/.style={->},
  quivarr/.style={-latex,thick},
}   
\usepackage{color}

\input xy
\xyoption{all}

\numberwithin{equation}{section}

\title{Grassmannian cluster subcategories and positroid varieties}
\author{Bernt Tore Jensen}
\address{Department of Mathematical Sciences, Norwegian University of Science and Technology, 
Gj\o vik, Teknologvn. 22, 2815 Gj\o vik, Norway}
\email{bernt.jensen@ntnu.no}

\author{Liam Riordan}
\address{Mathematical Sciences, University of Bath, Bath BA2 7AY, U.K.}
\email{lrr27@bath.ac.uk}
\author{Xiuping Su}
\address{Mathematical Sciences, University of Bath, Bath BA2 7AY, U.K.}
\email{xs214@bath.ac.uk}

\usepackage{amssymb,amsthm,array}

\usepackage{lmodern}
\usepackage{verbatim}

\usepackage{amsthm}

\usepackage[linktocpage,pdfstartview=FitH,colorlinks=true,
linkcolor=blue,citecolor=magenta]{hyperref}

\theoremstyle{definition}
\newtheorem{theorem}{Theorem}[section]
\newtheorem{proposition}[theorem]{Proposition}
\newtheorem{lemma}[theorem]{Lemma}
\newtheorem{corollary}[theorem]{Corollary}
\newtheorem{remark}[theorem]{Remark}
\newtheorem{example}[theorem]{Example}
\newtheorem{conjecture}[theorem]{Conjecture}

\theoremstyle{definition}
\newtheorem{definition}[theorem]{Definition}

\newcommand{\minor}[1]{\Delta_{#1}}

\newcommand{\ZZ}{\mathbb{Z}}
\newcommand{\QQ}{\mathbb{Q}}
\newcommand{\CC}{\mathbb{C}}
\newcommand{\NN}{\mathbb{N}}

\newcommand{\ol}[1]{\overline{#1}}

\newcommand{\Hom}{\mathrm{Hom}}

\newcommand{\End}{\operatorname{End}}
\newcommand{\sub}{\operatorname{Sub}}
\renewcommand{\mod}{\operatorname{mod}}
\newcommand{\fac}{\operatorname{Fac}}
\newcommand{\Gr}{\operatorname{Gr}}
\newcommand{\GP}{\operatorname{GP}}

\newcommand{\proj}{\operatorname{proj}}

\newcommand{\rep}{\operatorname{Rep}}
\newcommand{\add}{\operatorname{add}}

\newcommand{\supp}{\operatorname{supp}}
\newcommand{\Ext}{\operatorname{Ext}}

\newcommand{\cok}{\operatorname{Cok}}
\renewcommand{\ker}{\operatorname{Ker}}
\newcommand{\im}{\operatorname{Im}}

\newcommand{\soc}{\operatorname{soc}}

\newcommand{\lra}{\longrightarrow}

\newcommand{\ra}{\rightarrow}
\newcommand{\sdp}{\times\kern-.2em\vrule height1.1ex depth-.05ex}
\newcommand{\epi}{\lra \kern-.8em\ra}
\newcommand{\ul}{\underline}

\newcommand{\g}{\widehat{g}}
\newcommand{\h}{\mathfrak{h}}

\newcommand{\tM}{\widetilde{M}}
\newcommand{\M}{\widehat{M}}

\renewcommand{\h}{\widehat{h}}

\newcommand{\maxWS}{\mathcal{S}}
\newcommand{\maxNC}{\maxWS}
\newcommand{\Irr}{\operatorname{Irr}}

\newcommand{\ptf}{\mathfrak{P}}

\newcommand{\verteq}{\rotatebox{90}{$\,=$}}

\newcommand{\X}{\widehat{X}}

\newcommand{\rk}{\operatorname{rk}}

\newcommand{\CM}{\operatorname{CM}}

\newcommand{\Bbar}{\pi B}

\renewcommand{\int}{\mathrm{int}}

\newcommand{\rank}{\operatorname{rank}}

\newcommand{\catC}{\mathcal{C}}
\newcommand{\catD}{\mathcal{D}}
\newcommand{\catP}{\mathcal{P}}

\newcommand{\catB}{\mathcal{B}}
\newcommand{\cmax}{\mathcal{S}'}

\newcommand{\opoid}{ \overset{\circ}{\Pi}}
\newcommand{\poid}{\Pi}

\newcommand{\clalg}{\mathsf{A_{clu}}}
\newcommand{\clalgptf}{\mathsf{A^{\ptf}_{clu}}}
\newcommand{\clalgptfloc}{(\mathsf{A^{\ptf}_{clu}})_{B^{-1}}}

\newcommand{\algGP}{\mathsf{A}_{\mathrm{GPB}}}
\newcommand{\algCM}{\mathsf{A_{CMB}}}
\newcommand{\algGPptf}{\mathsf{A^{\ptf}_{GPB}}}
\newcommand{\algCMptf}{\mathsf{A^{\ptf}_{CM}}}
\newcommand{\algD}{\mathsf{A}_{\catD}}
\newcommand{\algDd}{\mathsf{A}_{\catD_2}}
\newcommand{\ideal}{\mathsf{I}}
\newcommand{\algMS}{\mathsf{A_{MS}}}

\newcommand{\labset}[1]{[#1]}
\newcommand{\labsubset}[2]{\binom{\labset{#1}}{#2}}

\newcommand{\nlace}{\mathsf{N}}
\newcommand{\positr}{\mathsf{P}}

\newcommand{\openpv}{\stackrel{\circ}{\Pi}}

\newcommand{\flabs}{\mathcal{F}(G)}
\newcommand{\ring}{\mathcal{R}}
\newcommand{\field}{\mathcal{K}}

\newcommand{\spec}{\operatorname{Spec}}
\newcommand{\hC}{\mathbf{c}}
\newcommand{\HC}{\mathbf{C}}
\newcommand{\IrrSub}{\operatorname{Irr}_{\sub}}
\newcommand{\IrrCM}{\operatorname{Irr}_{\CM}}
\newcommand{\preproj}{\Lambda}
\newcommand{\gen}{\mathrm{gen}}
\newcommand{\gminvec}[1]{\overline{\mathrm{g}}_{\gen}(#1)}
\newcommand{\hCmin}{\hC_{\gen}}
\newcommand{\hCminco}{\hC^{\mathrm{co}}_{\mathrm{gen}}}
\newcommand{\GrotAbar}{\Grot(\fd\olA)}
\newcommand{\GL}[1]{\operatorname{GL}(#1)}
\newcommand{\st}{:}
\newcommand{\preprojQ}{Q}
\newcommand{\olA}{\overline{A}}
\newcommand{\olT}{\overline{T}}
\newcommand{\olM}{\overline{M}}
\newcommand{\op}{^{\mathrm{op}}}
\newcommand{\olQ}{\overline{Q}}
\newcommand{\Grot}{K}
\newcommand{\fd}{\operatorname{fd}}
\newcommand{\dimv}{\operatorname{\underline{dim}}}
\newcommand{\gminmap}{\overline{\mathrm{g}}_{\gen}}
\newcommand{\cogv}{\operatorname{cog}}
\newcommand{\gv}{\operatorname{g}}

\newcommand{\bas}{\mathfrak{B}}
\newcommand{\basGP}{\mathfrak{B}_{\mathsf{GPB}}}
\newcommand{\basCM}{\mathfrak{B}_{\mathsf{CMB}}}
\newcommand{\basD}{\mathfrak{B}_{\catD}}

\newcommand{\gi}{\operatorname{GI}}
\newcommand{\rich}{\operatorname{R}(v,w)}
\newcommand{\bl}[1]{\textcolor{black}{#1}}

\renewcommand{\op}{\operatorname{^{\mathrm{op}}}}

\newcommand{\ggop}{\ol{\mathrm{g}}^{\op}_{\gen}}
\newcommand{\gcog}{\ol{\mathrm{cog}}_{\gen}}
\newcommand{\wt}{\operatorname{wt}}
\newcommand{\sn}{\mathsf{S_n}}

\newcommand{\set}{\mathsf{S}}

\newcommand{\positrij}{\mathsf{P}_{1}}
\newcommand{\positrji}{\mathsf{P}_{2}}
\newcommand{\maxNCij}{\mathcal{S}_{1}}
\newcommand{\maxNCji}{\mathcal{S}_{2}}
\newcommand{\nlaceij}{\mathsf{N}_{1}}
\newcommand{\nlaceji}{\mathsf{N}_{2}}
\newcommand{\maxNCone}{\widehat{\mathcal{S}}_{1}}
\newcommand{\maxNCtwo}{\widehat{\mathcal{S}}_{2}}

\normalbaselines
\begin{document}
\maketitle

\begin{abstract}
A class of subcategories $\GP B$ of the Grassmannian cluster category $\CM C_{k, n}$ was
constructed by Jensen--King--Su from certain superorders $B$ of $C_{k, n}$, which they showed
are  in bijection with Grassmannian positroids of type $(k, n)$.
We prove that $\GP B$ admits a cluster substructure of $\CM C_{k, n}$, giving rise to
a cluster algebra $\clalg$.  This naturally raises questions regarding the relationship 
of $\clalg$ to $\CC[\Gr(k, n)]$ and 
to the coordinate ring of the positroid variety associated to $B$.

Using the cluster substructure, we show
that the ice Gabriel quiver $Q^\circ_U$ of a cluster tilting object $U\in \GP B$, consisting of rank one modules,  
is a subquiver of $Q^\circ_T$ with $T$ a cluster tilting object in $\CM C_{k, n}$ containing $U$ as a summand. We also 
deduce that $\clalg$ is a subalgebra of $\CC[\Gr(k, n)]$. 
Moreover, applying a result of Canakci--King--Pressland on the Gabriel quiver
$Q_U$ in the case where $B$ is connected (i.e., 
has no repeated direct summands), 
we deduce that $Q^\circ_U$, for arbitrary $B$, coincides with the quiver constructed by Muller-Speyer from 
a plabic graph whose face labels agree with the indices of the indecomposable summands of $U$.
Consequently, upon localisation at the minors corresponding to $B$, the algebra
$(\clalg)_B$ is isomorphic to the cluster algebra $\algMS$ of Muller-Speyer.
We then construct bases for certain 
subalgebras and for an ideal of $\CC[\Gr(k, n)]$, whose elements have pairwise distinct 
minimal and distinct maximal leading exponents, and apply these to prove that 
$(\clalg)_B$ is naturally isomorphic to the coordinate ring of the open positroid variety.

As a consequence, we obtain a new proof of Galashin--Lam's Theorem, identifying $\algMS$ with
the coordinate ring of the open positroid variety, 
which was originally conjectured by Muller-Speyer.
In the connected case, we note also that Pressland gave a categorification of the cluster 
structure following Galashin-Lam. 
\end{abstract}

\setcounter{tocdepth}{1}
\tableofcontents

\section{Introduction}
\subsection{The Grassmannian cluster category $\CM C$}\label{sec:catgr}

Let $\ring$ be the complete local ring $\ring= \CC[[t]]$ and let $\field=\ring[t^{-1}]$
be its field of fractions. Denote by $\Gr(k, n)$  (the affine cone of) the Grassmannian 
of $k$-dimensional quotient spaces of $\CC^n$. In \cite[\S3]{JKS1}, Jensen--King--Su 
introduced an algebra $C=C_{k, n}$ and applied the category $\CM C$ of Cohen-Macaulay 
$C$-modules to give a categorification of  the Grassmannian cluster algebra $\CC[\Gr(k, n)]$ 
constructed by Scott \cite{Scott}. The algebra $C$ can be described as the path 
$\ring$-algebra of the circular double quiver with $n$ vertices labeled by $\ZZ_n$, 
with relations $xy=t=yx$ and $x^k=y^{n-k}$ at each vertex, where $x$ and~$y$ label 
clockwise and anti-clockwise arrows, respectively. For instance,  when $(k,n)=(2,5)$,  

\[\begin{tikzpicture}[scale=1.0,baseline=(bb.base)]
\draw (0,0) node (bb) {};
\pgfmathsetmacro{\stang}{90}
\pgfmathsetmacro{\incang}{72}
\newcommand{\vrad}{1.5}
\newcommand{\nrad}{1.85}
\newcommand{\xrad}{1.8}
\newcommand{\yrad}{0.7}
\foreach \j in {1,...,5}
{ \coordinate (v\j) at (\stang+\incang-\incang*\j:\vrad);
  \draw (v\j) node (v\j) {\small $\bullet$};  }  
\foreach \t/\h in {5/1,1/2, 2/3, 3/4, 4/5}
{ \draw[quivarr] (v\t) to [bend left=25] (v\h);
  \draw[quivarr] (v\h) to [bend left=23] (v\t);
  \draw (\stang+0.5*\incang-\incang*\h:\xrad) node {$x_{\h}$};
  \draw (\stang+0.5*\incang-\incang*\h:\yrad) node {$y_{\h}$};  }
\foreach \j in {0,...,4}
  \draw (\stang-\incang*\j:\nrad) node {$\j$};
\end{tikzpicture}
\]
the relations starting at vertex 0 are  $x_5y_5=t=y_1x_1$ and $x_2x_1=y_3y_4y_5$.
We recall some key properties of the Grassmannian cluster category 
$\CM C$ from \cite{JKS1}. 

There is an exact functor  
\[
   \pi\colon \CM C\to \sub Q_{k},
\]
which induces an equivalence 
\[ 
   \CM C/\add P_0 \to \sub Q_{k},
\]
where $P_0=Ce_0$ is the projective $C$-module at vertex $0$. Note that $\sub Q_k$ is the 
category of $\Lambda$-modules that can be embedded into modules in $\add Q_k$,  
$Q_k$ is the indecomposable injective $\Lambda$-module with socle at vertex $k$, and 
$\Lambda$ is the preprojective algebra of type $\mathbb{A}_{n-1}$. Also, $\sub Q_k$ 
is a Frobenius (stably) 2-Calabi-Yau (2-CY) category \cite{GLS08}. The functor $\pi$ 
is full exact, that is, for any $M, N\in \CM C$,
\[
   \Ext^1_C(M, N)\cong \Ext^1_{\Lambda}(\pi M, \pi N).
\]
Consequently, $\CM C$ is also a Frobenius 2-CY category. 

For any $M\in \CM C$, there is a well-defined  rank of  $M\in \CM C$ such that 
\[
   \rank_{C} M=  \rank_{\ring} e_i M
\] 
for any vertex idempotent $e_i\in  C$. Rank one modules are classified by  
$J\in \labsubset{n}{k}$ (see Figure \ref{fig:profile} for an illustration), where $[n]=\{1, \dots, n\}$ and $\labsubset{n}{k}$ is the 
set of $k$-sets contained in $[n]$, i.e., subsets of size $k$. 
Let $\maxWS$ be a 
collection of $k$-sets and let 
\[
   M_{\maxWS}=\oplus_{I \in \maxWS} M_I.
\] 
There is a combinatorial characterisation of the vanishing of extensions between 
rank one modules, that is,  $\Ext^1_C(M_I, M_J)=0$ if and only if $I$ and $J$ 
are non-crossing sets \cite{Scott} (see Definition \ref{def:NC}).  
Moreover, when $\maxWS$ is a maximal collection of (pairwise) non-crossing
$k$-sets,  $M_{\maxWS}$ is a cluster tilting object (equivalently, a maximal rigid 
module) in $\CM C$. 

There is a cluster character (see \cite{Pal}, \cite{FuKeller}, \cite{JKS1}) 
\begin{equation}\label{eq:Psi}
  \Psi\colon \CM C \to \CC[\Gr(k, n)], M\mapsto \Psi_M,
\end{equation}
which homogenises Geiss--Leclerc--Schr\"oer's cluster character defined on $\sub Q_k$ (see \cite{GLS08}). In particular,
\begin{equation}\label{eq:Psi-minor}
  \Psi_{M_J} = \minor{J}.
\end{equation}
The category $\CM C$ equipped with 
$\Psi$ provides an (additive) categorification of the cluster structure on the 
coordinate ring $\CC[\Gr(k, n)]$ from \cite{Scott}. 

\subsection{The subcategories $\GP B$ of $\CM C$}\label{sec:GPB}

Following \cite{JKS3}, let  $ B$ be an $\ring$-order in $C[t^{-1}]\cong M_{n}(\field)$ such that 
\begin{equation}\label{eq-intro:CinB}
   C\subseteq  B\subseteq  C[t^{-1}], 
\end{equation}
and $B$ is rigid as a $C$-module. Let $\CM B$ be the category of Cohen-Macaulay 
$B$-modules, which is a full subcategory of $\CM C$ through the inclusion $C\subseteq B$. 
Define
\begin{equation}\label{eq:defpgb}
\GP B=\{M\in \CM B \colon \Ext^1_B(M, B)=0\},
\end{equation}
which is a full exact subcategory of $\CM C$. In fact,  following \cite[Prop 2.9]{JKS3}, 
for any $M\in\CM B$ and $N\in \GP B$,
\begin{equation}\label{eq:extCB}
\Ext^1_B(M, N)=\Ext^1_C(M, N).
\end{equation}
So in this context, we can drop the subscripts on the functor $\Ext^1(-, -)$. Note also 
that by construction, $\GP B$ and $\CM B$ are full subcategories of $\CM C$. So we 
also drop the subscripts on the functor $\Hom(-, -)$.  

By  \cite[Thm 2.14]{JKS3}, $B$ is Iwanaga--Gorenstein of injective dimension at 
most~$2$, while $\GP B$ is the category of Gorenstein projective $ B$-modules 
and is Frobenius (stably) $2$-Calabi--Yau. We refer the reader to \cite{CKP, Pr} for a 
special case, where the algebra $B$ and the category $\GP B$ is constructed from a 
dimer algebra.  

Note that  \cite[Prop 2.19]{JKS3} provides a combinatorial characterisation of the 
algebra $B$, as a \emph{necklace algebra}. More precisely, as a $C$-module, 
\begin{equation}\label{eq:BN}
   B=\oplus_{I\in \nlace} M_I,
\end{equation}
where $\nlace$ is a Grassmann necklace (see \S \ref{sec:positrV} for the definition) determined by $B$. 

\subsection{Positroid varieties}\label{sec:positrV}

We recall some basic definitions related to positroids. We refer the reader to 
\cite{OPS, Pos} for further details. 

A sequence $I_1, \dots, I_n$ of $k$-sets in $\labsubset{n}{k}$ is a 
\emph{Grassmann necklace of type} $(k, n)$ if
\begin{equation}\label{eq:neck-cond}
   I_i\setminus \{i\}\subseteq I_{i+1},
\end{equation} 
for all $i\in [n]$. In particular,  $I_{i+1}=I_i$ when $i\not\in I_i$. Here the indices 
of the necklace are cyclically ordered and can be identified with $\ZZ_n$. 
For $I\subset [n]$ and $i\in [n]$, we use the shorthand $Ii$ to 
denote the set $I\cup \{i\}$.

Let $\leq_i$ be the shifted order on $[n]$, 
\[
   i<_i i+1<_i\dots<_i n<_i 1<_i\dots <_i i-1.
\]
Extend $<_i$ to $\labsubset{n}{k}$ as follows. For two $k$-sets 
$I=\{i_1<_i\dots <_i i_k\}$ and $J=\{j_1<_i\dots <_i j_k\}$, define 
\[
    I\leq_i J \text{ if and only if } i_s\leq_ij_s, ~\forall s.
\]
A Grassmann necklace $\nlace=\{I_1, \dots, I_n\}$ defines a \emph{positroid}
\[
   \positr_{\nlace}=\{J\in \labsubset{n}{k}\colon I_i\leq_i J, ~ \forall i\}.
\]
Grassmann necklaces and positroids are both in one-to-one correspondence 
with decorated permutations of type $(k, n)$  \cite{OPS, Pos}.  By \cite[\S 2]{JKS3}, 
there is also a correspondence between these combinatorial objects and the orders $B$. 

Now the positroid and open positroid varieties associated to a necklace $\nlace$ can be defined 
as follows (see \cite[\S 5.4]{KLS}), 
\[
  \Pi=\{V\in \Gr(k, n)\colon I\not\in \positr_\nlace\Longrightarrow  \Delta_I(V)=0\},
\]
and 
\[
   \opoid~ =\{V\in \Pi \colon \Delta_I(V)\not=0 \text{ for all } I \in \nlace\}.
\]
By \cite[Thm 5.9]{KLS}, the varieties $\Pi$ and  $\openpv$ are irreducible of codimension 
$a(\sigma)$, where $\sigma$ is the decorated permutation determined by $\nlace$ and $a(\sigma)$
is the number of alignments determined by $\sigma$ (\cite[Def. 4.6]{OPS} and \cite[p. 67]{Pos}).

Associated to a decorated permutation $\sigma$ of type $(k, n)$, there are two types 
of graphs, namely, reduced Postnikov diagrams and reduced \emph{plabic} (planar bicoloured) 
graphs \cite{Pos}. They are in one-to-one correspondence and can be constructed from each other. 
We will not emphasise the term reduced again,  \emph{all Postnikov diagrams and plabic graphs considered 
from this point onwards are assumed to be reduced}.
Label 
the faces by $k$-sets, following the left-target convention, where the faces are faces of a plabic 
graph or the alternating regions of the corresponding Postnikov diagram.  
Let $\positr$ and $\nlace$ be the positroid and the Grassmann necklace determined by $\sigma$.
Oh--Postnikov--Speyer \cite[Thm. 1.5]{OPS} showed that $\maxWS$ is a maximal collection of non-crossing sets 
 in $\positr$ containing $\nlace$  if and only if 
$\maxWS=\flabs$, the set of the face labels for a plabic graph $G$ of type $\sigma$. Moreover, two 
such maximal collections can be obtained from each other by a sequence of square moves,  
$\maxWS\mapsto (\maxWS\backslash \{Iac\})\cup \{Ibd\}$, where $a, b, c, d\in [n]$ are cyclically ordered and $\maxWS$ contains $Iab, Ibc, Icd, Ida, Iac$ (\cite[Thm 1.4]{OPS}). Following 
\cite[Prop 17.10]{Pos}, they deduced that $ |\flabs|= k(n-k)-a(\sigma)+1$ (\cite[Thm 6.8]{OPS}). 
Therefore,  
\begin{equation}\label{eqPositroidDim}
\dim \Pi=\dim \opoid=|\flabs|.
\end{equation}

\subsection{Cluster structure on open positroid varieties}
Leclerc  \cite{Lec} constructed Frobenius subcategories $\catC_{v, w}$ of the module 
category of a preprojective algebra and proved that those subcategories 
possess a cluster structure. He further showed that the categorical cluster 
structure on $\catC_{v, w}$ determines a cluster structure on the corresponding Richardson 
varieties and conjectured that the cluster algebra constructed from $\catC_{v, w}$  coincides with the coordinate ring of the variety. The conjecture was confirmed 
in type $\mathbb{A}$ by Serhiyenko--Sherman-Bennett \cite{SSB}. 
Note that positroid varieties 
are Richardson varieties (see for instance \cite{KLS}). Muller-Speyer 
\cite{MS} conjectured that the coordinate ring of an open positroid variety 
has a cluster structure determined by the combinatorics of a Postnikov diagram for 
the corresponding positroid. Leclerc proposed a question of comparing 
the two cluster structures \cite{Lec}.
Galashin--Lam \cite{GL} confirmed that the conjecture is true and that 
the two cluster structures coincide.
More recently, for connected positroid varieties, 
Pressland  \cite{Pr} applied a category of the form $\GP B$ to give 
a categorification of Galashin--Lam's cluster structure on the associated positroid variety, 
where $B$ is the boundary algebra of a dimer algebra constructed by Canakci--King--Pressland
\cite{CKP}. 

\subsection{Main results}
We study the cluster structure of $\GP B$, its relationship to the cluster structure 
of $\CM C$ and its connection to the coordinate ring of the associated 
positroid variety.

We begin by proving that $\GP B$ is a functorially finite subcategory of $\CM C$ (Theorem \ref{thm:FF}).
We then exploit an important property of extension closed and functorially finite subcategories of
a triangulated 2-CY category \cite[Prop. II.2.3]{BIRS} to investigate the cluster substructure 
(as defined in \cite{BIRS}) on $\GP B$ relative to $\CM C$.
We construct a subcategory $\catD_2$ of $\CM C$, which is 
$\Ext^1$-perpendicular to $\GP B$. 

Let $\maxNC$ be a maximal collection of non-crossing sets $I$
with $M_I\in \GP B$, and let $\maxNC'$ be 
a collection of sets disjoint to $\maxNC$
such that $\maxNC\cup \maxNC'$ is a maximal collection of 
non-crossing sets in $\labsubset{n}{k}$.
Denote by $Q_U$ the Gabriel quiver of $(\End U)^{\text{op}}$, and by 
$Q_U^\circ$ the subquiver of $Q_U$ obtained by removing the arrows between vertices corresponding to the 
indecomposable projective summands in $U$. The quiver $Q^\circ_U$ is a called 
an \emph{ice Gabriel quiver}.

\begin{theorem}  [Theorem \ref{main1}] 
The category $\GP B$ is a cluster subcategory of $\CM C$. More precisely, we have
the following properties. 
\begin{enumerate}
\item\label{itmclpgb1} A module in $ \GP B$  is a cluster tilting object if and only if it is maximal rigid.
In particular, $M_{\maxNC}$  is a cluster tilting object in $\GP B$. 

\item\label{itmclpgb12} There is $V\in \catD_2$ (e.g. $V = M_{\cmax}$) such that $T=U\oplus V$ is a cluster tilting object in $\CM C$ for any cluster tilting object $U$ in $\GP B$. 
\end{enumerate}

Assuming $T$ is as in (2):
\begin{enumerate} 
\item[(3)] The mutation sequences of $U$ at $U_i$ in $\GP B$ coincide with 
the mutation sequences of $T$ at $U_i$ in $\CM C$.

\item[(4)] $Q_U^\circ$ is a subquiver of $Q_T$ such that all arrows in $Q_T$
incident to a mutable vertex in $Q_U$ belong to $Q_U^\circ$.
\item[(5)] $Q_U^\circ$ has no loops and no 2-cycles.  
Therefore, $Q_{\mu_iU}^\circ=\mu_iQ_U^\circ$. 
\end{enumerate}
\end{theorem}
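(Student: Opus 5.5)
The plan is to reduce everything to \cite[Prop. II.2.3]{BIRS}, which is stated for extension closed, functorially finite subcategories of a 2-CY triangulated category. We pass to the stable category $\underline{\CM}\,C\simeq\underline{\sub}\,Q_k$. This is triangulated 2-CY, and $\GP B$ is extension closed in it because it is a full exact subcategory closed under extensions, by \eqref{eq:extCB}. By Theorem \ref{thm:FF}, $\GP B$ is functorially finite in $\CM C$. We also need $\GP B$ to be stably 2-CY and Frobenius with projectives $\add B$, which is \cite[Thm 2.14]{JKS3}.

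The key construction is $\catD_2$. We take it to consist of the rigid modules $V\in\CM C$ with $\Ext^1(V,\GP B)=0$. Since $\add B\subseteq\GP B$ and $B$ is rigid, the projective--injectives $\add B$ lie in both $\GP B$ and $\catD_2$. By \cite[Prop. II.2.3]{BIRS}, $\GP B$ then inherits a cluster structure from $\CM C$ once we exhibit one cluster tilting object $U$ of $\GP B$ that extends to a cluster tilting object $T=U\oplus V$ of $\CM C$ with $V\in \catD_2$.

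For (1), take $U=M_{\maxNC}$. It is rigid by Scott's non-crossing criterion. Extend $\maxNC$ by $\maxNC'$ to a maximal non-crossing collection, so that $T=M_{\maxNC}\oplus M_{\maxNC'}$ is cluster tilting in $\CM C$. The main obstacle is to show that $M_{\maxNC'}\in\catD_2$, i.e.\ that $\Ext^1(M_{\maxNC'},X)=0$ for every $X\in\GP B$, not only for the rank one modules in $\maxNC$.

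My first approach to this obstacle is to use the BIRS reduction: $\GP B=\{X : \Ext^1(X,V)=0\text{ for some suitable } V\}$. Concretely, I would show that $\GP B$ is exactly ${}^{\perp}M_{\maxNC'}\cap\CM B$. To do this, characterize the rank one modules in $\GP B$ as the $M_I$ with $I\in\positr_\nlace$, then apply a filtration argument. Every $X\in\CM C$ has a filtration by rank one modules, and for $X\in\GP B$ one should be able to choose this filtration with factors in $\GP B$. The Ext-vanishing then follows from the non-crossing condition between the sets of $\positr_\nlace$ and $\maxNC'$. Here we may choose $\maxNC'$ to consist of sets outside $\positr$, using the OPS characterisation that $\maxNC$ is a maximal non-crossing collection in $\positr$ containing $\nlace$.

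Once $M_{\maxNC'}\in\catD_2$, $U$ is cluster tilting in $\GP B$. Indeed, if $X\in\GP B$ and $\Ext^1(U,X)=0$, then $\Ext^1(T,X)=0$, so $X\in\add T\cap\GP B=\add U$. Part (2) then follows for every cluster tilting $U$, with the same $V$. To get "cluster tilting iff maximal rigid" in (1), use that all cluster tilting objects of $\GP B$ are reached by mutation, as in \cite{BIRS}, together with the standard 2-CY argument of Zhou--Zhu: a maximal rigid object in a 2-CY Frobenius category that admits a cluster tilting object is cluster tilting.

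For (3), let $U_i$ be a non-projective summand of $U$. Its exchange sequences in $\GP B$ are the minimal $\add(U/U_i)$-approximations. These are also the $\add(T/U_i)$-approximations, because $\Hom(V,U_i)\to\Ext^1$ arguments with $V\in\catD_2$ show that summands of $V$ contribute nothing. More precisely, by 2-CY the unique complement of $T/U_i$ in $\CM C$ is $\mu_iU$, which lies in $\GP B$, and so the approximation sequences agree.

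For (4), an arrow of $Q_T$ at a mutable vertex $i$ of $Q_U$ corresponds to a summand of the middle terms of the exchange sequences. By (3) these middle terms lie in $\add(U/U_i)$, so every such arrow lies in $Q_U^\circ$. Arrows between summands of $U$ are irreducible maps in $\add T$ iff they are irreducible in $\add U$, since $\GP B$ is full and the factoring through $V$ is excluded as above.

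For (5), $Q_T$ has no loops or 2-cycles by \cite{BIRS} for $\CM C$. Because of (4), any loop or 2-cycle of $Q_U^\circ$ at a mutable vertex would appear in $Q_T$, and $Q_U^\circ$ has no arrows between frozen vertices. Hence $Q_U^\circ$ has none, and the mutation compatibility follows from \cite[Thm II.1.6]{BIRS}.
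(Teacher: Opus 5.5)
\textbf{There is a genuine gap: the step $M_{\cmax}\in\catD_2$ is never proved.} This is exactly the step you flag as the main obstacle, namely $\Ext^1(\GP B, M_{\cmax})=0$, and the route you propose for it rests on statements that are false.

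First, the rank one objects of $\GP B$ are not the $M_I$ with $I\in\positr$. By Corollary~\ref{cor:rk1oid}, those are the rank one objects of $\CM B$; membership in $\GP B$ additionally requires $I$ to be non-crossing with every set in $\nlace$.

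Second, the characterisation $\GP B={}^{\perp}M_{\cmax}\cap\CM B$ fails. In Example~\ref{ex:lastex} one is forced to take $\cmax=\{123,345,156\}$. These are cyclic intervals, so $M_{\cmax}$ is projective in $\CM C$ and the right hand side is all of $\CM B$. That category contains $M_{125}\notin\GP B$, since $125\in\positr$ but $\Ext^1(M_{346},M_{125})\neq 0$.

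Third, the filtration claim fails in the same example. The rank one objects of $\GP B$ there are indexed by the pairwise non-crossing collection $\nlace\cup\{246\}$. So any sequence $0\to M_I\to X\to M_J\to 0$ with $M_I,M_J\in\GP B$ splits, and the indecomposable rank two module $X\in\GP B$ has no such filtration. Its actual filtration $0\to M_{346}\to X\to M_{125}\to 0$ has a factor outside $\GP B$.

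Allowing factors in $\CM B$ instead would require $\Ext^1(\CM B,M_{\cmax})=0$, and you give no argument for that. The natural syzygy argument breaks down precisely because $\Ext^1(M,B)\neq 0$ for $M\in\CM B\setminus\GP B$.

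\textbf{The missing idea is the product decomposition in \cite[Prop II.2.3]{BIRS} (Proposition~\ref{prop:tem}).} You invoke this result but never use its actual output. Set $\catD=\{M\colon\Ext^1(M,B)=0\}$ and take $\catD_2$ as in \eqref{eq:catD2}: maps to and from $\GP B$ factor through $\add(B\oplus C)$. Note that this differs from your definition, and the statement refers to this one. Lemma~\ref{lem:elabtem} then gives $\ul\catD/\ul{\add B}\cong\ul{\GP B}/\ul{\add B}\times\ul{\catD_2}/\ul{\add B}$. So every indecomposable of $\catD$ lies in $\GP B$ or in $\catD_2$. Proposition~\ref{prop:GPBV} gives $\Ext^1(\GP B,\catD_2)=0$ by a short syzygy argument.

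Now $M_{\cmax}\in\catD$, because $B\in\add M_{\maxNC}$ and $M_{\maxNC\cup\cmax}$ is rigid. No summand of $M_{\cmax}$ lies in $\GP B$, by maximality of $\maxNC$ in $\positr$. Hence $M_{\cmax}\in\catD_2$, with no combinatorics at all.

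The same dichotomy is what you need, and skip, in two further places:
\begin{itemize}
\item For (2) with an arbitrary cluster tilting $U$: an indecomposable with no extensions with $U\oplus M_{\cmax}$ lies in $\catD$, hence in $\GP B$ or $\catD_2$, hence in $\add U$ or $\add M_{\cmax}$. Saying (2) ``follows'' once $U=M_{\maxNC}$ is handled is not enough.
\item For placing $U_i^*$ in $\GP B$ in (3).
\end{itemize}

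Finally, ``cluster tilting iff maximal rigid'' needs only Theorem~\ref{BIRS2} once $M_{\maxNC}$ is known to be cluster tilting. Your claim that all cluster tilting objects of $\GP B$ are reachable by mutation is neither known nor needed.
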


Theorem \ref{main1} implies that $\GP B$ determines a cluster subalgebra $\clalg$ of $\CC[\Gr(k, n)]$. 
Moreover, we have the following, relating  $\clalg$  to Muller--Speyer's cluster algebra $\algMS$ defined 
by the quiver $Q_G$ constructed from a plabic graph $G$ \cite{MS}.

\begin{theorem}[Theorem \ref{thm:locacy}] \label{main3}
Let $G$ be a plabic graph of type $\sigma$ and $\maxNC=\mathcal{F}(G)$. Then 
 $Q_{M_\maxNC}^{\circ}=Q_G$.
Consequently,  $\clalg\cong \algMS$  and $\clalg$ is locally acyclic.
\end{theorem}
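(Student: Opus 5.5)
The plan is to reduce to the connected case, where the ice Gabriel quiver can be read off from known results, and then to handle repeated necklace entries by hand. Let $\maxNC=\mathcal{F}(G)$ for a plabic graph $G$ of type $\sigma$. By Oh--Postnikov--Speyer, $\maxNC$ is a maximal non-crossing collection in $\positr_\nlace$ containing $\nlace$. Using the characterisation of rank one modules in $\GP B$ (those $M_I$ with $I\in\positr_\nlace$), $M_\maxNC$ is then a cluster tilting object in $\GP B$ by Theorem~\ref{main1}(1). Its projective summands are exactly the $M_I$ with $I\in\nlace$, by \eqref{eq:BN}, and these correspond to the boundary faces of $G$, that is, to the frozen vertices of $Q_G$.

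\textbf{Connected case.} Suppose first that $B$ is connected, so $\nlace$ has no repeated entries. Then $B$ is the boundary algebra of the dimer algebra $A_G$ of Canakci--King--Pressland, and $\GP B$ is the Frobenius category studied there. Their result identifies $(\End M_\maxNC)^{\op}$ with $A_G$. Its Gabriel quiver $Q_U$ is therefore the dual quiver of $G$, with an arrow for each edge of $G$, up to cancellation of 2-cycles and removal of arrows between boundary vertices. Muller--Speyer's quiver $Q_G$ is defined in exactly this way: one arrow per edge separating two faces, 2-cycles cancelled, and no arrows between frozen vertices. Removing arrows between projective vertices of $Q_U$ gives precisely $Q_U^\circ$, so $Q_U^\circ=Q_G$.

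\textbf{General case.} For arbitrary $B$, repeated necklace entries occur at lollipops (fixed points of $\sigma$) and at places where $I_i=I_{i+1}$. I would reduce to the connected case by passing to a connected positroid of smaller type $(k',n')$: delete the fixed-point columns, and split $\sigma$ into its connected components along the non-crossing decomposition. The corresponding $\GP B$ decomposes, or embeds via a fully faithful exact functor that preserves the Hom spaces between rank one modules that determine irreducible maps. The plabic graph $G$ decomposes compatibly, since boundary leaves and components glued along boundary faces add no internal arrows. I would check that both $Q_U^\circ$ and $Q_G$ transform in the same way under these operations.

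I expect this step to be the main obstacle, in particular verifying that irreducible morphisms in $\add M_\maxNC$ are unchanged under the reduction. As a fallback, I would use Theorem~\ref{main1}(4) together with the known quiver of $M_{\maxNC\cup\maxNC'}$ in $\CM C$, which is the plabic quiver of a $\Gr(k,n)$ graph extending $G$. Since all arrows of $Q_T$ at mutable vertices of $Q_U$ lie in $Q_U^\circ$, it would remain only to compare arrows between mutable and frozen vertices. Those could be computed directly from the Hom spaces between rank one modules, namely shifts of profiles.

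\textbf{Consequences.} With $Q_U^\circ=Q_G$ established, Theorem~\ref{main1}(3) and (5) show that mutation in $\GP B$ is quiver mutation of $Q_G$. The cluster character $\Psi$ sends $M_I$ to $\minor{I}$ by \eqref{eq:Psi-minor} and is compatible with the exchange triangles. Hence $\clalg$, generated by the images of the cluster tilting objects reachable from $M_\maxNC$, is the cluster algebra with seed $(Q_G,\{\minor{I}\}_{I\in\maxNC})$. Its cluster variables are nonzero and algebraically independent, because $|\maxNC|=\dim\Pi$ by \eqref{eqPositroidDim}. This gives $\clalg\cong\algMS$. Local acyclicity then follows from Muller--Speyer, who proved that cluster algebras from reduced plabic graph quivers are locally acyclic via Muller's Banff algorithm.
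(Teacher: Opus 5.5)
Your connected case and your deduction of $\clalg\cong\algMS$ and local acyclicity follow the paper. The genuine gap is the disconnected case, which you only sketch.

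\textbf{The gap.} Suppose $I_i=I_j$ with $i\ne j$, and let $\maxNC=\maxNCone\cup\maxNCtwo$ be the corresponding splitting; the two parts meet only in $I_i$. Since $\Hom(M_I,M_J)\cong\ring$ for all rank one modules, $\GP B$ never splits as a product along this decomposition. The real issue is therefore not preservation of Hom spaces under a change of $(k,n)$. You must show two things:
\begin{enumerate}
\item a map between summands of $M_{\maxNCone}$ that is irreducible in $\add M_{\maxNCone}$ does not factor through summands of $M_{\maxNCtwo}$ inside $\add M_\maxNC$;
\item no arrow joins a mutable vertex on one side to a vertex on the other.
\end{enumerate}
A ``fully faithful exact functor'' that deletes columns says nothing about factorisations through the other component, and you do not construct one.

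Your fallback does not close the gap either. Theorem~\ref{main1}(4) already says that $Q_U^\circ$ is exactly the set of arrows of $Q_T$ incident to mutable vertices of $U$, including those with frozen endpoints. So nothing about mutable--frozen arrows remains to be computed from profiles. What does remain is to identify these arrows of $Q_T$ with arrows of $Q_G$. That requires a reduced $\Gr(k,n)$ plabic graph containing $G$ with unchanged neighbourhoods of its internal faces. You assert this combinatorial extension statement but do not prove it, and it is not routine.

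\textbf{The missing idea.} The paper never needs to know $Q_T$: it stays in type $(k,n)$ and uses intermediate positroids. Since $\sigma$ preserves $[i,j)$ and $[j,i)$, define $\sigma^1$ to agree with $\sigma$ on $[i,j)$ and to fix $[j,i)$ pointwise, with the points of $I_i^2$ coloured $-1$ and the rest $+1$. Its necklace is then $I_i,\dots,I_{j-1}$ followed by copies of $I_i$. Define $\sigma^2$ symmetrically, and let $B_1,B_2$ be the corresponding orders.

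Then $M_{\maxNCone}$ is cluster tilting in $\GP B_1$, and $M_{\maxNCtwo}$ lies in $\catD_2$ relative to $\GP B_1$. Apply Theorem~\ref{main1}(2),(4) to both $\GP B$ and $\GP B_1$, using the same cluster tilting object $T=M_{\maxNC\cup\cmax}$ of $\CM C$. The arrows of $Q^\circ_\maxNC$ and of $Q^\circ_{\maxNCone}$ at a mutable vertex of $\maxNCone$ are then both equal to the arrows of $Q_T$ at that vertex. Doing the same with $B_2$ gives $Q^\circ_\maxNC=Q^\circ_{\maxNCone}\cup Q^\circ_{\maxNCtwo}$, glued at the frozen vertex $I_i$. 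This settles both points above.

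On the plabic side, cutting $G$ along the face $I_i$ gives $Q_G=Q_{G_1}\cup Q_{G_2}$. Lemma~\ref{lem:EndTech1} says that adjoining the disjoint set $I_i^2$ does not change which maps $f_{I,J}$ factor, so induction applies to the smaller positroids.

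\textbf{Two smaller points.}
\begin{enumerate}
\item Canakci--King--Pressland label faces by the left-source rule, so their isomorphism applies to the strand-reversed diagram with complementary labels, not to $G$ itself. The paper transfers it back using $\Hom_\ring(-,\ring)$ and $C^{\mathrm{op}}\cong C_{n-k,n}$, under which both $Q_\maxNC$ and $Q_G$ become opposites of the corresponding quivers for the reversed diagram. You cannot simply identify $(\End M_\maxNC)^{\mathrm{op}}$ with $A_G$ directly.
\item Algebraic independence of $\{\minor{I}\colon I\in\maxNC\}$ does not follow from $|\maxNC|=\dim\Pi$. It follows from extending $\maxNC$ to a maximal non-crossing collection, whose minors form a cluster of $\CC[\Gr(k,n)]$.
\end{enumerate}
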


Next, we investigate the relation between the cluster algebras constructed from 
$\GP B$ and the coordinate rings of positroid and open positroid varieties.
A key  intermediate step is to prove the existence of generic 
bases for  certain subalgebras  and for an ideal of $\CC[\Gr(k, n)]$ arising from relevant subcategories of $\CM C$. 
These include $\algGP$, which is spanned by $\Psi_M$ for all $M\in \GP B$,  
 $\clalg$ and another cluster algebra $\clalgptf$ constructed from  
the initial seeds defined by a cluster tilting object of the form $M_\maxNC$ and  
the generalised partition function $\ptf$ in \eqref{eq:defptf}. 
We also consider their localisations $(\algGP)_B$, $\clalgptfloc$ at 
$\{\Delta_I\colon M_I\in \add B\}$ and $\{\Delta^{-1}_I\colon M_I\in \add B\}$, respectively.
Let 
\begin{equation*}
p\colon \CC[\Gr(k, n)]_B \to \CC[\opoid],
\end{equation*}
the localisation of the quotient map $\CC[\Gr(k, n)] \to \CC[\Pi]$ at $\{\Delta_I\colon M_I\in \add B\}$.

\begin{theorem} \label{main2} [Theorem \ref{thm:cat}]  We have the following.
\begin{enumerate}
\item \label{itm:newcat1} $\algGP=\clalg$ and $\clalg\cong \clalgptf$ as cluster algebras.

\item The  map $p$ restricts to an isomorphism from $(\algGP)_B$ to $\CC[\opoid]$ and 
an injection  from $\algGP$ to $ \CC[\Pi] $.

\item The  map $p$ restricts to an isomorphism from $(\algGPptf)_{B^{-1}}$ to $\CC[\opoid]$ and 
an injection  from $\algGPptf$ to $ \CC[\opoid] $.
\end{enumerate}
\end{theorem}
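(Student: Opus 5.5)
We will prove the three parts in order, using Theorem \ref{main1} and Theorem \ref{main3} together with a generic basis argument. Throughout, $\maxNC$ is a maximal non-crossing collection with $M_{\maxNC}\in \GP B$. By Theorem \ref{main1}(1) and the result of Oh--Postnikov--Speyer, $\maxNC$ contains $\nlace$ and lies in $\positr_\nlace$. So $\maxNC=\mathcal{F}(G)$ for a plabic graph $G$ of type $\sigma$, and $|\maxNC|=\dim\opoid$ by \eqref{eqPositroidDim}.

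\emph{Step 1: $\algGP=\clalg$.} The inclusion $\clalg\subseteq\algGP$ is clear. Cluster monomials of $\clalg$ are $\Psi_M$ for $M$ reachable rigid objects in $\GP B$; this uses Theorem \ref{main1}(3), since mutation in $\GP B$ is mutation in $\CM C$ and $\Psi$ is a cluster character. For the reverse inclusion, the plan is as follows.
\begin{enumerate}
\item Construct a generic basis $\{\Psi_{M}\colon M \text{ generic in an irreducible component of } \GP B\}$ of $\algGP$.
\item Show that its elements have pairwise distinct minimal leading exponents, taken with respect to the initial seed $M_\maxNC$ (equivalently, distinct $g$-vectors via $\gminmap$).
\item Show that these leading exponents range over a lattice of rank $|\maxNC|$ whose points are all attained by products of cluster variables divided by frozen variables.
\end{enumerate}
A triangularity argument then expresses each $\Psi_M$ in terms of elements of $\clalg$. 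The ideal $\ker(\CC[\Gr(k,n)]\to\CC[\Pi])$ is spanned by $\Psi_N$ for $N$ in a complementary subcategory ($\catD$-type, containing no summand from $\positr$). We plan to show it also has a basis with distinct leading exponents, disjoint from those of $\algGP$. The isomorphism $\clalg\cong\clalgptf$ should follow by comparing initial seeds: $\ptf$ rescales cluster variables by Laurent monomials in frozens, and the two quivers coincide by Theorem \ref{main3}.

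\emph{Step 2: part (2).} For injectivity of $\algGP\to\CC[\Pi]$, suppose $x\in\algGP$ vanishes on $\Pi$. Then $x$ lies in the ideal above. Since the leading exponents of the two bases are pairwise distinct, $x=0$. Localising at $\{\Delta_I: I\in\nlace\}$ gives an injection $(\algGP)_B\to\CC[\opoid]$. For surjectivity, $\CC[\opoid]$ is generated by the restrictions of all $\Delta_J$ together with $\Delta_I^{-1}$ for $I\in\nlace$. Each $\Delta_J$ with $J\notin\positr$ restricts to zero. For $J\in\positr$, we need $\Delta_J|_{\opoid}\in p((\algGP)_B)$. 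The first approach to try is: $M_J\in\CM B$, and a $\GP B$-approximation sequence $0\to M_J\to X\to Y\to0$ shows that $\Delta_J$ times a monomial in necklace minors lies in $\algGP$ modulo the ideal. An alternative is to use the Muller--Speyer/Galashin--Lam-free fact that $\opoid$ is covered by a torus chart given by $\mathcal{F}(G)$, together with Laurent phenomenon and normality, to deduce equality with $(\clalg)_B$ as upper cluster algebra (local acyclicity from Theorem \ref{main3}). The dimension count $|\maxNC|=\dim\opoid$ guarantees fraction fields agree.

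\emph{Step 3: part (3).} This is identical to part (2) after twisting by $\ptf$, which differs from $\Psi$ by invertible frozen factors after localising.

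The main obstacle is Step 1's basis statement. Establishing that the generic elements of $\GP B$ and of the complementary ideal have pairwise distinct minimal and maximal leading exponents, and that together they exhaust $\CC[\Gr(k,n)]$, requires a careful analysis of $g$-vectors restricted to the subcategory.
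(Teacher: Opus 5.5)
Your proposal has genuine gaps, most seriously in Steps 1 and 2.

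\textbf{Step 1.} The reverse inclusion $\algGP\subseteq\clalg$ is not established. Sub-step (3) is asserted without any mechanism. Outside finite type the generic basis contains elements that are not cluster monomials, so you would need products of incompatible cluster variables and a well-founded order to make the triangular recursion terminate. Even granting all that, dividing by frozen variables only gives $\algGP\subseteq(\clalg)_B$, not the unlocalised equality.

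The missing idea is short, and you already cite its key input in Step 2. By \cite[Thm 9.11]{JKS3} and Corollary~\ref{rem:char}, $\Psi_M=\Phi^U_M$ for every $M\in\GP B$ and every reachable cluster tilting object $U$. Hence $\algGP$ lies in the upper cluster algebra of $\clalg$. By Theorem~\ref{thm:locacy}, $\clalg$ is locally acyclic, so it equals its upper cluster algebra \cite[Thm 4.1]{Mu}. The generic-basis machinery you plan for Step 1 is what the paper uses for (2), not for (1).

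Two smaller points. First, $\ptf$ is not a rescaling of $\Psi$ by frozen monomials, since $\ptf_M=\Psi_{\Omega_M}/\Psi_{P_M}$ involves a different module. Instead, $\clalg\cong\clalgptf$ holds because both seeds have exchange matrix $B_{M_{\maxNC}}$, $\ptf$ is a cluster character, and its initial cluster is algebraically independent \cite[Prop 8.3]{JKS3}. Second, (3) reduces to (2) because $\Omega$ is an autoequivalence of $\GP B/\add B$, which gives $(\algGPptf)_{B^{-1}}=(\algGP)_B$.

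\textbf{Step 2.} You assume that $\ker(\CC[\Gr(k,n)]\to\CC[\Pi])$ is spanned by generic basis elements disjoint from $\basGP$. This is unproved, and your description of those elements is wrong. For $I\notin\positr$ and $J\in\nlace$ non-crossing (e.g.\ $I=123$, $J=124$ in Example~\ref{ex:lastex}), $\Delta_I\Delta_J=\Psi_{M_I\oplus M_J}$ is a generic basis element lying in the kernel, yet it has the summand $M_J\in\add B$.

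Your surjectivity argument ``modulo the ideal'' would need $\Psi_N$ to vanish on $\Pi$ for the extension terms that are not $B$-modules. You do not address this, and it is not evident beyond rank one. The torus-chart alternative is essentially the hard inclusion in \cite{GL}. Finally, for an injection, equal dimensions only make the extension of fraction fields algebraic, not trivial.

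The paper never describes the kernel. It works with the ideal $\ideal\subseteq\algD$ generated by $\Psi_N$ for $N\in\catD_2\setminus\add B$. Theorem~\ref{thm:genericbasis} and Proposition~\ref{prop:ideal} give $\algD/\ideal\cong\algGP$. Combined with $(\algD)_B=\CC[\Gr(k,n)]_B$ from \eqref{eq:algDCGratB} and Proposition~\ref{prop:Idealloc} ($\Delta_I\in\ideal_B$ for $I\notin\positr$), only the containment $\ker p\subseteq\ideal_B$ is needed.

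That containment yields a \emph{surjection} $\sigma\colon\CC[\opoid]\to(\algD)_B/\ideal_B$, and $\sigma p$ restricts to an isomorphism on $(\algGP)_B$, so $p$ is injective there. Now $\opoid$ is irreducible of dimension $|\maxNC|$, and $(\algGP)_B$ is a domain of the same dimension by (1). So $\ker\sigma$ is a prime ideal that would lower the dimension if it were nonzero; hence $\sigma$ is an isomorphism. The dimension count works for a surjection of domains. The reverse containment $\ideal_B\subseteq\ker p$, which your approach tries to prove directly, then follows for free.
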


As a consequence, we obtain a new proof Galashin--Lam's Theorem, which confirms 
Muller--Speyer's conjecture  \cite[Conj. 3.4]{MS} (see also Conjecture \ref{conj:MS}). 
Galashin--Lam~lifted Leclerc's cluster structure from \cite{Lec} and showed that the lifted cluster algebra is 
isomorphic to the cluster algebra  $\algMS$ constructed by Muller--Speyer \cite{MS}. 
To establish a key step proving their theorem--namely, the coordinate ring of the open positroid 
variety is contained in the upper cluster algebra of $\algMS$--they made use of techniques from
combinatorics of Le-diagram, 
parametrisations of open Deodhar strata \cite{MR04},
Postnikov's boundary measurement map \cite{Pos} and 
Muller-Speyer's twist \cite{MS17} for positroid varieties. 

Evidently, our approach is not only technically different from that of Galashin--Lam, but also 
arises from a distinct perspective on understanding the cluster structure of $\clalg$. In particular, we do not rely on Leclerc's cluster structure, 
and to our knowledge,  no direct link between $\GP B$ and Leclerc's categories has been established.
We emphasise the importance of the properties of $\GP B$ as  the cluster subcategory of $\CM C$, together with  
Grassmannian combinatorics, which
are crucial for  proving Theorem~\ref{main3}.  
To prove Theorem~\ref{main2}, we employ properties of cluster characters to construct generic bases, 
which leads to a surjective map,
\[
\CC[\opoid] \to (\clalg)_B.
\]  
Finally, the surjective map is an isomorphism, because both algebras 
are coordinate rings of irreducible varieties of the same dimension. 

We remark that the second author lifted Leclerc's categories modelling open positroid varietes to $\CM C$ in his thesis \cite{Rio}. The categories obtained are in general different from $\GP B$ and offer a distinct cluster structure.
In \S \ref{sec:ex}, we include examples of the construction from \cite{Rio}.
We also discuss generators of the coordinate rings of open positroid varieties. By definition and Corollary~\ref{cor:rk1oid},  such a coordinate ring is generated by $\Delta_I$,  $\Delta^{-1}_J$ for 
$M_I\in \CM B$ and $M_J\in \add B$.  
In the case of positroids of type $(2, n)$, the result can be enhanced. That is, the ring is generated by $\Delta_I$,  $\Delta^{-1}_J$ for 
$M_I\in \GP  B$ and $M_J\in \add B$ (Proposition \ref{rem:gr2n}). 

\section{Functorial finiteness of $\GP B$}\label{Sec:Func}

Throughout this paper, all categories are Krull-Schmidt and defined over 
the complex numbers. A category $\catC$ is \emph{Frobenius (stably) 2-CY} 
if it is exact Frobenius and the stable category is a 2-CY triangulated category with finite
dimensional homomorphism spaces. All Frobenius categories $\catC$ will be 
extension closed and full (i.e. full exact) subcategories of either $\CM C$ or $\sub Q_k$. 
This means objects and homomorphism spaces have finite dimension 
(in the latter case) or finite rank (in the former), and also that these categories 
have finitely many indecomposable projective-injective objects.

Let $\catD$ be a full additive subcategory of $\catC$ and $X\in \catC$. Recall that 
$f\colon Y\to X$ for some $Y\in \catD$ is called a {\it right $\catD$-approximation} 
of $X$ if 
\[
\Hom(Z, f)\colon \Hom(Z, Y) \to \Hom(Z, X)
\]
is surjective for any $Z\in \catD$. 
Similarly,  $f\colon X\to Y$ for 
some $Y\in \catD$ is called a {\it left $\catD$-approximation} of $X$ if 
\[
\Hom(f, Z)\colon \Hom(X, Z) \to \Hom(Y, Z)
\]
is surjective for any $Z\in \catD$. Note the feature of $\CM C$  that any 
left approximation $f\colon X\to Y$ is injective. Indeed, $P_0$ embeds into any 
$M\in \catD ~(\subseteq \CM C)$, and every projective $P_i$ embeds into $P_0$. So there is at 
least one injection from $X$ to an object in $\catD$, and hence the approximation 
$X\rightarrow Y$ is injective. 

A right approximation $f\colon Y\to X$ is said to be \emph{minimal} if $f=fg$ for some 
map $g\colon Y\to Y$ implies that $g$ is an isomorphism. Any other right 
approximation $f'\colon Y'\to X$ can be decomposed as 
$f'=(f_1, f_2): Y_1\oplus Y_2\to X$ such that $f_2=0$ and $f_1$ is minimal. 
Similarly,  a left approximation $f\colon X\to Y$ is \emph{minimal} if $f=gf$ for some $g\colon Y\to Y$ implies that $g$ is an isomorphism. In this case,  any other left 
approximation $f'\colon X\to Y'$ can be decomposed as 
$f'=(f_1, f_2): X\to Y_1\oplus Y_2$ such that $f_2=0$ and $f_1$ is minimal. 
Note that minimal approximations exists and are unique, up to isomorphism. We refer to 
\cite[Sec. I.2]{ARS} for proof for finite length categories. The proofs 
are similar when objects have finite rank.{\it We will omit 'left' and 'right' when 
the context makes it clear which approximation is being referred to}. We say the subcategory $\catD$  is \emph{contravariantly (resp. covariantly)  finite} if any object in $\catC$ has a 
right (resp. left) $\catD$-approximation, and  
\emph{functorially finite} if it is both contravariantly and covariantly finite.

The following is the main result of this section.

\begin{theorem}\label{thm:FF}
The category $\GP B$ is a functorially finite, extension closed Frobenius 2-CY 
subcategory of $\CM  C$.
\end{theorem}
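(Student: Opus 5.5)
Several of the claims are already available from \cite{JKS3}. By \cite[Thm 2.14]{JKS3}, $\GP B$ is Frobenius stably 2-CY. Its projective-injective objects are $\add B$. By \eqref{eq:extCB}, $\Ext^1_B$ and $\Ext^1_C$ agree on $\GP B$, so the exact structure is the one inherited from $\CM C$. For extension closure, let $0\to L\to M\to N\to 0$ be exact in $\CM C$ with $L,N\in\GP B$. We first show $M\in\CM B$. Work inside $M[t^{-1}]$, which is a $C[t^{-1}]=B[t^{-1}]$-module. Since $\Ext^1_C(N,B)=0$, a $C$-linear argument shows that the $B$-submodule $BM$ of $M[t^{-1}]$ equals $M$: the quotient $BM/M$ maps to $N[t^{-1}]$ and must land in $BN=N$, which forces it into $L$-torsion. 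Alternatively, and more cleanly, one can use that $\CM B$ is exactly the set of $M\in\CM C$ with $\Hom$-orthogonality to the cokernel $B/C$ and apply the long exact sequence. Once $M\in\CM B$, the long exact sequence for $\Hom_B(-,B)$ gives $\Ext^1_B(M,B)=0$.

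The main work is functorial finiteness. For contravariant finiteness, take $X\in\CM C$. The natural candidate is $\Hom_C(B,X)$ with the evaluation map, which gives the largest $B$-submodule $X_B=\{x\in X: Bx\subseteq X\}$ of $X$ (inside $X[t^{-1}]$). This lies in $\CM B$ and is a right $\CM B$-approximation, since any map from a $B$-module has image a $B$-submodule. Next, $\GP B$ is contravariantly finite in $\CM B$: because $B$ is Iwanaga--Gorenstein of injective dimension at most $2$, the Gorenstein projectives admit right approximations (Auslander--Buchweitz). Concretely, take a projective resolution of $X_B$ of length $2$ by $B$-modules. The second syzygy is Gorenstein projective, and the usual pushout construction produces an exact sequence $0\to Y\to G\to X_B\to 0$ with $G\in\GP B$ and $Y$ of finite projective dimension. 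Then $\Ext^1(G',Y)=0$ for all $G'\in\GP B$, so $G\to X_B$ is a right approximation. Composing the two approximations gives a right $\GP B$-approximation of $X$.

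For covariant finiteness, I would dualise. First map $X$ into $X^B:=BX\subseteq X[t^{-1}]$, which is finitely generated and lies in $\CM B$; every map from $X$ to a $B$-module factors through it. It then remains to give a left $\GP B$-approximation of an object of $\CM B$. Here I would use the duality $\Hom_{\ring}(-,\ring)$, or $\Hom_B(-,B)$, which exchanges Gorenstein projectives over $B$ and $B^{\op}$, together with the analogous approximation result for $B^{\op}$ (also Iwanaga--Gorenstein). An alternative route is more direct: since $\GP B$ has the projective-injective $B$ and is Frobenius, embed $X^B$ via its left $\add B$-approximation and use the cotorsion pair $(\GP B, \mathcal{P}^{<\infty})$ to obtain a left $\GP B$-approximation. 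I expect this step to be the main obstacle: checking that the approximation relative to $\CM B$ is a genuine approximation in $\CM C$, and that the duality preserves Cohen--Macaulayness. I would first attempt the completeness of the cotorsion pair for Iwanaga--Gorenstein orders, and fall back on the explicit necklace description \eqref{eq:BN} if needed.
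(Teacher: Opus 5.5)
Your contravariant half is correct and is essentially the paper's Proposition \ref{prop:leftapp}. There, $X_B$ is the image of an $\add B$-approximation of $X$, and your Auslander--Buchweitz pushout is exactly the construction of $0\to B''\to Z\to Y\to 0$ (by Lemma \ref{lem:GPBBapp}, one syzygy already suffices). The genuine gap is covariant finiteness, which you do not establish.

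The route you say you would try first cannot give it. Completeness of the cotorsion pair $(\GP B,\mathcal{P}^{<\infty})$ yields special right $\GP B$-approximations and special left $\mathcal{P}^{<\infty}$-approximations $0\to M\to P\to G\to 0$. It never yields left $\GP B$-approximations.

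The duality $\Hom_\ring(-,\ring)$ does not help as you state it. It is an exact duality $\CM B\to\CM B^{\mathrm{op}}$, but it sends $\{M\colon\Ext^1(M,B)=0\}$ to the lattices $N$ with $\Ext^1(\Hom_\ring(B,\ring),N)=0$. These are Gorenstein-injective lattices (compare $\gi B$ in Remark \ref{rem:GIB}), and in general they are not the Gorenstein projective $B^{\mathrm{op}}$-lattices, because $\Hom_\ring(B,\ring)$ need not be projective.

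Your alternative route starts exactly as the paper's Proposition \ref{prop:rightapp} does, with an injective left $\add B$-approximation $f\colon M\to B'$. What it misses is the key idea: saturate. Let $X=\cok f$, let $tX$ be its torsion part, and put $\tM=\ker(B'\to X/tX)$. Since $X/tX\in\CM B$ and $B'\to X/tX$ is an $\add B$-approximation, Lemma \ref{lem:GPBBapp} gives $\tM\in\GP B$.

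Now let $g\colon M\to N$ with $N\in\GP B$. Choose $0\to N\xrightarrow{d}B''\to Z\to 0$ with $Z\in\GP B$, and lift to $dg=hf$. The induced map $X\to Z$ kills $tX$ because $Z$ is torsion-free, so $h$ maps $\tM$ into $d(N)$. This gives $b\colon\tM\to N$ with $dg=dba$ for $a\colon M\to\tM$, hence $g=ba$. This also makes your reduction $X\mapsto BX$ unnecessary.

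Alternatively, your duality route works with $(-)^*=\Hom_B(-,B)$, which is a duality between Gorenstein projective $B$- and $B^{\mathrm{op}}$-lattices. Let $g\colon G\to N^*$ be a right Gorenstein projective approximation over the Iwanaga--Gorenstein ring $B^{\mathrm{op}}$. Then $g^*\eta_N\colon N\to G^*$ is a left $\GP B$-approximation: any $h\colon N\to G'$ gives $h^*=gk$, hence $h=\eta_{G'}^{-1}k^*g^*\eta_N$.

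Finally, your extension-closure sketch stops at a torsion subquotient of $L[t^{-1}]/L$ without showing it vanishes. The claim follows at once from \eqref{eq:extCB}, since every $C$-extension of $N$ by $L$ is then a $B$-extension. Or you can simply cite \cite[Thm 2.14]{JKS3}, as the paper does.
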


Following \cite[Thm 2.14]{JKS3}, we know that $\GP B$ is Frobenius 2-CY, 
full and extension closed. So it remains to prove its functorial finiteness, which will
follow from Proposition \ref{prop:leftapp} and Proposition \ref{prop:rightapp}.

\subsection{Perpendicular subcategories}

Let $\catC$ be a Frobenius (stably) 2-CY category as above, with a projective-injective 
generator $P$ and denote by $\ul \catC$ its stable category with  morphisms
\[
\ul \Hom(X, Y)=\Hom(X, Y)/\Hom(X, Y)_P
\]
for any $X, Y\in \catC$,  where $\Hom(X, Y)_P$ is the subspace of homomorphisms 
that factor through $\add P$.

\begin{lemma}\label{lem:funfst}
Let $\catB$ be a subcategory of $\catC$ containing $P$. Then $\catB$ is functorially 
finite if and only if the quotient $\ul \catB$ is functorially finite in $\ul \catC$.
\end{lemma}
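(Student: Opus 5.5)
The plan is to treat contravariant finiteness in detail and obtain covariant finiteness by the dual argument. The basic observation is that for $X\in\catC$ and $Z\in\catB$ every morphism $Z\to X$ in $\ul\catC$ lifts to $\catC$. Moreover, two lifts differ by a map factoring through $\add P$, and $\add P\subseteq \catB$ because $\catB$ contains $P$.

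For the direction ($\Rightarrow$), let $f\colon Y\to X$ be a right $\catB$-approximation in $\catC$. I claim its image $\ul f$ is a right $\ul\catB$-approximation in $\ul\catC$. Given $Z\in\catB$ and a morphism $\ul g\colon Z\to X$ in $\ul\catC$, lift it to $g\colon Z\to X$. Then $g=fh$ for some $h\colon Z\to Y$, and hence $\ul g=\ul f\,\ul h$. Left approximations are handled in the same way.

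For the direction ($\Leftarrow$), let $\ul f\colon Y\to X$ be a right $\ul\catB$-approximation with $Y\in\catB$, and choose a lift $f$. Let $p\colon P'\to X$ be a right $\add P$-approximation of $X$. This exists because $\add P$ has finitely many indecomposables and Hom spaces are finitely generated, of finite dimension or finite rank. I claim $(f,p)\colon Y\oplus P'\to X$ is a right $\catB$-approximation, noting that $Y\oplus P'\in\catB$.

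To check this, take $g\colon Z\to X$ with $Z\in\catB$. There is $h$ with $\ul g=\ul f\,\ul h$. So $g-fh$ factors through some object of $\add P$, say as $Z\to P''\to X$. The map $P''\to X$ factors through $p$, and therefore $g=fh+pq$ for a suitable $q\colon Z\to P'$. This proves the claim.

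Dually, for left approximations, take a lift $f\colon X\to Y$ together with a left $\add P$-approximation $X\to P'$, and use $X\to Y\oplus P'$. The existence of that left approximation follows from the same finiteness argument, or from the remark in the paper that $P_0$ embeds into every object.

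The only point needing care is the existence of $\add P$-approximations. This follows from $\Hom(P,X)$ being finitely generated over $\CC$ or over $\ring$: take a finite generating set of $\Hom(P,X)$ as an $\End P$-module, or simply as a module over the base ring. Beyond this the argument is routine.
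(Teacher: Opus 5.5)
Your proposal is correct and follows essentially the same argument as the paper. The forward direction lifts stable morphisms; the converse augments a lift $f\colon Y\to X$ of a stable $\ul\catB$-approximation by a right $\add P$-approximation $P'\to X$, so that $Y\oplus P'\to X$ absorbs the error term factoring through $\add P$, and dually for left approximations. Your brief justification that $\add P$-approximations exist, via finite generation of $\Hom(P,X)$, is a harmless addition that the paper takes for granted.
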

\begin{proof}
By definition, the functorial finiteness of $\catB$ in $\catC$ implies the functorial finiteness of $\ul \catB$ in $\ul\catC$.
We prove the converse. For any $X\in \ul\catC$, there exist $M\in \ul\catB$ and $f\colon M\to X$ in $\ul\catC$ such that any map $g\colon M'\to X$ with $M'\in \ul\catB$, we have 
$g=fh$ for some $h\colon M'\to M$. Lift the maps to $\catC$, we have 
$\widehat{g}-\widehat{f}\widehat{h}\in \Hom(M', X)_P$, that is, 
$\widehat{g}-\widehat{f}\widehat{h}=ab$ for some $P'\in \add P$, $a\colon P'\to X$ and $b\colon M'\to P'$.
Let $d\colon P''\to X$ be an $\add P$-approximation of $X$. Then there exists $c\colon P'\to P''$ 
such that $a=dc$. Consequently, $\widehat{g}=\widehat{f}\widehat{h}+dcb$. That is, 
$(\widehat{f}, d)\colon M\oplus P''\to X$ is a right $\catB$-approximation of $X$. So $\catB$ is contravariantly 
finite, and similarly, it is covariantly finite.
\end{proof}

\begin{lemma} \label{prop:perp}
Let $X$ be a rigid object in $\catC$ and let $\catB=\{M\in \catC\colon \Ext^1(M, X)=0\}$.
Then $\catB$ is functorially finite in $\catC$.
\end{lemma}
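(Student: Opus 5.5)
By Lemma \ref{lem:funfst}, it suffices to show that $\ul\catB$ is functorially finite in the 2-CY triangulated category $\ul\catC$. Note that $\catB$ contains $P$, and indeed $\add P$, since $\Ext^1(P,X)=0$. In $\ul\catC$ we have $\Ext^1(M,X)=\ul\Hom(M,X[1])$. Hence
\[
\ul\catB=\{M\in\ul\catC\colon \ul\Hom(M,X[1])=0\}.
\]
By the 2-CY property this equals $\{M\colon \ul\Hom(X,M[1])=0\}$, the perpendicular category ${}^{\perp}(X[1])$, which coincides with $(X[-1])^{\perp}$.

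For the left approximation, take $M\in\ul\catC$. Since $\ul\Hom(M,X[1])$ is finite dimensional, choose a right $\add X[1]$-approximation
\[
g\colon X_1[1]\to M[1],
\]
or equivalently work with $\ul\Hom(M[-1]\ldots)$. Concretely, take a minimal left $\add X[1]$-approximation $a\colon M\to X_1$ with $X_1\in\add X[1]$, which exists by finite dimensionality of Hom spaces. Complete it to a triangle
\[
N\xrightarrow{b} M\xrightarrow{a} X_1\to N[1].
\]
This gives a right approximation of $M$ by $N$. To see that $N\in\ul\catB$, apply $\ul\Hom(-,X[1])$ to the triangle to obtain
\[
\ul\Hom(X_1,X[1])\xrightarrow{a^*}\ul\Hom(M,X[1])\to\ul\Hom(N,X[1])\to\ul\Hom(X_1[-1],X[1]).
\]
The map $a^*$ is surjective because $a$ is an approximation. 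The last term is $\ul\Hom(X_1,X[2])$, which vanishes since $X_1\in\add X[1]$ and $\ul\Hom(X,X[1])=0$ by rigidity. Hence $\ul\Hom(N,X[1])=0$.

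Next, $b$ is a right $\ul\catB$-approximation. For $Z\in\ul\catB$ and $h\colon Z\to M$, the composite $ah$ lies in $\ul\Hom(Z,X_1)$, which vanishes because $X_1\in\add X[1]$. So $h$ factors through $b$.

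Dually, take a right $\add X[-1]$-approximation $c\colon X_2\to M$ and complete it to a triangle
\[
X_2\xrightarrow{c} M\to N'\to X_2[1].
\]
The same exactness argument, using rigidity in the form $\ul\Hom(X[-1],X[1])=\ul\Hom(X,X[2])$ together with 2-CY duality $\ul\Hom(N',X[1])\cong D\ul\Hom(X,N'[1])$, shows that $N'\in\ul\catB$ and that $M\to N'$ is a left approximation.

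The main point requiring care is the identification of $\ul\catB$ with $(X[-1])^{\perp}$ via 2-CY duality, so that the dual argument applies. An alternative is simply to cite \cite[Prop. II.2.3]{BIRS} or the analogous result of Iyama--Yoshino on perpendicular categories of rigid objects being functorially finite. Lifting to $\catC$ is then exactly Lemma \ref{lem:funfst}.
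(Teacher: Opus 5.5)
Your proof is correct, but it takes a different route from the paper. Both arguments reduce to $\ul\catC$ via Lemma \ref{lem:funfst}. From there the paper is a single citation: $\add X$ is functorially finite and extension closed in $\ul\catC$, since rigidity splits triangles inside $\add X$, and \cite[Thm II.2.1]{BIRS} then gives that $\{M\colon \ul\Hom(M,X[1])=0\}$ is functorially finite. You instead prove this special case directly:
\begin{itemize}
\item The cocone of a left $\add X[1]$-approximation is a right $\ul\catB$-approximation. This half uses only rigidity and Hom-finiteness.
\item After rewriting $\ul\catB=(X[-1])^{\perp}$ by 2-CY duality, the cone of a right $\add X[-1]$-approximation is a left $\ul\catB$-approximation.
\end{itemize}
This is the ``direct check'' the paper mentions just after the lemma. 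Your version is self-contained and shows exactly where rigidity and the 2-CY property enter. The citation is shorter and covers arbitrary functorially finite, extension-closed subcategories, not just $\add X$.

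There are some slips to fix.
\begin{itemize}
\item The fallback reference should be \cite[Thm II.2.1]{BIRS}, not Prop.\ II.2.3. The latter is the decomposition result used later, and it assumes functorial finiteness as a hypothesis.
\item In the dual step, $\ul\Hom(X[-1],X[1])=\ul\Hom(X,X[2])\cong D\ul\Hom(X,X)$ is generally nonzero, so it is not a form of rigidity. Apply $\ul\Hom(X[-1],-)$ to $X_2\xrightarrow{c} M\to N'\to X_2[1]$. Then $c_*$ is surjective, and $\ul\Hom(X[-1],X_2[1])=0$ because $X_2[1]\in\add X$ and $\ul\Hom(X,X[1])=0$. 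Together these give $\ul\Hom(X[-1],N')=0$, that is, $N'\in\ul\catB$.
\item Delete the abandoned sentence about a map $X_1[1]\to M[1]$.
\end{itemize}
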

\begin{proof}
Note that $\add X$ is functorially finite and extension closed in $\ul\catC$, and 
\[\ul\catB=\{M\in \ul \catC\colon \ul\Hom(M, X[1])=0\},\] 
which is functorially finite in $\ul\catC$ by  \cite[Thm II.2.1]{BIRS}. 
Therefore, following Lemma \ref{lem:funfst}, $\catB$ is functorially finite in $\catC$.
\end{proof}

Note that the functorial finiteness of the category $\catB$ in Lemma \ref{prop:perp} can also 
be checked directly following the definition.
As $\catB$ is a lift of the perpendicular subcategory $\ul\catB={}^{\perp}X[1]$ in $\ul\catC$, we say $\catB$ is  the \emph{perpendicular subcategory} of $X$ in $\catC$ (with respect to $\Ext^1(-, X)$). 

\subsection{Existence of right approximations}

\begin{lemma} \cite[Lem 2.12]{JKS3}\label{lem:GPBBapp} 
Let $K$ be the kernel of an $\add B$-approximation of some $M$ in $\CM C$.  
Then $K\in\GP B$. In particular, if $M\in \CM B$, then any syzygy $\Omega_B M\in\GP B$.
\end{lemma}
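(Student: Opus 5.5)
The lemma has two assertions. I would first prove the general one, that the kernel of an $\add B$-approximation of any $M\in\CM C$ lies in $\GP B$, and then deduce the syzygy statement. The approach is to check the two conditions in \eqref{eq:defpgb}: that $K\in\CM B$, and that $\Ext^1_B(K,B)=0$.

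\emph{Setup and $K\in\CM B$.} Take a right $\add B$-approximation $f\colon B'\to M$ with $B'\in\add B$ (as $C$-modules, so $f$ is $C$-linear), and put $K=\ker f$, giving an exact sequence
\[
0\to K\to B'\xrightarrow{f} M.
\]
$K$ is a $C$-submodule of the Cohen--Macaulay module $B'$, hence torsion free over $\ring$, so $K\in\CM C$. To see that $K$ is a $B$-module, I would use that $B'$ is a $B$-module and that $B\subseteq C[t^{-1}]$. Given $b\in B$, choose $m$ with $t^m b\in C$. For $x\in K$ we have $t^m f(bx)=f(t^m b\,x)=t^m b f(x)=0$. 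Since $M$ is torsion free, $f(bx)=0$, so $bx\in K$. Hence $K\in\CM B$.

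\emph{Vanishing of $\Ext^1_B(K,B)$.} Since $B$ is a $C$-module, $\Hom_C(M,B)$ makes sense, and the approximation property gives surjectivity of $\Hom_C(B',B)\to\Hom_C(\im f,B)$ after restricting along $\im f\subseteq M$. Here I would need to check that any map $\im f\to B$ extends over $M$. Rather than rely on that, I would work directly with $0\to K\to B'\to \im f\to 0$, using $\im f\in\CM B$ (by the same torsion argument), and note that $f$ is also an $\add B$-approximation of $\im f$.

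Apply $\Hom_B(-,B)$ to this sequence. We have $\Ext^1_B(B',B)=0$ because $B'$ is projective over $B$. This gives an exact sequence
\[
\Hom(B',B)\to\Hom(K,B)\to\Ext^1_B(\im f,B)\to 0.
\]
So it suffices to know that $\Ext^1_B(\im f,B)$ is the cokernel and that every map $K\to B$ extends to $B'$. Alternatively I would use the Gorenstein property: $B$ is Iwanaga--Gorenstein of injective dimension at most $2$ by \cite[Thm 2.14]{JKS3}. For Cohen--Macaulay modules over a $2$-dimensional-type order, $\GP B$ consists exactly of the second syzygies. Since $\im f$ embeds in $M$, which embeds via a left approximation in an object of $\add C$ (itself in $\add B$-lattices after inverting $t$), I expect $K$ to be a second syzygy in $\CM B$. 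Making this precise is the main obstacle. I would first try to show that $\im f\hookrightarrow B''$ for some $B''\in\add B$ with torsion-free cokernel, which would exhibit $K=\Omega^2_B$ of that cokernel and hence $K\in\GP B$ by \cite[Thm 2.14]{JKS3}.

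\emph{The syzygy statement.} If $M\in\CM B$, a projective cover $B'\to M$ over $B$ is in particular an $\add B$-approximation, so $\Omega_BM\in\GP B$ by the general statement.
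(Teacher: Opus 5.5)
The first step is fine: you show that $K$ (and likewise $\im f$) is a $B$-lattice via the $t^m$-trick. The reduction of the syzygy statement to the general one is also fine. The gap is the vanishing of $\Ext^1_B(K,B)$, which none of your three attempts establishes.

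\begin{itemize}
\item Your first attempt has the variance backwards. A right $\add B$-approximation $f\colon B'\to M$ only says that $\Hom(B,B')\to\Hom(B,M)$ is onto. It says nothing about maps from $\im f$ or $K$ into $B$.
\item In your second attempt, applying $\Hom_B(-,B)$ to $0\to K\to B'\to \im f\to 0$ gives $\Ext^1_B(K,B)\cong\Ext^2_B(\im f,B)$. The cokernel you single out is instead $\Ext^1_B(\im f,B)$, which is irrelevant here. It is also nonzero in general: take $M\in\CM B\setminus\GP B$ and $f$ a projective cover. So ``every map $K\to B$ extends to $B'$'' is false.
\item Your third (Gorenstein) route assumes an equivalent form of the lemma. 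Every $B$-lattice $Y$ embeds in a free $B$-module, since $t^NY\subseteq B^m$ for $N\gg 0$. Hence $\Ext^1_B(\Omega_BY,B)\cong\Ext^3_B(B^m/Y,B)$, and the injective-dimension bound of \cite[Thm 2.14]{JKS3} is equivalent to the second assertion of the lemma you are proving. Quoting it therefore gives no independent argument. Note also that the torsion-free cokernel you call the main obstacle would not be needed on that route.
\end{itemize}

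The missing idea is to use the approximation covariantly, together with two hypotheses your proposal never uses: $B$ is rigid as a $C$-module, and $\CM C$ is stably $2$-Calabi--Yau. Put $Y=\im f$. Then $f\colon B'\to Y$ is still an $\add B$-approximation, because any map $B''\to Y\subseteq M$ factors through $f$. In the exact sequence
\[
\Hom_C(B,B')\to\Hom_C(B,Y)\to\Ext^1_C(B,K)\to\Ext^1_C(B,B')=0
\]
the first map is therefore onto, so $\Ext^1_C(B,K)=0$. The $2$-CY duality gives $\Ext^1_C(K,B)\cong D\Ext^1_C(B,K)=0$, where $D=\Hom_{\CC}(-,\CC)$. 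Finally, $\Ext^1_B(K,B)=\Ext^1_C(K,B)$ by \eqref{eq:extCB}, using $B\in\GP B$. Alternatively, $\Ext^1_B(K,B)\hookrightarrow\Ext^1_C(K,B)$ directly, since by your $t^m$-argument a $C$-linear splitting of an extension of $B$-lattices is automatically $B$-linear. Hence $K\in\GP B$.
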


\begin{proposition}\label{prop:leftapp}
Let $X\in \CM C$. There exists a $\GP B$-approximation $f\colon M\longrightarrow X$. 
\end{proposition}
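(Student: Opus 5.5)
We want a right $\GP B$-approximation of an arbitrary $X\in\CM C$. The plan is to do this in two steps: first approximate $X$ by an object of $\CM B$, then approximate that object by $\GP B$ using $B$-syzygies (Lemma \ref{lem:GPBBapp}).

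\emph{Step 1: a right $\CM B$-approximation.} Take $X_B=\Hom_C(B,X)$, the largest $B$-module inside $X[t^{-1}]$ contained in $X$. Concretely, $X_B=\{x\in X: Bx\subseteq X\}$, which makes sense because $B\subseteq C[t^{-1}]$ acts on $X[t^{-1}]$.
\begin{itemize}
\item $X_B$ is an $\ring$-submodule of the free module $X$, hence torsion-free. It has full rank, since $t^mB\subseteq C$ for some $m$ gives $t^mX\subseteq X_B$. So $X_B\in\CM B$.
\item Any $C$-map $g\colon N\to X$ with $N\in\CM B$ extends $\field$-linearly to $N[t^{-1}]\to X[t^{-1}]$, and this extension is $B[t^{-1}]=C[t^{-1}]$-linear. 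Hence $B\,g(N)=g(BN)\subseteq g(N)\subseteq X$, so $g(N)\subseteq X_B$.
\end{itemize}
Thus the inclusion $X_B\hookrightarrow X$ is a right $\CM B$-approximation.

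\emph{Step 2: from $\CM B$ to $\GP B$.} Put $Y=X_B\in\CM B$ and choose an $\add B$-approximation, i.e.\ a $B$-projective cover, $p\colon P\to Y$. This gives an exact sequence
\[0\to \Omega_B Y\to P\xrightarrow{p} Y\to 0,\]
and $\Omega_B Y\in\GP B$ by Lemma \ref{lem:GPBBapp}. Note $P\in\add B\subseteq\GP B$. The claim is that $p$ is already a right $\GP B$-approximation of $Y$. Let $M\in\GP B$ and $h\colon M\to Y$. Applying $\Hom(M,-)$ gives the exact sequence
\[\Hom(M,P)\to\Hom(M,Y)\to\Ext^1_B(M,\Omega_BY).\]
This last term should vanish. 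In the Frobenius category $\GP B$ the projective-injectives are $\add B$ (\cite[Thm 2.14]{JKS3}), and $\Ext^1_B(M,B)=0$ by the definition \eqref{eq:defpgb}. The point to check is that $\Ext^1_B(M,\Omega_BY)$ is not automatically zero, so I would argue instead with Gorenstein-projective approximation theory: $B$ is Iwanaga--Gorenstein of injective dimension at most $2$, so every $Y\in\CM B$ has a right $\GP B$-approximation $0\to L\to G\to Y\to 0$ with $G\in\GP B$ and $L$ of finite projective dimension (Auslander--Buchweitz). Then $\Ext^1(M,L)=0$ for all $M\in\GP B$, since $L$ has a finite resolution by modules in $\add B$ and $\Ext^{\ge1}(M,B)=0$. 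Hence $G\to Y$ is a right approximation. To construct $G$, take a finite $B$-projective resolution of a syzygy: $\Omega^2_BY$ lies in $\GP B$ because of injective dimension $2$, and we push out along its coresolution by projectives (which exists since $\GP B$ is Frobenius). This is the standard construction, and it is the step I expect to need the most care in the $\ring$-order setting.

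\emph{Step 3: composition.} The composite $G\to X_B\hookrightarrow X$ is a right $\GP B$-approximation of $X$. Indeed, any map $M\to X$ with $M\in\GP B\subseteq\CM B$ factors through $X_B$ by Step 1, and then through $G$ by Step 2.

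The main obstacle is Step 2: making sure the Auslander--Buchweitz argument (finite projective-dimension kernel and vanishing of $\Ext^1(\GP B,L)$) works for Cohen--Macaulay modules over the order $B$, using \cite[Thm 2.14]{JKS3} and \eqref{eq:extCB}.
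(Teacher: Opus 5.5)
Your proposal is correct and follows essentially the same route as the paper: your $X_B=\{x\in X\colon Bx\subseteq X\}$ is the maximal $B$-submodule of $X$, i.e.\ the image of an $\add B$-approximation $B'\to X$, which the paper uses as its $\CM B$-approximation, and your final composition step is identical. The obstacle you anticipate in Step 2 disappears, because Lemma \ref{lem:GPBBapp} already gives $\Omega_B Y\in\GP B$, so a single step of the Auslander--Buchweitz construction suffices: the cokernel $Z$ of an $\add B$-approximation $\Omega_B Y\to P\oplus B''$ extending the inclusion $\Omega_B Y\to P$ fits into $0\to B''\to Z\to Y\to 0$ with $Z\in\GP B$ and $B''\in\add B$ (exactly the paper's argument), so $\Ext^1(M,B'')=0$ for $M\in\GP B$ follows directly from \eqref{eq:defpgb}, with no need for higher Ext vanishing or finite projective dimension.
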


\begin{proof}
Let $f\colon B'\to X$ be an $\add B$-approximation.
Let $Y=\im f$. It follows that 
$Y\in \CM B$  and $Y$ is the maximal submodule of $X$ that is also a $B$-module. Therefore the embedding $\iota\colon Y\to X$ is a $\CM B$-approximation of $X$.

Consider the following short exact sequence 
\[
\xymatrix{
0\ar[r] & \Omega \ar[r]^{d'} & B'  \ar[r]^f&  Y\ar[r]&0,
}
\]
where $B'\in \add B$.
By Lemma \ref{lem:GPBBapp},  $\Omega\in \GP B$. 
Extend $d'$ to an $\add B$-approximation 
$$d=\begin{pmatrix}d' \\ d''\end{pmatrix}\colon\Omega\to B'\oplus B''.$$
We have the commutative diagram,  
\begin{equation}\label{eq:gpBappofcmB}
\begin{split}
\xymatrix{
0\ar[r]& \Omega\ar[r]^{d ~~~~~~\;\;\;\;}\ar@{=}[d] &B'\oplus B''\ar[r]\ar[d]^b & Z\ar[r]\ar[d]^c &0\\
0\ar[r]& \Omega\ar[r]^{d'}  &B'\ar[r]^f  & Y\ar[r]
 &0,
}
\end{split}
\end{equation}
where  $Z=\cok d$ and $b$ is the projection onto $B'$. 
So all the vertical maps are surjective, and 
by the Snake Lemma, we have the exact sequence 
\begin{equation}\label{eq1}
\xymatrix{
0\ar[r] &B''\ar[r] &Z \ar[r]^c &Y\ar[r]& 0.
}
\end{equation}
Consequently, $Z\in \CM B$ by \eqref{eq:extCB}. Applying $\Hom(-, B)$,  we have 
\[
\xymatrix{
0\ar[r]& \Hom(Z, B)\ar[r] &\Hom(B'\oplus B'', B)\ar[r] &  \Hom(\Omega, B)\ar[r]
&\Ext^1(Z, B)\ar[r] &0.
}
\]
As $d$ is an $\add B$-approximation, $\Hom(d, B)$ is surjective. Therefore 
\[\Ext^1(Z, B)=0,\] and so by the definition of $\GP B$ (see \eqref{eq:defpgb}), we have $Z\in \GP B$. 
Furthermore, applying  $\Hom(L,- )$ with $L\in \GP B$ to \eqref{eq1} gives a 
short exact sequence, and so $c$ is a $\GP B$-approximation. 

Now consider the composition  $\alpha=\iota c$.  
Then $\alpha$ is a $\GP B$-approximation of $X$. Indeed, let $L\in \GP B$ and 
$g\colon L\to X$. 
Then $g$ factors through $\iota$, that is, 
\[g= \iota h\]
for some $h\colon L\to Y$. 
As $c$ is a $\GP B$-approximation,   $h$ factors through $c$. 
Consequently, $g$ factors through $\alpha$. This completes the proof.
\end{proof}

\begin{corollary} \label{cor:cmbff} We have the following.
\begin{enumerate}  
\item\label{it:cor1} The category $\CM B$ is contravariantly finite in $\CM C$.
\item\label{it:cor2} Any $Y\in \CM B$  has a $\GP B$-approximation in an exact sequence 
as follows, 
\begin{equation}\label{eq:appseqofcmb}
\xymatrix{
0\ar[r] & B'' \ar[r]^{d'} & Z  \ar[r]^c&  Y\ar[r]&0,
}
\end{equation}
with  $B''\in \add B$.
\end{enumerate}
\end{corollary}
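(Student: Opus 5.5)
Both parts are read off from the proof of Proposition \ref{prop:leftapp}; we extract the two intermediate approximations constructed there.

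For part (\ref{it:cor1}), take any $X\in\CM C$ and an $\add B$-approximation $f\colon B'\to X$; such an approximation exists because $\add B$ has finitely many indecomposables and the Hom spaces have finite rank. Set $Y=\im f$.

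We first check that $Y\in\CM B$. It is a $B$-module, being the image of a $B$-module map. Since $f$ is a map of $C$-modules and $B\subseteq C[t^{-1}]$, $f$ is $B$-linear: for $b\in B$ there is $m\geq 0$ with $t^m b\in C$, and then $t^m f(bx)=f(t^m b x)=t^m b f(x)$, so $f(bx)=bf(x)$ because $X$ is torsion free. Moreover $Y$ is a submodule of the torsion-free module $X$, hence Cohen--Macaulay.

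Next we show that $\iota\colon Y\hookrightarrow X$ is a right $\CM B$-approximation. Let $g\colon L\to X$ with $L\in\CM B$. Take an epimorphism $B^m\to L$. The composite $B^m\to X$ factors through $f$, since $f$ is an $\add B$-approximation. Hence $\im g\subseteq \im f=Y$, so $g$ factors through $\iota$. This is exactly the argument already stated in the proof of Proposition \ref{prop:leftapp}.

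For part (\ref{it:cor2}), let $Y\in\CM B$ and repeat the second half of that proof with this $Y$.
\begin{enumerate}
\item Take an $\add B$-approximation $B'\to Y$. It is surjective, because $Y$ is a quotient of some $B^m$ and $B^m\to Y$ factors through it. Its kernel $\Omega$ lies in $\GP B$ by Lemma \ref{lem:GPBBapp}.
\item Extend $d'\colon\Omega\to B'$ to an $\add B$-approximation $d=(d',d'')^T\colon\Omega\to B'\oplus B''$ and set $Z=\cok d$.
\item Diagram \eqref{eq:gpBappofcmB} and the Snake Lemma give the exact sequence \eqref{eq1}, which is the required sequence \eqref{eq:appseqofcmb}: $0\to B''\to Z\xrightarrow{c} Y\to 0$.
\item The proof there shows $Z\in\GP B$: we have $Z\in\CM B$, and applying $\Hom(-,B)$ gives $\Ext^1(Z,B)=0$ because $\Hom(d,B)$ is surjective.
\item Finally, for $L\in\GP B$ we have $\Ext^1(L,B'')=0$ by the definition \eqref{eq:defpgb} together with \eqref{eq:extCB}. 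So $\Hom(L,c)$ is surjective, and $c$ is a $\GP B$-approximation.
\end{enumerate}

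No step is a real obstacle. The only points needing care are that $Y$ is a $B$-module in part (\ref{it:cor1}), which uses that $X$ is torsion free, and that $B'\to Y$ is surjective in part (\ref{it:cor2}). The rest is a restatement of the proof of Proposition \ref{prop:leftapp}.
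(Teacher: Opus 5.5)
Your proposal is correct and is the paper's own argument: part (1) is the first step of the proof of Proposition \ref{prop:leftapp}, and part (2) is the construction in \eqref{eq:gpBappofcmB} applied directly to $Y\in\CM B$, giving $\ker c=\ker b=B''$. The extra checks you add are valid elaborations of steps the paper leaves implicit: surjectivity of $B'\to Y$; $B$-stability of $\im f$ (there $bf(x)$ should be read in $X[t^{-1}]$, since $X$ is only a $C$-module); and surjectivity of $\Hom(L,c)$ from $\Ext^1(L,B'')=0$.
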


\begin{proof}
(1) is the first part of the proof of Proposition \ref{prop:leftapp}.
Now for any $Y\in \CM B$, using the construction in  \eqref{eq:gpBappofcmB}, 
the map $c$ is a $\GP B$-approximation of $Y$ 
with $\ker c=\ker b=B''\in \add B$. So (2)  holds. 
\end{proof}

\subsection{Existence of left approximations}

Let $M\in \CM C$ and let $f\colon M\to B'$ be an (injective) $\add B$-approximation 
with cokernel $X$. Consider the  commutative diagram, 
\[
\xymatrix{
0\ar[r] & M \ar[r]^f\ar[d]^a & B'\ar@{=}[d] \ar[r]^{f'} &  X\ar[r] \ar[d]^c&0\\
0\ar[r] & \tM \ar[r]^{\iota} &  B' \ar[r]^{\iota'} & X/tX \ar[r] & 0, 
}
\]
where $tX$ is the maximal torsion submodule of $X$ and so  $X/tX$ is torsion free. 
\begin{proposition}\label{prop:rightapp}
We have $\tM\in \GP B$ and the map $a$ is a $\GP B$-approximation of~$M$.
\end{proposition}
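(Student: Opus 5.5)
The plan is in three steps: identify $\tM$ concretely inside $B'$, show $\tM\in\GP B$ by Lemma \ref{lem:GPBBapp}, and then extend maps from $M$ to $\tM$ using torsion-freeness of cokernels.

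\emph{Step 1: $\tM$ is a $B$-submodule of $B'$.} By construction $\tM=\ker\iota'$ is the preimage of $tX$ under $f'$. So
\[
\tM=\{b\in B' \colon t^Nb\in f(M) \text{ for some } N\}=B'\cap M[t^{-1}],
\]
the saturation of $M$ inside $B'$, and $\tM/M\cong tX$ has finite length. Since $C[t^{-1}]=B[t^{-1}]$, the space $M[t^{-1}]$ is a $B[t^{-1}]$-module. Hence $\tM$ is a $B$-submodule of $B'$, and it is torsion-free, so $\tM\in\CM B$. Likewise $X/tX=B'/\tM$ is a torsion-free quotient of $B$-modules, so $X/tX\in\CM B$ and $\iota'$ is a surjection of $B$-modules.

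\emph{Step 2: $\tM\in\GP B$.} I would show that $\iota'\colon B'\to X/tX$ is itself an $\add B$-approximation. Let $u\colon B''\to X/tX$ with $B''\in\add B$. Since $B''$ is projective as a $B$-module and $\iota'$ is a $B$-epimorphism, $u$ lifts through $\iota'$. Lemma \ref{lem:GPBBapp} then gives $\tM=\ker\iota'\in\GP B$. This also shows $\iota=\iota a\circ$-compatible with $f$ in the sense that $f=\iota a$, so $a$ is injective.

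\emph{Step 3: $a$ is a $\GP B$-approximation.} Let $g\colon M\to L$ with $L\in\GP B$. Since $\GP B$ is Frobenius with projective-injectives $\add B$, there is an exact sequence
\[
0\to L\xrightarrow{j} B''\to L''\to 0
\]
with $B''\in\add B$ and $L''\in\GP B$. Because $L''$ is torsion-free, $L=B''\cap L[t^{-1}]$, i.e. $L$ is saturated in $B''$.

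Since $f$ is an $\add B$-approximation, $jg=\varphi f$ for some $\varphi\colon B'\to B''$. Now take $x\in\tM$ and choose $N$ with $t^Nx\in f(M)$, say $t^Nx=f(m)$. Then
\[
t^N\varphi(x)=\varphi f(m)=jg(m)\in j(L).
\]
By saturation, $\varphi(x)\in j(L)$. So $\varphi|_{\tM}=jh$ for a unique $h\colon\tM\to L$. Then $jha=\varphi\iota a=\varphi f=jg$, and injectivity of $j$ gives $ha=g$.

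The main obstacle is this saturation point. A naive extension of $g$ along $M\subseteq\tM$ would be obstructed by $\Ext^1(tX,L)$. The way around it is to embed $L$ into $\add B$ with torsion-free cokernel, which is exactly where the Frobenius structure of $\GP B$ (projective-injectives $\add B$) is used.
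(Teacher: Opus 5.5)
Your proof is correct and follows the paper's argument. You get $\tM\in\GP B$ from Lemma \ref{lem:GPBBapp} applied to the surjection $\iota'\colon B'\to X/tX$, then embed the target $L$ into $\add B$ with torsion-free cokernel and factor $jg=\varphi f$; your saturation step ($t^N\varphi(x)\in j(L)\Rightarrow\varphi(x)\in j(L)$) is the elementwise form of the paper's observation that $\Hom(X,Z)=\Hom(X/tX,Z)$ for torsion-free $Z$, which is what forces $\varphi\iota$ to factor through $j$.
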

\begin{proof}
First note that $X/tX\in \CM B$, since it is torsion free (and so free over $\ring$) and is a factor module of $B$. 
By construction, $B'$ is projective and $\iota'$ is surjective, and so $\iota'$ is an $\add B$-approximation. Now,
 by Lemma \ref{lem:GPBBapp}, we have $\tM\in \GP B$.

Now let $N\in \GP B$ and $g\colon M\ra N$. 
We will show that $g$ factors through $a$ and thus 
conclude that the map $a$ is a $\GP B$-approximation of $M$.

Since $\GP B$ is Frobenius, there is an $\add B$-approximation $d:N\rightarrow B''$
and a short exact sequence as follows
\begin{equation*}
\xymatrix{0\ar[r]& N\ar[r]^d&  B'' \ar[r]^{d'}& Z\ar[r]& 0,}
\end{equation*}
with $Z\in \GP B$. 

As $f\colon M\to B'$ is an $\add B$-approximation, the map $dg$ factors through $f$, namely
\[
dg=hf
\]
for some $h\colon B'\to B''$. Therefore, there exists $l\colon X\to Z$ such that
$lf'=d'h$ and we have a commutative diagram,
\[
\xymatrix{
0\ar[r] & M \ar[r]^f\ar[d]^a \ar@/_1pc/[dd]_{\begin{smallmatrix}g\\ \\ \\ \\ \\\end{smallmatrix}}& 
B'\ar[d]^{\verteq} \ar[r]^{f'} \ar@/_1pc/[dd]_{\begin{smallmatrix}h\\ \\ \\ \\ \\ \end{smallmatrix}}&  
X \ar@/_1pc/[dd]_{\begin{smallmatrix}l\\ \\ \\ \\ \\ \end{smallmatrix}} \ar[r] \ar[d]^c&0\\
0\ar[r] & \tM 
\ar[r]^{\iota}&  B' \ar[d]^h \ar[r]^{\iota'\; \;}  & X/tX \ar[r] 
& 0\\
 0\ar[r] & N \ar[r]^d&  B'' \ar[r]^{d'}  & Z \ar[r] & 0.
}
\]
Now as $Z$ is torsion free, by applying $\Hom(-, Z)$ to the short exact sequence 
\begin{equation*}\xymatrix{0\ar[r] & tX\ar[r] & X\ar[r] &X/tX\ar[r] & 0,}\end{equation*} 
we have  \[\Hom(X, Z)=\Hom(X/tX, Z). \]
Therefore, there exists some $p\colon X/tX\to Z$ such that $l=pc$. 
We have 
\[
p\iota'=pcf'=lf'=d'h.
\]
Hence, there exists $b \colon \tM \to N$ such that $h\iota=db$. Consequently, 
\[
dg=hf=h \iota a =d b a.
\]
As $d$ is injective, we have \[g=b a,\]  as required. 
\end{proof}

\begin{remark} \label{rem:cmbfunct}
Note that $\CM B=\sub B\cap \fac B$. 
The proofs of Proposition \ref{prop:rightapp} and Proposition \ref{prop:leftapp} imply 
that $\CM B$ is also functorially finite in $\CM C$. This property is similar to a result
for Artin algebras, that is, under some assumptions, the subcategory 
$\sub M\cap \fac N$ is functorially finite in the module category of finite length modules 
\cite[Thm 5.10]{AS}. 
\end{remark}

\section{Cluster subcategories of $\CM C$}
Let $\catC$ be a Frobenius 2-CY category as in Section \ref{Sec:Func}. Recall
that a \emph{cluster tilting object} in $\catC$ is an object $T\in \catC$ such
that $\Ext^1(T, X)=0$ implies that $X\in \add T$. Cluster tilting objects are similarly 
defined for triangulated categories (e.g. the stable category of $\catC$), and for 
full extension closed subcategories. 

Let $Q_T$ be the Gabriel quiver of $A_T=(\End T)\op$, which is the quiver of the 
\emph{basic} algebra Morita equivalent to $(\End T)\op$. Let $Q_T^\circ$ be 
the subquiver of $Q_T$ obtained by removing the arrows between vertices corresponding to the 
indecomposable projective summands in $T$. 
The Gabriel quiver encodes the dimension of the space of irreducible maps 
between indecomposable summands of $T$. 

We call $\catC$  a \emph{cluster category} if it admits a \emph{cluster structure} 
\cite{BIRS} (see also \cite{KR1}) consisting of a collection $\catC_0$ of cluster
tilting objects. In particular, the Gabriel quiver $Q_T^\circ$ for $T\in \catC_0$ has no
loops and no two-cycles, and $Q_{\mu_i T}^\circ=\mu_iQ_T^\circ$, 
where $\mu_iT$ is the mutation of $T$ at the summand $T_i$ and $\mu_iQ_T^\circ$ 
is the mutation of $Q_T$ at the vertex $i$, defined by Fomin-Zelevinsky \cite{FZ1}. 
Recall that mutation of cluster tilting objects is defined using (exact) mutation sequences, 
\begin{equation} \label{eq:mutsequences}
0\lra T_i^* \lra E \lra T_i \lra 0 \text{ and } 0\lra T_i \lra F \lra T_i^* \lra 0,
\end{equation}
where the maps are $\add T\backslash T_i$-approximations (see \cite{BIRS}),
and $\mu_iT = (T\backslash T_i) \oplus T_i^*$.

When $\catC_0$ consist of all the cluster tilting objects which can be obtained by 
iterated mutation from a fixed cluster tilting object $T$, we say the 
\emph{cluster structure is defined} by $T$. Cluster tilting objects in $\catC_0$ 
are then said to be \emph{reachable} from $T$. We drop $T$ and say that a 
cluster tilting object is \emph{reachable}, when $T$ is clear from the context.

Let $\catB$ be a full exact Frobenius subcategory of $\catC$. Assume that $\catB$ and 
$\catC$ are cluster categories. The cluster structure on  $\catB$ is a {\it cluster 
substructure of $\catC$} \cite[Sect II.2]{BIRS} if there exists  an object $X\in \catC$ 
such that $T'\oplus X$ is a cluster tilting object in $\catC_0$ for any cluster tilting 
object $T'\in \catB_0$. In this case, we call $\catB$ a {\it cluster subcategory} of 
$\catC$.

\subsection{Two results of Buan-Iyama-Reiten-Scott} 

We recall two results on cluster tilting objects and cluster structures from \cite{BIRS},
which we will apply to show that $\GP B$ is a cluster subcategory of $\CM C$. 

\begin{theorem} \cite[Thm II.1.8]{BIRS} \label{BIRS2}
Let $\catC$ be a Frobenius 2-CY category with a cluster tilting object $T$. Then 
any maximal rigid object in $\catC$ is a cluster tilting object. 
\end{theorem}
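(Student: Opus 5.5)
This is the result of \cite[Thm II.1.8]{BIRS}. Here is how I would prove it. Work in the stable category $\ul\catC$, which is a 2-CY triangulated category with finite dimensional Hom spaces. An object of $\catC$ is rigid, maximal rigid or cluster tilting exactly when its image in $\ul\catC$ is, since $\Ext^1_\catC(X,Y)\cong\ul\Hom(X,Y[1])$ and the projective-injectives are summands of every maximal rigid object. So it suffices to prove the claim in $\ul\catC$.

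Let $M$ be maximal rigid and let $X$ satisfy $\Ext^1(M,X)=0$; we must show $X\in\add M$. Take a minimal right $\add M$-approximation $M_0\xrightarrow{f}X$ and complete it to a triangle $Y\to M_0\to X\to Y[1]$. Apply $\Hom(M,-)$. Since $f$ is an approximation and $\Ext^1(M,M_0)=0$, we get $\Ext^1(M,Y)=0$. Using 2-CY we also get $\Ext^1(Y,M)=0$.

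The aim is to reduce to the case where $X$ is rigid, because then $M\oplus X$ is rigid, maximality forces $X\in\add M$, and we are done. This reduction is the main obstacle, and it is exactly where the existence of the cluster tilting object $T$ is used.

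First, I would resolve $X$ by $\add T$. Since $T$ is cluster tilting, every object $X$ admits a triangle $T_1\to T_0\to X\to T_1[1]$ with $T_0,T_1\in\add T$, which gives a finite invariant of $X$ (its index relative to $T$). Second, I would show that $M$ is itself cluster tilting by induction, comparing $M$ with $T$. The tool is exchange triangles: for a summand $M_i$, the triangle $M_i^*\to E\to M_i\to M_i^*[1]$ built from a minimal $\add(M/M_i)$-approximation. The key lemma is that, in a 2-CY category with a cluster tilting object, a maximal rigid object has the same number of non-isomorphic indecomposable summands as $T$. This follows because the indices of the summands of a rigid object are linearly independent in $K_0(\add T)$, and the rank of that group is the number of summands of $T$.

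Given the key lemma, I would finish as follows. Take $X$ indecomposable with $\Ext^1(M,X)=0$ and $X\notin\add M$. Form the triangle above; a rigidity argument on $Y$ shows $M\oplus Y$ is rigid, hence $Y\in\add M$. Then $\Hom(X,Y[1])=\Ext^1(X,Y)\cong D\Ext^1(Y,X)=0$, so the map $X\to Y[1]$ vanishes and the triangle splits. Thus $X$ is a summand of $M_0$, which gives $X\in\add M$, a contradiction.

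If the index-independence argument proves cumbersome, I would fall back on citing \cite[Thm II.1.8]{BIRS} directly. That is how the statement is used in the paper anyway: it is applied to $\CM C$, which has the cluster tilting object $M_{\maxWS}$.
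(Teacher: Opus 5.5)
The paper gives no proof of this statement; it simply quotes \cite[Thm II.1.8]{BIRS}. Your fallback of citing it is therefore exactly what the paper does. Your attempted direct proof, however, has a genuine gap at the decisive step.

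From the triangle $Y\to M_0\xrightarrow{f}X\xrightarrow{h}Y[1]$ you correctly obtain $\Ext^1(M,Y)=0=\Ext^1(Y,M)$. But the assertion that ``a rigidity argument on $Y$ shows $M\oplus Y$ is rigid'' is not justified. Since $\Hom(Y,M_0[1])=0$, the triangle only tells you that every morphism $Y\to Y[1]$ has the form $h\circ a$ with $a\colon Y\to X$. Nothing forces these composites to vanish when $X$ is not rigid, and if $X$ is rigid the theorem is immediate anyway. In fact, given $\Ext^1(M,Y)=\Ext^1(Y,M)=0$ and maximality of $M$, one has
$Y$ rigid $\iff Y\in\add M\iff X\in\add M$, so this step \emph{is} the theorem.

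Moreover, your final paragraph uses neither $T$ nor the key lemma. If it were valid, it would prove the statement in every Hom-finite 2-CY triangulated category, which is false. Take the cluster tube of rank $2$ with quasi-simples $S_1,S_2$. The object $S_1$ is maximal rigid. Yet the indecomposable $X$ of quasi-length $3$ with quasi-socle $S_1$ satisfies $\Ext^1(S_1,X)=0$ and $X\notin\add S_1$. There the object $Y$ in your triangle is necessarily non-rigid.

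The place where you do invoke $T$ does not close the gap either. Linear independence of the indices of the summands of a rigid object only shows that a rigid object has \emph{at most} as many non-isomorphic indecomposable summands as $T$. To get equality for a maximal rigid $M$, you need a completion statement: a rigid object with fewer summands admits a non-zero rigid complement. Even with equality of counts, you would still need that a rigid object with the full number of summands is cluster tilting.

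Both of these are genuine results. They can be derived, for example, from $\tau$-tilting theory for $\End(T)$, using the Adachi--Iyama--Reiten correspondence together with Bongartz completion. Neither is supplied or actually used in your sketch. As it stands, the proposal does not prove the statement. Either give an argument in which $T$ genuinely controls $Y$ (or $M$), or cite \cite[Thm II.1.8]{BIRS} as the paper does.
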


Note that by definition, cluster tilting objects are maximal rigid. So by 
Theorem \ref{BIRS2}, cluster tilting objects and maximal rigid objects coincide in 
the category $\catC$. The following result is \cite[Prop II.2.3]{BIRS} applied to the
stable category $\ul{\CM C}$.

\begin{proposition} \label{prop:tem}  \cite[Prop II.2.3]{BIRS}
Let $\catD'$ a functorially finite and extension closed subcategory of $\ul{\CM C}$. Let 
\[
\catP=\catD'\cap {}^{\perp}\catD'[1]
\text{ and } \catD={^\perp\catP[1]}.
\] 
\begin{enumerate}
\item 
There exists a functorially finite and extension closed subcategory $\catD''$ of $\ul{\CM C}$ that contains $\catP$ such that  as triangulated categories,
\begin{equation*}
\catD/\catP \cong \catD'/\catP \times \catD''/\catP.
\end{equation*}
In particular, for any $M'\in \catD'$ and $M''\in \catD''$, any morphism 
between $M'$ and $M''$ factors through $\catP$.
\item  The map $(T', T'')\mapsto T'\oplus T''$
is a bijection (up to isomorphism and repeated summands)  
between pairs of cluster tilting objects  $T'\in \catD'$ and $T''\in 
\catD''$,  and cluster tilting objects in $\catD$ that contain $\catP$.
\end{enumerate}
\end{proposition}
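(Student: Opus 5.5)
This statement is cited from \cite[Prop II.2.3]{BIRS}, specialised to the 2-CY triangulated category $\ul{\CM C}$, so the plan is to check that $\ul{\CM C}$ meets the hypotheses of the general result and then follow its proof. The general setting is a Hom-finite 2-CY triangulated category $\mathcal{T}$ and a functorially finite, extension closed subcategory $\catD'$. First I would confirm that $\ul{\CM C}$ is Hom-finite and 2-CY; this holds because $\CM C$ is Frobenius stably 2-CY, as recalled in \S1.1 via the equivalence $\CM C/\add P_0\simeq \sub Q_k$. Then the two parts are proved in turn.

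\textbf{Part (1).} First, $\catP=\catD'\cap{}^{\perp}\catD'[1]$ consists of the Ext-projectives of $\catD'$. Since $\catD'$ is functorially finite and extension closed, $\catD'$ has enough Ext-projectives and enough Ext-injectives. By 2-CY duality these two classes coincide, so $\catD'$ is Frobenius with projectives $\catP$.

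Next, $\catP$ is rigid, and by \cite[Thm II.2.1]{BIRS} the subcategory $\catD={}^{\perp}\catP[1]$ is functorially finite. By Iyama--Yoshino reduction, $\catD/\catP$ is again a 2-CY triangulated category, and $\catD'/\catP$ sits inside it as a triangulated subcategory. It is closed under shifts because, in $\catD/\catP$, the shift of $X$ is the cone of a left $\catP$-approximation of $X$. Since $\catD'$ is Frobenius with projectives $\catP$, this cone stays in $\catD'$.

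Define $\catD''$ to be the objects $Y\in\catD$ with $\Hom_{\catD/\catP}(\catD'/\catP,Y)=0$. By 2-CY duality this is equivalent to $\Hom_{\catD/\catP}(Y,\catD'/\catP)=0$. Functorial finiteness of $\catD'/\catP$ in $\catD/\catP$ makes it admissible, giving a semiorthogonal decomposition. Serre duality (2-CY) then makes the decomposition orthogonal, which yields $\catD/\catP\cong \catD'/\catP\times\catD''/\catP$. Any morphism from $\catD'$ to $\catD''$ vanishes in the quotient, i.e. factors through $\catP$. Finally I would check that $\catD''$ is extension closed and functorially finite in $\ul{\CM C}$, which follows from the decomposition together with functorial finiteness of $\catD$.

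\textbf{Part (2).} Cluster tilting objects of $\catD$ containing $\catP$ correspond bijectively to cluster tilting objects of $\catD/\catP$ (Iyama--Yoshino). In a product of triangulated categories, cluster tilting objects are exactly sums of cluster tilting objects of the factors. Lifting back gives pairs $(T',T'')$ with $\catP\subseteq\add T'\cap\add T''$. It remains to check that $T'$ cluster tilting in $\catD'$ is equivalent to $T'/\catP$ cluster tilting in $\catD'/\catP$, which uses $\catP\subseteq\add T'$ and that $\catP$ is Ext-projective in $\catD'$.

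The main obstacle is the orthogonality step in part (1): showing that the semiorthogonal decomposition splits, and that $\catD'/\catP$ is functorially finite inside $\catD/\catP$. This is where the 2-CY property and the Frobenius structure on $\catD'$ are used essentially.
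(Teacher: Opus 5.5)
Your proposal is correct and follows essentially the paper's route: the paper gives no separate proof but applies \cite[Prop II.2.3]{BIRS} to the Hom-finite 2-CY triangulated category $\ul{\CM C}\simeq\ul{\sub Q_k}$. Your sketch is the standard argument behind that citation: Iyama--Yoshino reduction to $\catD/\catP$, then $\catD'/\catP$ as a functorially finite triangulated subcategory closed under $\langle 2\rangle$, then 2-CY duality to split the resulting semiorthogonal decomposition into a product. The details you leave implicit are routine: $\catP$ is functorially finite because it is rigid and so has only finitely many indecomposables, which is what Iyama--Yoshino needs; and $\catD'$ has enough Ext-projectives by Wakamatsu's lemma applied to a minimal left $\catD'[1]$-approximation $X\to D_1[1]$.
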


\subsection{The cluster substructure on $\GP B$}

Let $\iota: C\to B$ denote the inclusion of orders $C\subseteq B$. 
Note that $\iota$ is also a homomorphism of $C$-modules.

\begin{proposition} \label{Prop:CBappr}
The embedding $\iota\colon C\to B$  is a $\CM B$-approximation of $C$. Thus,  
it is also a $\GP B$-approximation.
\end{proposition}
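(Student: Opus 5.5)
The plan is to show that every $C$-module map $g\colon Y\to M$ with $M\in \CM B$ factors uniquely through $\iota$. Throughout, I read ``approximation'' as a left approximation of $C$, i.e.\ of morphisms out of $C$.

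\emph{Step 1: extension of maps.} Take $M\in\CM B$ and a $C$-homomorphism $g\colon C\to M$. We have $C\subseteq B\subseteq C[t^{-1}]$, and $M$ is torsion free over $\ring$. Hence $M$ embeds in $M[t^{-1}]=\field\otimes_\ring M$, which is a module over $C[t^{-1}]=B[t^{-1}]$. Localise $g$ to $g[t^{-1}]\colon C[t^{-1}]\to M[t^{-1}]$ and restrict it to $B$ to get $\tilde g\colon B\to M[t^{-1}]$. By construction $\tilde g\iota=g$.

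\emph{Step 2: the image lands in $M$.} This is the only real point, and I expect it to be the main obstacle. Set $m=g(1)\in M$; here $1=\sum e_i\in C$ is also the identity of $B$. The map $g$ is $C$-linear, and $g[t^{-1}]$ is $C[t^{-1}]$-linear, so for $b\in B$ we get $\tilde g(b)=g[t^{-1}](b\cdot 1)=b\cdot m$. The action on the right is that of $B\subseteq C[t^{-1}]$ on $M[t^{-1}]$. Since $M$ is a $B$-module and its $B$-action is the restriction of the $C[t^{-1}]$-action on $M[t^{-1}]$, we have $b\cdot m\in M$. So $\tilde g\colon B\to M$ is a $B$-module map (right multiplication by $m$), and in particular a $C$-map.

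The subtle part is checking that the $B$-structure on $M\in\CM B$ really agrees with the one induced from $M[t^{-1}]$. This holds because the $\field$-algebra $B[t^{-1}]=C[t^{-1}]$ has a unique extension of the $C$-action on the $\field$-space $M[t^{-1}]$. I will state this compatibility as a short remark.

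\emph{Step 3: uniqueness and conclusion.} The factorisation is unique because $B$ is torsion free and $\iota[t^{-1}]$ is an isomorphism. Therefore $\Hom(\iota,M)\colon\Hom(B,M)\to\Hom(C,M)$ is surjective (indeed bijective) for all $M\in\CM B$. Since $B\in\CM B$, $\iota$ is a $\CM B$-approximation of $C$.

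For the second claim, note that $B$ is projective over itself and $\Ext^1_B(B,B)=0$, so $B\in\GP B$. Also $\GP B\subseteq \CM B$. The surjectivity of $\Hom(\iota,M)$ for all $M\in\CM B$ therefore holds in particular for all $M\in\GP B$, so $\iota$ is also a $\GP B$-approximation.
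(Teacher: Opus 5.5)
Your proof is correct and is essentially the paper's argument: the paper simply extends $f\colon C\to M$ to the $B$-map $B\to M$, $b\mapsto b\,f(1_C)$, using $\iota(1_C)=1_B$ and the fact that the $C$-action on $M\in\CM B$ is by definition the restriction of its $B$-action, so your detour through $M[t^{-1}]$ in Steps 1--2 is not needed. As a minor point, uniqueness in Step 3 actually rests on $M$ (not $B$) being torsion free: if $h\iota=0$, then for $b\in B$ some $t^nb\in C$, so $t^nh(b)=0$; in any case uniqueness is not required for the approximation property.
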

\begin{proof}
Let $M\in \CM B$ and let $f\colon C \rightarrow M$ be a $C$-homomorphism. Then
$f(c)=cf(1_C)$. Define $g\colon B\rightarrow M$ by $g(1_B)=f(1_C)$. Then
since, $\iota(1_C)=1_B$, we have $f=g\iota$. 
\end{proof}

As a corollary, we have the following combinatorial 
characterisation of rank one modules in $\CM B$.

\begin{corollary}\label{cor:rk1oid} Suppose that $B=\oplus_i M_{I_i}$ as a $C$-module.
Let $M_I$ be a rank one $C$-module. Then $M_I\in \CM B$ if and only if $I$ is in the positroid $\positr$ associated to $B$, that is, $I_i\leq_i I$ for all $i$. 
\end{corollary}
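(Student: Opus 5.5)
By Proposition \ref{Prop:CBappr}, a rank one module $M_I$ lies in $\CM B$ if and only if it is a $B$-module, and this holds if and only if every $C$-homomorphism $C\to M_I$ extends along the inclusion $\iota\colon C\to B$. Decomposing $C=\oplus_i P_i$ and $B=\oplus_i B e_i$ as $C$-modules, with $Be_i\cong M_{I_i}$ rank one and containing $P_i$, the condition reduces vertex by vertex to the following: for each $i$, every map $P_i\to M_I$ extends to a map $M_{I_i}\to M_I$. Since $\Hom(P_i,M_I)=e_iM_I$, this says that $e_iM_I\subseteq \Hom(M_{I_i},M_I)$ under restriction to $e_iM_{I_i}\ni 1$. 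So I would first record that $M_I\in\CM B$ if and only if each inclusion $P_i\hookrightarrow M_{I_i}$ has the extension property against $M_I$.

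The key step is to translate the extension condition into combinatorics of rank one modules. For rank one modules $M_J$, $M_I$, every homomorphism is multiplication by an element of $\field$ on each vertex space. Homomorphisms $M_J\to M_I$ are determined by the image of a generator at vertex $i$. Using the lattice/profile description of $M_I$ (the $e_jM_I$ are $t^{a_j}\ring$ with $a_{j}-a_{j-1}\in\{0,1\}$ governed by whether $j\in I$), the image of $e_iM_{J}$ is a shift by some power of $t$. A map sending $e_i M_J$ into $e_iM_I$ with generator to generator exists if and only if the profile of $J$, normalised at vertex $i$, lies weakly below that of $I$ normalised at $i$, i.e. the partial counts satisfy $|J\cap[i,j]|\ge |I\cap[i,j]|$ in the shifted order for all $j$. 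This is exactly $J\le_i I$ in the Gale order $\le_i$. I would verify this by direct computation with the standard description of $M_I$ from \cite{JKS1}, tracking $x$ and $y$ actions.

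Applying this with $J=I_i$, and noting that $P_i\cong M_{\{i+1,\dots,i+k\}}$ (suitably indexed) is the $\le_i$-minimal set, the condition "every map $P_i\to M_I$ extends to $M_{I_i}$" becomes "the generator map $P_i\to M_I$ at vertex $i$ extends", which is $I_i\le_i I$. Combining over all $i$ gives $M_I\in\CM B$ if and only if $I_i\le_i I$ for all $i$, that is, $I\in\positr$. The main obstacle is the middle step: nailing down precisely, with the paper's labelling and left-target conventions, that extendability of the canonical element $e_i\in e_iM_{I_i}$ corresponds to the Gale inequality $\le_i$ and not its reverse or a cyclic shift of it. I would check this first against the example $(k,n)=(2,5)$ with a small necklace before writing the general argument.
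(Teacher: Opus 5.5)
Your proposal is correct and is essentially the paper's argument: Proposition~\ref{Prop:CBappr} reduces membership of $M_I$ in $\CM B$ to the existence, for each vertex $i$, of a map from the rank one summand $Be_i$ to $M_I$ that is an isomorphism at vertex $i$, and comparing profiles turns this into a Gale inequality. (The paper phrases the ``if'' direction as the maps $f_j$ assembling into a surjection $B\to M_I$, which is the same point as your converse of the extension property.) As you anticipate, the indexing needs care: in the paper's convention an isomorphism at vertex $i$ corresponds to $\leq_{i+1}$, so one should take $Be_i\cong M_{I_{i+1}}$, with $P_i=M_{\{i+1,\dots,i+k\}}$ being $\leq_{i+1}$-minimal, rather than $Be_i\cong M_{I_i}$ with $\leq_i$.
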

\begin{proof}
We can write $\iota=\oplus_j \iota_j$ with $\iota_j\colon Ce_j\to M_{I_j}$. 
Then for any $I$ in the positroid, $I_j\leq_j I$ for all $j$. By comparing the 
profiles of the modules (see \cite{JKS1} and also Example \ref{Ex:newEx} for an illustration), 
we have an embedding $f_j\colon M_{I_j} \to M_I$
 that is the identity at vertex $j$, for any $j$. Therefore, 
 $(f_j)\colon B\to M_I$ is a surjective map and so $M_I\in \CM B$. 
 
 Conversely, suppose that $M_I\in \CM B$. Let $g_j\colon  C e_j\to M_I$ such that it
 is an isomorphism at vertex $j$. By  Proposition \ref{Prop:CBappr}, 
$g_j$ factors through  $\iota_j$, that is, 
 \[
 g_j=h\iota_j
 \]
for some $h\colon M_{I_j}\to M_I$. 
Since $\iota_j$ and $g_j$ are 
isomorphisms at vertex $j$,  $h$ is an injective map 
and is an isomorphism at vertex $j$. That is, there is an embedding 
$M_{I_j}\to M_I$ that identifies the spaces at vertex $j$. Again by comparing the 
profiles of the modules, we have  $I_j\leq_{j+1} I$ for all $j$.
Consequently, $I$ is contained in the positroid $\positr$.
\end{proof}

\begin{example}\label{Ex:newEx}
In this example, we draw the contours of three rank one modules $M_{4569}$, $M_{2378}$ and $M_{1457}$ 
for $(k, n)=(4, 9)$ within the $4 \times 5$ rectangle. The 
contour of each module is a sequence of down or up steps with the down steps 
are labeled by the profile, starting from the left corner, which is vertex $9$ (or 0) and 
is identified with the right corner.  For instance, the green line is the contour for 
module $M_{4569}$ with steps 4, 5, 6, 9 downwards. Note that the module structure 
of each rank one module $M_I$ is completely determined by the contour.

The embedding $M_{1457}$ into $M_{4569}$ as depicted in the picture identifies the spaces 
of the two modules at vertex 9, while to embed $M_{2378}$ into $M_{4569}$, we need to 
shift $M_{2378}$ one step down and so the embedding does not identify the spaces of the two 
modules at vertex 9.
Therefore, $1457\leq_1 4569$, but $2378\not\leq_1 4569$. 
\begin{figure}[h]
\begin{tikzpicture} [scale=0.4,
upbdry/.style={thick, gray},
lowbdry/.style={thick, gray},
bboxes/.style={thick,  densely dotted},
rboxes/.style={thick,  densely dotted},
ridge/.style={very thick, purple},
bridge/.style={very thick, blue},
gridge/.style={very thick, green},
>={Stealth[inset=2.5pt,length=4.5pt,angle'=40,round]},
outarr/.style={->, gray},
midarr/.style={->,blue},
rdgarr/.style={->,red},
keyarr/.style={->,black},
outdot/.style={gray, fill= gray},
middot/.style={blue, fill=blue}]
\newcommand{\abit}{0.133}
\begin{scope} [scale=1.2, shift={(-10,-5)},rotate=135]
\draw [bboxes] (0,-1)--(3,-1) (0,-2)--(1,-2)--(1,0) (2,0)--(2,-2);
\draw [rboxes] (4,-1)--(3,-1) (4,-2)--(3,-2)--(3,-5) (4,-3)--(1,-3)--(1,-5) (4,-4)--(0,-4) (2,-2)--(2,-5);
\draw [upbdry] (0,-5)--(4,-5)--(4,0);
\draw [lowbdry] (0,-5)--(0,0)--(4,0);
\draw [ridge] (4,0)--(3,0)--++(0,-2)--++(-2,0)--++(0,-1)--++(-1,0)--(0,-5);

\draw [bridge] (3.85,0)--(3.85,-1)--(2,-1)--(2,-4)--(0.15,-4)--(0.15,-5);
\draw [gridge] (4,0)--(4,-3)--(1,-3)--(1,-5)--(0,-5);

\end{scope}
\end{tikzpicture}
\caption{Profiles and contours of modules: $M_{\color{green}4569}$,
$M_{{\color{blue}2378}}$ and $M_{\color{purple} 1457}$}
\label{fig:profile}
\end{figure}
\end{example}

If $\catC$ is a subcategory 
of $\CM C$, denote by $\ul \catC$ the image of $\catC$ under the quotient functor
$\CM C\rightarrow \ul{\CM C}$. 

\begin{proposition}\label{stabBC} The quotient functor $\GP B\to \ul{\GP B}$ induces 
an equivalence between 
$\GP B/\add B$ and $\ul{\GP B}/\ul{\add B}$.
\end{proposition}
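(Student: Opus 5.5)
Both categories have the same objects as $\GP B$, namely the modules in $\GP B$. On the left, $\Hom(X,Y)$ is divided by $\Hom(X,Y)_{\add B}$, the maps factoring through $\add B$. On the right, one first divides by $\Hom(X,Y)_P$, where $P=C$ is the projective-injective generator of $\CM C$, and then divides by the image of the maps factoring through $\ul{\add B}$. The quotient functor $\GP B\to\ul{\GP B}\to\ul{\GP B}/\ul{\add B}$ is the identity on objects and full, so essential surjectivity and fullness are immediate. The plan is therefore to show that the induced functor on $\GP B/\add B$ is faithful, and also that it is well defined. Concretely, for $X,Y\in\GP B$ we must identify the kernel of
\[
\Hom(X,Y)\to \ul\Hom(X,Y)/\ul\Hom(X,Y)_{\ul{\add B}}
\]
with $\Hom(X,Y)_{\add B}$.

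Well-definedness means every map factoring through $\add B$ dies on the right. This is immediate, since its image in the stable category factors through $\ul{\add B}$.

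For faithfulness, take $g\colon X\to Y$ whose image in $\ul{\GP B}$ factors through some $\ul{B'}$ with $B'\in\add B$. Lifting to $\CM C$ gives $g-\beta\alpha\in \Hom(X,Y)_P$ for some $\alpha\colon X\to B'$ and $\beta\colon B'\to Y$. So $g-\beta\alpha=ab$ with $b\colon X\to P'$, $a\colon P'\to Y$ and $P'\in\add C$. The key step is to show that every map $X\to Y$ factoring through $\add C$ already factors through $\add B$. By Proposition~\ref{Prop:CBappr}, the inclusion $\iota\colon C\to B$ is a $\CM B$-approximation of $C$, and the same holds for finite direct sums, so $\iota'\colon P'\to B'''$ with $B'''\in\add B$ is a $\CM B$-approximation. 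Since $Y\in\GP B\subseteq\CM B$, the map $a$ factors as $a=a'\iota'$. Hence $ab=a'(\iota'b)$ factors through $B'''\in\add B$, and $g=\beta\alpha+a'\iota'b$ factors through $B'\oplus B'''\in\add B$, as required.

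The only substantive point is this factorisation of maps through $\add C$ via $\add B$, and Proposition~\ref{Prop:CBappr} provides exactly that. I expect no real obstacle beyond checking that the composite of two ideal quotients on the right is indeed a quotient by the ideal of maps factoring through $\add B\cup\add C$. This holds because $\ul{\add B}$ is the image of $\add B$, so the right-hand category is $\GP B$ modulo the ideal generated by $\add(B\oplus C)$. By the argument above, that ideal equals the ideal of maps factoring through $\add B$, which gives the equivalence.
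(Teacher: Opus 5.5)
Your proposal is correct and follows the same route as the paper. You reduce the equivalence to showing that any map between objects of $\GP B$ that factors through $\add C$ already factors through $\add B$, and you get this from Proposition~\ref{Prop:CBappr}, which says that $\iota\colon C\to B$ is a $\CM B$-approximation and hence a $\GP B$-approximation; your extra bookkeeping on well-definedness, fullness and faithfulness only makes explicit what the paper leaves implicit.
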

\begin{proof} 
Note that if some projective $C$-module $P$ is  in $\GP B$, then 
it is projective in $\GP B$, namely, $P\in \add B$. 
To show that the two categories are equivalent, it suffices to show that for any $X, Y\in \GP B$, if 
$f\colon X\to Y$ factors 
through $\add C$, then it factors through $\add B$. 
Suppose that $f=gh$, where $h\colon X\to C'$, $g\colon C'\to Y$ and $C'\in \add C$.
By Proposition  \ref{Prop:CBappr}, the embedding $\iota\colon C \to B$ is a $\GP B$-approximation.
Hence $g$ factors through  $\add B$. So does $f$, as required. 
\end{proof}

Define
\begin{equation} \label{eq:catD}
\catD=\{M\in \CM C\colon \Ext^1(M, B)=0\}.
\end{equation}
Note that $C\in \catD$ and $\GP B\subseteq \catD$. Let $\catD'=\catD/\add(B\oplus C)$, and 
\begin{equation} \label{eq:catD2}
 \catD_2=\{N\in \catD \colon \Hom_{\catD'}(M, N)=\Hom_{\catD'}(N, M)=0, 
 \text{for all } M\in \GP B\}.
\end{equation}
The following lemma is an easy consequence of the definition of $\catD_2$.

\begin{lemma} We have 
$B\oplus C\in \catD_2$. Consequently, if $V$ is a maximal rigid module in $\catD_2$, then
$B\oplus C\in \add V$.
\end{lemma}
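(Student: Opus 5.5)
The first claim is that $B\oplus C\in\catD_2$. I will check the three defining conditions in turn.

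First, $B\oplus C\in\catD$. For $B$, note that $B$ is rigid as a $C$-module, so $\Ext^1(B,B)=0$. For $C$, it is projective in $\CM C$, so $\Ext^1(C,B)=0$. Since $\Ext^1$ is additive, $\Ext^1(B\oplus C,B)=0$.

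Next, the Hom-vanishing in $\catD'$. Here $\catD'=\catD/\add(B\oplus C)$ is the quotient by the ideal of maps factoring through $\add(B\oplus C)$. Every summand of $B\oplus C$ lies in $\add(B\oplus C)$, so its image in $\catD'$ is zero. Hence, for every $M\in\GP B$, both $\Hom_{\catD'}(M,B\oplus C)$ and $\Hom_{\catD'}(B\oplus C,M)$ vanish. Formally, any morphism between $M$ and $B\oplus C$, in either direction, factors through the identity of $B\oplus C$, which lies in $\add(B\oplus C)$. This proves $B\oplus C\in\catD_2$.

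For the second claim, let $V$ be maximal rigid in $\catD_2$. I will show that $V\oplus B\oplus C$ is rigid and lies in $\catD_2$; maximality of $V$ then forces $B\oplus C\in\add V$. Membership in $\catD_2$ follows because $\catD_2$ is closed under direct sums, its defining conditions being additive. Rigidity requires the following vanishings:
\begin{itemize}
\item $\Ext^1(B\oplus C,B\oplus C)=0$, because $B$ is rigid and $C$ is projective-injective in $\CM C$.
\item $\Ext^1(V,C)=\Ext^1(C,V)=0$, because $C$ is projective-injective, as $\CM C$ is Frobenius.
\item $\Ext^1(V,B)=0$, because $V\in\catD$.
\item $\Ext^1(B,V)=0$, by the 2-CY property of $\ul{\CM C}$, which gives $\Ext^1(B,V)\cong D\Ext^1(V,B)=0$.
\end{itemize}

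The only step needing care is this last symmetry of $\Ext^1$, which is where the stably 2-CY property of $\CM C$ is used. Everything else is immediate from the definitions.
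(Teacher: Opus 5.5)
Your proof is correct and follows the paper's approach; the paper gives no written proof, noting only that the lemma is an easy consequence of the definition of $\catD_2$. Your checks supply exactly the omitted details: $B\oplus C\in\catD$ by rigidity of $B$ and projectivity of $C$; the Hom-spaces vanish in $\catD'$ because $B\oplus C\in\add(B\oplus C)$; and $V\oplus B\oplus C$ is rigid using the stably $2$-CY symmetry $\Ext^1(B,V)\cong D\Ext^1(V,B)$.
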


\begin{proposition} \label{prop:GPBV}
For any $M\in \GP B$ and $N\in \catD_2$,  $\Ext^1(M, N)=0$. 
\end{proposition}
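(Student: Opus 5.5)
Take $M\in\GP B$ and $N\in\catD_2$; we must show $\Ext^1(M,N)=0$. The plan is to represent an extension class by a morphism from a syzygy of $M$ in $\GP B$, and then use the vanishing $\Hom_{\catD'}(-,N)=0$ on $\GP B$ to make that morphism factor through the relevant projective.

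Since $\GP B$ is Frobenius with projective generator $B$, there is a short exact sequence
\[
0\lra \Omega M\overset{u}{\lra} B'\lra M\lra 0
\]
with $B'\in\add B$ and $\Omega M\in\GP B$, by Lemma \ref{lem:GPBBapp}. Apply $\Hom(-,N)$ to get
\[
\Hom(B',N)\overset{u^*}{\lra}\Hom(\Omega M,N)\lra \Ext^1(M,N)\lra \Ext^1(B',N).
\]
The key reduction is to show that $u^*$ is surjective and that $\Ext^1(B',N)=0$; together these give $\Ext^1(M,N)=0$.

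For $\Ext^1(B',N)=0$, we use $N\in\catD$, so $\Ext^1(N,B)=0$. The $2$-CY property of $\CM C$ then gives $\Ext^1(B,N)\cong D\Ext^1(N,B)=0$. The stable category has finite-dimensional Hom spaces, so this duality applies.

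For surjectivity of $u^*$, take $g\colon \Omega M\to N$. Since $\Omega M\in\GP B$ and $N\in\catD_2$, we have $\Hom_{\catD'}(\Omega M,N)=0$. Hence $g$ factors through some object of $\add(B\oplus C)$: $g=ba$ with $a\colon\Omega M\to B''\oplus C''$. By Proposition \ref{Prop:CBappr}, any map from $\Omega M$ to $C''$ composed with $\iota\colon C\to B$ is part of an approximation. More directly, $\iota$ is a $\GP B$-approximation, and the argument of Proposition \ref{stabBC} lets us replace the factorisation through $C''$ by one through $\add B$. So $g=b'a'$ with $a'\colon\Omega M\to B'''\in\add B$.

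Since $B'''$ is injective in $\GP B$, i.e.\ $\Ext^1(M,B''')=0$, the map $a'$ extends along $u$: $a'=cu$ for some $c\colon B'\to B'''$. Therefore $g=(b'c)u$ lies in the image of $u^*$.

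The main obstacle is the step replacing the factorisation through $\add C$ by one through $\add B$. In Proposition \ref{stabBC} the target lies in $\GP B$, whereas here $N$ is only in $\catD$, so that argument does not transfer directly. What I would try first is to use the other hypothesis, that maps $C''\to N$ need not factor while maps $\Omega M\to C''$ do. Since $\Omega M\in\CM B$, I would factor $\Omega M\to C''$ through a $\CM B$-approximation of $C''$; then, via Corollary \ref{cor:cmbff}, pass through the $\GP B$-approximation $Z\to Y$, whose kernel is in $\add B$. If that fails, the fallback is to argue instead in the stable category, where $\ul{\Hom}(\Omega M,N)\cong\Ext^1(M,N)$. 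The summand $C$ vanishes there, leaving only factorisations through $\ul{\add B}$. Those factorisations give classes that vanish, because $\Ext^1(M,B)=0$ and $\Ext^1(B,N)=0$.
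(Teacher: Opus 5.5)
Your strategy is the same as the paper's. You take the sequence $0\to\Omega M\to B'\to M\to 0$ with inclusion $u$ and $\Omega M\in\GP B$. You use $N\in\catD_2$ to factor $g\colon\Omega M\to N$ through some object of $\add(B\oplus C)$, and then extend along $u$. Your explicit check that $\Ext^1(B',N)=0$, from $N\in\catD$ and 2-CY symmetry, is needed; the paper leaves it implicit.

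However, the step you flag as the main obstacle is mishandled in your main line, and it is not actually an obstacle. The argument of Proposition \ref{stabBC} does not transfer, as you suspected. It needs maps $C''\to N$ to factor through $\iota$, and that uses $N\in\GP B$. For $N\in\catD_2$ this fails: take $N=C\in\catD_2$ and $\mathrm{id}_C$, which factors through $\iota\colon C\to B$ only if $B=C$. Your first fallback via Corollary \ref{cor:cmbff} does not help either, since it only yields a factorisation through an object of $\GP B$.

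The missing observation is that $C''$ is projective-injective in the Frobenius category $\CM C$, so $\Ext^1(M,C'')=0$. Hence the $C''$-component of $a$ extends along $u$ for the same reason the $\add B$-component does, using $\Ext^1(M,B)=0$. There is no need to trade $C''$ for $\add B$. The paper simply notes $\Ext^1(M,B\oplus C)=0$ and extends the whole map $a$ along $u$.

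Your stable-category fallback is this observation in disguise: $C''$ vanishes in $\ul{\CM C}$ precisely because it is projective. It does go through, with one correction. The isomorphism $\ul\Hom(\Omega M,N)\cong\Ext^1(M,N)$ holds for the syzygy with respect to $\add C$, not for the $\add B$-syzygy you are using. For the latter you only get a surjection $\ul\Hom(\Omega M,N)\to\Ext^1(M,N)$, using $\Ext^1(B',N)=0$, and that is all the argument needs.
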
 
\begin{proof}
Let $r\colon B'\to M$ be an $\add B$-approximation. We have the following short exact sequence
\[\xymatrix{
0\ar[r] &\Omega_M \ar[r]^l & B' \ar[r]^{r}& M \ar[r] & 0.
}\]
Then $\Omega_M\in \GP B$ (see Lemma \ref{lem:GPBBapp}).
Let $f\colon \Omega_M\to N$.
By \eqref{eq:catD2}, $f$ factors through some $X\in \add (B\oplus C)$. We have $f=hg$ with 
\[\xymatrix{\Omega_M\ar[r]^{~~~~~~g} & X \ar[r]^h & N.}\]
As $\Ext^1(M, B\oplus C)=0$, we have $\Ext^1(M, X)=0$. Hence  $g$ factors through $l$ and so does $f$. 
Consequently, $\Ext^1(M, N)=0.$
\end{proof}

\begin{lemma}\label{lem:elabtem}
Using the notation from Proposition \ref{prop:tem}. Let 
$\catD' = \ul{\GP B}$, then 
$$\catP = \ul{\add B} \text{ and } \catD'' = \ul{\catD_2}.$$
\end{lemma}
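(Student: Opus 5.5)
The plan is to identify the three pieces of Proposition \ref{prop:tem} one after the other. First I check that $\catD'=\ul{\GP B}$ satisfies the hypotheses there. Theorem \ref{thm:FF} says $\GP B$ is extension closed and functorially finite in $\CM C$. Applying the quotient functor to approximations gives approximations in $\ul{\CM C}$; this is the easy direction of Lemma \ref{lem:funfst} and does not need $C\in\GP B$. So $\ul{\GP B}$ is functorially finite. Extension closure passes to the stable category because $\Ext^1$ is unchanged under the quotient and triangles in $\ul{\CM C}$ come from short exact sequences in $\CM C$.

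Next I compute $\catP=\catD'\cap{}^{\perp}\catD'[1]$. In $\ul{\CM C}$ we have $\ul\Hom(X,Y[1])\cong\Ext^1(X,Y)$. Hence an object lies in $\catP$ exactly when it is (the image of) some $X\in\GP B$ with $\Ext^1(M,X)=0$ for all $M\in\GP B$, i.e.\ $X$ is Ext-injective in $\GP B$. By \cite[Thm 2.14]{JKS3}, $\GP B$ is Frobenius with projective-injectives $\add B$. So $X\in\add B$, and conversely every object of $\add B$ has this property. Thus $\catP=\ul{\add B}$.

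It follows that $\catD={}^{\perp}\catP[1]$ consists of the images of those $M\in\CM C$ with $\Ext^1(M,B)=0$, so it is $\ul{\catD}$ for the $\catD$ of \eqref{eq:catD}. I will also use the identification
\[
\catD/\catP=\ul{\catD}/\ul{\add B}\simeq \catD/\add(B\oplus C),
\]
which is the category written $\catD'$ in \eqref{eq:catD2}. Here I also use that the $\Hom$-vanishing in \eqref{eq:catD2} is unaffected by passing between these two descriptions.

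The main step, and the one I expect to be the real obstacle, is pinning down $\catD''$, since Proposition \ref{prop:tem} only asserts that some such $\catD''$ exists. I will argue that the decomposition $\catD/\catP\cong\catD'/\catP\times\catD''/\catP$ forces $\catD''$ to be $\ul{\catD_2}$.

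For one inclusion, take $N\in\catD''$. By part (1) of Proposition \ref{prop:tem}, every morphism between $N$ and any $M\in\ul{\GP B}$, in either direction, factors through $\catP$. So these morphisms vanish in $\catD/\add(B\oplus C)$, which means $N\in\catD_2$.

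For the other inclusion, take $N\in\catD_2$ and decompose it in the product category as $N\cong N'\oplus N''$ with $N'\in\catD'/\catP$ and $N''\in\catD''/\catP$. The identity of $N'$ factors through $N$. Since $N\in\catD_2$, every map from $N'$ into $N$ vanishes in $\catD/\catP$, so $N'\cong0$ there. Using Krull--Schmidt, this says the indecomposable summands of $N$ lie in $\catD''$ or in $\catP$. Since $\catP\subseteq\catD''$ and $\catD''$ is closed under summands (it is additive and functorially finite), we get $N\in\catD''$. Hence $\catD''=\ul{\catD_2}$.

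The care needed in this last step is bookkeeping: checking that objects of $\catP$ and of $\ul{\add C}$ (which are zero in $\ul{\CM C}$) are handled consistently, and that $B\oplus C\in\catD_2$ (the lemma just before) gives $\catP\subseteq\ul{\catD_2}$.
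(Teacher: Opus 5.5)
Your proposal is correct and follows essentially the same route as the paper. In both, $\catP$ is identified as the Ext-projective objects of the Frobenius category $\GP B$, namely $\ul{\add B}$, and $\catD''$ is read off from the product decomposition in Proposition \ref{prop:tem} (1) as the objects of $\ul{\catD}$ all of whose maps to and from $\ul{\GP B}$ factor through $\ul{\add B}$. Two small points on the computation of $\catP$:
\begin{itemize}
\item The paper makes the passage from $\Ext^1_C$ to $\Ext^1_B$ explicit via \eqref{eq:extCB}.
\item ${}^{\perp}\catD'[1]$ literally imposes $\Ext^1(X,\GP B)=0$, not the Ext-injectivity you wrote; this is harmless by the 2-CY symmetry.
\end{itemize}
Your additional checks of the hypotheses of Proposition \ref{prop:tem}, and your proof of both inclusions for $\catD''$ via Krull--Schmidt, are a faithful expansion of what the paper's one-line argument leaves implicit.
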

\begin{proof}
We have 
\begin{eqnarray*}
\ul{\catD'}\cap {}^{\perp}\ul{\catD'}[1]&=&\ul{\{{X}\colon X\in \GP B \text{ and }  
\Ext^1_C(X, \catD')=0\}}\\
&=&\ul{\{{X}\colon X\in \GP B \text{ and }  \Ext^1_B(X, \catD')=0\} } \\
&=&\ul{\add B},
\end{eqnarray*}
where the second equality follows from \eqref{eq:extCB}. 

By the decomposition in Proposition \ref{prop:tem} (1), $\catD''$ consist
of those $M\in \ul{\CM C}$ where all maps to and from 
$\catD' = \ul{\GP B}$ factor through $\ul{\add B}$. Therefore 
$\catD'' = \ul{\catD_2}$.
\end{proof}

\begin{definition}\label{def:NC} \cite{JKS1, OPS} two subsets $I, J$ of $[n]$ are said to be \emph{non-crossing}
if there do not exist pairs of elements $\{a, c\}\in I\backslash J$ and 
$\{b, d\}\in J\backslash I$ such that $a, b, c, d$ are cyclically ordered. 
\end{definition}

Let $\nlace$ be the Grassmann necklace associated to $B$. 
Let $\maxNC$ be any  maximal collection of non-crossing sets in the positroid $\positr_{\nlace}$ 
containing $\nlace$ and 
$\cmax$  a collection of sets disjoint to $\maxNC$ such 
that $\maxNC\cup \cmax$ is a maximal collection of non-crossing $k$-sets, 
which implies that $M_{\maxNC\cup \maxNC'}$ is a cluster tilting object in $\CM C$ 
(see \S \ref{sec:catgr} or \cite{JKS1}). 

\begin{theorem} \label{main1} 
The category $\GP B$ is a cluster subcategory of $\CM C$. More precisely, we have
the following properties. 
\begin{enumerate}
\item\label{itmclpgb1} A module in $ \GP B$  is a cluster tilting object if and only if it is maximal rigid.
In particular, $M_{\maxNC}$  is a cluster tilting object in $\GP B$. 

\item\label{itmclpgb12} There is $V\in \catD_2$ (e.g. $V = M_{\cmax}$) such that $T=U\oplus V$ is a cluster tilting object in $\CM C$ for any cluster tilting object $U$ in $\GP B$. 
\end{enumerate}

Assuming $T$ is as in (2):
\begin{enumerate} 
\item[(3)] The mutation sequences of $U$ at $U_i$ in $\GP B$ coincide with 
the mutation sequences of $T$ at $U_i$ in $\CM C$.

\item[(4)] $Q_U^\circ$ is a subquiver of $Q_T$ such that all arrows in $Q_T$
incident to a mutable vertex in $Q_U$ belong to $Q_U^\circ$.
\item[(5)] $Q_U^\circ$ has no loops and no two-cycles.  
Therefore, $Q_{\mu_iU}^\circ=\mu_iQ_U^\circ$. 
\end{enumerate}
\end{theorem}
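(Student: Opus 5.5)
The plan is to pass to the stable category $\ul{\CM C}$ and apply Proposition \ref{prop:tem} with $\catD'=\ul{\GP B}$. This is legitimate because $\GP B$ is functorially finite and extension closed by Theorem \ref{thm:FF}, and these properties pass to $\ul{\GP B}$ by Lemma \ref{lem:funfst}. Lemma \ref{lem:elabtem} then identifies $\catP=\ul{\add B}$ and $\catD''=\ul{\catD_2}$. The ambient category in that proposition is $\ul\catD={}^\perp\catP[1]$, where $\catD$ is as in \eqref{eq:catD}.

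Part (2) of Proposition \ref{prop:tem} yields a bijection between pairs $(U,V)$ of cluster tilting objects in $\GP B$ and in $\catD_2$ on the one hand, and cluster tilting objects of $\catD$ containing $B$ on the other. Every cluster tilting object $T$ of $\CM C$ that contains $B$ lies in $\catD$. Conversely, such a $T$ is cluster tilting in $\CM C$ as soon as it is cluster tilting in $\catD$: if $\Ext^1(T,X)=0$, then in particular $\Ext^1(B,X)=0$, so $X\in\catD$ by the 2-CY symmetry, and hence $X\in\add T$. Now take $T=M_{\maxNC\cup\cmax}$, which is cluster tilting in $\CM C$ by \cite{JKS1}. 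It contains $B$ because $\nlace\subseteq\maxNC$ and \eqref{eq:BN} holds.

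\begin{enumerate}
\item[(a)] I show $M_\maxNC\in\GP B$ and $M_{\cmax}\in\catD_2$. First, $\maxNC\subseteq\positr$, so $M_\maxNC\in\CM B$ by Corollary \ref{cor:rk1oid}. It is rigid with $B$, since $\maxNC$ is non-crossing and contains $\nlace$, so $M_\maxNC\in\GP B$.
\item[(b)] For $M_J$ with $J\in\cmax$, write $M_J=U'\oplus V'$ using the decomposition of Proposition \ref{prop:tem}(1). Then $U'\in\GP B$ is rigid with $M_\maxNC$.
\item[(c)] I claim $M_\maxNC$ is maximal rigid in $\GP B$. Granting this, $U'\in\add M_\maxNC$, so the indecomposable $M_J$ must lie in $\catD_2$, because $J\notin\maxNC$.
\item[(d)] The claim needs an argument that any rigid $X\in\GP B$ with $\Ext^1(X,M_\maxNC)=0$ lies in $\add M_\maxNC$. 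Here $M_\maxNC\oplus M_{\cmax}$ is cluster tilting in $\CM C$, and Proposition \ref{prop:GPBV} gives $\Ext^1(X,M_{\cmax})=0$ once $M_{\cmax}\in\catD_2$. The two facts depend on each other, so I break the circularity by applying the bijection in Proposition \ref{prop:tem}(2) directly to $T$: $T=U\oplus V$ with $U\in\GP B$ and $V\in\catD_2$ both cluster tilting. Each indecomposable rank one summand of $T$ lies in exactly one of the two parts. Summands $M_I$ with $I\in\maxNC$ lie in $\GP B$ by (a). Summands with $J\in\cmax$ cannot lie in $\GP B$, because by Corollary \ref{cor:rk1oid} membership in $\GP B$ forces $J\in\positr$, non-crossing with $\maxNC$. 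That would contradict maximality of $\maxNC$ in $\positr$ given by \cite{OPS}.
\end{enumerate}

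This gives $U=M_\maxNC$ and $V=M_{\cmax}$. Part (1) of the theorem then follows: Theorem \ref{BIRS2} applies to $\GP B$, since it is Frobenius 2-CY with a cluster tilting object. Part (2) follows from the bijection, since $V$ pairs with every cluster tilting $U$.

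For (3), observe that the mutation sequences \eqref{eq:mutsequences} of $U$ at $U_i$ in $\GP B$ use $\add(U/U_i)$-approximations. I would show these are also $\add(T/U_i)$-approximations, so that they coincide with the mutation sequences in $\CM C$. This requires any map from $V$ to $U_i$, or from $U_i$ to $V$, to factor through $\add B\subseteq\add(U/U_i)$ when $U_i$ is non-projective. By Proposition \ref{prop:tem}(1), such maps factor through $\catP$ modulo $\add C$, and Proposition \ref{stabBC} together with Proposition \ref{Prop:CBappr} converts this into factoring through $\add B$.

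For (4), the same factorisation shows that there are no irreducible maps in $\add T$ between a mutable $U_i$ and a summand of $V$. It also shows that irreducible maps within $\add U$ remain irreducible in $\add T$. So $Q_U^\circ$ is a full subquiver at its mutable vertices. Part (5) then follows from (4), since $Q_T$ has no loops or 2-cycles at mutable vertices, together with the BIRS mutation rule in $\CM C$.

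I expect the main obstacle to be the factorisation step in (3)–(4), namely handling maps that factor through $\add C$ rather than $\add B$, including irreducibility at frozen vertices of $U$.
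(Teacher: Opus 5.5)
Your treatment of (1) and (2) is essentially the paper's. Both arguments pass to $\ul{\CM C}$, apply Proposition~\ref{prop:tem} together with Lemma~\ref{lem:elabtem}, check that $M_\maxNC\in\GP B$, and use maximality of $\maxNC$ in $\positr$ to exclude every $M_J$ with $J\in\cmax$ from $\GP B$. You apply the bijection of Proposition~\ref{prop:tem}(2) directly to $M_{\maxNC\cup\cmax}$ to sort its summands. This cleanly removes the circularity you spotted in (b)--(c). The paper instead places each such $M_J$ in $\catD_2$ via the product decomposition, and only then invokes Proposition~\ref{prop:GPBV}.

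For (3)--(4) you run the comparison in the opposite direction to the paper.
\begin{itemize}
\item The paper starts from the mutation sequences of $T$ at $U_i$ in $\CM C$. Rigidity gives $U_i^*\in\catD$. Non-splitness together with Proposition~\ref{prop:GPBV} rules out $U_i^*\in\catD_2$, so $U_i^*\in\GP B$. Extension closure then puts both middle terms in $\GP B\cap\add(T/U_i)=\add(U/U_i)$, and (4) is read off from these middle terms.
\item You instead show that the $\GP B$-approximations are already $\add(T/U_i)$-approximations, via a Hom-factorisation lemma. This makes (4) a transparent radical computation, but the lemma is needed in both directions, and your justification covers only one.
\end{itemize}

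For maps $V_j\to U_i$, the part through $\add C$ is a map $C'\to U_i$, which extends along $C'\to B'$ by Proposition~\ref{Prop:CBappr}, as you say.

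For a composite $U_i\to C'\to V_j$, neither cited result applies. Proposition~\ref{stabBC} requires the target to lie in $\GP B$. Proposition~\ref{Prop:CBappr} does not help either, since $C'\to V_j$ need not extend to $B'$ when $V_j$ is not a $B$-module. This is exactly the direction the left approximation $U_i\to F$ needs.

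The missing step is the dual argument:
\begin{itemize}
\item Take the sequence $0\to U_i\to B''\to\Sigma U_i\to 0$ in $\GP B$.
\item Since $\Sigma U_i\in\CM C$ and $C'$ is injective in $\CM C$, the map $\Hom(B'',C')\to\Hom(U_i,C')$ is surjective.
\item Hence $U_i\to C'$ factors through $B''\in\add B$.
\end{itemize}
With this added, your argument for (3)--(5) goes through. Alternatively, you can bypass the lemma entirely by using the paper's Ext argument.

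One small caveat: your claim that irreducible maps in $\add U$ stay irreducible in $\add T$ is only established by your radical argument for maps with a mutable endpoint. That is all (4) requires.
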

\begin{proof}
Note that (1), (3) and (5) imply that $\GP B$ is a cluster category, which together with 
(2) implies that it is a cluster subcategory of $\CM C$. Below we  prove  
(1)-(5).

We first work with the stable category $\ul{\CM C}$. Recall the categories $\catD$, 
$\catD_2$ from \eqref{eq:catD} and \eqref{eq:catD2}. Let $\catD_1=\GP B$.
By Proposition \ref{prop:tem} (and Lemma \ref{lem:elabtem}), 
$\ul{\catD_2}$ is a functorially finite and extension closed subcategory of 
$\ul\catD$ containing $\catP=\ul{\add B}$, and 
\begin{equation}\label{eq6}
\ul\catD/\ul{\add B} \cong (\ul{\catD_1}/\ul{\add B}) \times (\ul{\catD_2}/\ul{\add B}).
\end{equation} 

By assumption, $\maxNC$ contains the necklace $\nlace$. So $B\in \add M_\maxNC$.  
Furthermore, 
$M_{\maxNC\cup \cmax}=M_\maxNC\oplus M_{\cmax  }$ is 
a cluster tilting object in $\CM C$. So
\[
\Ext^1(M_\maxNC\oplus M_{\cmax}, B)=0,
\]
and thus ${M_\maxNC\oplus M_{\cmax}}\in \catD$. 
By Proposition \ref{prop:GPBV}, any $M$ in $\GP B$ with $\Ext^1(M, M_\maxNC)=0$ has no
non-trivial extensions with $M_\maxNC\oplus M_{\cmax}$, and so $M_\maxNC$ is  a cluster
tilting object in $\GP B$. The rest of (1) follows from Theorem \ref{BIRS2}.

By construction, $M_{\cmax}$ has no direct summands in $\GP B$. Therefore,
$M_{\cmax} \in \catD_2$ and $M_{\maxNC'}\oplus B$ is a cluster tilting object in $\catD_2$, by 
Proposition \ref{prop:GPBV}, since any $M$ in $\catD_2$ without extensions with 
$M_{\cmax}$ also has no extensions with $M_\maxNC\oplus M_{\cmax}$.

Let $V$ be a cluster tilting object in $\GP B$. Any indecomposable $M\in \CM C$ 
without extensions with $V\oplus M_{\cmax}$ is in $\GP B$ or $\catD_2$ by \eqref{eq6}, 
and therefore $M\in \add V$ or $M\in \add M_{\cmax}$, which shows that $V\oplus M_{\cmax}$
is a cluster tilting object. This proves (2).

{\it For the remainder of the proof, we let $T=U\oplus V$ be a cluster tilting object 
in $\CM C$ as described in  (2)}.

Let $U_i\in \add U\backslash \add B$. Then $U_i\not\in \add C$ and so is a 
mutable summand of $T$.
Consider the following mutation sequence of $T$ at  $U_i$ in $\CM C$,
\begin{equation}\label{eq:mut}
\xymatrix{
0\ar[r] & U_i^*\ar[r]& E_i\ar[r]^f &U_i\ar[r] &0, 
}
\end{equation}
where  $f$ is a minimal $\add T'$-approximation with $T'= U'\oplus V$
and $U'=\oplus_{j\not= i} U_j$.
In particular, \eqref{eq:mut} is non-split and $U_i^*\oplus U'$ is rigid, 
which implies that $\Ext^1(U_i^*, B)=0$ and thus $U_i^* \in \catD$. Furthermore, by
Proposition \ref{prop:GPBV}, $U^*_i\in \GP B$ and so $E\in \GP B$. 
Hence $f$ is an $\add U'$-approximation. Similarly, the other mutation sequence of $T$ at $U_i$ in 
$\CM C$
\begin{equation}\label{eq:mut2}
\xymatrix{
0\ar[r] & U_i\ar[r]^g& F_i\ar[r] &U_i^*\ar[r] &0, 
}
\end{equation}
is also an exact sequence in $\GP B$ and $g$ is a minimal $\add U'$-approximation.
So mutation for a non-projective indecomposable summand $U_i$ of $U$ exists in $\GP B$ and coincides 
with the mutation of $U_i$ as a summand of  $T$ in $\CM C$. This proves (3).

The modules $E_i$ in  \eqref{eq:mut}  and $F_i$ in \eqref{eq:mut2} determine 
the arrows incident to the vertices corresponding to $U_i$ in both $Q^\circ_U$ and  $Q_T^\circ$. 
So (4) follows.
 
Since $E_i$ and $F_i$ have no common summands and have no summands equal to $U_i$, 
because $Q_T$ has no loops and no 2-cycles (see \cite[Prop 4.2]{JKS2}), it follows
that $Q_U^\circ$ has no loops and no 2-cycles. 

Finally, $Q_{\mu_iT}^\circ=\mu_iQ_T^\circ$ (see \cite[Thm. 6.1]{JKS2}), together with
the properties of $Q_U$ just proved, implies that $Q_{\mu_iU}^\circ=\mu_iQ_U^\circ$.
Here, by slight abuse of notation, $\mu_iT$ denotes the mutation of $T$ at $U_i$.
This completes the proof.
\end{proof}

\begin{remark} 
The fact that $\GP B$ is a cluster subcategory of $\CM C$ that has no loops or 2-cycles 
also follows from \cite[Prop II.2.8]{BIRS}, Theorem \ref{thm:FF} and the fact that 
$\CM C$ has no loops and no 2-cycles (see \cite[Prop 4.2]{JKS2}). Above, we give a 
direct account to deduce the required finer relation (Theorem \ref{main1} (2) 
and (4)) between the cluster tilting objects in $\GP B$ and $\CM C$.
\end{remark}

We remark that $\catD_2$ is also a Frobenius 2-CY cluster category, however we 
do not pursue this further in this paper. We do however require the following
corollary.

\begin{corollary} \label{cor:mutinD2}
Let $T=U\oplus V$ be as in Theorem \ref{main1} (2) and let $T_i$ be a mutable 
summand which belongs $\add V$. If $E$ and $F$ denotes the middle 
terms in the mutation sequences  \eqref{eq:mutsequences} of $T_i$, 
then $E,F\in \add (B\oplus V)$.
\end{corollary}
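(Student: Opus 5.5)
Mirroring the proof of Theorem \ref{main1} (3), we show that the mutation sequences at $T_i$ take place inside $\catD_2$. Write $T=U\oplus V$ with $T_i\in\add V$ mutable, so $T_i\notin\add C$. Let
\[
0\lra T_i^*\lra E\xrightarrow{f} T_i\lra 0
\]
be the mutation sequence, with $f$ a minimal right $\add(T/T_i)$-approximation; treat the sequence $0\to T_i\xrightarrow{g}F\to T_i^*\to 0$ dually. Since $B\in\add U$, the module $T_i^*\oplus(T/T_i)$ is rigid, so $\Ext^1(T_i^*,B)=0$ and hence $T_i^*\in\catD$.

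\textbf{Step 1: $T_i^*\in\catD_2$.} By \eqref{eq6}, every indecomposable object of $\ul\catD/\ul{\add B}$ lies in exactly one of the two factors. So an indecomposable $X\in\catD$ lies, up to $\add(B\oplus C)$, either in $\GP B$ or in $\catD_2$. Suppose $T_i^*$ were in $\GP B$ but not in $\add B$. Then $T_i^*$ would be rigid in $\GP B$ and have no extensions with $U$, since $\Ext^1(T_i^*,U)=0$. Because $U$ is cluster tilting in $\GP B$, this forces $T_i^*\in\add U$. But $T_i^*$ is not a summand of $T/T_i$ (mutation produces a new summand), a contradiction. Hence $T_i^*\in\catD_2$.

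\textbf{Step 2: $E\in\add(B\oplus V)$.} We have $E\in\add(T/T_i)=\add(U'\oplus V')$. Let $E_U$ be a summand of $E$ lying in $\add U\setminus\add B$, and consider the component $E_U\to T_i$ of $f$. By the definition of $\catD_2$ (maps to and from $\GP B$ vanish in $\catD'$), every map $E_U\to T_i$ factors through $\add(B\oplus C)$. Moreover, $\iota\colon C\to B$ is a $\GP B$-approximation by Proposition~\ref{Prop:CBappr}; hence, as in the proof of Proposition~\ref{stabBC}, any map from $E_U\in\GP B$ that factors through $\add C$ already factors through $\add B$. So every such map factors through $\add B\subseteq\add(T/T_i)$.

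It follows that replacing $E_U$ by a map from $\add B$ still yields an approximation. Minimality of $f$ then forces $E_U=0$: a right-minimal approximation has no summand whose component factors through the other summands' contributions. We make this precise through the Gabriel quiver, where irreducible maps in $\add T$ from $E_U$ to $T_i$ would be non-irreducible because they factor through $\add B$ (the quiver $Q_T$ records irreducible maps). Thus $E\in\add(B\oplus V)$. The argument for $F$ is the same, using that maps $T_i\to U_j$ factor through $\add B$.

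\textbf{Main obstacle.} The delicate point is Step 2: turning "factors through $\add B$" into "does not occur in the minimal approximation." The cleanest route is to note that the summands of the middle term $E$ are exactly the targets of the arrows into $T_i$ in $Q_T$, i.e.\ of irreducible maps in $\add T$. A map $E_U\to T_i$ that factors through $B\in\add T$ via non-isomorphisms lies in $\mathrm{rad}^2$, because $E_U\notin\add B$ and $T_i\notin\add B$. Step 1 is not actually needed for this Gabriel-quiver argument: only $T_i\in\catD_2$ and $U_j\in\GP B$ are used.
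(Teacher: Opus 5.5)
Your argument is correct in substance, but it takes a different route from the paper. The paper gets the corollary in one line from Theorem~\ref{main1}~(4). Every arrow of $Q_T$ incident to a mutable vertex of $Q_U$ (a summand in $\add U\setminus\add B$) lies in $Q_U^\circ$. So no arrow joins $T_i$ to such a vertex, and since $E,F$ are read off from the arrows at $T_i$, they lie in $\add(B\oplus V)$. Fact (4) was itself proved from the $U$-side: the mutation sequences at $U_j$ stay inside $\GP B$, via Proposition~\ref{prop:GPBV}.

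You prove the same quiver statement from the $\catD_2$-side, using only the definition of $\catD_2$. Maps between $U_j\in\add U\setminus\add B$ and $T_i\in\add V$ factor through $\add(B\oplus C)$, so they lie in $\mathrm{rad}^2$ of $\add T$ and cannot be irreducible. This is self-contained and independent of parts (3)--(4) of the theorem; the paper's route is simply shorter once (4) is available. Your Step~1 is correct but, as you note, unused.

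There is one flawed step. You reduce factorisation through $\add(B\oplus C)$ to factorisation through $\add B$ by citing the argument of Proposition~\ref{stabBC}. That argument needs the \emph{target} to lie in $\GP B$, so that $C'\to Y$ extends along $\iota$. It therefore handles maps $T_i\to U_j$ (the $F$ side), but not maps $E_U\to T_i$, whose target $T_i\in\catD_2$ need not be a $B$-module.

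The reduction is unnecessary, so the proof survives:
\begin{itemize}
\item $C\in\add T$, because a cluster tilting object in $\CM C$ contains all projective-injectives.
\item $E_U\notin\add(B\oplus C)$, because a projective $C$-module lying in $\GP B$ is in $\add B$.
\item $T_i\notin\add(B\oplus C)$, because $T_i$ is mutable.
\end{itemize}
Hence factoring through $\add(B\oplus C)\subseteq\add T$ already places the map in $\mathrm{rad}^2_{\add T}$.

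Your sentence ``minimality forces $E_U=0$'' is not yet an argument, since factoring through $\add B$ is not the same as factoring through the other components of $f$. You can either rely on the correspondence between middle terms and arrows at $T_i$, which uses the absence of loops \cite[Prop 4.2]{JKS2}, or close the gap directly. Write $f=(f_1,f_2)$ with $f_2=ab$, where $b\colon E_U\to X$, $X\in\add(B\oplus C)\subseteq\add(T/T_i)$, and $a=fh$ for some $h=(h_1,h_2)^T$. Then $f_2(1-h_2b)=f_1h_1b$. Since $h_2b$ is radical, $1-h_2b$ is invertible, so $f_2$ factors through $f_1$. This contradicts right minimality unless $E_U=0$.
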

\begin{proof}
This follows from Theorem \ref{main1} (4). 
\end{proof}

\subsection{Approximations in $\GP B$}

Let $U$ and $T=U\oplus V$ be cluster tilting objects in $\GP B$ and $\CM C$, respectively. We investigate the relations between $\add U$- and 
$\add T$-presentations, which will be used in the next section.  

Let $M, X\in \CM C$ (or in $\sub Q_k$). A short exact sequence  
\begin{equation}\label{eqaddpres1}
\xymatrix{0\ar[r] & X''\ar[r] &  X'\ar[r]^f &  M\ar[r] &0}
\end{equation}
is an {\it $\add X$-presentation} of $M$, if $X'', X'\in \add X$, and $f$ is an $\add X$-approximation.
Similarly, a short exact sequence 
\begin{equation}\label{eqaddpres}
\xymatrix{ 0\ar[r] & M\ar[r]^g &  X'\ar[r] &  X''\ar[r] & 0}
\end{equation}
is an {\it $\add X$-copresentation} of $M$, if $X'', X'\in \add X$, and $f$ is an 
$\add X$-approximation. In general, the map $f$ in an $\add X$-presentation is not
necessarily surjective. However, in our context, it will be. The map $g$ in 
an $\add X$-copresentation is however always injective for modules in $\CM C$. 

\begin{proposition}\label{mutgpBC}   
Let $M\in \CM B$ and $N\in \catD_2$.
\begin{enumerate}

\item  $M $ has an $\add U$-presentation.

\item $M$ has an  $\add U$-copresentation 
if and only if $M\in \GP B$.   

\item $N$ has an  $\add (B\oplus V)$-presentation
and an $\add (B\oplus V)$-copresentation.
\end{enumerate}
Moreover, these $\add U$- and $\add (B\oplus V)$-(co)presentations are also $\add T$-(co)presentations.
\end{proposition}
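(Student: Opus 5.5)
We use the short exact sequence of Corollary~\ref{cor:cmbff}~(2) to reduce everything about $M\in\CM B$ to $\GP B$, where $U$ is cluster tilting. Wherever we want a kernel or cokernel of an approximation to lie in $\add U$ (resp.\ $\add(B\oplus V)$), we use $\Ext^1$-vanishing and cluster tilting. Wherever we want an approximation by $\add U$ or $\add(B\oplus V)$ to be an $\add T$-approximation, we use the perpendicularity encoded in the definition of $\catD_2$.

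\emph{Step 1: part (1).} Let $M\in\CM B$ and take the sequence $0\to B''\to Z\xrightarrow{c} M\to 0$ of Corollary~\ref{cor:cmbff}~(2), so $Z\in\GP B$ and $c$ is a $\GP B$-approximation. Since $U$ is cluster tilting in the Frobenius 2-CY category $\GP B$ and $B\in\add U$, a minimal right $\add U$-approximation $u\colon U'\to Z$ is surjective. Its kernel $U''$ satisfies $\Ext^1(U,U'')=0$, from the long exact sequence, and $U''\in\GP B$ (extension closedness and $\Ext^1(U'',B)=0$ via the 2-CY property). Hence $U''\in\add U$. The composite $cu\colon U'\to M$ is a surjective $\add U$-approximation. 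Its kernel $K$ fits into $0\to U''\to K\to B''\to 0$, which splits because $B''$ is projective in $\GP B$ and $U''\in\GP B$. Thus $K\cong U''\oplus B''\in\add U$.

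\emph{Step 2: part (2).} If $M\in\GP B$, a left $\add U$-approximation $M\to U'$ is injective, as noted in Section~\ref{Sec:Func}. Since it contains an injective $\add B$-envelope, its cokernel lies in $\GP B$. The 2-CY property in $\GP B$ then gives that the cokernel is $\Ext^1$-orthogonal to $U$, hence lies in $\add U$. Conversely, given $0\to M\to U'\to U''\to 0$ with $U',U''\in\add U\subseteq\GP B$, apply $\Hom(-,B)$. This gives
\[
\Ext^1(U',B)\to\Ext^1(M,B)\to\Ext^2(U'',B),
\]
and both outer terms vanish because Gorenstein projectives have $\Ext^{\geq1}(-,B)=0$ \cite[Thm 2.14]{JKS3}. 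So $M\in\GP B$.

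\emph{Step 3: part (3).} By the proof of Theorem~\ref{main1}, $B\oplus V$ is cluster tilting in $\catD_2$, and $B\oplus C\in\add(B\oplus V)$ by the lemma preceding Proposition~\ref{prop:GPBV}. Moreover $\ul{\catD_2}$ is functorially finite and extension closed by Proposition~\ref{prop:tem} and Lemma~\ref{lem:elabtem}, so Lemma~\ref{lem:funfst} yields approximations. A right $\add(B\oplus V)$-approximation of $N\in\catD_2$ is surjective because $C\in\add(B\oplus V)$. We then run the argument of Step~1 inside $\catD_2$, working in the stable category, where $\ul{\catD_2}$ is a 2-CY extension-closed piece by \eqref{eq6}. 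This shows that the kernel lies in $\catD_2$ and is $\Ext^1$-orthogonal to $B\oplus V$, hence lies in $\add(B\oplus V)$. The copresentation is obtained dually.

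\emph{Step 4: the ``moreover'' clause.} This is where the main care is needed.

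For presentations of $M\in\CM B$, we must show that every map $g\colon V_j\to M$ with $V_j\in\add V$ factors through $\add U$. Since $V\in\catD$ gives $\Ext^1(V,B'')=0$, $g$ lifts through $c$ to a map $V_j\to Z$. By \eqref{eq:catD2}, this lift factors through $\add(B\oplus C)$. Any map $C\to Z$ factors through $\iota\colon C\to B$ by Proposition~\ref{Prop:CBappr}, so $g$ factors through $\add B\subseteq\add U$.

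For copresentations of $M\in\GP B$, a map $M\to V_j$ factors through $\add(B\oplus C)$, and we need the $C$-part to factor through $\add B$. We handle this by taking the injective $\add B$-envelope $0\to M\to B'\to W\to 0$ with $W\in\GP B\subseteq\catD$. Since $C$ is projective-injective in $\CM C$, we have $\Ext^1(W,C)=0$, so every map $M\to C$ extends over $B'$.

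For $N\in\catD_2$ the argument is symmetric: maps between $N$ and $U$ factor through $\add(B\oplus C)\subseteq\add(B\oplus V)$ by \eqref{eq:catD2}. Hence the $\add(B\oplus V)$-(co)presentations of $N$ are $\add T$-(co)presentations.
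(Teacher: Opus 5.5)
There is a genuine but fixable gap in Step~3, at the sentence ``this shows that the kernel lies in $\catD_2$''. Steps~1, 2 and~4 are sound. Step~4 is more work than needed: once the kernel of a presentation, or the cokernel of a copresentation, lies in $\add T$, rigidity of $T$ makes $\Hom(T,-)$, resp.\ $\Hom(-,T)$, exact on the sequence, and that is how the paper settles the ``moreover'' clause in one line.

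The argument of Step~1 does not transfer to $\catD_2$. In Step~1, $U''$ lands in $\GP B$ not by extension-closedness. It does so because the kernel of an epimorphism in $\CM B$ is again a $B$-lattice, after which $\Ext^1(U'',B)\cong D\Ext^1(B,U'')=0$. Membership in $\catD_2$ is instead a Hom-factorisation condition through $\add(B\oplus C)$, and it has no such closure under kernels. Extension-closedness of $\ul{\catD_2}$ does not help either. In $\ul{\CM C}$ the kernel $K$ of $V'\to N$ sits in a triangle $N[-1]\to K\to V'\to N$, and $N[-1]=\Omega N$ need not even lie in $\catD$ (take $N=B$ with $B\neq C$). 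So the approximation property gives you only $\Ext^1(B\oplus V,K)=0$. That is not enough to invoke that $B\oplus V$ is cluster tilting in $\catD_2$, because you do not yet know $K\in\catD_2$.

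The missing step uses exactly the facts you already exploit in Step~4.
\begin{itemize}
\item Every map $U\to N$ factors through $\add(B\oplus C)\subseteq\add(B\oplus V)$, hence through $V'\to N$. The exact sequence $\Hom(U,V')\to\Hom(U,N)\to\Ext^1(U,K)\to\Ext^1(U,V')=0$ then gives $\Ext^1(U,K)=0$.
\item Hence $\Ext^1(T,K)=0$, and so $K\in\add T$.
\item Suppose an indecomposable $U_j\in\add U\setminus\add B$ were a summand of $K$. Since $\Ext^1(N,U_j)=0$ by Proposition~\ref{prop:GPBV}, the projection $K\to U_j$ extends along $K\to V'$. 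Then $\mathrm{id}_{U_j}$ factors through $V'\in\add(B\oplus V)$. This is impossible, because an indecomposable lying in both $\GP B$ and $\catD_2$ already lies in $\add B$.
\end{itemize}
So $K\in\add(B\oplus V)$. The copresentation case is dual: maps $N\to U$ factor through $\add(B\oplus C)$, and $\Ext^1(U,N)=0$.

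This is essentially the paper's proof of (3). The paper phrases it as follows: the minimal $\add T$-(co)presentation of $N$ exists by (1) and (2) applied with $B=C$, and it has no summands from $\add U\setminus\add B$ in either term.
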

\begin{proof}
(1) is \cite[Lem. 3.3]{JKS3} and is included here for the completeness of the statements. 
 
(2) Let $M\in \GP B$. 
Let $f\colon M\to U'$ be an $\add U$-approximation. We have the following short exact sequence
\begin{equation}\label{eq:addUpres}
\xymatrix{
0\ar[r] &M \ar[r]^f & U' \ar[r] & U'' \ar[r] & 0.
}\end{equation}
As $\GP B$ is  a Frobenius category, $M$ has a cosyzygy $\Sigma M\in \GP B$. We have a short exact sequence, 
\begin{equation*}
\xymatrix{
0\ar[r] &M \ar[r]^g & B' \ar[r] & \Sigma M  \ar[r] & 0,
}\end{equation*}
where $B'\in \add B$. As $f$ is an $\add U$-approximation and $B'\in \add B\subseteq \add U$, 
$g$ factors through $f$. We have the following commutative diagram, 
\begin{equation*}
\xymatrix{
0\ar[r] &M \ar[r]^f \ar[d]^{\verteq} & U' \ar[r]\ar[d]^h & U'' \ar[r]\ar[d]^l & 0\\
0\ar[r] &M \ar[r]^g & B' \ar[r] & \Sigma M  \ar[r] & 0
}\end{equation*}
By adding additional summands from $\add U$ if needed, we may assume that $h$ is 
surjective and so $l$ is surjective as well. We have
$\ker l=\ker h\leq U' $ and $U'\in \CM B$. So $\ker l\in \CM B$. Now $U''$ is an extension 
of $\Sigma M\in \GP B\subset \CM B$ by $\ker l\in \CM B$. Therefore, $U'' \in \CM B$. Since 
$f$ is an $\add U$-approximation, 
\begin{equation}\label{eq:adhoc}
\Ext^1(U'', U)=0.\end{equation} 
Consequently, 
$\Ext^1(U'', B)=0$ and so $U''\in \GP B$, following the definition of $\GP B$ \eqref{eq:extCB}.
Furthermore, $U''\in \add U$, because
$U$ is a cluster tilting object in $\GP B$. 
 Therefore, \eqref{eq:addUpres} is an $\add U$-copresentation.

Conversely, suppose that $M\in \CM C$ has a copresentation as \eqref{eq:addUpres}. 
Then $M\in \CM B$ as it is the kernel of a homomorphism in $\CM B$. Applying $\Hom(B, -)$ to \eqref{eq:addUpres}, we have 
\[
\xymatrix{
0\ar[r] &\Hom(B, M) \ar[d]^{\verteq} \ar[r] & \Hom(B, U') \ar[d]^{\verteq} \ar[r] & \Hom(B, U'')\ar[d]^{\verteq} \ar[r] &\Ext^1_C(B, M) \ar[r]& 0\\
0\ar[r] &M \ar[r] & U' \ar[r] & U'' \ar[r] & 0.
}
\]
So $\Ext^1_C(B, M)=0$. Thus $\Ext^1(M, B)=0$ and so $M\in \GP B$.

(3) 
Recall that $N\in \catD_2$. 
We show that the minimal $\add T$-(co)presentation (which exists using
$B=C$ and (1) and (2)) is an $\add (B\oplus V)$-(co)presentation. First consider the minimal $\add T$-presentation, 
\begin{equation*}\label{eq:310}
\xymatrix{0\ar[r] & X''\ar[r] &  X'\ar[r]^f &  N\ar[r] &0,}
\end{equation*}
where the map $f$ is surjective because $C\in \add (B\oplus V)$ and $N\in \CM C$.
Any map $U\to N$ factors through $B\oplus C$
(see \eqref{eq:catD2}). So $X'$ has no direct summands in $\add U$. By Proposition \ref{prop:GPBV}, 
we have $\Ext^1_C(N, U)=0$. Therefore, $X''$  has no direct summands from 
$\add U$ either. 
So the  $\add T$-presentation is an $\add (B\oplus V)$-presentation. Similarly, the 
 minimal $\add T$-copresentation is an $\add (B\oplus V)$-copresentation. 

Finally,
it remains to show that the $\add U$- and $\add (B\oplus V)$-(co)presentations are $\add T$-(co)presentations. To achieve this, 
 applying $\Hom(T, -)$ to the presentations and $\Hom(-, T)$ to the 
copresentations gives short exact sequences, because $\Ext^1(T, T)=0$ and 
$T=U\oplus V$. Consequently, these (co)presentations are also $\add T$-(co)presentations.
\end{proof}

\subsection{Cluster subcategories of $\sub Q_k$}

In this subsection, we describe $\pi (\GP B)$ as a subcategory of $\sub Q_k$.
For $M\in \sub Q_k$, denote by $\M$ the minimal lift of $M$ to $\CM C$, namely, 
$\pi(\M)=M$ and $\M$ has no direct summand in $\add P_0$.
Let 
\begin{equation}\label{eq3}
\xymatrix{
0\ar[r]& \ker f\ar[r] & X\ar[r]^f & M \ar[r]&0,
}
\end{equation}
be a short exact sequence in $\sub Q_k$. The lift to a short exact sequence in $\CM C$
(which exists because $\pi$ induces an isomorphism on $\Ext^1$), 
gives an  exact sequence,  
\begin{equation}\label{eqliftof3}
\xymatrix{
0\ar[r]& \widehat{\ker f} \ar[r] & Y \ar[r] &\M \ar[r]&0,
}
\end{equation}
where $Y=\X\oplus (P_0)^r$, for some $r$.
\begin{lemma}\label{lem:liftses}
The module $Y$ in \eqref{eqliftof3} has no direct summand $P_0$ (i.e. $r=0$) 
if and only if the map $f$ in \eqref{eq3} is surjective on socles, that is, 
$f|_{\soc X}\colon \soc X\to \soc M$ is surjective.
\end{lemma}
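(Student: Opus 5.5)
The plan is to compare the two short exact sequences through the functor $\pi$ and its relationship with $P_0$. We use the facts from \cite{JKS1} that $\pi(P_0)=Q_k$ and that $\pi$ is, up to identification, the functor $e\cdot$ to $\Lambda=C/(e_0)$-type data. Concretely, $\pi M=M/Me_0$-style: $\pi(\widehat{N})$ is obtained by factoring out the part generated at vertex $0$. The working hypothesis is that for $N\in\CM C$ without $P_0$-summands, the socle of $\pi N$ (which lies in $\add \soc Q_k$, i.e. is concentrated at vertex $k$) has dimension equal to $\rank N$.

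More precisely, $\dim \soc \pi N$ equals the number of generators at vertex $0$ needed modulo $P_0$-summands. I would verify this first. The approach is to view $\soc \pi N\cong \Hom_\Lambda(S_k,\pi N)$ and relate it to $\Hom_C(P_0,N)/(\text{maps factoring as } t\cdot)$, i.e. $e_0N/te_0N$, which has dimension $\rank N$.

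The main identity I would aim for is
\[
\dim\soc \pi(N) = \rank N - (\text{multiplicity of } P_0 \text{ in } N),
\]
for any $N\in\CM C$, since $\pi(P_0)=Q_k$ has one-dimensional socle yet is killed in the stable sense. I expect this identity to be the main obstacle, and I would try to establish it via the profile description of rank one modules (each minimal rank one module $M_I\neq P_0$ maps to $\pi M_I$ with simple socle $S_k$?). Here I realise that $\pi(M_I)$ has simple socle as a submodule of $Q_k$, which gives the correct rank count only for rank one. A cleaner route is therefore to use that $\pi N$ embeds in $Q_k^{\rank N}$ with $P_0$-summands corresponding exactly to $Q_k$ summands. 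The identity then becomes $\dim\soc\pi N=\rank N$ when $N$ has no summand $P_0$, and $\pi(P_0)=Q_k$ with socle of dimension $1$ as well. Adjusting the identity to the version that actually holds is the delicate point.

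Assuming a rank-versus-socle formula of this kind, the argument runs as follows. Rank is additive on short exact sequences in $\CM C$, so $\rank Y=\rank\widehat{\ker f}+\rank\M$. On the $\Lambda$ side, left exactness of $\soc$ gives the exact sequence $0\to\soc\ker f\to\soc X\to\soc M$. Hence $f|_{\soc X}$ is surjective if and only if $\dim\soc X=\dim\soc\ker f+\dim\soc M$. Translating via the formula, with $Y=\X\oplus P_0^r$, this comparison becomes
\[
\rank \X + r = \rank\widehat{\ker f}+\rank\M
\]
set against
\[
\dim\soc X=\rank\X .
\]
Surjectivity on socles is then equivalent to $r=0$, which is what the statement asserts.
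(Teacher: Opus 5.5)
Your final computation is essentially the paper's proof: additivity of rank along the lifted sequence, the formula $\rank Z=\dim\soc\pi Z$ for $Z\in\CM C$ without $P_0$-summands, and left exactness of $\soc$. The paper does not re-derive the rank--socle formula but quotes it from \cite[Thm 4.5]{JKS1}, so the step you flag as the main obstacle should simply be cited. One correction to your preliminary remarks: $\pi(P_0)=0$, not $Q_k$, as the equivalence $\CM C/\add P_0\simeq\sub Q_k$ forces. With that fixed, your identity $\dim\soc\pi N=\rank N-(\text{multiplicity of }P_0\text{ in }N)$ holds exactly as stated, and your count in fact gives $r=\dim\cok\bigl(f|_{\soc X}\colon\soc X\to\soc M\bigr)$.
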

\begin{proof}
Note  that $\rank Z=\dim \soc \pi Z$ for any $Z\in \CM C$ that has no direct summand 
$P_0$ (see \cite[Thm 4.5]{JKS1}), and rank is additive, in particular, 
\[
\rank Y=\rank \widehat{\ker f}+\rank \M.
\]
 Therefore, 
$Y$ has no direct summand $P_0$ if and only if 
\[\dim \soc X=\dim \soc \ker f+\dim \soc M,\] 
which is equivalent to $f$ being surjective on socles.
\end{proof}

\begin{definition}\label{de:gppib}
Let  $\GP \Bbar$ be the full subcategory of $\sub Q_k$
consisting of objects $M$ satisfying the following. 
\begin{enumerate}
\item[(1a)] if $P_0 \in \add B$, there exist $X\in \add \Bbar$ and a surjective map $f\colon X\to M$.

\item[(1b)] if $P_0\not\in \add B$, there exist $X\in \add \Bbar$ and a surjective map $f\colon X\to M$ 
such that $f$ induces a surjective map $\soc X\to \soc M$.

\item[(2)] $\Ext^1(M, \Bbar)=0$.
\end{enumerate}
\end{definition}

\begin{proposition}\label{prop:gpbbar} We have the following.
\begin{enumerate}
\item \label{itm:pbbbar1} $\pi (\GP B)=\GP \Bbar$. 
\item The category $\GP \Bbar$ is functorially finite in ${\sub Q_k}$, and
therefore a cluster subcategory of $\sub Q_k$. In particular, maximal rigid objects 
and cluster tilting objects in $\GP \Bbar$ coincide.
\end{enumerate}
\end{proposition}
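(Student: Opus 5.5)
For (1), I will prove the two inclusions $\pi(\GP B)\subseteq \GP\Bbar$ and $\GP\Bbar\subseteq \pi(\GP B)$, using that $\pi$ is full exact, preserves $\Ext^1$, and induces $\CM C/\add P_0\simeq \sub Q_k$. Throughout I will pass to minimal lifts $\M$.

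For $\pi(\GP B)\subseteq \GP\Bbar$, take $M\in\GP B$ with no summand $P_0$. Since $M\in\CM B$, an $\add B$-approximation $B'\to M$ is surjective. Applying $\pi$ gives a surjection $\pi B'\to\pi M$ with $\pi B'\in\add\Bbar$. The kernel is $\Omega_M\in\GP B$, and the sequence $0\to\Omega_M\to B'\to M\to 0$ is exact in $\CM C$ and stays exact after applying $\pi$. If $P_0\notin\add B$, then $B'$ has no $P_0$ summand. Lemma \ref{lem:liftses}, read in the reverse direction (the lifted middle term $\pi B'$ has $r=0$), then gives surjectivity on socles. I expect the lift of the sequence to agree up to the $P_0$ bookkeeping; I will justify this by comparing ranks exactly as in the proof of Lemma \ref{lem:liftses}. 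Finally, $\Ext^1_\Lambda(\pi M,\Bbar)=\Ext^1_C(M,B)=0$.

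For $\GP\Bbar\subseteq \pi(\GP B)$, take $M\in\GP\Bbar$ and a surjection $f\colon X\to M$ with $X\in\add\Bbar$ as in (1a) or (1b). Lift the short exact sequence $0\to\ker f\to X\to M\to0$ to $\CM C$ as in \eqref{eqliftof3}, with middle term $\X\oplus P_0^r$.
\begin{itemize}
\item In case (1b), Lemma \ref{lem:liftses} gives $r=0$.
\item In case (1a), $P_0\in\add B$.
\end{itemize}
Either way the middle term $Y$ lies in $\add B$ (note $\X\in\add B$ up to $P_0$ summands, since $\Bbar=\pi B$). Hence $\M$ is a quotient of a module in $\add B$ and is torsion free, so $\M\in\CM B$, because a $C$-quotient of a $B$-module is a $B$-module. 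Then $\Ext^1_C(\M,B)\cong\Ext^1_\Lambda(M,\Bbar)=0$, and by \eqref{eq:extCB} and the definition \eqref{eq:defpgb} we get $\M\in\GP B$. The main obstacle is this step: checking that the quotient $C$-module structure of $\M$ really extends to $B$. I will handle it by identifying $\M$ with $Y/\widehat{\ker f}$ inside $C[t^{-1}]$-modules, where the $B$-action is inherited from $Y$.

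For (2), functorial finiteness of $\GP\Bbar$ in $\sub Q_k$ follows from Theorem \ref{thm:FF} via the equivalence $\CM C/\add P_0\simeq\sub Q_k$. An approximation in $\CM C$ by $\GP B$ maps under $\pi$ to an approximation by $\pi(\GP B)$: morphisms in $\sub Q_k$ are morphisms in $\CM C$ modulo those factoring through $\add P_0$, and I will absorb the $P_0$-factoring part by adding a $P_0$ summand as in Lemma \ref{lem:funfst}. Extension closure and the Frobenius 2-CY property also transfer through $\pi$. The cluster subcategory statement then follows by the argument of Theorem \ref{main1} applied in $\sub Q_k$, or directly from \cite[Prop II.2.3]{BIRS} together with Theorem \ref{BIRS2}; the latter gives that maximal rigid and cluster tilting objects coincide, since $\pi(M_{\maxNC})$ is cluster tilting in $\GP\Bbar$.
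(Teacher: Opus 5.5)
Your proposal is correct and follows essentially the same route as the paper. For (1) you lift an $\add\Bbar$-surjection to $\CM C$, use Lemma \ref{lem:liftses} to control the $P_0$-summands, and use full exactness of $\pi$ for $\Ext^1$; for (2) you transport Theorem \ref{thm:FF} through the full functor $\pi$, with the paper then citing \cite[Thm. II.3.1]{BIRS} for the cluster-subcategory conclusion rather than re-running Theorem \ref{main1} or Proposition \ref{prop:tem}. The one point to state carefully is that a $C$-quotient of a $B$-lattice is a $B$-module only because the quotient is torsion free: the kernel is then $B$-stable since $B\subseteq C[t^{-1}]$, which is what your $C[t^{-1}]$ remark supplies and what the paper uses implicitly.
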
 
\begin{proof} (1) 
Note that the functor $\pi$ is full exact \cite{JKS1}. So for any $M\in \sub Q_k$,
\begin{equation}\label{eq:compext}
\Ext^1(\widehat{M}, B)=0 \text{ if and only if } \Ext^1(M, \pi B)=0.
\end{equation}
So, $\pi(\GP B)\subseteq \GP \Bbar$, regardless of whether Definition \ref{de:gppib}
(1a) or (1b) is satisfied, using Lemma \ref{lem:liftses} 
in the latter case. 

Let $M\in \GP \Bbar$. We can lift an $\add \pi B$-approximation $f\colon X\to M$ 
to a surjective map  
\[
\X\oplus (P_0)^r\to \M
\]
with $\M\in \CM C$. If $P_0\in \add B$, then $\M\in \CM B$ and $\M \in \GP B$ by \eqref{eq:compext}. If $P_0\not \in \add B$, then $r=0$, by Lemma \ref{lem:liftses}, 
and so again $\M\in \GP B$ by \eqref{eq:compext}.
Therefore $\pi(\GP B)=\GP \pi B$.

(2) By Theorem \ref{thm:FF} and (1), $\GP \Bbar$ is functorially finite  and extension closed. So (2) follows from \cite[Thm. II.3.1]{BIRS}.
\end{proof}

\begin{remark} Recall that $\Lambda$ is the preprojective algebra of type $\mathbb{A}_{n-1}$ and is 
artinian. The functorial finite subcategory  $\GP \Bbar$  is in general not as  described in \cite[Thm 5.10]{AS} (see also Remark \ref{rem:cmbfunct}).  Therefore, 
 $\GP \Bbar$ is different from those constructed in \cite{Lec} by Leclerc. However, 
 we will see that $\GP \Bbar$ still offers a categorification
of the positroid varieties described in \cite{Lec} (see Theorem \ref{thm:cat}).
\end{remark}

\section{Properties of  $\Psi$ and bases of $\CC[\Gr(k, n)]$} \label{sec:4}

We compare the cluster character $\Psi_M$ with Fu-Keller's \cite{FuKeller} 
cluster characters for any $M\in \GP B$. We then enhance the construction of bases 
of $\CC[\Gr(k, n)]$, whose elements have pairwise distinct minimal leading exponents \cite{JKS3}, to 
obtain bases,
whose elements additionally have pairwise distinct maximal leading exponents. 

\subsection{$g$-vectors and $cog$-vectors}

We require some notation for cluster categories $\catC$ more generally. Recall that our cluster
categories are always full exact subcategories of either $\sub Q_k$ or $\CM C$.
Let $T$ be a cluster tilting object in $\catC$ and $A=(\End T)\op$. Denote by 
$K(\CM A)$ (when $\catC\subseteq \CM C$), $K(\proj A)$ and $K(\fd A)$, the 
Grothendieck groups of Cohen-Macaulay $A$-modules, projective $A$-modules, and 
finite dimensional $A$-modules, respectively. 

Note that $A$ has finite global dimension (see, for instance, \cite[Prop 3.6]{JKS3} 
and \cite{GLS08}). When $\catC=\sub Q_k$, 
\[
K(\fd A)=K(\proj A),
\]
and when $\catC=\GP B$,
 
\begin{equation}\label{eq:GrothCMProj}
K(\CM A)=K(\proj A).
\end{equation}
For any $M\in \catC$, let 
\[\gv(M)=[\Hom(T, M)] \text{ and }
\cogv(M)=[\Hom(T, M)]- [\Ext^1(T, M)],
\]
which are the \emph{$g$-vector} and \emph{$cog$-vector} of $M$ in $K(\proj A)$, 
respectively. Note that when $\catC\subseteq \CM C$,  $\Ext^1(T, M)$
is not Cohen-Macaulay; however, we can view $[\Ext^1(T, M)]$ as a class in 
$K(\proj A)$ (or, equivalently, $K(\CM A)$), by taking projective resolutions (resp. 
CM-approximations). We denote $[T,M]=[\Hom(T,M)]$, for simplicity.

\subsection{Comparison of cluster characters}

Let $T$ be a cluster tilting object in $\CM C$ and $A=A_T=(\End T)\op$.
Let 
\[
\beta=\beta_T\colon K(\fd A)\to K(\CM A)
\]
be the map induced by taking projective resolutions or equivalently by taking 
$\CM$-approximations (see \eqref{eq:GrothCMProj}). 
On mutable simples $S_i$, we have 
\begin{equation}\label{eq:betaint}
\beta_T([S_i])=[T,E_i]-[T,F_i], 
\end{equation}
which can be  obtained by applying $\Hom(T, -)$ to 
the mutation sequences of $T_i$ given in \eqref{eq:mutsequences}.

Fu-Keller's cluster character can be adapted to $\CM C$ as follows~(see \cite{JKS3}),  
\begin{equation} \label{eq:FKchar}
\begin{aligned}
\Phi^T_M & =x^{[T, M]} \sum_{d} \chi(\Gr_d\Ext^1(T, M)) x^{-\beta(d)} \in \CC[K(\proj A)]\\
& =x^{\cogv(M)}\sum_{d} \chi(\Gr^d\Ext^1(T, M)) x^{\beta(d)} \in \CC[K(\proj A)],
\end{aligned}
\end{equation}
where $\Gr_d \Ext^1(T,M)$ and $\Gr^d \Ext^1(T,M)$  are, respectively, the Grassmannians of submodules and quotient modules of $\Ext^1(T,M)$ 
with dimension vector $d$, and $\chi$ is the Euler characteristic. 

Let $U\in \GP B$ and  $T=U\oplus V$ be cluster tilting objects as described in 
Theorem~\ref{main1}, and let $A_U=(\End U)^{\mathrm{op}}$. Define 
\[
\gamma\colon K(\proj A_U) \to K(\proj A_T)\colon [U, U_i]\mapsto [T, U_i].
\]
Then $\gamma$ induces a homomorphism of algebras, 
\[
\gamma^*\colon \CC[K(\proj A_U)] \to \CC[K(\proj A_T)].
\]
\begin{proposition} \label{prop:charongpb}
For any  $M\in \GP B$, $\gamma^*(\Phi^U_M)= \Phi^T_M$.
\end{proposition}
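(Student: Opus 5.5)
We compare the two formulas in \eqref{eq:FKchar} term by term, using the first (submodule Grassmannian) form. Write $M\in\GP B$, $T=U\oplus V$ with $V\in\catD_2$ as in Theorem~\ref{main1}(2), and let $e_U$ be the idempotent of $A_T$ corresponding to $U$, so that $A_U=e_UA_Te_U$. The plan has three steps. First match the leading monomials $x^{[U,M]}$ and $x^{[T,M]}$. Then show that the $A_T$-module $\Ext^1(T,M)$ is just the $A_U$-module $\Ext^1(U,M)$, so the quiver Grassmannians coincide. Finally show that $\gamma\circ\beta_U=\beta_T$ on the classes of the relevant dimension vectors.

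\emph{Leading term.} By Proposition~\ref{mutgpBC}(1) and the final assertion there, $M$ has an $\add U$-presentation $0\to U''\to U'\to M\to 0$ that is also an $\add T$-presentation. Applying $\Hom(U,-)$ and $\Hom(T,-)$ gives short exact sequences (the approximation property gives surjectivity, and $\Ext^1(T,T)=0$ gives the rest). Hence $[U,M]=[U,U']-[U,U'']$ in $K(\proj A_U)$ and $[T,M]=[T,U']-[T,U'']$ in $K(\proj A_T)$. By definition of $\gamma$ this yields $\gamma([U,M])=[T,M]$.

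\emph{The Ext module.} By Proposition~\ref{prop:GPBV}, $\Ext^1(V,M)=0$. Hence $\Ext^1(T,M)=\Ext^1(U,M)$ as vector spaces, and as an $A_T$-module it is supported only at vertices of $U$. We must check that it is a module over $A_T/A_T(1-e_U)A_T$, i.e. that the action of any path through a $V$-vertex vanishes. This is automatic, since such an endomorphism factors through $\Ext^1(V,M)=0$. Moreover, for $A_T$-modules supported on the $U$-vertices, submodules agree with $A_U$-submodules of the restriction. So $\Gr_d\Ext^1(T,M)=\Gr_d\Ext^1(U,M)$ for each $d$ supported on the $U$-vertices, and all other Grassmannians are empty.

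\emph{Compatibility of $\beta$.} This is the main obstacle. It suffices to show $\gamma(\beta_U[S_i])=\beta_T[S_i]$ for the simples $S_i$ at $U$-vertices that can occur as composition factors of $\Ext^1(U,M)$. Since $\Ext^1(U,B)=0$ and $U$ is rigid, $\Ext^1(B\oplus\cdots,M)$ is irrelevant at frozen vertices of $\GP B$ (as $\Ext^1(B,M)=0$), so only mutable vertices $U_i\notin\add B$ occur. For these, \eqref{eq:betaint} gives $\beta_T[S_i]=[T,E_i]-[T,F_i]$, where $E_i,F_i$ are the middle terms of the mutation sequences of $T$ at $U_i$. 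By Theorem~\ref{main1}(3) these coincide with the mutation sequences of $U$ at $U_i$ in $\GP B$, with $E_i,F_i\in\add U$. The same computation in $\GP B$ gives $\beta_U[S_i]=[U,E_i]-[U,F_i]$, so applying $\gamma$ summand by summand yields the identity. One subtlety needs care here: we must confirm that $\beta_U$ computed in $\CM A_U$ via projective resolutions agrees with the mutation-sequence formula. This requires the analogue of \eqref{eq:betaint} for $U$ in $\GP B$, which follows by applying $\Hom(U,-)$ to the exact mutation sequences in $\GP B$. Combining the three steps term by term gives $\gamma^*(\Phi^U_M)=\Phi^T_M$.
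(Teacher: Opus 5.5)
Your proposal is correct and follows essentially the same route as the paper. You show $\gamma([U,M])=[T,M]$ via an $\add U$-presentation that is also an $\add T$-presentation. You identify $\Ext^1(T,M)$ with $\Ext^1(U,M)$ using Proposition~\ref{prop:GPBV}, so the Grassmannians agree. You then get $\gamma\circ\beta_U=\beta_T$ on the relevant simples from the coincidence of mutation sequences in Theorem~\ref{main1}(3).

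Your additional checks make explicit what the paper leaves implicit: that the $A_T$-module and $A_U$-module structures give the same submodules, and that only mutable vertices contribute. The justification for the latter is $\Ext^1(B,M)=0$ for $M\in\GP B$, not the rigidity of $U$ as your wording suggests.
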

\begin{proof} Let $M\in \GP B$ and let
\begin{equation*}
\xymatrix{0\ar[r] & U''\ar[r] &  U'\ar[r] &  M \ar[r] &0}
\end{equation*}
be an $\add U$-presentation of $M$. We have $[U, M]=[U, U']-[U, U'']$. 
Applying $\Hom(T, -)$ to the $\add U$-presentation, we have 
\[
 [T, M]=[T, U']-[T, U''] \text{ and so } \gamma([U, M])=[T, M].
\]

Now suppose that $U_i$ is a mutable summand in $\GP B$ with  
the mutation sequences, 
\begin{equation*}
\xymatrix{0\ar[r] & U_i\ar[r] &  F_i\ar[r] &  U^*_i \ar[r] &0, &
 0\ar[r] & U^*_i\ar[r] &  E_i\ar[r] &  U \ar[r] &0.
}
\end{equation*}
Then by \eqref{eq:betaint},
\begin{equation}\label{eq:betaUT}
\beta_U([S_i])=[U, F_i]-[U, E_i]  \text{ and so } \gamma(\beta_U([S_i])) =\beta_T([S_i]),
\end{equation}
because mutation sequences in $\GP B$ are also mutation sequences in $\CM C$, by 
Theorem~\ref{main1} (3). 
Note that for any $M\in \GP B$, by Proposition \ref{prop:GPBV}, 
$\Ext^1(T, M)=\Ext^1(U, M)$ and so the corresponding Grassmannians of submodules 
are the same. In particular,
\[
\chi(\Gr^d\Ext^1(T, M))=\chi(\Gr^d\Ext^1(U, M)).
\]
 for all $d$.  Therefore, $\gamma^*(\Phi^U_M)=\Phi^T_M$.
\end{proof}

\begin{corollary}\label{rem:char} 
Let $U\in \GP B$ and $T=U\oplus V\in \CM C$ be cluster tilting objects. 
For any indecomposable summand $T_i\in \add T$ and $U_i\in \add U$,
by substituting $\Phi^T_{T_i}=\Psi_{T_i}$,  
and $\Phi^U_{U_i}=\Phi^T_{U_i}$, we have 
$\Psi_M=\Phi^T_M=\Phi^U_M$ as functions on $\CC[\Gr(k, n)]$ for any $M\in \GP B$.
\end{corollary}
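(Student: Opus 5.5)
The approach is to reduce the statement to two facts. The first is the known result that $\Psi_M=\Phi^T_M$ for $M\in\CM C$ after substituting $x^{[T,T_i]}\mapsto\Psi_{T_i}$; this is the homogenised Fu--Keller/GLS comparison from \cite{JKS3}. The second is Proposition \ref{prop:charongpb}, which gives $\gamma^*(\Phi^U_M)=\Phi^T_M$ for $M\in\GP B$. What remains is to check that the two substitutions are compatible with $\gamma^*$.

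\textbf{Step 1: evaluation maps.} Define $\mathrm{ev}_T\colon\CC[K(\proj A_T)]\to\CC[\Gr(k,n)]$ (or its fraction field) by $x^{[T,T_i]}\mapsto\Psi_{T_i}$. This is well defined because the classes $[T,T_i]$ form a $\ZZ$-basis of $K(\proj A_T)$. The key input, taken from \cite{JKS3} (the analogue of \cite[Thm]{FuKeller}, \cite{GLS08} after homogenisation), is that $\mathrm{ev}_T(\Phi^T_M)=\Psi_M$ for every $M\in\CM C$. If I need to argue this rather than cite it, I would use two ingredients. Both $\Psi$ and $\mathrm{ev}_T\circ\Phi^T$ satisfy the multiplication formula on exact sequences with one-dimensional $\Ext^1$, and they agree on $\add T$. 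Then I would use the standard uniqueness argument, or simply quote it.

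\textbf{Step 2: compatibility of $\gamma^*$.} Define $\mathrm{ev}_U$ by $x^{[U,U_i]}\mapsto\Phi^U_{U_i}$, with the value read via $\gamma^*$, i.e.\ $\Phi^T_{U_i}=\Psi_{U_i}$. Since $\gamma([U,U_i])=[T,U_i]$, we get $\mathrm{ev}_T\circ\gamma^*=\mathrm{ev}_U$ on the generators $x^{\pm[U,U_i]}$, and hence on all of $\CC[K(\proj A_U)]$.

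\textbf{Step 3: conclusion.} For $M\in\GP B$, Proposition \ref{prop:charongpb} and Step 1 give
\[
\mathrm{ev}_U(\Phi^U_M)=\mathrm{ev}_T(\gamma^*\Phi^U_M)=\mathrm{ev}_T(\Phi^T_M)=\Psi_M,
\]
which is the claimed equality $\Psi_M=\Phi^T_M=\Phi^U_M$ as functions on $\Gr(k,n)$.

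The only nontrivial step is Step 1, the identification $\mathrm{ev}_T\Phi^T_M=\Psi_M$ on all of $\CM C$. One subtlety is that the $\beta$-terms $x^{-\beta(d)}$ involve inverses of $\Psi_{T_i}$, so the evaluation lands a priori in the Laurent ring in the cluster $\{\Psi_{T_i}\}$. Equality with the polynomial $\Psi_M$ there is what is needed, and that is exactly what \cite{JKS3} provides. Everything else is the formal bookkeeping in Step 2, which uses only that $\gamma$ is a group homomorphism sending basis to basis.
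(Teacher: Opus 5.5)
Your proposal is correct and follows the paper's proof. The paper likewise cites \cite[Thm 9.11]{JKS3} for $\Psi_M=\Phi^T_M$ and then applies Proposition \ref{prop:charongpb}; your Step 2 only makes explicit the compatibility of the substitutions with $\gamma^*$, which the paper leaves implicit. One caution: your fallback sketch for Step 1, via the one-dimensional multiplication formula and agreement on $\add T$, would only determine $\Psi$ on reachable rigid objects, not on all of $\CM C$, so the citation to \cite{JKS3} is genuinely needed there.
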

\begin{proof}
By \cite[Thm 9.11]{JKS3}, we have $\Psi_M=\Phi^T_M$, which is $\Phi^U_M$, 
by Proposition \ref{prop:charongpb}.
\end{proof}

\subsection{Bases of $\CC[\Gr(k, n)]$}

To help the reader understand the construction in Section \ref{Sec:cog}
below, we recall the construction of such bases from \cite{JKS3} in some
detail. This construction builds on work of Lusztig \cite{Lus00} and Geiss-Leclerc-Schr\"{o}er \cite{GLS05} for preprojective algebras 
(see Plamondon \cite{PL} for a related construction).

 Let $\rep(\preproj, d)$ be the representation variety of 
$\preproj$ with dimension vector $d\in \NN^{n-1}$.
Note that $\GL{d}=\prod_{i=1}^{n-1} \GL{d_i}$ acts on $\rep(\preproj, d)$ by conjugation  
such that $\GL{d}$-orbits are in one-to-one correspondence with the isomorphism 
classes of representations of $\preproj$ of dimension vector $d$.

For any $d\in \NN^{n-1}$ 
and any $\preproj$-module $X$, the map
\begin{equation}\label{eq:usc-dim-hom}
  \rep(\preproj, d) \to \NN \colon M \mapsto \dim \Hom(X, M)
\end{equation}
is upper semicontinuous, 
that is, $\{ M\st \dim \Hom(X, M) \leq m\}$ is open, for any $m\in\NN$.
Let
\[
  \sub(\preprojQ_k, d)\subset \rep(\preproj, d),
\] 
be the $\GL{d}$-invariant subset of $\preproj$-modules in $\sub\preprojQ_k$, and let
\[
  \sub(\preprojQ_k, d,r)\subset \sub(\preprojQ_k, d),
\] 
be the $\GL{d}$-invariant subset of modules $M$ with $\dim\soc M\leq r$.

\begin{lemma}\label{Lem:quotrep}\cite[Lem 10.3]{JKS3}
Both $\sub(\preprojQ_k, d)$ and $\sub(\preprojQ_k, d,r)$ are open in $\rep(\preproj, d)$. 
\end{lemma}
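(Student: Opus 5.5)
Both openness claims will be obtained by writing each subset as an intersection of sets defined by upper semicontinuous conditions of the form \eqref{eq:usc-dim-hom}, together with the corresponding lower semicontinuity for $\Hom(-,X)$. The key input is a module-theoretic characterisation of $\sub Q_k$: a $\preproj$-module $M$ lies in $\sub Q_k$ if and only if its socle is concentrated at vertex $k$, that is, $\soc M\cong S_k^{r}$ for some $r$. Indeed, $Q_k$ is the injective envelope of $S_k$, and $M$ embeds into its injective envelope $E(\soc M)$. So $M\in\sub Q_k$ if and only if $E(\soc M)\in \add Q_k$, which holds if and only if $\soc M$ is a direct sum of copies of $S_k$.

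For the first set, I will express the socle condition via Hom-dimensions. The multiplicity of $S_i$ in $\soc M$ equals $\dim \Hom(S_i, M)$. Hence
\[
\sub(\preprojQ_k,d)=\bigcap_{i\neq k}\{M\in \rep(\preproj,d)\st \dim\Hom(S_i,M)\leq 0\}.
\]
Each set in this intersection is open by the upper semicontinuity \eqref{eq:usc-dim-hom} with $X=S_i$. The intersection is finite, so the set is open.

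For the second set, note that for $M\in\sub(\preprojQ_k,d)$ we have $\dim\soc M=\dim\Hom(S_k,M)$. Thus
\[
\sub(\preprojQ_k,d,r)=\sub(\preprojQ_k,d)\cap\{M\st \dim\Hom(S_k,M)\leq r\},
\]
which is again a finite intersection of open sets. Both sets are visibly $\GL{d}$-invariant, because the conditions depend only on the isomorphism class of $M$.

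The main point needing care is the upper semicontinuity of $M\mapsto\dim\Hom(X,M)$ on $\rep(\preproj,d)$, which the paper takes as given. It holds because $\Hom(X,M)$ is the kernel of a linear map whose matrix entries depend polynomially on $M$: the map sends a tuple $(\phi_i)$ to the commutativity defects along the arrows. Kernel dimension is upper semicontinuous because rank is lower semicontinuous, so this is standard. The only other subtlety is justifying the socle characterisation of $\sub Q_k$, which uses that $\preproj$ is selfinjective and finite-dimensional, so that injective envelopes exist and are determined by socles.
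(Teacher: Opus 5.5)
Your proof is correct and complete. The paper gives no argument of its own here and simply cites \cite[Lem 10.3]{JKS3}. Your route is the natural one: $M\in\sub Q_k$ iff $\soc M\in\add S_k$, i.e.\ iff $\Hom(S_i,M)=0$ for all $i\neq k$. On that locus $\dim\soc M=\dim\Hom(S_k,M)$, and upper semicontinuity of $M\mapsto\dim\Hom(S_i,M)$ then gives both openness statements.

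Two small slips, neither of which affects the argument:
\begin{itemize}
\item Your opening sentence mentions a ``lower semicontinuity for $\Hom(-,X)$''. In fact $M\mapsto\dim\Hom(M,X)$ is also \emph{upper} semicontinuous, by the same kernel-of-a-linear-map argument. You never use it anyway.
\item Self-injectivity of $\preproj$ is not needed. Over any finite-dimensional algebra, a finite-dimensional module has an injective envelope, and $E(M)=E(\soc M)$.
\end{itemize}
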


We will work with cluster tilting objects in $\sub Q_k$ and $\CM C$. 
To avoid confusion, we use overline for modules in $\sub Q_k$.
Let $\olT=\bigoplus_i \olT_i$ be a cluster tilting object in $\sub\preprojQ_k$, 
$\olA =(\End \olT)\op$ and $\olQ$ the Gabriel quiver of $\olA$.
For any $M\in \sub\preprojQ_k$, denote by $[\olT, M]$ the class of the $\olA$-module $\Hom( \olT, M)$ 
in the Grothendieck group $\Grot(\fd \olA)$ of finite dimensional $\olA$-modules. 
By identifying the basis of simple classes with the standard bases, 
we have 
\begin{equation}\label{eq:grothZlat}
\Grot(\fd \olA)=\ZZ^{\olQ_0},
\end{equation} and thus 
$ [\olT, M]=\dimv\Hom(\olT, M).$ Also, we have $\Grot(\proj \olA)=\Grot(\fd \olA)$ since $\olA$ has finite global dimension. 

Denote by $\IrrSub(d)$ the set of irreducible components in $\sub(\preprojQ_k, d)$ and let 
\[
  \IrrSub=\bigcup_d \IrrSub(d).
\] 
By the semicontinuity of \eqref{eq:usc-dim-hom},
for each $\hC\in \IrrSub$, there is a class 
\begin{equation}\label{eq:gg} \gminvec{\hC}\in\GrotAbar, \end{equation}
which is minimal (as a dimension vector) in $\{[\olT, N]\st N\in \hC\}$. 
Furthermore, the set  
\[
  \hCmin=\{N\in \hC\st [\olT, N]=\gminvec{\hC}\}
\] 
is an open dense subset of $\hC$. We call $\gminvec{\hC}$
the \emph{generic $g$-vector} of~$\hC$. This defines the map
\[
\gminmap\colon\IrrSub \to \GrotAbar\colon \hC\mapsto \gminvec{\hC}.
\]
\begin{proposition} \cite[Cor. 10.9, Prop. 10.13]{JKS3} \label{prop:generic-gv}
The map $\gminmap$ is injective. Moreover, any $g$-vector is generic.
\end{proposition}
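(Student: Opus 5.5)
The statement to prove is Proposition~\ref{prop:generic-gv}: the map $\gminmap$ is injective, and every $g$-vector is generic. I would argue via $\olA$-modules and the Geiss--Leclerc--Schr\"oer type correspondence between irreducible components and generic presentations.

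\emph{Injectivity.} Given $\hC\in\IrrSub(d)$, take $N\in\hCmin$ and an $\add\olT$-presentation
\[
0\to \olT''\to \olT'\to N\to 0 .
\]
It exists and is a short exact sequence in $\sub Q_k$, since $\olT$ is cluster tilting and $\sub Q_k$ is Frobenius 2-CY. Applying $\Hom(\olT,-)$ gives
\[
[\olT,N]=[\olT,\olT']-[\olT,\olT''] .
\]
Because $\olA$ has finite global dimension and $\Grot(\proj\olA)=\Grot(\fd\olA)$, the classes $[\olT,\olT_i]$ form a basis. After cancelling common summands of $\olT'$ and $\olT''$, which is possible for a minimal presentation and generic $N$, the pair $(\olT',\olT'')$ is determined by $\gminvec{\hC}$. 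The modules with a given reduced presentation pair form the image of an irreducible variety of maps $\Hom(\olT'',\olT')$, namely the injective ones whose cokernel lies in $\sub Q_k$. That set is open by Lemma~\ref{Lem:quotrep}, so its image is irreducible. Two components with the same generic $g$-vector would therefore both have dense subsets inside one irreducible constructible set of dimension vector $d$. Note that $d$ is determined by the $g$-vector, since $\dimv N$ is recovered from $\dim\Hom(\olT,N)$ via the projective-injective summands. Hence the two components coincide.

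\emph{Surjectivity onto all $g$-vectors.} Let $g=[\olT,M]$ for some $M$. Take the presentation of $M$ as above and consider the family of cokernels of the injective maps in $\Hom(\olT'',\olT')$. This family is irreducible, and its closure lies in a component $\hC$. Upper semicontinuity of \eqref{eq:usc-dim-hom} gives $\gminvec{\hC}\le [\olT,N]$ for $N\in\hC$. Conversely, every module in the family has $g$-vector $[\olT,\olT']-[\olT,\olT'']=g$, so $g$ is attained on a dense subset. It remains to see that this value is the minimal one.

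\emph{Main obstacle.} The key step is showing that the family of cokernels is dense in a full irreducible component, equivalently that generic $g$-vectors in the sense of \eqref{eq:gg} are exactly those given by presentations of modules in $\hCmin$. I would approach this with a dimension count: compare $\dim \Hom(\olT'',\olT')-\dim\Aut$ with the dimension of $\hC$, using $\Ext^1$-vanishing for generic presentations (the Voigt-type lemma). Failing that, I would reduce to the GLS results for $\rep(\preproj,d)$ through the open embedding from Lemma~\ref{Lem:quotrep}.
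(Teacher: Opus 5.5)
Your injectivity argument has the right shape, but it rests on a claim you do not prove. You assert that a generic $N\in\hCmin$ has a minimal $\add\olT$-presentation in which $\olT'$ and $\olT''$ share no summand. Cancelling a common summand $X$ changes the module, so this needs an argument.

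The argument that works is Gaussian elimination, as in Lemma~\ref{lem:cog}. In the irreducible space $\Hom(\olT''\oplus X,\olT'\oplus X)$, the maps whose $X\to X$ block is invertible form a nonempty open subset: perturb the block of your presentation map by $a\,\mathrm{Id}_X$ for generic $a$. The cokernels of such maps are cokernels of maps $\olT''\to\olT'$.

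So let $\mathcal{Z}_g\subseteq\sub(Q_k,d)$ be the irreducible set of cokernels of injective maps $\olT''\to\olT'$, for the pair with no common summands and $[\olT,\olT']-[\olT,\olT'']=g$. Then every module with $g$-vector $g$ lies in $\overline{\mathcal{Z}_g}$. Hence $\hC=\overline{\mathcal{Z}_{\gminvec{\hC}}}$ for every component $\hC$, which gives injectivity. Your statement about generic presentations then follows as a consequence, rather than serving as an input.

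The real gap is the second assertion, which you leave open. For a component $\hC\supseteq\mathcal{Z}_g$, semicontinuity only gives $\gminvec{\hC}\le g$, which is the wrong direction. The missing idea is that $\dim\mathcal{Z}_g$ depends only on $d$.

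Over the open set of admissible $f\in\Hom(\olT'',\olT')$ there is a $\GL{d}$-torsor mapping onto $\mathcal{Z}_g$. Its fibre over $N$ is an open set of $\add\olT$-approximations $\olT'\to N$ times $\operatorname{Aut}\olT''$. The kernel of each such approximation lies in $\add\olT$ with class $[\olT,\olT'']$, so it is isomorphic to $\olT''$. The fibre therefore has dimension $\dim\Hom(\olT',N)+\dim\End\olT''$, and $\dim\Hom(\olT',N)=\dim\End\olT'-\dim\Hom(\olT',\olT'')$. Thus
\[
\dim\mathcal{Z}_g=\dim\GL{d}+\dim\Hom(\olT'',\olT')+\dim\Hom(\olT',\olT'')-\dim\End\olT'-\dim\End\olT''=\dim\GL{d}-\tfrac12\langle d,d\rangle,
\]
where the last equality uses \eqref{eq:CB} and $\Ext^1(\olT,\olT)=0$.

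Now suppose $\gminvec{\hC}=g'$. Then $\hC=\overline{\mathcal{Z}_{g'}}$ has this same dimension and contains $\overline{\mathcal{Z}_g}$. The two irreducible closed sets therefore coincide, so $g'=g$. This matches Lusztig's equidimensionality of $\rep(\Lambda,d)$.

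The inputs you need are the rigidity of $\olT$ and Crawley-Boevey's formula. A Voigt-type tangent-space bound on $\dim\hC$ would not close the argument on its own.
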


The result can be lifted to $\CM C$ to obtain a basis, whose elements have pairwise 
distinct minimal leading exponents
for $\CC[\Gr(k, n)]$.  Let us recall some relevant notation. 
First, the Grothendieck groups $K(\CM C)$ and $K(\sub Q_k)$ satisfy the following relation, 
\begin{equation}\label{eq:Grot-isom}
 \Grot(\CM C)\cong \ZZ\times \Grot(\sub\preprojQ_k) \text{ with } [M]\mapsto (\rk [M],[\olM]),
\end{equation}
where $\rk\colon K(\CM C) \to \ZZ, ~ [M]\mapsto \rank_{C} M$. Denote the isomorphism by $\theta$.

For $\lambda\in\Grot(\CM C)$, 
let $\CM(\lambda)$ be the set of isomorphism classes in $\CM C$ of modules of class $\lambda$
and let
\[
  \CM(r)=\bigcup_{\rk\lambda=r} \CM(\lambda).
\]
If $\theta(\lambda)=(r,d)$, 
then $\pi$ determines a bijection between $\CM(\lambda)$ and the set of $\GL{d}$-orbits in 
$\sub(\preprojQ_k,d,r)$,
which is an open subset of $\sub(\preprojQ_k,d)$ by Lemma \ref{Lem:quotrep}. 
When $\hC\in\IrrSub(d)$ intersects $\sub(\preprojQ_k,d,r)$, define
\begin{equation}\label{eq:Crc}
  \HC(r, \hC)=\{ M\in \CM(\lambda)\st \pi M \in \hC \}. 
\end{equation}
By abuse of terminology, $\HC(r, \hC)$ is called an irreducible component of 
$\CM(\lambda)$. Let $\IrrCM(\lambda)$ be the collection of all such components,  
and let 
\[ 
 \IrrCM(r) = \bigcup_{ \rk\lambda=r} \IrrCM(\lambda),\quad 
 \IrrCM = \bigcup_r \IrrCM(r).
\]

For any $\HC\in \IrrCM(r)$, define
\begin{equation}\label{eq:Crc-too}
  \HC_{\gen}=\{ M\in \HC \st \pi M \in \hC_{\gen} \}. 
\end{equation}
Note that the g-vectors of modules in $\HC_{\gen}$ are constant and
 are referred to as the \emph{generic g-vectors} of $\HC$.
Choose $N_\HC\in \HC_\gen$ for each $\HC\in\IrrCM(r)$.

Let  $\mathsf{J}=\Hom_\ring(e_0C, \ring)$, 
where $e_0$ the idempotent corresponding to vertex 0 in the circular double 
quiver defining the algebra $C$.  Recall the weight map $\wt$ from \cite{JKS3}, 
\[
\wt\colon K(\CM A)\to K(\fd A), ~
[X]\mapsto (\rk X)[T, \mathsf{J}]-[X]. 
\]
As in \eqref{eq:grothZlat}, we can identify $K(\fd A)$ with $\ZZ^{Q_0}$. 
The usual partial order on $\ZZ^{Q_0}$ induces a partial order on $K(\CM A)$ 
(\cite[\S 5]{JKS3}), that is, for $\lambda, \lambda'\in K(\CM A)$, we say 
\begin{equation} \label{eq:partord}
\lambda\leq \lambda' \text{ if }
\rk\lambda <\rk\lambda', \text{ or }\rk\lambda =\rk\lambda' \text{ and } \wt\lambda \leq \wt\lambda'.
\end{equation} 

\begin{proposition}\label{Thm:GrassGeneric} \cite[Cor. 10.23]{JKS3}
The functions in $\{\Psi_{N_\HC}\colon \HC\in \IrrCM\}$ have distinct (minimal) 
leading exponents (i.e. the generic $g$-vectors), and form a basis of $\CC[\Gr(k, n)]$. 
Consequently, any $g$-vector is generic. 
\end{proposition}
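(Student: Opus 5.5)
The statement is a lift of Proposition \ref{prop:generic-gv} from $\sub Q_k$ to $\CM C$. I would derive it from that proposition, the isomorphism $\theta$ in \eqref{eq:Grot-isom}, and the cluster character $\Psi$.

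\emph{Step 1: distinct generic $g$-vectors.} Fix a cluster tilting object $T=\olT'\oplus P_0$ lifting a cluster tilting object $\olT$ in $\sub Q_k$; here $\olT'$ is the minimal lift. For $M\in\CM C$ with no summand $P_0$, the $g$-vector $[T,M]$ is determined by two pieces of data. The first is $[\olT,\pi M]$, via $\pi$ and the fact that $\pi$ is full with kernel $\add P_0$. The second is the rank $\rk M$, which equals $[P_0,M]$ at the frozen vertex $0$. So $\HC(r,\hC)$ has generic $g$-vector determined by $(r,\gminvec{\hC})$. Injectivity of $\gminmap$ (Proposition \ref{prop:generic-gv}) and the fact that $r$ is recorded give injectivity on $\IrrCM$. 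Summands of $P_0$ only shift the $0$-coordinate and rank, so I would handle them separately. Next I would use the leading term description of $\Psi$ from \cite{JKS3}. With the partial order \eqref{eq:partord}, the Laurent expansion $\Psi_M=\Phi^T_M$ from \eqref{eq:FKchar} has a unique minimal exponent $x^{[T,M]}$, with coefficient $1$ coming from $d=0$, since $\beta(d)$ has positive weight for $d\neq 0$. Hence the $\Psi_{N_\HC}$ have pairwise distinct minimal leading exponents and are linearly independent.

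\emph{Step 2: spanning.} This is the main obstacle. I would grade $\CC[\Gr(k,n)]$ by degree, which matches rank by \eqref{eq:Psi-minor}. In each degree $r$ I would argue by dimension counting or by triangularity. Each $\Psi_M$ with $\rk M=r$ is a combination of the $\Psi_{N_\HC}$: $M$ lies in some component $\HC$, and I would degenerate $M$ from $N_\HC$ and induct on the order \eqref{eq:partord}. Alternatively, $\CC[\Gr(k,n)]_r$ is spanned by products of minors, and these are $\Psi$ of direct sums of rank one modules. Such a product has leading exponent $[T,\oplus M_{J}]$, which is a $g$-vector. The leading term then lies among the basis leading terms, and subtracting it strictly raises the exponent. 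Since each graded piece is finite dimensional, this terminates.

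\emph{Step 3: every $g$-vector is generic.} Take $M\in\CM C$ and let $\HC$ be a component containing $\pi M$ with matching rank. The generic $g$-vector of $\HC$ is $\leq[T,M]$ by semicontinuity \eqref{eq:usc-dim-hom}, and I would use Proposition \ref{prop:generic-gv} to conclude equality: $[\olT,\pi M]$ is itself generic for the unique component it determines.
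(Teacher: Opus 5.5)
Your Step 1 is fine. Via $\theta$, the generic $g$-vector of $\HC(r,\hC)$ is determined by $(r,\gminvec{\hC})$, so Proposition~\ref{prop:generic-gv} gives distinctness. Since $[T,M]$ is the unique minimal exponent of $\Psi_M$, with coefficient $1$, the $\Psi_{N_\HC}$ are linearly independent.

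\textbf{The gap is spanning.} Your triangular reduction needs, at every stage, that the minimal exponent of the current element of $\CC[\Gr(k,n)]_r$ is the generic $g$-vector of some $\HC\in\IrrCM(r)$. Already for a product of minors this uses Step~3. After the first subtraction, the remainder $\Psi_{\oplus M_J}-\Psi_{N_\HC}$ is no longer of the form $\Psi_M$. The terms $x^{[T,\oplus M_J]}$ cancel, and the new minimal exponent is some $[T,\oplus M_J]-\beta(d)$ from one of the two Laurent expansions. Nothing says this is a $g$-vector at all. Knowing that every nonzero element of $\CC[\Gr(k,n)]_r$ has its minimal exponent among the generic $g$-vectors is equivalent to what you are proving, so the argument is circular. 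Finite dimensionality only guarantees termination of a procedure you cannot carry out.

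Your other two suggestions are not arguments either. Nothing expresses $\Psi_M-\Psi_{N_\HC}$ through other modules when $M$ is a degeneration of $N_\HC$. The equality $|\IrrCM(r)|=\dim\CC[\Gr(k,n)]_r$ needed for a dimension count is itself a real theorem: the components meeting $\sub(Q_k,d,r)$ must index a basis of this irreducible representation.

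\textbf{What is needed is an a priori spanning input.} The paper takes the result from \cite{JKS3}, where the construction rests on the dual semicanonical basis of \cite{Lus00,GLS05}. For instance, the generic values of $\psi$ on $\IrrSub$ form the dual semicanonical basis of the coordinate ring of the unipotent cell. Lusztig's compatibility of the semicanonical basis with irreducible highest weight modules then identifies the degree-$r$ piece, after dehomogenising at $\Psi_{P_0}$, with the span of the elements indexed by components of generic socle dimension at most $r$, i.e.\ the $\HC(r,\hC)$.

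With the basis in hand, ``any $g$-vector is generic'' is formal, which is why the statement says ``consequently''. Expand $\Psi_M$ in the basis. Distinct leading exponents prevent cancellation, so $[T,M]$ is the minimal exponent of some $\Psi_{N_\HC}$ that occurs.

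\textbf{Your Step~3 cannot replace this.} Semicontinuity only gives that the generic $g$-vector of a component containing $M$ is at most $[T,M]$, and equality fails at non-generic points. Invoking Proposition~\ref{prop:generic-gv} instead produces $\hC'$ with $\gminvec{\hC'}=[\olT,\pi M]$, but $\hC'$ need not contain $\pi M$. You would still have to show that the generic socle dimension of $\hC'$ is at most $\rk M$, so that $\HC(\rk M,\hC')$ exists. This is not automatic, because $\dim\soc$ is not a function of $[\olT,-]$. For $(k,n)=(2,4)$ and a suitable $T$, the module $\pi(M_{13}\oplus M_{24})$ and $\pi$ of the exchange middle term containing $P_0$ have the same $[\olT,-]$, but their socles have dimensions $2$ and $1$.
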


\subsection{Generic cog-vectors} \label{Sec:cog}

Similar to \eqref{eq:gg},  by the semicontinuity of 
\[
\Hom_\Lambda(-, \olT_j)\colon \sub(Q_k, d)\to \NN,
\]  
for each component $\hC\in \IrrSub(d)$, there is a class 
\begin{equation}\label{eq:ggop} 
\ggop(\hC)\in K(\fd \olA^{\op}), 
\end{equation}
which is minimal (as a dimension vector) in $\{[N, \olT]\colon N\in \hC\}$. 
Moreover, analogous to Proposition~\ref{prop:generic-gv}, we have an injective map, 
\begin{equation}\label{eq:ggopinj}
\ggop\colon\IrrSub \to \GrotAbar\colon \hC\mapsto \ggop(\hC).
\end{equation}

Next, recall the bilinear form defined on $K(\fd \Lambda)$,  
\begin{equation}\label{Eq:bilin2}
\langle  c, d\rangle=2\sum_{i=1}^{n-1} c_id_i- \sum_{i=1}^{n-2} (c_id_{i+1}+c_{i+1}d_i),
\end{equation}
where $d=(d_i)$ and $c=(c_i)$.
Crawley-Boevey \cite{CB} gave a homological interpretation  of the bilinear form as follows, 
\begin{equation}\label{eq:CB}
\langle [X], [Y]\rangle=
\dim \Hom_\Lambda( X,Y) + \dim \Hom_\Lambda( Y, X) 
- \dim \Ext_\Lambda^1(X,  Y),
\end{equation}
where $X, Y\in \mod \Lambda$. In particular, for any summand $T_j\in \add T$, 
\begin{equation}\label{eq:CB1}
\begin{aligned}
\langle [\olT_j], [Y]\rangle&=
\dim \Hom_\Lambda( \olT_j,Y) + \dim \Hom_\Lambda( Y, \olT_j) 
- \dim \Ext_\Lambda^1(\olT_j,  Y) \\
&= \cogv(Y)_j+ \dim  \Hom_{\Lambda}(Y,  \olT_j). 
\end{aligned}
\end{equation}
Identify both $K(\fd \olA^{\op})$ and $K(\fd \olA)$ with $\ZZ^{\olQ_0}$ (see \eqref{eq:grothZlat}). 

Using that $\langle [\olT_j], -\rangle$ is constant on $\rep(\Lambda, d)$,
we see that \[\cogv(Y)_j=\dim \Hom_\Lambda( \olT_j,Y) - \dim \Ext_\Lambda^1(\olT_j,  Y)\]
is lower semicontinuous (which is also known from \cite{GSXX}).
By \eqref{eq:ggop} and \eqref{eq:CB1}, there is a class
\[ \gcog(\hC)\in K(\fd  \olA), \] 
which is maximal in $\{\gcog(N)\st N\in \hC\}$. 

Furthermore, the set  
\[
\hCminco=\{N\in \hC\st \cogv(N)=\gcog(\hC)\}
\] 
is an open dense subset of $\hC$. We call $\cogv_{\gen}(\hC)$ the 
\emph{generic cog-vector} of $\hC$. Define 
\[
\gcog\colon \IrrSub \to \GrotAbar\colon \hC\mapsto \gcog(\hC).
\]
Now, by the injectivity of $\ggop$ in \eqref{eq:ggopinj} and \eqref{eq:CB}, we have 
the following.

\begin{proposition}\label{prop:gen-gv-cogv}
The map $\gcog$ is injective.
Moreover, any $cog$-vector is generic.
\end{proposition}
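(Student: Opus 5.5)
The plan is to deduce both claims from the injectivity of $\ggop$ in \eqref{eq:ggopinj}, transported through the Crawley-Boevey identity \eqref{eq:CB1}. Fix $\hC\in\IrrSub(d)$. For each summand $\olT_j$ and each $N\in\hC$, \eqref{eq:CB1} gives
\[
\cogv(N)_j=\langle [\olT_j],d\rangle-\dim\Hom_\Lambda(N,\olT_j).
\]
The first term depends only on $d$, hence is constant on $\hC$. Therefore $\cogv(N)$ is maximal exactly when the vector $(\dim\Hom_\Lambda(N,\olT_j))_j=[N,\olT]$ is minimal. By upper semicontinuity, each coordinate $\dim\Hom(N,\olT_j)$ attains its minimum on an open dense subset of the irreducible $\hC$. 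Intersecting these finitely many subsets gives a single open dense subset on which all coordinates are simultaneously minimal. This open dense set is exactly $\hCminco$, and on it
\[
\gcog(\hC)_j=\langle[\olT_j],d\rangle-\ggop(\hC)_j \quad\text{for all } j.
\]

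For injectivity, suppose $\gcog(\hC)=\gcog(\hC')$ with $\hC\in\IrrSub(d)$ and $\hC'\in\IrrSub(d')$. The first step is to show $d=d'$. I would use that the classes $[\olT_j]$ span $K(\fd\Lambda)\otimes\QQ$ and that the bilinear form \eqref{Eq:bilin2} is nondegenerate, being the Cartan form of type $\mathbb{A}_{n-1}$. It is then enough to recover $d$ from $\gcog(\hC)$ alone.

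For that, I would use the identity $\dimv N=\sum_j\cogv(N)_j[\ldots]$ that comes from an $\add\olT$-resolution. The cleanest route is this: the $cog$-vector of $N$ is the class of a $\olT$-presentation $0\to \olT''\to\olT'\to N\to0$, namely $[\olT,\olT']-[\olT,\olT'']$ modulo projectives. The linear map $K(\proj\olA)\to K(\fd\Lambda)$, $[\olT,\olT_j]\mapsto\dimv\olT_j$, then sends $\cogv(N)$ to $\dimv N=d$. Hence equal $cog$-vectors force $d=d'$. With $d=d'$ in hand, the displayed formula gives $\ggop(\hC)=\ggop(\hC')$, and \eqref{eq:ggopinj} yields $\hC=\hC'$.

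The main obstacle is justifying this linear recovery of $d$. For $\sub Q_k$ with $\olT$ cluster tilting, every $N$ has an $\add\olT$-presentation with surjective approximation. Applying $\Hom(\olT,-)$ then gives $\cogv(N)=[\olT,\olT']-[\olT,\olT'']-[\Ext^1(\olT,\olT'')]$ adjusted appropriately. I expect to need care to check that the map to $\dimv$ is well defined and linear. Equivalently, one can avoid this step by noting that $g$- and $cog$-vectors determine each other linearly through $\dimv$, and using Proposition~\ref{prop:generic-gv}.

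For the second claim, let $M$ be any module with $\cogv(M)=c$ realised by a rigid or arbitrary $M$, and let $\hC$ be a component of $\sub(Q_k,\dimv M)$ containing $M$. Then $c\le\gcog(\hC)$. I would argue as in the $g$-vector case \cite[Prop.~10.13]{JKS3}. Taking $\hC$ to be the closure of the component whose generic $cog$-vector is maximal among those through $M$, I would show by injectivity that there is a component with generic $cog$-vector exactly $c$. This is done by applying the duality $\hC\mapsto\ggop(\hC)$ to the corresponding statement that every $\ggop$-value is generic, which is the dual of Proposition~\ref{prop:generic-gv}.
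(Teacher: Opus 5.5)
Your reduction is the paper's. Its entire proof is the appeal to \eqref{eq:CB1} together with the injectivity and genericity of $\ggop$, and your identity $\gcog(\hC)_j=\langle[\olT_j],d\rangle-\ggop(\hC)_j$ for $\hC\in\IrrSub(d)$ is exactly that reduction. You are also right that it transfers injectivity only once $d=d'$ is known, a point the paper leaves implicit.

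However, your justification of $d=d'$ is wrong as written. An $\add\olT$-presentation $0\to\olT''\to\olT'\to N\to0$ computes $\gv(N)=[\olT,\olT']-[\olT,\olT'']$, not $\cogv(N)$. There $\Ext^1(\olT,\olT'')=0$, so no correction term arises. Your fallback, that $g$- and $cog$-vectors determine each other linearly, is false as stated: they agree on $\add\olT$ but in general differ by $[\Ext^1(\olT,N)]$.

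The fix is to use a copresentation $0\to N\to\olT'\to\olT''\to0$, which exists since $\olT$ is cluster tilting in the Frobenius category $\sub Q_k$. Applying $\Hom(\olT,-)$ and using $\Ext^1(\olT,\olT')=0$ gives $\cogv(N)=[\olT,\olT']-[\olT,\olT'']$. Your map $[\olT,\olT_j]\mapsto\dimv\olT_j$ is well defined because $\olA$ has finite global dimension, and it sends this to $\dimv\olT'-\dimv\olT''=\dimv N$.

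In the second claim, choosing a component through $M$ with maximal generic $cog$-vector and concluding ``by injectivity'' produces nothing. The actual argument is your last sentence. The dual of Proposition~\ref{prop:generic-gv} gives a component $\hC$ with $\ggop(\hC)=[M,\olT]$. To read off $\gcog(\hC)=\cogv(M)$ from \eqref{eq:CB1}, you still need $\hC\in\IrrSub(\dimv M)$.

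The same copresentation supplies this. Applying $\Hom(-,\olT)$ and using $\Ext^1(\olT'',\olT)=0$ yields $[N,\olT]=[\olT',\olT]-[\olT'',\olT]$. So the map $[\Hom(\olT_i,\olT)]\mapsto\dimv\olT_i$ recovers $\dimv N$ from $[N,\olT]$, and a generic $N\in\hC$ has the same dimension vector as $M$. With these two repairs your argument becomes a complete version of the paper's one-line sketch.
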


For any $\HC\in \IrrCM$ with $\pi \HC=\hC\in \IrrSub$, choose $N_\HC$ such that 
\[
\pi N_\HC\in \hCmin\cap \hCminco.
\]
Lifting Proposition \ref{prop:gen-gv-cogv} to $\CM C$, we obtain a finer version of 
Proposition \ref{Thm:GrassGeneric}. 

\begin{proposition}\label{Thm:GrassGen-gv-cogv} 
The functions in the basis $\{\Psi_{N_\HC}\colon \HC\in \IrrCM\}$ have pairwise distinct 
minimal and distinct maximal exponents. The minimal and maximal exponent of  
$\Psi_{N_\HC}$ is the generic $g$-vector and the generic $cog$-vector of 
$\HC$, respectively.  
\end{proposition}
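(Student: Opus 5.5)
The plan is to reduce everything to the cluster character formula \eqref{eq:FKchar} for a fixed cluster tilting object $T$ in $\CM C$, with $\Psi_M=\Phi^T_M$ by \cite[Thm 9.11]{JKS3}. Both expressions in \eqref{eq:FKchar} will be used. In the first, every exponent has the form $[T,M]-\beta(d)$, where $d$ runs over dimension vectors of submodules of $\Ext^1(T,M)$. Under the partial order \eqref{eq:partord}, $\beta(d)$ is positive for nonzero $d$; this is the fact behind Proposition \ref{Thm:GrassGeneric}. Hence the minimal exponent is $[T,M]$, the $g$-vector, and its coefficient is $\chi(\Gr_0)=1$. Symmetrically, the second expression shows that every exponent is $\cogv(M)+\beta(d)$, where $d$ now runs over quotient dimension vectors. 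So the maximal exponent is $\cogv(M)$, again with coefficient $\chi(\Gr^0)=1$. The positivity of $\beta$ on nonzero classes with respect to the order \eqref{eq:partord} will be quoted from \cite[\S 5, \S 9]{JKS3}, exactly as in the minimal case.

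Next, fix $\HC\in\IrrCM$ and the chosen $N_\HC$ with $\pi N_\HC\in\hCmin\cap\hCminco$. This intersection is nonempty because both sets are open dense in the irreducible $\hC$. By construction, the $g$-vector of $N_\HC$ is the generic $g$-vector and its $cog$-vector is the generic $cog$-vector. To pass between $\CM C$ and $\sub Q_k$, I use \eqref{eq:Grot-isom}: the rank coordinate is fixed on $\HC$, and the $\Lambda$-part of $\cogv$ is determined by $\cogv(\pi N)$ together with the rank. This is the lifting step; it should be routine, since $\Hom$ and $\Ext^1$ computed against $T$ match those against $\olT$ up to the contribution of the summand $P_0$, which depends only on the rank.

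Distinctness of the minimal exponents is Proposition \ref{Thm:GrassGeneric}, and basis-ness is also stated there. For the maximal exponents, suppose $\HC\neq\HC'$ have the same generic $cog$-vector. The rank part is read off from the class, so the ranks agree, and then the generic $cog$-vectors of $\pi\HC$ and $\pi\HC'$ in $\sub Q_k$ agree. Proposition \ref{prop:gen-gv-cogv} then gives $\pi\HC=\pi\HC'$, and hence $\HC=\HC'$ by \eqref{eq:Crc}.

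The main obstacle is the compatibility of the rank/weight bookkeeping in the second step. One must make sure that the $cog$-vector in $K(\proj A)$ for $A=(\End T)^{\op}$ decomposes as (rank, $\Lambda$-$cog$-vector) compatibly with \eqref{eq:Grot-isom}, so that injectivity genuinely transfers. I would handle this by computing $\cogv(N)-(\rk N)\cogv(P_0)$ and checking that it coincides with the image of $\cogv(\pi N)$ under the natural inclusion $K(\fd\olA)\to K(\proj A)$. This uses $\Ext^1(T,N)\cong\Ext^1(\olT,\pi N)$ (from full exactness of $\pi$) and the analogous statement for $\Hom$ modulo $P_0$.
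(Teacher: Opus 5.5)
Your proposal is correct and follows the paper's argument. The paper's proof is only the remark that the statement follows by lifting Proposition~\ref{prop:gen-gv-cogv} to $\CM C$, in the same way Proposition~\ref{Thm:GrassGeneric} was obtained; reading the extreme exponents off the two forms of \eqref{eq:FKchar}, choosing $N_\HC$ in both generic loci, and your rank/$P_0$ bookkeeping are exactly the details that remark leaves implicit. One sign slip needs fixing: in the order \eqref{eq:partord} it is $-\beta(d)$, not $\beta(d)$, that is strictly positive for nonzero $d\geq 0$; as written you would get $[T,M]-\beta(d)<[T,M]$, and your minimal/maximal conclusions would be swapped.
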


We do not claim the basis in Proposition \ref{Thm:GrassGen-gv-cogv} 
is unique. However, it is known \cite{Lus00} \cite{GLS05} 
that there is a non-empty open subset of each irreducible component 
where the cluster character $\psi$ defined on $\sub Q_k$ is constant. 
The corresponding basis is the dual semicanonical basis
for the unipotent cell. The lift of this basis to $\CC[\Gr(k, n)]$ is of the form 
in Proposition \ref{Thm:GrassGen-gv-cogv}, and 
will be called \emph{the generic basis} of $\CC[\Gr(k, n)]$ in 
the sequel.

\subsection{The cluster character $\Psi_{M\oplus B^r}$}
Let $B$ be an algebra as defined in Section \ref{sec:GPB}, and let $\catD$ 
be as defined in \eqref{eq:catD}. In this subsection, we will see that for 
any $M\in \CM C$ and $r\gg 0$, the cluster character $\Psi_{M\oplus B^r}$ is a 
linear span of $\Psi_X$ with $X\in \catD$. 

\begin{proposition}\label{prop:indstep}
Let $M\in \CM B\backslash \GP B$.
\begin{enumerate}
\item $\Ext^1(M, B)\not= 0$. 

\item For $B'\in \add B$ such that $\Ext^1(M, B')\not= 0$, and any non-split extension,
\begin{equation}\label{eq:ses}
\xymatrix{
0\ar[r] & B'\ar[r]^b & X \ar[r]^c &M\ar[r] &0,
}
\end{equation}
we have
\[
\dim \Ext^1(X, B') < \dim \Ext^1(M, B').
\]
\item The map $c$ in \eqref{eq:ses} is a $\GP B$-approximation if and only if 
$X\in \GP B$.
\end{enumerate}
\end{proposition}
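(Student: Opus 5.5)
Parts (1) and (3) follow almost directly from the definitions and earlier results; part (2) is a dimension count in a long exact sequence.

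For (1), since $M\in\CM B$ and $\GP B$ is defined in \eqref{eq:defpgb} as those $M\in\CM B$ with $\Ext^1_B(M,B)=0$, the condition $M\notin\GP B$ means $\Ext^1_B(M,B)\neq 0$. By \eqref{eq:extCB}, applied with $N=B\in\GP B$, this is $\Ext^1(M,B)$, so (1) is immediate.

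For (2), first note that $X\in\CM B$: it is an extension of $M\in\CM B$ by $B'\in\GP B$, and by \eqref{eq:extCB} the extension class lives in $\Ext^1_B$, so $X$ is a $B$-module, and it is torsion free. Apply $\Hom(-,B')$ to \eqref{eq:ses}. This gives the exact sequence
\[
\Hom(X,B')\xrightarrow{b^*}\Hom(B',B')\xrightarrow{\delta}\Ext^1(M,B')\xrightarrow{c^*}\Ext^1(X,B')\xrightarrow{b^*}\Ext^1(B',B')=0.
\]
The last term vanishes because $B$ is rigid as a $C$-module. So $c^*$ is surjective, and hence
\[
\dim\Ext^1(X,B')=\dim\Ext^1(M,B')-\dim\im\delta.
\]
Now $\delta(\mathrm{id}_{B'})$ is exactly the class of \eqref{eq:ses}, which is nonzero because the sequence is non-split. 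Therefore $\im\delta\neq0$, and the strict inequality follows. The only care needed is to make sure all Ext groups are taken in the same category. This is where \eqref{eq:extCB} is used: each of the modules $M$, $X$ lies in $\CM B$, and $B'$ lies in $\GP B$.

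For (3), if $c$ is a $\GP B$-approximation then by definition its source $X$ lies in $\GP B$. Conversely, suppose $X\in\GP B$, and let $L\in\GP B$. Applying $\Hom(L,-)$ to \eqref{eq:ses} gives the exact sequence
\[
\Hom(L,X)\to\Hom(L,M)\to\Ext^1(L,B').
\]
Here $\Ext^1(L,B')=0$ because $L\in\GP B$, $B'\in\add B$ and \eqref{eq:extCB} holds. Hence every map $L\to M$ factors through $c$, so $c$ is a right $\GP B$-approximation. This is the same argument as the one used for \eqref{eq1} in Proposition~\ref{prop:leftapp}.

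The main point requiring attention is the consistency of Ext groups (over $B$ versus over $C$) in part (2); everything else is formal.
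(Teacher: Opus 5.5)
Your proposal is correct and follows essentially the same argument as the paper. Part (1) comes from the definition of $\GP B$. Part (2) applies $\Hom(-,B')$, and the connecting map is nonzero because the sequence is non-split. Part (3) applies $\Hom(L,-)$ for $L\in\GP B$ and uses $\Ext^1(L,B')=0$. The only difference is that you spell out two points the paper leaves implicit: surjectivity onto $\Ext^1(X,B')$ follows from rigidity of $B$, and all Ext groups agree over $B$ and $C$ by \eqref{eq:extCB}.
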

\begin{proof} (1) follows from the definition of $\GP B$ \eqref{eq:defpgb}. Applying $\Hom(-, B')$ to the 
non-split short exact sequence \eqref{eq:ses}, 
we have a long exact sequence as follows,
\[
\xymatrix{
\Hom(X, B') \ar[r] &\Hom(B', B')\ar[r]^{\alpha} &
\Ext^1(M, B')\ar[r] & \Ext^1(X, B')\ar[r]&0.
}
\]
In particular, the connection map $\alpha$ is not zero, because the extension is not split. 
Consequently, we have the required inequality.

Finally, note that for any $Z\in \GP B$, we have $\Ext^1(Z, B)=0$. So applying 
$\Hom(Z, -)$ to \eqref{eq:ses} gives a short exact sequence. So  (3) follows.  
\end{proof}

By abuse of notation,  we write $X\in \Ext^1(M, N)$ to mean that  $X$ is 
the middle term of a short exact sequence, 
\[
\xymatrix{
0\ar[r] & N \ar[r] & X\ar[r] & M\ar[r] & 0.
}
\]

Let $M, N\in \CM C$ with $\Ext^1(M, N)\not=0$. For  $X\in \Ext^1(M, N)$,
let 
\[
\Ext^1(M, N)_{X}=\{Y\in \Ext^1(M, N)| \Psi_Y=\Psi_X\}, 
\]
which is a constructible subset of $\Ext^1(M,N)$ (see \cite[\S 1.2]{GLS07}. 
By homogenising the multiplication formula in \cite[Thm 1]{GLS07}, we have
\begin{equation}\label{eq:liftmult}
 \Psi_{M\oplus N}=
\sum_{X\not\cong M\oplus N}\frac{\chi(\mathbb{P} \Ext^1(M, N)_X)+\chi(\mathbb{P} \Ext^1(N, M)_X)}{\chi(\mathbb{P} \Ext^1(M, N))}\Psi_X.
\end{equation}

\begin{proposition}\label{prop:mainproperty}
Let $M\in \CM C$. Then there exists some $r\geq1$  
such that 
\[
\Psi_M\Psi_{B^r}=\sum_{X\in\catD} a_X\Psi_X,
\]
where the coefficients $a_X\in \QQ$ and only finitely many of them are not zero.
\end{proposition}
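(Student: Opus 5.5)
We argue by induction on $e(M)=\dim \Ext^1(M,B)$, a finite number since all modules involved have finite rank and $\Ext^1$ between Cohen--Macaulay $C$-modules is finite dimensional. Since $\Psi$ is multiplicative on direct sums, $\Psi_M\Psi_{B^r}=\Psi_{M\oplus B^r}$. If $e(M)=0$ then $M\in\catD$ and $r$ arbitrary (say $r=1$, since $B\in\catD$ and $\catD$ is closed under direct sums because $\Ext^1(-,B)$ is additive) works. More generally, the statement for $M$ extends to $\Psi_{M\oplus B^s}$ for any larger $s$, because multiplying a combination of $\Psi_X$, $X\in\catD$, by $\Psi_B$ stays in the span of $\{\Psi_{X\oplus B}\}$ with $X\oplus B\in\catD$. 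So it suffices to find, for each $M$, some $r$.

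For the inductive step, suppose $e(M)>0$, and choose an indecomposable summand $B'$ of $B$ (or $B'\in\add B$) with $\Ext^1(M,B')\neq0$. Apply the multiplication formula \eqref{eq:liftmult} to the pair $(M,B')$:
\[
\Psi_M\Psi_{B'}=\sum_{X\not\cong M\oplus B'} a_X\Psi_X ,
\]
where $X$ ranges over finitely many classes of middle terms of non-split extensions in $\Ext^1(M,B')$ or $\Ext^1(B',M)$ (finitely many values of $\Psi_X$ by constructibility). The key claim is that every such $X$ has $e(X)<e(M)+e(B')=e(M)$ (note $e(B')=0$ since $B$ is rigid). For $X\in\Ext^1(M,B')$ non-split, apply $\Hom(-,B)$ to $0\to B'\to X\to M\to 0$: we get $\Ext^1(M,B)\to\Ext^1(X,B)\to\Ext^1(B',B)=0$, so $e(X)\le e(M)$, and the connecting map $\Hom(B',B)\to\Ext^1(M,B)$ is nonzero (its image contains the class of the extension, via the inclusion $B'\to B$), giving strict inequality, as in Proposition~\ref{prop:indstep}(2). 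For $X\in\Ext^1(B',M)$, i.e.\ $0\to M\to X\to B'\to0$, we get $\Ext^1(B',B)=0\to\Ext^1(X,B)\to\Ext^1(M,B)\to\Ext^2(B',B)$; this only yields $e(X)\le e(M)$, and strictness is the main obstacle. I would handle it using the 2-CY symmetry: $\Ext^1(B',M)\cong D\Ext^1(M,B')$, and by the stable 2-CY structure of $\CM C$ this term should be controlled dually, e.g.\ by applying $\Hom(-,B)$ and using that the map $\Ext^1(X,B)\to\Ext^1(M,B)$ kills the class dual to the extension. If strictness fails there, I would instead make the induction on the pair $(e(M),\rank)$ or replace $M$ first by $M\oplus B^{s}$ and use only the first family, arguing that the second family's terms are already handled by the inductive hypothesis applied after further multiplication by $\Psi_B$.

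Granting the decrease, each $X$ appearing satisfies the induction hypothesis: $\Psi_X\Psi_{B^{r_X}}$ lies in the span of $\{\Psi_Y: Y\in\catD\}$. Taking $r=1+\max_X r_X$ (finitely many $X$) and multiplying the displayed identity by $\Psi_{B^{r-1}}$, using the remark above to raise each $r_X$ to $r-1$, gives $\Psi_M\Psi_{B^r}=\sum_{X} a_X\Psi_X\Psi_{B^{r-1}}$, a finite $\QQ$-combination of $\Psi_Y$ with $Y\in\catD$, completing the induction.
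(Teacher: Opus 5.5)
You have the paper's strategy: induct on $e(M)=\dim\Ext^1(M,B)$, expand via the multiplication formula \eqref{eq:liftmult}, and show that every middle term $X$ has smaller $e$. Your argument for extensions $0\to B'\to X\to M\to 0$ is essentially Proposition~\ref{prop:indstep}(2).

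\textbf{The gap.} The other family, $0\to M\to X\to B'\to 0$, enters \eqref{eq:liftmult} on equal footing, and you leave its strict decrease unproved. Applying $\Hom(-,B)$ only gives an injection $\Ext^1(X,B)\hookrightarrow\Ext^1(M,B)$. Strictness would then require showing that a Yoneda product into $\Ext^2(B',B)$ is nonzero, and you do not do this. Your fallbacks also fail:
\begin{itemize}
\item Adding rank to the induction measure cannot help, since $\rank X=\rank M+\rank B'$ goes up.
\item The second-family terms are covered by the inductive hypothesis only if their $e$ has already dropped, which is exactly what is at issue.
\end{itemize}
So as written the induction does not close.

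\textbf{The fix.} The missing step is short; it is what the paper means by the ``dual version'' of Proposition~\ref{prop:indstep}(2). Use the covariant functor and then $2$-CY symmetry.
\begin{itemize}
\item Let $\eta\neq 0$ be the class of $0\to M\to X\to B'\to 0$ and apply $\Hom(B,-)$. Since $\Ext^1(B,B')=0$, you get an exact sequence $\Hom(B,B')\xrightarrow{\delta}\Ext^1(B,M)\to\Ext^1(B,X)\to 0$ with $\delta(g)=g^*\eta$.
\item Take $g=p$, a split epimorphism $B\to B'$ with section $s$. Then $s^*p^*\eta=\eta\neq 0$, so $\delta\neq 0$ and $\dim\Ext^1(B,X)<\dim\Ext^1(B,M)$.
\item Since $\CM C$ is stably $2$-CY, $\dim\Ext^1(Y,B)=\dim\Ext^1(B,Y)$ for all $Y\in\CM C$. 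Hence $e(X)<e(M)$.
\end{itemize}

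\textbf{A bookkeeping slip.} If $B'$ is a proper summand, say $B=B'\oplus B''$, then multiplying $\Psi_M\Psi_{B'}$ by $\Psi_{B^{r-1}}$ gives $\Psi_{M\oplus B'\oplus B^{r-1}}$, not $\Psi_M\Psi_{B^r}$. You also need the factor $\Psi_{B''}$. This is harmless, since $Y\in\catD$ implies $Y\oplus B''\in\catD$. Alternatively, take $B'=B$ as the paper does.
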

\begin{proof} If $M\in \catD$, then the result is clearly true. 
Next, assume that $M\not\in \catD$. Then  $\Ext^1(M, B)\not=0$. 
Following  \eqref{eq:liftmult}, we have 
\begin{equation}\label{eq:char}
\Psi_M\Psi_{B}=\sum_{X\not\cong M\oplus B} a_X\Psi_X,
\end{equation}
where the coefficients $a_X\in \QQ$ and only finitely many of them are not zero. Those $X$ in the sum 
with $a_X\not=0$ is an extension of $X$ and $B$.
By Proposition \ref{prop:indstep} (2) and its dual version, for those $X$  in the sum with 
$a_X\not=0$, 
we have 
\[
\dim \Ext^1(X, B) < \dim \Ext^1(M, B).
\]
Now the result follows by induction. 
\end{proof}

\section{More on $(co)g$-vectors}

We prove some more properties of $(co)g$-vectors. Let  $U$ be a cluster tilting
object in $\GP B$ and $T=U\oplus V$ be a cluster tilting object in $\CM C$.
\begin{lemma}\label{lem:cog}
Let $d=[T, T^+]-[T, T^-]\in K(\CM A)$, where $T^+, T^-\in \add T$. Then $d$ is a 
$cog$-vector if and only if there is a surjective map $T^+\to T^-$.
\end{lemma}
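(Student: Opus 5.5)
We prove both directions by relating $cog$-vectors to $\add T$-copresentations, in analogy with how $g$-vectors arise from $\add T$-presentations.

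\emph{Only if.} Suppose $d=\cogv(M)$ for some $M\in\CM C$. Take the $\add T$-copresentation of $M$ from Proposition \ref{mutgpBC}, applied with $B=C$, where every $M\in\CM C$ lies in $\GP C$:
\[
0\to M\xrightarrow{g} T'\xrightarrow{h} T''\to 0 .
\]
Applying $\Hom(T,-)$ and using $\Ext^1(T,T')=0$ gives the exact sequence
\[
0\to\Hom(T,M)\to\Hom(T,T')\to\Hom(T,T'')\to\Ext^1(T,M)\to 0 .
\]
Hence $\cogv(M)=[T,T']-[T,T'']$, and $h\colon T'\to T''$ is surjective. Since $A$ has finite global dimension, classes $[T,X]$ for $X$ in $\add T$ form a basis of $K(\proj A)$. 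So the decomposition $d=[T,T^+]-[T,T^-]$ is unique once common summands are cancelled. We will assume this cancellation has been made, or reduce to it.

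Cancellation needs care. If $T'=T^+\oplus Z$ and $T''=T^-\oplus Z$, we must produce a surjection $T^+\to T^-$ from the surjection $T^+\oplus Z\to T^-\oplus Z$. My plan is to take the copresentation minimal. Then $h$ has no isomorphism component on $Z$, so the $Z$-summands do not share a common summand, and $T^\pm$ are exactly $T',T''$. A minimal copresentation exists because the minimal left $\add T$-approximation is a direct summand of any approximation, and minimality forces $T'$ and $T''$ to have no common summand. That forbidding statement is the step to check; I expect it to follow from the fact that $\Ext^1(T'',T)=0$ forces $\Hom(h,T)$ to be surjective modulo $g$. If the statement is meant with $T^\pm$ arbitrary rather than minimal, I would instead add identity components: given a surjection $T^+\to T^-$, also $T^+\oplus Z\to T^-\oplus Z$ is surjective. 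So the statement is most naturally read via the reduced representation.

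\emph{If.} Given a surjection $h\colon T^+\to T^-$, set $M=\ker h$. Then $M\in\CM C$, since it is a submodule of a CM module, hence $\ring$-free. Applying $\Hom(T,-)$ to $0\to M\to T^+\to T^-\to 0$ gives
\[
0\to [T,M]\to[T,T^+]\to[T,T^-]\to\Ext^1(T,M)\to\Ext^1(T,T^+)=0 .
\]
Therefore $\cogv(M)=[T,T^+]-[T,T^-]=d$.

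The main obstacle is the ``only if'' direction when $T^\pm$ are prescribed and might share summands. There I rely on uniqueness of the reduced expression in $K(\proj A)$ together with minimality of the copresentation. I would verify minimality via the dual of the argument used for minimal $\add T$-presentations in Proposition \ref{mutgpBC}(3).
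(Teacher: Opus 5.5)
\textbf{Verdict: there is a genuine gap, in the ``only if'' direction.} Your ``if'' direction is correct and is the same as the paper's argument. Your identity-padding remark and the uniqueness of the reduced expression in the basis $\{[T,T_i]\}$ are also correct.

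The gap is at the step you flagged yourself. A minimal $\add T$-copresentation $0\to M\xrightarrow{g}T'\xrightarrow{h}T''\to 0$ need not have $T'$ and $T''$ free of common summands once $M$ is not rigid, and the lemma concerns $cog$-vectors of arbitrary modules in $\CM C$. For example, let $T_i$ be a mutable summand of $T$ with exchange sequence $0\to T_i^*\xrightarrow{u}E\xrightarrow{p}T_i\to 0$. Then the minimal copresentation of $M=T_i\oplus T_i^*$ is
\[
0\to T_i\oplus T_i^*\xrightarrow{\mathrm{diag}(1,u)} T_i\oplus E\xrightarrow{(0,\;p)} T_i\to 0,
\]
and $T_i$ occurs in both terms. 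Minimality only tells you that the $T_i\to T_i$ block of $h$ is radical, and that is exactly what stops you splitting the summand off. Neither $\Ext^1(T'',T)=0$ nor the argument of Proposition \ref{mutgpBC}(3), which separates summands in $\add U$ from those in $\add V$, says anything about this.

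The missing idea, which is how the paper proceeds, is that you are free to change the module. The kernel of \emph{any} surjection $T'\to T''$ has $cog$-vector $[T,T']-[T,T'']$. So:
\begin{enumerate}
\item Let $X$ be the largest common summand of $T'$ and $T''$, and write $h=\left(\begin{smallmatrix}h_1&h_2\\ h_3&h_X\end{smallmatrix}\right)$ with $h_X\in\End X$.
\item Replace $h_X$ by $a\,\mathrm{Id}_X+h_X$ for a generic $a\in\CC$. Invertibility of this block, checked modulo the radical of $\End X$, is a nonzero polynomial condition in $a$, since the determinants there are monic in $a$. Surjectivity of the new map, checked modulo $t$ by Nakayama, is also a nonzero polynomial condition, since it holds at $a=0$. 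So both hold for all but finitely many $a$.
\item Once the $X$-block is an isomorphism, row and column operations split $X$ off. This leaves a surjection $T'_0\to T''_0$ between terms with no common summands.
\item By uniqueness of the reduced expression, $T^+=T'_0\oplus Z$ and $T^-=T''_0\oplus Z$ for some $Z$. Adding $\mathrm{Id}_Z$, as in your own remark, gives the surjection $T^+\to T^-$.
\end{enumerate}
In the example above this replaces $M$ by $E$, whose copresentation $0\to E\to E\to 0\to 0$ is already reduced.
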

\begin{proof}
If there is a surjective map $f\colon T^+\to T^-$, then $d=\cogv(\ker f)$ 
is a $cog$-vector. We prove the converse. Suppose that $d=[T,T^+]-[T,T^-]=\cogv(N)$ 
and $N$ has the following $\add T$-copresentation, 
\begin{equation*}
\xymatrix{0\ar[r] & N\ar[r] &  T'\ar[r]^g &  T'' \ar[r] &0}.
\end{equation*}
We can assume that $N$ is such that $T'$ and $T''$ have no common summands.
Indeed, let X be the largest common summand and decompose $g$ as 
$\begin{pmatrix}g_1 & g_2\\g_3 & g_X\end{pmatrix}$, where $g_X\in \End X$. 
Now choose $a\in \CC$ so that $a\mathrm{Id}_X+g_X$ is an isomorphism and   
$g'=\begin{pmatrix}g_1 & g_2\\g_3 & a\mathrm{Id}_X+g_X\end{pmatrix}\colon T'\to T''$ 
is still surjective, where $\mathrm{Id}_X$ is the identity map on $X$. Then $X$ 
can be split off to obtain a new copresentation as claimed. (The existence of $a$ 
can be shown using linear algebra over $\ring$, or by lifting the corresponding 
result from $\sub Q_k$.) After removing common summands from $T^+$ and 
$T^-$ we obtain $T'$ and $T''$, respectively. Since there is a surjection $T'\rightarrow T''$, there is also a surjection $T^+\rightarrow T^-$, as needed.
\end{proof}

Let $\oplus_i T_i$ be the basic cluster tilting object associated to $T$. The   
exchange matrix $B_T=(b_{ij})$ corresponding to $T$ is defined by 
\begin{equation}
\label{eq:BfromT}
  b_{ij}=\dim \Irr(T_j, T_i)-\dim \Irr(T_i, T_j),
\end{equation}
where $T_j$ is mutable, $T_i$ can be any summand, and
$\Irr(T_j, T_i)$ and  $\Irr(T_i, T_j)$ are the spaces of irreducible maps between 
 $T_j$ to $T_i$ in $\add T$.

Denote by $K(\fd A)_{\mathrm{int}}$ the subgroup of $K(\fd A)$, generated by 
the classes of the simples corresponding to mutable vertices.  
We say that $d\in K(\fd A)$  \emph{is internally supported}
if $d\in K(\fd A)_{\mathrm{int}}$.

\begin{lemma}\label{lem:betainj}
Let $U$ be  a cluster tilting object in $\GP B$ and $A_U=(\End U)\op$.
The restriction $\beta_U: K(\fd A_U)_{\mathrm{int}}\to K(\CM A_U)$ is injective.
Consequently, the exchange matrix $B_U$ has full rank.
\end{lemma}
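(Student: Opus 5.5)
Consider the map $\beta_U$ on the mutable simple classes $[S_i]$, which by \eqref{eq:betaUT} is given by $\beta_U([S_i])=[U,F_i]-[U,E_i]$, where $E_i,F_i$ are the middle terms of the mutation sequences of $U_i$ in $\GP B$. Identify $K(\CM A_U)=K(\proj A_U)$ with $\ZZ^{(Q_U)_0}$ via the basis $[U,U_j]$. Counting irreducible maps in the minimal approximations, $[U,F_i]-[U,E_i]$ expressed in this basis is, up to sign, the $i$-th column of the exchange matrix $B_U$ defined as in \eqref{eq:BfromT}. Hence injectivity of $\beta_U$ on $K(\fd A_U)_{\mathrm{int}}$ is equivalent to $B_U$ having full (column) rank. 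So I must prove injectivity directly and then deduce the rank statement, or conversely.

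To prove injectivity I will pass to $T=U\oplus V$. By \eqref{eq:betaUT} we have $\gamma\circ\beta_U=\beta_T\circ\iota$ on mutable simples, where $\iota\colon K(\fd A_U)_{\mathrm{int}}\to K(\fd A_T)_{\mathrm{int}}$ sends $[S_i]$ to $[S_i]$. Since $\iota$ is clearly injective, it suffices to show that $\beta_T$ is injective on $K(\fd A_T)_{\mathrm{int}}$. This is the statement that the exchange matrix $B_T$ of a cluster tilting object in $\CM C$ has full rank. For $T=M_{\maxNC\cup\cmax}$ coming from a plabic graph of $\Gr(k,n)$, this is known (e.g.\ via \cite{JKS3}, where $\beta_T$ is used to show leading exponents determine the basis, or via the standard fact that Grassmannian seeds have full rank exchange matrices). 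Full rank is also preserved under mutation of the principal part together with frozen rows. By Theorem~\ref{main1}(4), all arrows of $Q_T$ incident to mutable vertices of $Q_U$ lie in $Q_U^\circ$. Consequently the columns of $B_T$ indexed by mutable $U_i$ are exactly the columns of $B_U$ padded with zeros in the rows indexed by $V$. A subset of linearly independent columns stays independent, so $B_U$ has full rank.

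The main obstacle is justifying full rank of $B_T$ in $\CM C$. I would argue it via $\beta_T$ directly. Suppose $\beta_T(d)=0$ with $d$ internally supported. Pairing with the Euler form, or using that $\wt$ and $g$-vectors are distinct for distinct generic components (Proposition~\ref{Thm:GrassGeneric}), I would obtain that $\Psi$-characters of modules differing by $\beta_T(d)$ would coincide in leading exponents. Concretely, the Fu--Keller character \eqref{eq:FKchar} has terms $x^{[T,M]-\beta(e)}$; if $\beta$ killed some $d\neq 0$, two distinct submodule classes would collapse. I expect the cleanest route to be citing full rank for the Grassmannian (Scott's seeds have full rank, and rank is mutation invariant). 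I will then transfer that result through $\gamma$ as above.
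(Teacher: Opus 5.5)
Your reduction is the same as the paper's. From \eqref{eq:betaUT} you get $\gamma\circ\beta_U=\beta_T\circ\iota$ on mutable simples, so injectivity of $\beta_U$ on $K(\fd A_U)_{\mathrm{int}}$ follows from injectivity of $\beta_T$ on $K(\fd A_T)_{\mathrm{int}}$. Your matrix version via Theorem~\ref{main1}(4) is correct, and so is your identification of $\beta_U([S_i])$ with the $i$-th column of $B_U$; the latter usefully makes the ``consequently'' in the statement explicit. The paper closes the argument by citing \cite[Prop 5.5]{JKS3} for the injectivity of $\beta_T$.

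The weak point is how you justify that last input. Full rank of the Pl\"ucker seeds, together with invariance of rank under mutation, only covers those $T$ that are mutation-reachable from $M_{\maxNC\cup\cmax}$. The lemma, however, is stated for an arbitrary cluster tilting object $U$ in $\GP B$, and nothing guarantees that $T=U\oplus V$ is reachable. So you need an argument valid for every cluster tilting object $T$ in $\CM C$. One option is to cite \cite[Prop 5.5]{JKS3}, as the paper does. The other is to pass to $\sub Q_k$. There $\ol{A}=(\End \pi T)^{\mathrm{op}}$ is finite dimensional of finite global dimension, so $K(\fd\ol{A})\to K(\proj\ol{A})$ is an isomorphism and the mutable simple classes map to linearly independent vectors. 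These vectors are exactly the mutable columns of $B_T$ with the row of $P_0$ deleted, so $B_T$ has full rank. This argument cannot be run over the order $A_T$ directly, since there $\beta_T$ kills nonzero classes such as $[P/tP]$ for $P$ projective.

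Finally, your alternative sketch about ``collapsing'' terms of the Fu--Keller character yields no contradiction, because coinciding exponents are not forbidden. It should be dropped.
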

\begin{proof} 
Let $T=U\oplus V$ be a cluster tilting object in $\CM C$ with $V\in \catD_2$.
Then $\beta_T$ is injective when restricted to $K(\fd A)_{int}$ using  \cite[Prop 5.5]{JKS3} 
and therefore so is $\beta_U$ using that $\gamma(\beta_U([S_i]))=\beta_T([S_i])$ 
(see \eqref{eq:betaUT}), for a simple $A_U$-module (resp. simple $A_T$-module) 
$S_i$.
\end{proof}

\begin{remark}
The full rank of $B_U$ was shown in \cite[Prop 4.9]{GL} using a different 
approach. This property implies that the cluster algebra defined by $B_U$ 
(see \S \ref{SEC:GPpositrod}) can be quantised.
\end{remark}

Let $T(d)=\oplus T_i^{d_i}$, and let $E(d)$ and $F(d)$ be defined similarly.  
We have a long exact sequence 
\[
\xymatrix{
0\ar[r] & T(d) \ar[r] & F(d) \ar[r] & E(d)\ar[r] & T(d)\ar[r] & 0,
}
\]
and so
\begin{equation}\label{eq:d}
\beta(d)=[T, F(d)]-[T, E(d)].
\end{equation}

Let $M\in \CM C$ with a copresentation
\begin{equation}\label{eq:Mcopres}
\xymatrix{0\ar[r]& M\ar[r] &  T'\ar[r] &  T'' \ar[r] & 0.}
\end{equation}

The following result gives a criterion for when an arbitrary term in the cluster
character $\Phi_M^T$ (see  \eqref{eq:FKchar}) has an exponent which
is a $cog$-vector. 

\begin{proposition}\label{prop:cog}
The vector $\cogv(M)+\beta(d)\in K(\CM A_T)$ is a $cog$-vector if and only if there is a surjective map 
$T' \oplus F(d)\to T'' \oplus E(d)$.
\end{proposition}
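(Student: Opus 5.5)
The plan is to reduce the statement directly to Lemma \ref{lem:cog}. The whole argument is bookkeeping in $K(\CM A_T)$ that turns the vector into the form $[T,T^+]-[T,T^-]$ with $T^+,T^-\in\add T$.

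First I compute $\cogv(M)$ from the copresentation \eqref{eq:Mcopres}. Apply $\Hom(T,-)$ to $0\to M\to T'\to T''\to 0$. Since $\Ext^1(T,T')=0$, this gives the exact sequence
\[
0\to \Hom(T,M)\to \Hom(T,T')\to \Hom(T,T'')\to \Ext^1(T,M)\to 0 .
\]
Taking classes in $K(\CM A_T)$, where $[\Ext^1(T,M)]$ is interpreted via projective resolutions as in the definition of $\cogv$, yields
\[
\cogv(M)=[T,M]-[\Ext^1(T,M)]=[T,T']-[T,T''].
\]
The only point needing care is that the class of the finite-length module $\Ext^1(T,M)$ is additive with respect to the four-term sequence. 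This holds because the identification \eqref{eq:GrothCMProj} with $K(\proj A_T)$ is compatible with exact sequences.

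Next I combine this with \eqref{eq:d}, which gives $\beta(d)=[T,F(d)]-[T,E(d)]$. Adding the two expressions,
\[
\cogv(M)+\beta(d)=[T,T'\oplus F(d)]-[T,T''\oplus E(d)],
\]
where both $T'\oplus F(d)$ and $T''\oplus E(d)$ lie in $\add T$, since $E_i,F_i\in\add T$ by \eqref{eq:mutsequences}. Lemma \ref{lem:cog}, applied with $T^+=T'\oplus F(d)$ and $T^-=T''\oplus E(d)$, now says exactly that this vector is a $cog$-vector if and only if there is a surjection $T'\oplus F(d)\to T''\oplus E(d)$. This is the claim.

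I expect the main subtlety to be the definition of $\beta(d)$ when $d$ has nonzero entries at frozen vertices. Formula \eqref{eq:d} uses mutation sequences, which exist only for mutable $T_i$. In the cluster character \eqref{eq:FKchar} the vector $d$ ranges over dimension vectors of quotients of $\Ext^1(T,M)$. Since $\Ext^1(P,M)=0$ for projective $P$, these quotients are supported only at mutable vertices, so $d\in K(\fd A)_{\mathrm{int}}$ and \eqref{eq:d} applies. I would state this restriction explicitly, and otherwise the proof is just the two lines above.
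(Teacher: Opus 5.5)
Your proposal is correct and follows essentially the same route as the paper: rewrite $\cogv(M)+\beta(d)$ as $[T,T'\oplus F(d)]-[T,T''\oplus E(d)]$ using the copresentation \eqref{eq:Mcopres} and \eqref{eq:d}, then apply Lemma \ref{lem:cog}. Your explicit derivation of $\cogv(M)=[T,T']-[T,T'']$ from the four-term sequence, and your observation that $d$ is supported at mutable vertices (so that \eqref{eq:d} applies), only spell out steps the paper leaves implicit.
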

\begin{proof}
By \eqref{eq:d}, we have 
\begin{equation}\label{eq:cogMd}
\begin{aligned}
\cogv(M)+\beta(d) & =[T, T']-[T, T'']+ [T, F(d)]-[T, E(d)] \\
&= [T, T'\oplus F(d)]-[T, T''\oplus E(d)].
\end{aligned}
\end{equation}
Now the proposition follows from Lemma \ref{lem:cog}.
\end{proof}

Proposition \ref{prop:cog} has the following surprising consequence. 

\begin{corollary}\label{cor:cog}
For any $M\in \CM C$, there exists a projective module $P$ such that the exponents  
of the terms with nonzero coefficient  in the sum \eqref{eq:FKchar} for 
$\Phi^T_{M\oplus P}$ are all $cog$-vectors.
\end{corollary}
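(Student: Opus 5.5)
We deduce the corollary from Proposition \ref{prop:cog} by choosing a projective summand large enough that the required surjections exist for every exponent occurring in the sum.

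Fix $M\in\CM C$ and an $\add T$-copresentation \eqref{eq:Mcopres} of $M$. Only finitely many dimension vectors $d$ occur with nonzero coefficient in \eqref{eq:FKchar}, since $\Ext^1(T,M)$ has finite length. Let $D$ be this finite set, and let $P\in\add C$ be a projective module still to be chosen.

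\emph{Step 1: how $P$ changes the data.} Adding $P$ does not change $\Ext^1(T,M\oplus P)=\Ext^1(T,M)$, because $P$ is projective-injective. So the set $D$ and the Euler characteristics $\chi(\Gr^d\Ext^1(T,M))$ stay the same. Moreover, $P\in\add T$, so $0\to M\oplus P\to T'\oplus P\to T''\to 0$ is an $\add T$-copresentation of $M\oplus P$. We have $\cogv(M\oplus P)=\cogv(M)+[T,P]$. Hence, by the second form of \eqref{eq:FKchar}, the exponents of $\Phi^T_{M\oplus P}$ are
\[
\cogv(M\oplus P)+\beta(d),\qquad d\in D.
\]

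\emph{Step 2: reduction to a surjectivity statement.} By Proposition \ref{prop:cog}, applied to the copresentation above, it suffices to find a surjection
\[
T'\oplus P\oplus F(d)\to T''\oplus E(d)
\]
for every $d\in D$.

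\emph{Step 3: choosing $P$.} Every module in $\CM C$ is a quotient of a finitely generated projective $C$-module. In particular, each $T''\oplus E(d)$ is a quotient of some projective $P_d\in\add C$. Set $P=\bigoplus_{d\in D}P_d$. Then for each $d\in D$ the composite
\[
T'\oplus P\oplus F(d)\to P\to P_d\to T''\oplus E(d),
\]
the projection onto $P_d$ followed by the chosen surjection (and zero on $T'\oplus F(d)$), is surjective. This gives the required maps.

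\emph{Main obstacle.} The delicate point is Step 1: checking that the bookkeeping is unaffected by adding $P$. The exponents shift by $[T,P]$ on both sides of the comparison in Proposition \ref{prop:cog}, and we must make sure the modules $E(d)$, $F(d)$ from \eqref{eq:d} are unchanged. Here we use that $\beta$ depends only on $d$, and that $P$ contributes no $\Ext^1$ since it is projective-injective in the Frobenius category $\CM C$. One more detail: Proposition \ref{prop:cog} is stated for mutable simples, so $d$ should only involve mutable vertices. This holds because $\Ext^1(T,-)$ is supported at non-projective vertices.
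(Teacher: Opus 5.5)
Your proof is correct and takes essentially the same route as the paper. You add a projective $P$ to the first two terms of the copresentation \eqref{eq:Mcopres}, observe that the exponents become $\cogv(M\oplus P)+\beta(d)$, and apply Proposition~\ref{prop:cog} with $P$ chosen large enough to give the needed surjections for the finitely many relevant $d$. The only differences are cosmetic: the paper keeps the given surjection $T'\to T''$ and uses $P$ only to cover $E(d)$, whereas you let $P$ cover all of $T''\oplus E(d)$. Your remark that each $d$ is supported at mutable vertices is a detail the paper leaves implicit.
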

\begin{proof}
By adding a projective module $P$ to the first two terms of the copresentation  \eqref{eq:Mcopres} of $M$, we obtain a copresentation of $M\oplus P$, 
\[
\xymatrix{0\ar[r]& M\oplus P \ar[r] &  T' \oplus P\ar[r] &  T'' \ar[r] & 0.}
\]
Therefore, 
\[
\cogv(M\oplus P)+\beta(d)=[T, T'\oplus P\oplus F(d)]-[T, T'' \oplus E(d)].
\]
By \eqref{eq:Mcopres}, there is a surjective map $T'\to T''$ and we can choose $P$ 
large enough so that there is a surjective map $F(d)\oplus P\to E(d)$ for those $d$ 
in \eqref{eq:FKchar} with nonzero coefficients. So we have a surjective map 
$T'\oplus P\oplus F(d)\to T''\oplus E(d)$. Now the corollary follows from 
Proposition \ref{prop:cog}.
\end{proof}

We refine the partial order \eqref{eq:partord} on $K(\CM A)$ to a total order by fixing a lexicographic order on the vertices of $Q$.

Let $M\in \CM C$. Write 
\begin{equation}\label{eq:lincomb}
\Psi_M=\sum_{i=1}^s a_{i} \Psi_{N_i}
\end{equation} 
as a linear combination of the basis in Proposition \ref{Thm:GrassGen-gv-cogv}, where 
$a_i\not=0$. 
We may identify  $\Psi_M$ with $\Phi^T_M$ \cite[Thm. 9.11]{JKS3} (see also Corollary \ref{rem:char}). 
Applying  \eqref{eq:FKchar}, 
\begin{equation}\label{eq:expansions}
\sum_{d} \chi(\Gr_d\Ext^1(T, M)) x^{[T, M]-\beta(d)} =
\sum_{i=1}^s a_i\sum_{d} \chi(\Gr_d\Ext^1(T, N_i)) x^{[T, N_i]-\beta(d)}.
\end{equation}

For $d\in K(\fd A)$, let $\supp(d)=\{i | d_i\neq 0\}$ and $\supp(M)=\supp([M])$
for a finite dimensional $A$-module $M$.

\begin{proposition} \label{prop:CompExt}
For any $N_i$ in \eqref{eq:lincomb}, we have  
\begin{equation}\label{eq:claimgv}
\gv(N_i)=\gv(M)-\beta(d) \text{ with } \supp(d)\subseteq \supp(\Ext^1(T, M)),
\end{equation}
and $\dimv \Ext^1(T, N_{i})\leq \dimv\Ext^1(T, M)$.
\end{proposition}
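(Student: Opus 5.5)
The plan is to compare both sides of \eqref{eq:expansions}, using that the exponents of the basis functions $\Psi_{N_i}$ have pairwise distinct minimal (and maximal) leading terms, by Proposition \ref{Thm:GrassGen-gv-cogv}. The exponents on the left are exactly $\gv(M)-\beta(d)$ for those $d$ with $\chi(\Gr_d\Ext^1(T,M))\neq 0$. Such $d$ are dimension vectors of submodules of $\Ext^1(T,M)$, so $\supp(d)\subseteq\supp(\Ext^1(T,M))$ and $0\le d\le \dimv\Ext^1(T,M)$. On the right, each $\Psi_{N_i}$ contributes $x^{\gv(N_i)}$ with coefficient $1$ (from $d=0$), together with terms $x^{\gv(N_i)-\beta(d')}$ for $d'\le\dimv\Ext^1(T,N_i)$.

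\textbf{Step 1: each $\gv(N_i)$ appears on the left.} I would argue by a leading-term extraction with respect to the total order refining \eqref{eq:partord}, in which $-\beta(d)$ for $d>0$ is ``smaller''. Let $N_{i_1}$ be the basis element whose $g$-vector is minimal among the $\gv(N_i)$. Since $x^{\gv(N_{i_1})}$ is minimal for that summand, and the leading exponents of the other summands are distinct, this monomial cannot be cancelled on the right. It must therefore appear on the left, so $\gv(N_{i_1})=\gv(M)-\beta(d)$ for some admissible $d$.

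To get the statement for every $i$, I would subtract $a_{i_1}\Psi_{N_{i_1}}$ and iterate. The obstacle is that the remainder $\Psi_M-a_{i_1}\Psi_{N_{i_1}}$ is no longer a single cluster character. So instead I would run the argument in a different form: show that the set of all exponents occurring on the right is contained in $\{\gv(M)-\beta(d)\colon d\le\dimv\Ext^1(T,M)\}$, by induction along the total order. Injectivity of $\beta$ on internally supported classes (Lemma \ref{lem:betainj} and \cite[Prop 5.5]{JKS3}) makes $d$ well defined from the exponent. Cancellations among the $a_i$-terms then only happen within this set.

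\textbf{Step 2: the main difficulty.} I expect this to be proving that every $\gv(N_i)$ survives rather than being cancelled. To handle it I would use both extremes. The minimal exponent of $\Psi_{N_i}$ is $\gv(N_i)$ and its maximal exponent is $\cogv(N_i)$, and both families are pairwise distinct. Choose the $N_i$ with extremal $g$-vector among those not yet shown to lie in the set; its monomial $x^{\gv(N_i)}$ cannot be cancelled by later terms, so it must match a left-side term. Inductively removing handled basis elements, via triangularity in the total order, yields \eqref{eq:claimgv} for all $i$.

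\textbf{Step 3: the bound on $\Ext^1$.} For the inequality $\dimv\Ext^1(T,N_i)\le\dimv\Ext^1(T,M)$ I would compare maximal exponents. The maximal exponent of $\Psi_{N_i}$ is $\cogv(N_i)=\gv(N_i)-\beta(\dimv\Ext^1(T,N_i))$. The analogous triangularity argument from the top shows it equals $\gv(M)-\beta(d)$ for some $d\le\dimv\Ext^1(T,M)$. Combining this with $\gv(N_i)=\gv(M)-\beta(d_0)$ gives
\[
\beta\bigl(d_0+\dimv\Ext^1(T,N_i)\bigr)=\beta(d).
\]
Injectivity of $\beta$ on $K(\fd A)_{\mathrm{int}}$ then gives $d_0+\dimv\Ext^1(T,N_i)=d$. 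Since $d_0\ge 0$ and $d\le\dimv\Ext^1(T,M)$, this yields $\dimv\Ext^1(T,N_i)\le d\le\dimv\Ext^1(T,M)$, as required.
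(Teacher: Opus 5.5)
Your Step 3 is the paper's key step. The paper shows that every exponent of each $\Phi^T_{N_i}$ has the form \eqref{eq:Newtry} and also the form \eqref{eq:claimcog}. It then uses injectivity of $\beta$ on $K(\fd A)_{\mathrm{int}}$ to get $\dimv\Ext^1(T,M)-\dimv\Ext^1(T,N_i)=d+e\ge 0$.

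Steps 1--2, however, have a gap. You claim \eqref{eq:claimgv}, and even that all exponents of all $\Psi_{N_i}$ lie in $\{\gv(M)-\beta(d)\colon 0\le d\le\dimv\Ext^1(T,M)\}$, using a one-sided triangular induction run before any bound on $\Ext^1(T,N_i)$ is known. That induction does not close. When $x^{\gv(N_j)}$ is matched with a term of $\Psi_M-\sum_{i<j}a_i\Psi_{N_i}$, it may match a non-minimal exponent $\gv(N_i)-\beta(d')$ of an already handled $N_i$. There $d'$ is only constrained by $d'\le\dimv\Ext^1(T,N_i)$, so neither the upper bound nor the support condition is inherited. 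The non-minimal exponents of $\Psi_{N_j}$ itself are not controlled at all.

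The ``main difficulty'' you name is also misplaced. The monomial $x^{\gv(N_j)}$ is the strict minimum of $\sum_{i\ge j}a_i\Psi_{N_i}$ and can never be cancelled there; what needs control is the left-hand side. In addition, $\gv(M)-\beta(d)$ is \emph{larger}, not smaller, than $\gv(M)$ for $d>0$. That is exactly why $\gv$ is the minimal exponent and $\cogv$ the maximal one.

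The repair uses only your own ingredients, reordered.
\begin{itemize}
\item In the bottom-up induction, carry only the cone condition. Every exponent of $\Psi_{N_i}$ is $\gv(N_i)-\beta(d')$ with $d'\ge 0$ internally supported. So the set of classes $\gv(M)-\beta(d)$ with $d\ge0$ internally supported is preserved at each step, the induction closes, and it gives $\gv(N_i)=\gv(M)-\beta(d_0)$ with $d_0\ge 0$ for every $i$.
\item Symmetrically, the top-down induction gives $\cogv(N_i)=\cogv(M)+\beta(e)$ with $e\ge 0$. This is precisely your $d=\dimv\Ext^1(T,M)-e\le\dimv\Ext^1(T,M)$.
\item These two cone statements are all that Step 3 uses. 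Its conclusion $d_0+\dimv\Ext^1(T,N_i)=d\le\dimv\Ext^1(T,M)$ gives the $\Ext^1$-bound. It also gives $0\le d_0\le\dimv\Ext^1(T,M)$, hence $\supp(d_0)\subseteq\supp(\Ext^1(T,M))$, which is \eqref{eq:claimgv}.
\end{itemize}
Your bounded containment then also holds, since that bounded set is the intersection of the two cones, using injectivity of $\beta$.

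So \eqref{eq:claimgv} must come after Step 3, not before. The paper likewise proves the $\Ext^1$-bound first and only then obtains \eqref{eq:claimgv}, by a second pass of minimal-exponent comparisons.
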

\begin{proof} 
By definition the exponents on the left hand side of \eqref{eq:expansions} are all of 
the form
\begin{equation}\label{eq:claim}
[T, M] - \beta(d) \text{ for some } 0\leq d\leq \dimv\Ext^1(T, M).
\end{equation}
In particular, such a vector $d$ is internally supported. The minimal exponents of 
$\Phi^T_{N_i}$ are distinct (see Proposition \ref{Thm:GrassGeneric}). 
Without loss of generality, we may assume that $[T, N_i]<[T, N_{i+1}]$ for all $i$. 
Comparing minimal exponents in $\Phi^T_M$ and $\Phi^T_{N_1}$ yields that 
\begin{equation}\label{eq;basecase}
[T, M]=[T, N_1] \text{ and } a_1=1.
\end{equation} 
If $s=1$,  then $\Psi_M=\Psi_{N_1}$ and so $\Phi^T_{N_1}$ clearly satisfies 
\eqref{eq:claimgv}. Now let $s>1$. We have 
\begin{equation}\label{eq:ww2}
\Phi^T_M-\Phi^T_{N_1}=\sum_{i=2}^s a_{i} \Phi^T_{N_i}.
\end{equation}
Since $[T, M]=[T, N_1]$ (i.e., $\gv(M)=\gv(N_1)$), the exponents of the terms in 
$\Phi^T_{N_1}$ and therefore all exponents on the left hand side are of the form 
\begin{equation}\label{eq:Newtry}
\gv(M)-\beta(d)  \text{ for some internally supported } d\geq 0, 
\end{equation}
Now repeatedly comparing minimal exponents to \eqref{eq:ww2}, we can conclude 
that all the exponents in the sum of $\Phi^T_{N_i}$ are of the form \eqref{eq:Newtry}.
Moreover, similar arguments using Propositon \ref{Thm:GrassGen-gv-cogv} imply  
that the exponents in $\Phi^T_{N_i}$, for all $i$, have the form 
\begin{equation}\label{eq:claimcog}
\cogv(M) + \beta(e) \text{ for some internally supported vector } e\geq 0.
\end{equation}
In particular, 
\[
[T, N_i]- \beta(\dimv \Ext^1(T, N_i)) =[T, M]- \beta(\dimv \Ext^1(T, M)) +\beta(e)
\]
for some internally supported  $e\geq 0$. Therefore,
\[
\beta(\dimv\Ext^1(T, M)-\dimv\Ext^1(T, N_i))= [T, M]-[T, N_i]+\beta(e)=\beta(d+e), 
\]
for some internally supported $d\geq 0$.

Recall that $\beta$ is injective, when it is restricted to the interior part of $K(\fd A)$ 
(see Lemma \ref{lem:betainj}). 
Therefore, 
\[
\dimv\Ext^1(T, M)-\dimv\Ext^1(T, N_i)=d+e\geq 0.
\]
Consequently,  \begin{equation}\label{eq:try2}\dimv \Ext^1(T, N_{i})\leq \dimv\Ext^1(T, M),\end{equation}
proving the second statement in the proposition.
Next, we prove the first statement. 
When $s=1$, it follows from \eqref{eq;basecase}. Now let $s>1$.
Comparing the minimal exponents of both sides in \eqref{eq:ww2}, we have 
\[
\gv(N_2)=\gv(M)-\beta(d)
\]
for some $d\leq \dimv \Ext^1(T, M)$ or $d\leq \dimv \Ext^1(T, N_1)$. In either case, 
by \eqref{eq:try2}, we have $d\leq \dimv \Ext^1(T, M)$.
Now applying comparison of the minimal exponents to 
\[
\Phi^T_M-\Phi^T_{N_1}-a_2\Phi^T_{N_2}=\sum_{i=3}^s a_{i} \Phi^T_{N_i},
\]
and repeating the procedure, we can conclude that for any $i$, 
\[
\gv(N_i)=\gv(M)-\beta(d) \text{ with } \supp(d)\subseteq \supp(\Ext^1(T, M)). 
\]
This completes the proof.
\end{proof}

\section{Plabic graphs, Gabriel quivers and associated cluster algebras}
Recall that the Grassmann necklace corresponding to $B$ (see  \eqref{eq:BN})  is 
\begin{equation}\label{eq:neckl}
\nlace=\{I\colon M_I\in\add B\}.
\end{equation} 
By Corollary \ref{cor:rk1oid}, the associated positroid $\positr=\{I\colon M_I \in \CM B\}$. So
\begin{equation}\label{eq:opoidring}
\CC[\poid]=\CC[\Gr(k, n)]/\langle \Delta_I\colon M_I\not\in \CM B\rangle, 
\end{equation} 
and $\CC[\opoid]=\CC[\poid]_B$, the localisation of $\CC[\poid]$ at
 $\{\Delta_I\colon M_I\in \add B\}$.

\subsection{A conjecture by Muller-Speyer}
Muller--Speyer constructed an (ice) quiver $Q_D$ from a Postnikov diagram $D$ \cite{MS} as follows. 
The vertices of $Q_D$ are the alternating regions of $D$. %
There is an edge from vertex $R_i$ (i.e. region $R_i)$
to vertex $R_j$ (i.e. region $R_j$) if the two regions are separated by a pair of strands which cross 
the borders of $R_i$ and $R_j$. 
The faces of $Q_D$ are in bijection with the clockwise and anticlockwise regions of $D$. 
The quiver $Q_D$ is oriented so that orientations of faces of $Q_D$ are consistent with the orientations of the corresponding regions of $D$. Note that $Q_D$ 
can also be constructed from the plabic graph $G$ corresponding to $D$, and we also denote the quiver 
by $Q_{G}$. Muller--Speyer did not consider arrows between the boundary 
regions, that is, $Q_D$ does not include arrows between boundary vertices (or frozen vertices).

Let $\algMS$ be the cluster algebra defined by $Q_D$ localised at the the frozen cluster variables, i.e. those 
cluster variables corresponding to the boundary vertices.
Muller-Speyer proposed the following. 
\begin{conjecture}\label{conj:MS}
\cite[Conj. 3.4]{MS}  $\algMS\cong \CC[\opoid]$.
\end{conjecture}

\subsection{Mutation of cluster tilting objects $M_\maxNC$ and their reachability}
Place the numbers $1, \dots, n$  clockwise on the boundary of the disc. 
Let $\sigma$ be a decorated permutation. 
That is, $\sigma\in \sn$ with the fixed points coloured in two colours \cite[Def. 13.3]{Pos}, 
that is, $1$ and $-1$.
We say that a Postnikov diagram or a plabic graph is of \emph{type} $\sigma$ if 
the corresponding necklace is defined by the permutation $\sigma$. For 
instance, a plabic graph with face labels a maximal collection of non-crossing $k$-sets 
has type $\sigma_{k, n}$, where $\sigma_{k, n}\colon i \mapsto i+k, \forall i \in [n]$ 
(and $[n] = \mathbb{Z}_n$). The boundary faces of a plabic graph are labelled by the 
associated necklace, $I_1, \dots, I_n$. 
The boundary region touching the arc  from $i-1$ to $i$ in clockwise order is labelled by $I_i$, and if 
this region is also bounded by the left arc incident at $j$, then $I_i=I_j$.

Let $G$ be a plabic graph of type $\sigma$, and $\maxNC=\mathcal{F}(G)$, the set of face labels for $G$. 
Then
$\maxNC$ is a maximal collection of non-crossing $k$-sets in the positroid $\positr$ containing $\nlace$
\cite[Thm. 1.5]{OPS}.  Thus
$M_{\maxNC}$ is a cluster tilting object in $\GP B$,
following  Theorem~\ref{main1}.

\begin{proposition} \label{prop:reachability}
 Any cluster tilting object of the form $M_{\maxNC'}\in \GP B$ is reachable from $M_{\maxNC}$. 
\end{proposition}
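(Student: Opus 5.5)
Let $M_{\maxNC'}\in\GP B$ be a cluster tilting object, and write $\mathcal{F}(G)=\maxNC$. The plan is to reduce reachability to the combinatorics of Oh--Postnikov--Speyer, and then to identify combinatorial square moves with categorical mutations in $\GP B$.

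The first step is to show that $\maxNC'$ is a maximal non-crossing collection in $\positr$ containing $\nlace$. Since $M_{\maxNC'}$ is cluster tilting in $\GP B$, it contains $B$ as a summand, because $B$ is projective-injective in $\GP B$. Hence $\nlace\subseteq\maxNC'$. Each $M_I$ with $I\in\maxNC'$ lies in $\CM B$, so $\maxNC'\subseteq\positr$ by Corollary \ref{cor:rk1oid}. The sets in $\maxNC'$ are pairwise non-crossing because the module is rigid and Scott's criterion applies.

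For maximality, suppose $J\in\positr$ is non-crossing with every set in $\maxNC'$. Then $\Ext^1(M_J,M_{\maxNC'})=0$, and in particular $\Ext^1(M_J,B)=0$. Since $M_J\in\CM B$, this gives $M_J\in\GP B$, and cluster tilting then forces $J\in\maxNC'$. By \cite[Thm 1.5]{OPS}, $\maxNC'=\mathcal{F}(G')$ for some plabic graph $G'$ of type $\sigma$. By \cite[Thm 1.4]{OPS}, $\maxNC'$ is then obtained from $\maxNC$ by a finite sequence of square moves $\maxNC_0=\maxNC,\maxNC_1,\dots,\maxNC_m=\maxNC'$, each of which stays within the maximal non-crossing collections in $\positr$ containing $\nlace$. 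By the first step, each $M_{\maxNC_j}$ is cluster tilting in $\GP B$.

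It therefore suffices to show that a single square move $\maxNC_j\mapsto\maxNC_{j+1}=(\maxNC_j\setminus\{Iac\})\cup\{Ibd\}$ is a mutation in $\GP B$. First, $Iac\notin\nlace$: necklace sets label boundary faces and cannot be removed in a square move. Hence $M_{Iac}$ is a mutable summand of $U=M_{\maxNC_j}$. The module $U'=M_{\maxNC_j\setminus\{Iac\}}$ is an almost complete cluster tilting object in $\GP B$. Its two completions in $\GP B$ are $M_{Iac}$ and $M_{Ibd}$, since $M_{\maxNC_{j+1}}$ is cluster tilting there. By uniqueness of complements of almost complete cluster tilting objects in a 2-CY Frobenius category (\cite{BIRS}), these are exactly $U$ and $\mu_{Iac}U$.

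Alternatively, one can complete $\maxNC_j$ and $\maxNC_{j+1}$ by a common collection $\cmax$ in $\labsubset{n}{k}$, using the same complement for both. By Theorem \ref{main1}(2),(3), the mutation of $U$ at $M_{Iac}$ in $\GP B$ is then the mutation of $T=U\oplus M_{\cmax}$ at $M_{Iac}$ in $\CM C$. In $\CM C$ this mutation is known from \cite{JKS1} to replace $M_{Iac}$ by $M_{Ibd}$, realised by the exchange sequences with middle terms $M_{Iab}\oplus M_{Icd}$ and $M_{Ibc}\oplus M_{Ida}$. Iterating over $j$ gives reachability.

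The main obstacle is this last identification, specifically justifying that $\mu_{Iac}U$ is exactly $M_{\maxNC_{j+1}}$. I would rely on uniqueness of the complement, which holds in the stable 2-CY setting by \cite{BIRS} together with Theorem \ref{main1}(1), rather than computing the approximations explicitly.
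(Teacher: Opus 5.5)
Your proposal is correct and follows essentially the paper's route: realise $\maxNC'$ as $\mathcal{F}(G')$ via \cite[Thm.~1.5]{OPS}, connect it to $\maxNC$ by square moves, and identify each square move with a mutation in $\GP B$. The paper gets the square moves from Postnikov's moves (M1)--(M3), of which only the square move changes the labels, and it simply exhibits the two exchange sequences, where you argue instead via uniqueness of complements or Theorem~\ref{main1}(2),(3). You also verify something the paper takes for granted, namely that $\maxNC'$ is a maximal non-crossing collection in $\positr$ containing $\nlace$. Note, however, that the converse fact you use, that each intermediate $M_{\maxNC_j}$ is cluster tilting in $\GP B$, comes from Theorem~\ref{main1}(1) and not from your first step.
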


\begin{proof}
Let $\maxNC'$ be a maximal collection of non-crossing $k$-sets 
in $\positr$ containing $\nlace$. 
By \cite[Thm. 1.5]{OPS}, $\maxNC'=\mathcal{F}(G')$ 
for a  plabic graph $G'$, and
by \cite[Thm. 13.4]{Pos}, $G'$ can be obtained from $G$
by a sequence of three types of moves, (M1)-(M3) (see \cite{OPS, Pos}). 
Moves (M2) and (M3) do not change the number of faces and their labels,  
while (M1) is the square move,
which corresponds to a 
Pl\"{u}cker relation and to the mutation of the cluster tilting object.
See Figure \ref{fig:sqmstrands} for an illustration, where the square move replaces $Lbd$ by $Lac$, and 
the corresponding mutation sequences are: 
\[
\xymatrix{
0\ar[r] & M_{Lac}\ar[r]& M_{Lad}\oplus M_{Lbc} \ar[r] &M_{Lbd} \ar[r] &0}\]
and
\[
\xymatrix{
0\ar[r] & M_{Lbd}\ar[r]& M_{Lab}\oplus M_{Lcd} \ar[r] &M_{Lac} \ar[r] &0. 
}
\]
Therefore $M_{\maxNC'}$ is reachable from $M_{\maxNC}$.   
\end{proof}

\begin{figure}
\begin{tikzpicture}[scale=0.6,baseline=(bb.base),
  strand/.style={teal,dashed,thick},
  quivarrow/.style={blue, -latex, thick},
  doublearrow/.style={black, latex-latex, very thick}]
\newcommand{\strarrow}{\arrow{angle 60}}
\newcommand{\dotrad}{0.1cm} %
\newcommand{\bdrydotrad}{{0.8*\dotrad}} %
\path (0,0) node (bb) {}; %

\draw [strand] plot coordinates {(4,-2) (1.8,-0.2) (0.8,0.8) (-0.2, 1.8) (-2,4)}[postaction=decorate,decoration={markings,
mark= at position 0.17 with \strarrow,
mark= at position 0.5 with \strarrow,
mark= at position 0.83 with \strarrow}];

\draw [strand] plot coordinates {(-4,2) (-1.8,0.2) (-0.8,-0.8) (0.2,-1.8) (2,-4)}[postaction=decorate,decoration={markings,
mark= at position 0.17 with \strarrow,
mark= at position 0.5 with \strarrow,
mark= at position 0.83 with \strarrow}];

\draw [strand] plot coordinates {(-4,-2) (-1.8,-0.2) (-0.8,0.8) (0.2,1.8) (2,4)}[postaction=decorate,decoration={markings,
mark= at position 0.17 with \strarrow,
mark= at position 0.5 with \strarrow,
mark= at position 0.83 with \strarrow}];

\draw [strand] plot coordinates {(4,2) (1.8,0.2) (0.8,-0.8) (-0.2,-1.8) (-2,-4)}[postaction=decorate,decoration={markings,
mark= at position 0.17 with \strarrow,
mark= at position 0.5 with \strarrow,
mark= at position 0.83 with \strarrow}];

\draw (2.3,4.3) node {$a$};
\draw (-2.3,-4.3) node {$c$};
\draw (2.3,-4.3) node {$b$};
\draw (-2.3,4.3) node {$d$};

\draw (0,0) node (ac) {$Lbd$};
\draw (0,3.5) node (ab) {$Lab$};
\draw (3.5,0) node (bc) {$Lbc$};
\draw (0,-3.5) node (cd) {$Lcd$};
\draw (-3.5,0) node (ad) {$Lad$};

\draw [quivarrow] (ad)--(ac);
\draw [quivarrow] (bc)--(ac); 
\draw [quivarrow] (ac)--(cd); 
\draw [quivarrow] (ac)--(ab);

\begin{scope}[shift={(12,0)}]

\draw [strand] plot[smooth] coordinates {(4,-2) (2,-1.8) (0,-1.3) (-1.3,0) (-1.8,2) (-2,4)}[postaction=decorate,decoration={markings,
mark= at position 0.1 with \strarrow,
mark= at position 0.3 with \strarrow,
mark= at position 0.5 with \strarrow,
mark= at position 0.74 with \strarrow,
mark= at position 0.92 with \strarrow
}];

\draw [strand] plot[smooth] coordinates {(-4,2) (-2,1.8) (0,1.3) (1.3,0) (1.8,-2) (2,-4)}[postaction=decorate,decoration={markings,
mark= at position 0.1 with \strarrow,
mark= at position 0.3 with \strarrow,
mark= at position 0.5 with \strarrow,
mark= at position 0.74 with \strarrow,
mark= at position 0.92 with \strarrow}];

\draw [strand] plot[smooth] coordinates {(-4,-2) (-2,-1.8) (0,-1.3) (1.3,0) (1.8,2) (2,4)}[postaction=decorate,decoration={markings,
mark= at position 0.1 with \strarrow,
mark= at position 0.3 with \strarrow,
mark= at position 0.5 with \strarrow,
mark= at position 0.74 with \strarrow,
mark= at position 0.92 with \strarrow}];

\draw [strand] plot[smooth] coordinates {(4,2) (2,1.8) (0,1.3) (-1.3,0) (-1.8,-2) (-2,-4)}[postaction=decorate,decoration={markings,
mark= at position 0.1 with \strarrow,
mark= at position 0.3 with \strarrow,
mark= at position 0.5 with \strarrow,
mark= at position 0.74 with \strarrow,
mark= at position 0.92 with \strarrow}];

\draw (2.1, 4.4) node {$a$};
\draw (-2.1, -4.4) node {$c$};
\draw (2.1, -4.4) node {$b$};
\draw (-2.1, 4.4) node {$d$};

\draw (0,0) node (bd) {$Lac$};

\draw (0,3.5) node (ab) {$Lab$};
\draw (3.5,0) node (bc) {$Lbc$};
\draw (0,-3.5) node (cd) {$Lcd$};
\draw (-3.5,0) node (ad) {$Lad$};

\draw [quivarrow] (bd)--(ad);
\draw [quivarrow] (bd)--(bc); 
\draw [quivarrow] (cd)--(bd); 
\draw [quivarrow] (ab)--(bd);

\draw [quivarrow] (ad)--(ab);
\draw [quivarrow] (bc)--(ab); 
\draw [quivarrow] (ad)--(cd);
\draw [quivarrow] (bc)--(cd); 
\end{scope}

\draw (5.5, 0) node {$\leftrightsquigarrow$};

\end{tikzpicture}
\caption{A square move and the associated quiver mutation 
(cf. \cite[Fig.~2]{JKS2})}
\label{fig:sqmstrands}
\end{figure}
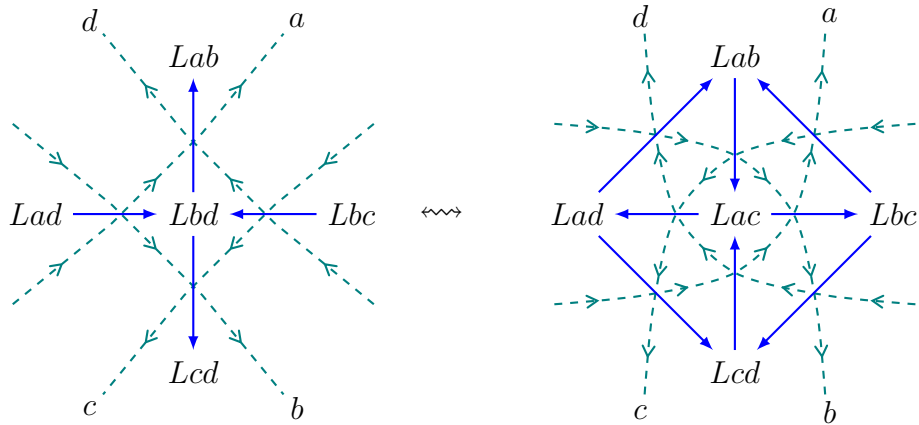

From now on, 
in $\GP B$, by {\it reachable} we mean summands of a cluster 
tilting object that is reachable from the cluster tilting object of the form 
$M_{\maxNC}$, where $\maxNC$ is a maximal collection of non-crossing sets 
corresponding to modules in $\GP B$. 

\subsection{Comparing endomorphism algebras}
For any $M_I, M_J\in \CM C$, which are graded $\ring$-modules, we have  \[\Hom(M_I, M_J)\cong \ring.\]
Let $f_{I, J}$ be a generator of $\Hom(M_I, M_J)$, viewed as a graded $\ring$-module.
Such a generator $f_{I, J}$ is unique up to a scalar and there exists a vertex $i$ such 
that the restriction of $f_{I, J}$ to that vertex induces an isomorphism from $(M_I)_i$ 
to $(M_J)_i$, and in this case, $I\leq_{i+1}J$.

We denote by $IS$ the union of any two sets $I, S\subseteq [n]$.

\begin{lemma}\label{lem:EndTech1}
Let $I, J, K$ be $k$-sets. Assume that $S\subset [n]$  is disjoint to these three 
$k$-sets. Then
$f_{I, J}\colon M_I \to M_J$ factors through $M_L$ if and only if 
$f_{IS, JS}\colon M_{IS} \to M_{JS}$ factors through $M_{LS}$.
\end{lemma}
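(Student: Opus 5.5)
We work combinatorially with the profiles of rank one modules, as in Corollary \ref{cor:rk1oid} and Example \ref{Ex:newEx}. A rank one module $M_I$ has, at each vertex $j$, a copy of $\ring$. The module is then determined by which of $x$ or $y$ acts as an isomorphism on the arrow between vertices $j-1$ and $j$, and this depends on whether $j\in I$. Equivalently, $M_I$ is recorded by a lattice path (contour) with down-steps at the elements of $I$.

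For this lemma we would allow $I,J,L$ to be subsets of the same size, so that $IS$, $JS$, $LS$ are $(k+|S|)$-sets. A morphism $M_I\to M_J$ is then multiplication by $t^{a_j}$ at each vertex $j$, where the exponents $a_j\geq 0$ satisfy compatibility conditions along each arrow. Concretely, $a_j-a_{j-1}$ is determined by the pair $(\mathbf{1}_{j\in I},\mathbf{1}_{j\in J})$: it equals $0$ if both or neither contain $j$, and $\pm 1$ otherwise. The generator $f_{I,J}$ corresponds to the unique minimal such sequence, the one with $\min_j a_j=0$. In contour language, $a_j$ is the difference of heights of the two contours at $j$, shifted so that the minimum is $0$.

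The key observation is that adjoining $S$, disjoint from $I$, $J$ and $L$, adds a common down-step at each $s\in S$ to all three contours. It therefore changes none of the height differences $a_j$, nor the analogous differences $b_j$ (for $I\to L$) and $c_j$ (for $L\to J$). Under the identification of $\Hom(M_I,M_J)$ with exponent sequences, the map $\Hom(M_I,M_J)\to\Hom(M_{IS},M_{JS})$ sending the sequence $(a_j)$ to itself is a bijection. It sends $f_{I,J}$ to $f_{IS,JS}$ and is compatible with composition, since composition adds exponents.

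The proof then reduces to the following criterion. $f_{I,J}$ factors through $M_L$ if and only if $f_{I,J}=f_{L,J}f_{I,L}\,t^m$ for some $m\geq 0$, because every map $M_I\to M_L$ is $t^{p}f_{I,L}$ and every map $M_L\to M_J$ is $t^{q}f_{L,J}$. Comparing exponent sequences, this holds exactly when $a_j=b_j+c_j$ for all $j$, i.e.\ when $\min_j(b_j+c_j)=0$ (noting that $b_j+c_j-a_j$ is constant in $j$). This condition is expressed purely in terms of the height differences. Since these are unchanged when $S$ is adjoined, the criterion holds for $(I,J,L)$ if and only if it holds for $(IS,JS,LS)$.

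The main obstacle is making the identification of homomorphisms with exponent sequences precise, in particular verifying that the increment rule for $a_j$ depends only on membership of $j$ in the two sets. This comes from $xy=t$: an arrow acts by $1$ or by $t$ according to the profile. A secondary care point is that $C_{k,n}$ changes to $C_{k+|S|,n}$, so we must check the relation $x^k=y^{n-k}$ causes no issue. It does not, since rank one modules are described uniformly by their contours and the relation is automatically satisfied.
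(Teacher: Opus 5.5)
Your argument is correct, but it runs differently from the paper's.

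The paper works through the Gale order. Suppose $f_{I,J}$ factors as $gh$ through $M_L$ and is an isomorphism at vertex $i$. Since nonzero maps between rank one modules are injective, $g$ and $h$ are isomorphisms at $i$, and comparing profiles gives $I\leq_{i+1}L\leq_{i+1}J$. Conversely, such a chain gives $f_{I,J}=f_{L,J}f_{I,L}$. Finally, these relations are preserved by adjoining elements outside $I\cup J\cup L$ (or removing elements of $I\cap J\cap L$).

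You instead compute the Hom spaces directly. Whether a family $(r_j)\in\ring^n$ is a morphism depends only on the differences $\mathbf{1}_{j\in J}-\mathbf{1}_{j\in I}$, which adjoining $S$ does not change. So the Hom spaces among $M_I,M_J,M_L$ and among $M_{IS},M_{JS},M_{LS}$ are literally identified, compatibly with composition. Your criterion $\min_j(b_j+c_j)=0$ says there is a vertex at which $f_{I,L}$ and $f_{L,J}$ are both isomorphisms. That is the paper's condition $I\leq_{i+1}L\leq_{i+1}J$ in exponent language.

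The paper's route is shorter once the dictionary between the Gale order and embeddings that are isomorphisms at a vertex is granted. Your route buys three things:
\begin{itemize}
\item it avoids that dictionary;
\item it makes explicit a point the paper passes over: $IS,JS,LS$ are $(k+|S|)$-sets, so one is working over $C_{k+|S|,n}$;
\item it gives Corollary~\ref{lem:EndTech2} directly as an isomorphism of $\ring$-algebras, since it also shows that the exponent $\kappa$ in $f_{L,J}f_{I,L}=t^{\kappa}f_{I,J}$ is unchanged.
\end{itemize}

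One small tidy-up. A general map $M_I\to M_L$ is $r f_{I,L}$ with $r\in\ring$, not only $t^pf_{I,L}$. Also, factoring through $M_L$ may mean a sum of composites through $M_L^{\oplus m}$. Because all these Hom spaces are free of rank one over $\ring$, every such factorization has the form $r t^{\kappa} f_{I,J}$, so your criterion $\kappa=0$ stands.
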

\begin{proof}

Suppose that $f_{I, J}$ factors through $M_L$, that is, 
$f_{I, J}=gh$ for some $g\colon M_L\to M_J$ and $h\colon M_I\to M_L$, 
 and 
the restriction of $f_{I, J}$ to vertex $i$ is an isomorphism. As $M_I$ and $M_J$
are Cohen-Macaulay modules of rank one,  any nonzero 
homomorphism between them is injective. Therefore, the 
restrictions of $g$ and $h$ to vertex $i$ are also isomorphisms, and so  
\begin{equation}\label{eq:orderati}
I\leq_{i+1} L \leq_{i+1} J.
\end{equation}
Conversely, if the three sets satisfy \eqref{eq:orderati}, 
then $f_{I, J}=f_{L, J}f_{I, L}$.

Now the relations in \eqref{eq:orderati} are preserved by adding elements that 
are not contained in $I\cup J\cup L$, or by removing elements in $I\cap J\cap L$.
So the lemma follows.
\end{proof}

\begin{corollary}\label{lem:EndTech2}
Let $\maxNC$ be a collection of $k$-sets and let $I\subset[n]$ be disjoint to any $J\in \maxNC$.
Denote the set $\{IJ\colon J\in \maxNC\}$ by $\maxNC I$. 
Then,    
as $\ring$-algebras, 
$$\End M_{\maxNC} \cong \End M_{\maxNC I}.$$ 
 \end{corollary}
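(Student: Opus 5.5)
The plan is to build an explicit isomorphism on graded pieces. Both algebras are $\ring$-algebras whose underlying modules decompose as
\[
\End M_{\maxNC}=\bigoplus_{J,K\in\maxNC}\Hom(M_J,M_K)
\quad\text{and}\quad
\End M_{\maxNC I}=\bigoplus_{J,K\in\maxNC}\Hom(M_{IJ},M_{IK}).
\]
Each summand is a free $\ring$-module of rank one, generated by $f_{J,K}$ (respectively $f_{IJ,IK}$). So I would define an $\ring$-linear map
\[
\Theta\colon \End M_{\maxNC}\to \End M_{\maxNC I},\qquad f_{J,K}\mapsto f_{IJ,IK},
\]
including $\Theta(e_J)=e_{IJ}$ for the identity idempotents. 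This is automatically an isomorphism of $\ring$-modules. The only real content is that it respects composition.

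For multiplicativity, the first step is the composition rule in each triple. Fix $J,K,L\in\maxNC$. The composition $f_{K,L}f_{J,K}$ is a nonzero element of $\Hom(M_J,M_L)\cong\ring$: maps between rank one CM modules are injective, so the composite is nonzero. Hence $f_{K,L}f_{J,K}=t^{m}f_{J,L}$ for a unique $m=m(J,K,L)\geq 0$, after normalising scalars using the grading. I then need to show $m(J,K,L)=m(IJ,IK,IL)$. The exponent $m$ is determined by the profiles: $f_{J,K}$ is an isomorphism at vertex $i$ exactly when $J\leq_{i+1}K$, and more generally its ``degree shift'' at each vertex is read off from the contours. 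Thus $m=0$ iff there is a vertex $i$ with $J\leq_{i+1}K\leq_{i+1}L$. This is exactly the criterion used in the proof of Lemma~\ref{lem:EndTech1}, and by that lemma (with $S=I$) it is invariant under adding $I$.

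For general $m$, I would argue that $m$ equals the minimum over vertices $i$ of the sum of the vertical offsets of the contours of $M_J,M_K,M_L$ at $i$. Each such offset measures how far the contour of one module must be shifted down to sit under the other. I would then check that adjoining the disjoint set $I$ inserts the same extra down-steps into all three contours. This preserves these offsets, in the same way the order relations $\leq_{i+1}$ in \eqref{eq:orderati} are preserved.

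Once $m$ is invariant, the rule $\Theta(ab)=\Theta(a)\Theta(b)$ follows on generators and then extends by $\ring$-linearity. So $\Theta$ is an $\ring$-algebra isomorphism.

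I expect the main obstacle to be the invariance of $m$ when $m>0$, that is, making the contour comparison precise, including how the wrap-around of the profile at vertex $0$ changes when the set size grows. If that proves messy, my fallback is to use Lemma~\ref{lem:EndTech1} more cleverly. Since $\End M_{\maxNC}$ is determined by which $f_{J,L}$ factor through which $M_K$, together with the $\ring$-structure, I would try to compute $m$ as the minimal $t$-power obtained by factoring through chains of rank one modules. Each link of such a chain is controlled by Lemma~\ref{lem:EndTech1}.
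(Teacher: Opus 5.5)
Your proposal is correct, and it is more explicit than the paper. The paper gives no separate proof: the statement is recorded as a consequence of Lemma~\ref{lem:EndTech1}. The proof of that lemma notes that the relations $J\leq_{i+1}L\leq_{i+1}K$ survive adjoining elements outside $J\cup K\cup L$. Hence, for $L$ disjoint from $I$, $f_{J,K}$ factors through $M_L$ if and only if $f_{IJ,IK}$ factors through $M_{IL}$.

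You instead fix the $\ring$-basis $\{f_{J,K}\}$, write $f_{K,L}f_{J,K}=t^{m(J,K,L)}f_{J,L}$, and show that every structure constant is preserved. This is what actually pins down the $\ring$-algebra. A factorisation test only detects whether $m=0$, but the positive values matter: for example $f_{K,J}f_{J,K}=t^{m}\,\mathrm{id}_{M_J}$ with $m=1$ for $(J,K)=(12,13)$ but $m=2$ for $(J,K)=(12,34)$ in type $(2,4)$. So your computation supplies exactly what the paper leaves implicit.

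The obstacle you anticipate does not arise. Let $f_{J,K}$ act at vertex $i$ by $t^{c_i(J,K)}$ with coefficient $1$; this normalisation also removes any scalar ambiguity in composites. Compatibility with $x_i$ and $y_i$ forces $c_i-c_{i-1}=\pm([i\in J]-[i\in K])$. These increments sum to $0$ around the circle because $|J|=|K|$, and the generator is normalised by $\min_i c_i=0$. Adjoining $I$ turns the same up-steps into down-steps in both contours; it does not insert new steps. So the increments are unchanged and $c_i(IJ,IK)=c_i(J,K)$ for all $i$. Only the difference of two contours enters, so the rectangle size and the identification at vertex $0$ never play a role. It is likewise harmless that $M_{IJ}$ is a $C_{k+|I|,n}$-module, since morphisms only involve the actions of $x$ and $y$.

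Your formula $m=\min_i\bigl(c_i(J,K)+c_i(K,L)\bigr)$ is right, because $c_i(J,K)+c_i(K,L)-c_i(J,L)$ is constant in $i$. Invariance of $m$ is then immediate. Equivalently, $\End M_{\maxNC}$ is the order of matrices $(a_{KJ})$ with $a_{KJ}\in t^{c_0(J,K)}\ring$, and its exponent matrix does not change. Your fallback via chains is unnecessary. It would also be awkward, since Lemma~\ref{lem:EndTech1} only lets you factor through intermediate sets disjoint from $I$.
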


\subsection{Gabriel quivers, plabic graphs and associated cluster algebras}\label{sec:6.3} 
Let $\maxNC$ be any maximal collection of non-crossing sets in $\positr$ containing $\nlace$. 
Let $X=\{\Delta_I\colon I\in \maxWS\}$. 
Extend $\maxNC$ to a maximal collection $\tilde{\maxNC}$ of non-crossing $k$-sets. 
Then $\{\Delta_I\colon I\in \tilde{\maxWS}\}$ is a cluster in $\CC[\Gr(k, n)]$ \cite{Scott}, containing $X$. 
 In particular, $X$ is algebraically independent.

We denote  by $Q_{\maxWS}$ the Gabriel quiver of $(\End M_{\maxNC})^{\text{op}}$,
instead of the usual $Q_{M_\maxNC}$, in order to simplify the notation. Let 
$B_{\maxWS}$ be the exchange matrix defined by $Q_{\maxNC}$. 
As mutations in $\GP B$ are mutations in $\CM C$ (see Theorem \ref{main1}) and 
$\CM C$ provides a categorification of the cluster structure on $\CC[\Gr(k, n)]$ \cite{JKS1}, 
$(X, B_\maxNC)$ determines a cluster subalgebra of $\CC[\Gr(k, n)]$, denoted by $\clalg$.
By Proposition \ref{prop:reachability}, $\clalg$ is independent of the choice of $\maxNC$.
The cluster variables of $\clalg$ are the cluster characters of reachable indecomposable rigid 
objects in $\GP B$.

Let $\sigma$ be the decorated permutation associated to the necklace $\nlace$. 
We recall definitions and basic results concerning connected components of decorated permutations,  Grassmann necklaces and positroids \cite[\S 5]{OPS}. Let 
\[ [n]=\set_1\cup \dots \cup\set_t\] 
be a partition of $[n]$ into a union of disjoint subsets. We say the partition is \emph{non-crossing} if any two distinct sets $\set_i$ and $\set_j$ are non-crossing.

Assume the partition is  non-crossing such that if $a\in \set_j$, then $\sigma(a)\in \set_j$, $\forall j$.
Let %
\begin{equation}\label{eq:sigmaj}
\sigma_j=\sigma|_{\set_j}
\end{equation}
and let $\nlace_{j}$ be the associated Grassmann necklace of $\sigma_{j}$ on the set $\set_j$.  
If the partition is the finest possible,  
then $\sigma_{j}$ is called a \emph{connected component} 
of $\sigma$ and $\nlace_{j}$ is called a \emph{connected component} of $\nlace$. 

The permutation $\sigma$ and Grassmann necklace $\nlace$ are said to be \emph{connected} if they have exactly one connected component, and in this case we also say that the \emph{algebra $B$  is connected}. The necklace $\nlace$ is connected if and only if $I_1, \dots, I_n$ are all distinct
\cite[Lem. 5.6]{OPS}.

Assume that $\sigma$ is disconnected. By \cite[Lem. 5.6]{OPS} and its proof,  there exist $i\not=j$ such that $I_i=I_j$
and $\sigma$ maps the intervals $[i, j)$ and $[j, i)$  to themselves. 
Let \[\set_1=[i, j) \text{ and } \set_2=[j, i),\] which form a non-crossing partition of $[n]$. 

For any $J\in \positr$, let
\[J^1=[i, j)\cap J  \text{ and } J^2=[j, i)\cap J,  \]
and 
\[
n_1=|[i, j)|, n_2=|[j, i)|, k_1=|I_i^1| \text{ and } k_2=|I_i^2|.
\]
Then $n_1+n_2=n$, $k_1+k_2=k$, and 
\[\nlaceij=\{I^1_i, \dots, I^1_{j-1}\} \text{ and }  \nlaceji=\{I^2_j, \dots, I^2_{i-1}\}\]
are necklaces on the sets $\set_1$ and $\set_2$ \cite[Prop. 5.10]{OPS}. Let
$\positrij$ and $\positrji$ be 
the positroids determined by $\nlaceij$ and $\nlaceji$, respectively.
Note that these are necklaces and 
positroids for the smaller Grassmannians, $\Gr(k_1, n_1)$ and $\Gr(k_2, n_2)$.

By \cite[Prop. 5.8 and Prop 5.10]{OPS}, $\positr$ is the \emph{direct sum} of 
$\positrij$ and $\positrji$, in the sense that, $J\in \positr$ if and only if 
$J^1\in \positrij$ and $J^2\in \positrji$. Moreover, 
$\maxNC$ is a collection of non-crossing $k$-sets if and only if there are collections $\maxNCij$, $\maxNCji$ 
of non-crossing $k_1$-sets in $\positrij$ and non-crossing $k_2$-sets in  $\positrij$, respectively, such that 
\begin{equation}\label{eq:divS}
\maxNC=\{J\cup I_i^2\colon J\in \maxNCij\} \cup
\{I_i^1\cup J\colon J\in \maxNCji\}.
\end{equation}
In particular, for any $s\in [i, j)$ and $t\in [j, i)$,  
\begin{equation}\label{eq:cols}
I_s=J\cup I_i^2 \text{ for some } J\in \positrij \text{ and }
I_t= I_i^1\cup J \text{ for some } J\in \positrji.
\end{equation}
Denote the sets $\{J\cup I_i^2\colon J\in \maxNCij\}$ and 
 $\{ I_i^2\cup J\colon J\in \maxNCji\}$ in \eqref{eq:divS}  by 
$\maxNCone$ and $\maxNCtwo$, respectively.
 
\begin{theorem}\label{thm:locacy}
Let $G$ be a plabic graph of type $\sigma$ and $\maxNC=\mathcal{F}(G)$. Then 
 \begin{equation}\label{eq:compareQ}Q_\maxNC^{\circ}=Q_G.\end{equation}
Consequently, as cluster algebras,  $\clalg\cong \algMS$, and thus $\clalg$ is locally acyclic.
\end{theorem}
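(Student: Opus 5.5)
Since $\GP B$ and $\maxNC$ decompose along connected components, I would argue by induction on the number of connected components of $\sigma$.

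\emph{Connected case.} Here $B$ has no repeated summands. I would invoke Canakci--King--Pressland \cite{CKP}: $B$ is isomorphic to the boundary algebra $eAe$ of the dimer algebra $A=A_G$ of $G$, and $\End_B(eA)\cong A$. The module $eA$ is the cluster tilting object $\bigoplus_{I\in\mathcal F(G)}M_I$ of $\GP B$, so $Q_{M_\maxNC}$ is the dimer quiver of $G$. I would check that this quiver has no superfluous arrows, i.e.\ every dimer arrow is irreducible in $\add M_\maxNC$. Removing the arrows between frozen vertices then gives exactly the Muller--Speyer quiver $Q_G$.

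\emph{Disconnected case.} Choose $i\neq j$ with $I_i=I_j$, and use the splitting \eqref{eq:divS}: $\maxNC=\maxNCone\cup\maxNCtwo$. The two pieces meet only in sets of the necklace.
\begin{itemize}
\item By Corollary \ref{lem:EndTech2} with the disjoint complement sets $I_i^2$ and $I_i^1$, we have $\End M_{\maxNCone}\cong\End M_{\maxNCij}$ and $\End M_{\maxNCtwo}\cong\End M_{\maxNCji}$. The right-hand sides are endomorphism algebras for the smaller Grassmannians $\Gr(k_1,n_1)$ and $\Gr(k_2,n_2)$.
\item Lemma \ref{lem:EndTech1} lets me compare factorisations, hence irreducibility of maps, between the large and small settings.
\item By induction, $Q^\circ_{\maxNCij}=Q_{G_1}$ and $Q^\circ_{\maxNCji}=Q_{G_2}$, where $G_1,G_2$ are the plabic graphs of the components. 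Cutting $G$ at the repeated boundary face $I_i=I_j$ yields plabic graphs of types $\sigma_1$ and $\sigma_2$, and $Q_G$ is the union of $Q_{G_1}$ and $Q_{G_2}$ glued along the shared frozen vertex.
\end{itemize}
It remains to show that $Q^\circ_\maxNC$ has no arrows between a mutable vertex of one piece and any vertex of the other. I would prove this with Theorem \ref{main1}(4) and Lemma \ref{lem:EndTech1}. A map $M_I\to M_J$ with $I\in\maxNCone$ mutable and $J\in\maxNCtwo$ factors through the common necklace module $M_{I_i}$. The reason is that the order conditions $\le_{s}$ decompose componentwise, so we get $I\le L\le J$ with $L=I_i^1\cup I_i^2$. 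Such a map is therefore not irreducible.

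\emph{Consequences.} Once $Q^\circ_\maxNC=Q_G$, both $\clalg$ and $\algMS$ are generated from the same seed: the cluster $\{\Delta_I: I\in\maxNC\}$ with exchange matrix $B_\maxNC$. So they are isomorphic as cluster algebras, modulo the localisation convention at frozen variables, which I would state carefully. Local acyclicity then follows from Muller--Speyer's result that $\algMS$, defined by a reduced plabic graph quiver, is locally acyclic.

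\emph{Main obstacle.} I expect the hardest step to be matching irreducible maps with dimer arrows in the connected case. CKP describe $Q_U$ via the dimer algebra, but there may be extra or missing arrows between frozen vertices, and one must confirm that internal arrows agree exactly. If the CKP identification is insufficient, I would fall back on local analysis: square moves (Proposition \ref{prop:reachability}) intertwine quiver mutation on both sides, by Theorem \ref{main1}(5) and Muller--Speyer. It would then suffice to check equality for one convenient plabic graph, for instance a Le-diagram graph.
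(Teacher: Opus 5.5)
Your plan has the same architecture as the paper's proof. The connected case goes through Canakci--King--Pressland. The disconnected case cuts at a repeated necklace element $I_i=I_j$, identifies $\End M_{\maxNCone}\cong\End M_{\maxNCij}$ via Lemma~\ref{lem:EndTech1}, and inducts on $G_1,G_2$. Local acyclicity comes from Muller--Speyer. Your caution about localisation is justified: $\algMS$ inverts frozen variables, so the precise statement, as in the abstract, is $(\clalg)_B\cong\algMS$. Two steps need attention.

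\emph{Connected case: the one genuine gap.} You cannot cite directly that $eA$ is $\bigoplus_{I\in\mathcal F(G)}M_I$. \cite{CKP} label faces by the left-source convention, while $\mathcal F(G)$ uses Muller--Speyer's left-target convention. Applied to $D$ itself, CKP's result concerns a different collection of modules: already on the boundary one gets the reverse necklace (cf.\ Remark~\ref{rem:GIB}).

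The paper bridges this by reversing all strands.
\begin{itemize}
\item $D'$ is connected of type $\sigma^{-1}$, with left-source labels $\maxNC'=\{[n]\setminus I\colon I\in\maxNC\}$, so \cite[Cor.~8.3]{CKP} gives $Q_{\maxNC'}=Q_{D'}$.
\item The duality $\Hom_\ring(-,\ring)$, together with $C_{k,n}^{\mathrm{op}}\cong C_{n-k,n}$, gives $Q_\maxNC=Q_{\maxNC'}^{\mathrm{op}}$.
\item Swapping colours gives $Q_G=Q_{G'}^{\mathrm{op}}$, so the two reversals cancel.
\end{itemize}
Without this step, or an equivalent one, you get at best an identification up to relabelling and reversal of arrows, not the equality $Q_\maxNC^\circ=Q_G$. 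By contrast, what you call the main obstacle is routine. The dimer relations are admissible, so the Gabriel quiver is the dimer quiver. Its only difference from Muller--Speyer's quiver is the frozen--frozen arrows, which $(-)^\circ$ removes.

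\emph{Disconnected case: a different, valid route with a faulty justification.} The paper introduces $\sigma^1,\sigma^2$ and algebras $B_1,B_2$, notes that $M_{\maxNCone}$ is cluster tilting in $\GP B_1$ with $M_{\maxNCtwo}\in\catD_2$, and reads the gluing off Theorem~\ref{main1}(4). You instead claim directly that every map between $M_I$, $I\in\maxNCone$, and $M_J$, $J\in\maxNCtwo$, factors through $M_{I_i}$. This claim is true and more elementary. It also yields something you need but did not state: irreducible maps inside $\add M_{\maxNCone}$ stay irreducible in $\add M_\maxNC$, because a factorisation through $M_L$ with $L\in\maxNCtwo$ can be rerouted through $M_{I_i}$.

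However, $\le_s$ does not decompose componentwise in general. For $\set_1=\{1,2,3\}$ and $\set_2=\{4,5\}$ one has $\{2,3,5\}\le_2\{1,3,4\}$, yet $\{5\}\not\le\{4\}$ on $\set_2$. What makes your claim work here is necklace minimality: $I_i^1$ is the $\le_i$-minimum of $\positrij$, and $I_i^2=I_j^2$ is the $\le_j$-minimum of $\positrji$.

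Write $I=I'\cup I_i^2$ and $J=I_i^1\cup J'$, and compare initial-segment counts.
\begin{itemize}
\item For $s\in\set_1$, the condition $I\le_s J$ is equivalent to $I\le_s I_i$ together with $I_i\le_s J$.
\item For $s\in\set_2$, the condition $I\le_s J$ forces $I=I_i$.
\end{itemize}
Since some $s$ with $I\le_s J$ always exists, $f_{I,J}=f_{I_i,J}f_{I,I_i}$. Maps $M_J\to M_I$ are handled symmetrically. Your sketch needs this necklace-minimality input to be a proof.
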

\begin{proof} Let $D$ be the Postnikov diagram associated to $G$.

First, consider the case where $D$  is connected. Let $D'$ be the Postnikov diagram
obtained from $D$ by reversing the directions of the strands in $D$.
Compare the face labels $\maxNC$ for $D$
following the left-target convention as in \cite{MS}, 
and the face labels $\maxNC'$ for $D'$ 
following the left-source convention as in \cite{CKP}. We have
\[\maxNC=\{I\colon [n]\backslash I\in \maxNC'\}.\]
Note that $D'$ is also  connected and of type 
$\sigma^{-1}$. The associated plabic graph $G'$ is obtained from $G$ by switching the colours of the internal vertices, 
and so using the notation in \cite{MS}, 
\begin{equation}\label{eq:quivfromG1}
Q_G=Q_{G'}^{\text{op}}.
\end{equation}
Canakci--King--Pressland proved that $A_{D'}\cong (\End M_{\maxNC'})^{\text{op}}$ \cite[Cor. 8.3]{CKP}, where $A_{D'}$ is the dimer algebra defined by 
 the quiver $Q_{D'}$ constructed from $D'$, subject to some (admissible) relations. 
The isomorphism implies that  
\begin{equation}\label{eq:quivfromG2}
Q_{\maxNC'}=Q_{D'}. 
\end{equation}
The constructions of the quiver associated to 
a Postnikov diagram 
(or equivalently, the associated plabic graph) given by Canakci--King--Pressland and by
Muller-Speyer are the same (see \cite[p. 7]{MS} and \cite[Def.~2.8]{CKP}), with one exception that, 
 in \cite{CKP}, $Q_{D'}$ includes arrows between frozen vertices, whereas in \cite{MS} such arrows are omitted. So 
\begin{equation}\label{eq:quivfromG3}
Q_{D'}^\circ \text{ from \cite{CKP} is the same as } Q_{G'} \text{ from \cite{MS}}.
\end{equation}
Note that the algebra $C=C_{k, n}$ depends of $(k, n)$. Analogously, we have an algebra $C_{n-k, n}$, where 
the relation $x^k=y^{n-k}$ in $C$ is replaced by $x^{n-k}=y^k$.
Using the duality $\Hom_\ring(-, \ring)\colon \CM C\to \CM C^{\text{op}}$ and the isomorphism 
$C^{\text{op}} \cong C_{n-k, n}$, we have 
\begin{equation}\label{eq:quivfromG4}
\End M_{\maxNC}\cong (\End M_{\maxNC'})^{\text{op}} \text{ and thus } Q_\maxNC=Q_{\maxNC'}^{\text{op}}.
\end{equation}
Therefore, by \eqref{eq:quivfromG1}-\eqref{eq:quivfromG4}, we have the required equality for the connected case,
\[
Q_{\maxNC}^\circ =Q_G.
\]

Next, assume that $D$ is not connected and that \eqref{eq:compareQ} holds for smaller plabic graphs. 
Now the necklace 
$\nlace$ is not connected either.  So there exists $i\not= j$ such that $I_i=I_j$. 
We know that $\sigma$ maps $[i, j)$ to itself. 
Let $\sigma^1$ be the decorated permutation on $[n]$ 
such that  $\sigma^1|_{[i, j)}=\sigma|_{[i, j)}$ and  fixes all the points in $[j, i)$, 
where points in $I_i^2$ are coloured by $-1$ and the other 
points in $[j, i)$ are coloured by 1. Note that in the Postnikov diagram $D$, a clockwise loop is attached to 
each fixed point coloured by $-1$, and an anti-clockwise loop is attached to each fixed points coloured by 1. 
This colouring of the fixed points is determined by the property of $\nlace$ in \eqref{eq:cols}.
Therefore, $\sigma^1$ is also 
a decorated permutation of type $(k, n)$, with the corresponding necklace 
$\nlace_{\sigma^{1}}=\{I_i, I_{i+1}, \dots, I_j, I'_{j+1}, \dots, I'_{i-1}\}$, where 
 $I_l'=I_j$ for all $l\in [j+1, i-1]$. Let $\sigma^2$ and $\nlace_{\sigma^2}$
be defined analogously. The decorated permutation $\sigma^j$ should not be 
confused with the decorated permutation $\sigma_j$ defined in \eqref{eq:sigmaj}.

Let $B_1$, $B_2$ be the algebras associated to the necklaces $\nlace_{\sigma^{1}}$
and $\nlace_{\sigma^{1}}$, respectively. Use the same notation as in 
Theorem \ref{main1}, where the categories $\catD$ and  $\catD_2$ are defined relative to $\catD_1=\GP B_1$. 
Then $M_{\maxNCone}$ is a cluster tilting object in $\GP B_1$ and 
$M_{\maxNCtwo}\in \catD_2$.  By Theorem \ref{main1}, $Q_{\maxNCone}^\circ$ is a full subquiver 
of $Q_{\maxNC}^\circ$ and
 there are no arrows between  mutable vertices in $Q_{\maxNCone}$ and 
vertices in $Q_{\maxNC}\backslash Q_{\maxNCone}$.
Similarly,  $Q_{\maxNCtwo}^\circ$ is a full subquiver 
of $Q_{\maxNC}^\circ$ and
 there are no arrows between  mutable vertices in $Q_{\maxNCtwo}$ and 
vertices in $Q_{\maxNC}\backslash Q_{\maxNCtwo}$. 
Note that $M_{I_i}$ is a summand of $B_1$, $B_2$ and $B$.  The vertices corresponding to $M_{I_i}$
are frozen in all the three Gabriel quivers, $Q_\maxNC$, $Q_{\maxNCone}$ and $Q_{\maxNCtwo}$. Therefore, 
\begin{equation}\label{eq:Gabquiv}
Q_{\maxNC}^\circ = Q_{\maxNCone}^\circ \cup Q_{\maxNCtwo}^\circ.
\end{equation}

The alternating face labeled by 
$I_i$  divide  $D$ into two Postnikov diagrams $D_1$ and $D_2$ on $[i, j)$ and $[j, i)$, respectively.
Let $G_1$, $G_2$ be associated  plabic subgraphs  of $G$. Then $D_1$ and $G_1$ are
of type $\sigma_1$, while $D_2$ and $G_2$ are
of type $\sigma_2$.
The subgraphs $G_1$ and $G_2$ share a common vertex labeled by $I_i$ and 
\begin{equation}\label{eq:plabicg}
Q_G=Q_{G_1}\cup Q_{G_2}.
\end{equation}
Note that the construction of each $Q_{G_i}$ is independent 
of whether $G_i$ is regarded as subgraph of $G$ or considered in isolation, and 
by induction, 
\begin{equation}\label{eq:plabicg2}
Q_{\maxNCij}^\circ =Q_{G_1} \text{ and } Q_{\maxNCji}^\circ =Q_{G_2}.
\end{equation}
On the other hand, 
by definition, $I_i^2\subseteq [j, i)$ and $J\subseteq [i, j)$ for any $J\in \positrij$. Therefore, $I_i^2$ 
 is disjoint to any $J\in \positrij$ and thus to any $J\in \maxNCij$ . So,
by Lemma \ref{lem:EndTech1}, \[\End M_{\maxNCone} \cong \End M_{\maxNCij},\] which implies
$Q_{\maxNCone}=Q_{\maxNCij}$. 
Similarly, 
$Q_{\maxNCtwo}=Q_{\maxNCji}$. 
Therefore, following  \eqref{eq:Gabquiv}-\eqref{eq:plabicg2}, we have the required equality,
\[
Q_{\maxNC}^\circ = Q_G. 
\]
Consequently, $\clalg\cong \algMS$. 
Finally, by \cite[Thm 3.3]{MS},  $\algMS$ is locally acyclic, and so  
$\clalg$ is also locally acyclic.
\end{proof}

\section{Grassmannian cluster subalgebras and positroid varieties}
For a subcategory $\catC$ of $\CM C$, we denote by $\mathsf{A}_{\catC}$ 
the subspace of $\CC[\Gr(k, n)]$ spanned by $\Psi_{M}$ for all $M\in \catC$.
We have 
\[
\algGP\subseteq \algCM\subseteq \CC[\Gr(k, n)]
\text{ and } \algGP, \algDd\subseteq \algD\subseteq \CC[\Gr(k, n)], 
\]
which are all subalgebras, as the corresponding subcategories of $\CM C$
contain the zero object and are closed under taking direct sums. These subcategories
all contain $B$ as well. 
Denote by $(\mathsf{A}_{\catC})_{B}$ the localisation of $\mathsf{A}_{\catC}$ at
$\{\Psi_{B'}\colon B'\in \add B\}$. By \cite[Thm. 9.5]{JKS1}, $\Psi$
defines a one-to-one correspondence between reachable rigid modules in $\CM C$
and the cluster variables of the cluster algebra on $\CC[\Gr(k, n)]$, constructed by 
Scott \cite{Scott}. Therefore,  
 \[\mathsf{A_{CMC}}=\CC[\Gr(k, n)],\] 
 and so by  Proposition \ref{prop:mainproperty},
\begin{equation}\label{eq:algDCGratB}
(\algD)_{B}=\CC[\Gr(k, n)]_B.
\end{equation}

\subsection{Bases for subalgebras of $\CC[\Gr(k, n)]$}
As before, let $U$ and $T=U\oplus V$ be cluster tilting objects in $\GP B$ and  
$\CM C$, respectively.  We assume that  $V$ does not have any indecomposable direct summand in $\add B$.
Recall the notation, $A=A_T=(\End T)^{\mathrm{op}}$, 
$A_U=(\End U)^{\mathrm{op}}$,
and $Q$ and $Q_U$ are the Gabriel quivers of $A$ and 
$A_U$, respectively. Following Theorem \ref{main1} (4), we have $Q_U^\circ\subseteq Q^\circ$.

For $X\in \add T$, let 
 \[\supp(X)=\{i\colon T_i\in \add X\},\]
where, as before, $\oplus_i T_i$ is the basic cluster tilting object determined by $T$.
For $d=(d_i)\in K(\CM A)$, written as a coordinate vector 
with respect to the basis $[T, T_i]$, let 
\[\supp(d)=\{i\colon d_i\not=0\}.\] 

The result below follows from Proposition \ref{mutgpBC}.  

\begin{lemma} \label{lem:gcogv}
Let $M\in \CM B$ and $N\in \catD_2$.
\begin{enumerate}
\item $\supp(\gv_M)\subseteq \supp(U)$.

\item If $M\in \GP B$, then $\supp(\cogv_M)\subseteq \supp(U)$. 

\item If $N\in \catD_2$, then $\supp(\gv_N), ~ \supp(\cogv_N)\subseteq \supp(V\oplus B)$. 
\end{enumerate}
\end{lemma}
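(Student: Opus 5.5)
The plan is to read off the supports directly from the (co)presentations supplied by Proposition \ref{mutgpBC}, using that each of them is also an $\add T$-(co)presentation. Since $T$ is cluster tilting in $\CM C$, $A=(\End T)\op$ has finite global dimension and \eqref{eq:GrothCMProj} holds. So $K(\CM A)$ is free on the classes $[T,T_i]$, and for $X\in\add T$ the class $[T,X]$ is the sum of $[T,T_i]$ with multiplicity equal to the multiplicity of $T_i$ in $X$. Hence $\supp([T,X])=\supp(X)$, and the support of a difference $[T,X']-[T,X'']$ is contained in $\supp(X')\cup\supp(X'')$.

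For (1), let $M\in\CM B$ and take the $\add U$-presentation $0\to U''\to U'\to M\to 0$ from Proposition \ref{mutgpBC}(1). This is also an $\add T$-presentation, so applying $\Hom(T,-)$ gives a short exact sequence. The reason is that the cokernel of $\Hom(T,U')\to\Hom(T,M)$ vanishes because the map $U'\to M$ is an $\add T$-approximation, and the next term $\Ext^1(T,U'')$ vanishes anyway. Thus
\[
\gv(M)=[T,M]=[T,U']-[T,U''],
\]
whose support lies in $\supp(U')\cup\supp(U'')\subseteq\supp(U)$.

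For (2), let $M\in\GP B$ and take the $\add U$-copresentation $0\to M\to U'\to U''\to 0$ from Proposition \ref{mutgpBC}(2). Applying $\Hom(T,-)$ gives the exact sequence
\[
0\to\Hom(T,M)\to\Hom(T,U')\to\Hom(T,U'')\to\Ext^1(T,M)\to\Ext^1(T,U')=0 .
\]
Taking classes in $K(\CM A)$, which is additive on exact sequences via projective resolutions, we get $\cogv(M)=[T,M]-[\Ext^1(T,M)]=[T,U']-[T,U'']$. This is the same identity used in Lemma \ref{lem:cog} and Proposition \ref{prop:cog}, and it gives $\supp(\cogv_M)\subseteq\supp(U)$.

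For (3), let $N\in\catD_2$. The argument is the same, now using the $\add(B\oplus V)$-presentation and copresentation of Proposition \ref{mutgpBC}(3), which are again $\add T$-(co)presentations. This gives $\supp(\gv_N)$ and $\supp(\cogv_N)$ contained in $\supp(V\oplus B)$; here $B\in\add U\subseteq\add T$, so the summands of $B$ are among the $T_i$.

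The only point requiring care is the cog-vector computation: the class $[\Ext^1(T,M)]$ must be interpreted in $K(\CM A)$ through projective resolutions. This is legitimate because $A$ has finite global dimension, so the alternating sum over the four-term exact sequence holds in $K(\CM A)$. Everything else is bookkeeping.
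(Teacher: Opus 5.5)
Your proposal is correct and is the argument the paper intends. The paper's entire proof is the remark that the lemma follows from Proposition \ref{mutgpBC}: one reads off $\gv$ and $\cogv$ from the $\add U$- and $\add(B\oplus V)$-(co)presentations, which are $\add T$-(co)presentations, exactly as in \eqref{eq:cogMd}. One minor point: in (1), the vanishing of $\Ext^1(T,U'')$ already gives short exactness of the $\Hom(T,-)$ sequence, so you do not need to invoke the approximation property.
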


Let $\bas$ be the generic basis of $\CC[\Gr(k, n)]$. In particular, it is a basis as desribed in  
Proposition \ref{Thm:GrassGen-gv-cogv}.  Let  
\[
\bas_\catC=\bas\cap \mathsf{A}_{\catC}.
\]

\begin{theorem} \label{thm:genericbasis}
The sets $\basGP$, $\basD$ and $\bas_{\catD_2}$ are bases of the
algebras $\algGP$, $\algD$ and $\algDd$, respectively. Their elements have pairwise 
distinct minimal and distinct maximal exponents.  
\end{theorem}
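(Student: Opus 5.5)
The plan is to show that for $\catC\in\{\GP B,\catD,\catD_2\}$, every $\Psi_M$ with $M\in\catC$ is a linear combination of elements of $\bas_\catC$. Linear independence and the distinctness of minimal and maximal exponents are then inherited from $\bas$ via Proposition \ref{Thm:GrassGen-gv-cogv}. Fix $M\in\catC$ and write $\Psi_M=\sum_{i=1}^s a_i\Psi_{N_i}$ as in \eqref{eq:lincomb}, with $N_i$ the chosen generic representatives. It suffices to show that each $N_i$ can be taken in $\catC$; more precisely, that $N_i$ lies in a component $\HC$ whose generic part meets $\catC$, so that the basis element $\Psi_{N_\HC}$ lies in $\mathsf{A}_\catC$.

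The membership test will go through Proposition \ref{prop:CompExt}. It gives $\dimv\Ext^1(T,N_i)\le\dimv\Ext^1(T,M)$ and $\gv(N_i)=\gv(M)-\beta(d)$ with $\supp(d)\subseteq\supp\Ext^1(T,M)$.

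For $\catC=\catD$, take $T=U\oplus V$ as in Theorem \ref{main1}, so that $B\in\add T$. Then $\Ext^1(B,M)=0$ says that $\Ext^1(T,M)$ vanishes at the vertices of $B$, and by the inequality the same holds for $N_i$. By 2-CY symmetry this gives $\Ext^1(N_i,B)=0$, so $N_i\in\catD$.

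For $\catC=\GP B$, the inequality shows that $\Ext^1(T,N_i)$ is supported on $\supp U$, since $\Ext^1(V,M)=0$ by Proposition \ref{prop:GPBV}. Next, $\supp\beta(d)$ is contained in the vertices adjacent to $\supp(d)\subseteq\supp U\setminus\supp B$. By Theorem \ref{main1}(4) all arrows at these mutable vertices lie in $Q_U^\circ$, so $\beta(d)$ is supported on $\supp U$. Together with Lemma \ref{lem:gcogv}(1), this gives $\supp\gv(N_i)\subseteq\supp U$. The same argument with maximal exponents and $cog$-vectors, using \eqref{eq:claimcog} and Lemma \ref{lem:gcogv}(2), gives $\supp\cogv(N_i)\subseteq\supp U$.

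It remains to convert these support conditions into $N_i\in\GP B$, and I expect this step to be the main obstacle. By Proposition \ref{Thm:GrassGeneric} every $g$-vector is realised, and $\gv(N_i)$ determines the $\add T$-presentation $0\to T''\to T'\to N_i\to0$ with $T',T''\in\add U$. Hence $N_i\in\CM B$, since it is a cokernel in $\CM B$ with $\Ext^1(N_i,V)=0$. Similarly, a $cog$-vector supported on $\supp U$ yields an $\add U$-copresentation via Lemma \ref{lem:cog}, i.e.\ a surjection between objects of $\add U$, whose kernel is a generic module of the same component. Proposition \ref{mutgpBC}(2) then gives that module in $\GP B$. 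One must make sure that the generic representative for the $g$-vector and the one for the $cog$-vector lie in the same open dense set; I would handle this by intersecting the open sets $\hCmin\cap\hCminco$, using the injectivity in Propositions \ref{prop:generic-gv} and \ref{prop:gen-gv-cogv}.

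The case $\catD_2$ is symmetric, using Lemma \ref{lem:gcogv}(3), Proposition \ref{mutgpBC}(3) and Corollary \ref{cor:mutinD2}, with $\supp(V\oplus B)$ in place of $\supp U$. Finally, spanning follows by induction on the total order \eqref{eq:partord}: subtract the leading term $\Psi_{N_1}$, which now lies in $\mathsf{A}_\catC$, and repeat. Since $\bas_\catC\subseteq\bas$, these elements are linearly independent and retain pairwise distinct minimal and maximal exponents.
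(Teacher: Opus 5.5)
Your treatment of $\catD$, and of $\GP B$ up to the conclusion $N_i\in\CM B$, follows the paper. One justification there is off. Passing from $\supp\gv(N_i)\subseteq\supp U$ to $T',T''\in\add U$ requires that the generic $N_i$ has a minimal $\add T$-presentation in which $T'$ and $T''$ have no common summands; the paper cites \cite[Lem.~12.5]{JKS3} for this. ``Every $g$-vector is generic'' does not give it, and without it common summands from $\add V$ could cancel in $\gv(N_i)$.

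The genuine gap is the step you flagged. Lemma~\ref{lem:cog} only produces \emph{some} surjection $T^+\to T^-$ in $\add U$. Its kernel $K$ is indeed in $\GP B$ by the converse part of Proposition~\ref{mutgpBC}(2). But nothing places $K$ in the component $\HC$ of $N_i$, let alone in the open subset where $\Psi$ takes the basis value $\Psi_{N_\HC}$. Intersecting $\hCmin\cap\hCminco$ does not repair this: having the generic $g$- and $cog$-vector need not fix the cluster character, because the Euler characteristics $\chi(\Gr_d\Ext^1(T,-))$ can still jump. So $\Psi_{N_\HC}\in\mathsf{A}_{\GP B}$ is not established.

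The missing observation is one you already have. Since $M\in\GP B\subseteq\catD$, your $\catD$ argument gives $\Ext^1(B,N_i)=0$. Together with $N_i\in\CM B$, \eqref{eq:extCB} and 2-CY symmetry, this gives $\Ext^1_B(N_i,B)=\Ext^1_C(N_i,B)=0$. Hence the chosen representative $N_i$ itself lies in $\GP B$. This is exactly how the paper concludes, and no $cog$-vectors are needed.

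The case $\catD_2$ is not symmetric, because Proposition~\ref{mutgpBC}(3) has no converse. Knowing that $\gv(N_i)$ and $\cogv(N_i)$ are supported on $\supp(B\oplus V)$ does not by itself certify $N_i\in\catD_2$. The paper argues in three steps:
\begin{enumerate}
\item It first uses $N_i\in\catD$, from the $\catD$ case, since $\catD_2\subseteq\catD$.
\item By Proposition~\ref{prop:tem}(1) and Lemma~\ref{lem:elabtem}, every indecomposable summand $N_{ij}$ of $N_i$ then lies in $\GP B$ or in $\catD_2$.
\item A summand $N_{ij}\in\GP B\setminus\add B$ is excluded by the support of $\gv(N_i)$. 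Its minimal $\add T$-presentation is a summand of that of $N_i$, and by Proposition~\ref{mutgpBC}(1) it has terms in $\add U$. So it has terms in $\add U\cap\add(B\oplus V)=\add B$, and the sequence $0\to B''\to B'\to N_{ij}\to 0$ splits because $\Ext^1(N_{ij},B)=0$. This forces $N_{ij}\in\add B$, a contradiction.
\end{enumerate}
Finally, once every $N_i$ in \eqref{eq:lincomb} is shown to lie in the relevant category, spanning is immediate, and your induction on the total order is unnecessary.
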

\begin{proof}
First, consider the case where $M\in \catD$. Express $\Psi_M$ as a linear 
combination using~$\bas$, 
\begin{equation*}\label{eq:lincombBD}
\Psi_M=\sum_{i=1}^s a_{i} \Phi_{N_i}, \text{ where } a_i\not=0.
\end{equation*}
By Proposition \ref{prop:CompExt}, for any $i$, 
 \[
 \dimv\Ext^1(T, N_i)\leq \dimv \Ext^1(T, M).
 \] 
As $M\in \catD$,  $\Ext^1(B, M)=0$. Therefore,
\begin{equation}\label{eq:newext}\Ext^1(B, N_i)=0, \end{equation}  
and so $N_i\in \catD$. Consequently, 
$\basD$ is a basis of $\algD$.

Second, let $M\in \GP B$. We first show that each $N_i\in \CM B$.
Write $U/B$ for the complementary summand of $B$ in $U$, that is, $(U/B)\oplus B=U$.
By Proposition \ref{prop:GPBV} and the assumption that $M\in \GP B$, 
\[
   \Ext^1(T, M)=\Ext^1(U, M) =\Ext^1(U/B, M), 
\]
which is only supported at the mutable vertices of $Q_U$. Note that 
 \eqref{eq:betaUT} implies  
\[
\supp(\beta([S_i]))\subseteq \supp(U),
\] 
for any mutable vertex $i\in Q_U$. Hence, for any $d\leq \dimv \Ext^1(T, M)$, 
\[
\supp(\beta(d))\subseteq \supp(U),
\]
which together with Lemma \ref{lem:gcogv} and Proposition \ref{prop:CompExt} 
implies that 
\begin{equation}\label{eq:suppNiU}
\supp(\gv(N_i)) \subseteq \supp(U).
\end{equation}
By \cite[Lem. 12.5]{JKS3}, 
$N_i$ has a \emph{generic} $\add T$-presentation 
\begin{equation*}
\xymatrix{0\ar[r]& T''\ar[r] &  T'\ar[r] &  N_i \ar[r] & 0,}
\end{equation*}
where $T'$ and $T''$ have no common summands. We have 
\[
 \gv (N_i)= [T, T']-[T, T''], 
\]
and so by \eqref{eq:suppNiU},  $T', ~T'' \in \add U$. Consequently,  $N_i$ is a $B$-module and so 
 $N_i\in~\CM B$, as required.
 
As $M\in \GP B\subseteq \catD$, by the symmetry in $\Ext^1(-, -)$ and \eqref{eq:newext}, 
we have
\[
\Ext^1(N_i, B) \cong \Ext^1(B, N_i)=0, 
\]
where the extensions are extensions of $C$-modules, and in fact by \eqref{eq:extCB}, 
\[\Ext^1_C(N_i, B)=\Ext^1_B(N_i, B).\]
Therefore, $N_i\in \GP B$ and thus $\basGP$ is a basis of $\algGP$. \\

Third, consider the case where $M\in \catD_2$. Using Corollary 
 \ref{cor:mutinD2} and similar to \eqref{eq:betaUT}, we have that 
$\beta([S_i])\in \supp(B\oplus V)$ for any simple $S_i$ corresponding to 
a mutable summand of $T$ in $\add V$. Note that $\Ext^1_C(T, M)=\Ext^1_C(V,M)$, 
by Proposition \ref{prop:GPBV}, and so for any $0\leq d\leq \dimv \Ext^1_C(T, M)$, 
\[
\supp(\beta(d))\subseteq \supp(B\oplus V).
\]
Together with  Lemma \ref{lem:gcogv} (3), this implies $\supp(\gv(N_i))\subseteq \supp(B\oplus V)$. As above, $N_i$ has a generic 
$\add T$-presentation 
\begin{equation*}
\xymatrix{0\ar[r]& T''\ar[r] & T'\ar[r] &  N_i \ar[r] & 0}
\end{equation*}
with $T', T''\in \add (B\oplus V)$, and 
\[
 \gv (N_i)= [T, T']-[T, T''].
\]
Let $N_{ij}$ be an indecomposable summand of $N_i$ not in $\catD_2$. Then $N_{ij}\in \GP B\backslash 
\add B$ by Proposition \ref{prop:tem} (1) (and Lemma \ref{lem:elabtem}). The composition $T'\rightarrow N_i\rightarrow N_{ij}$ 
is an $\add T$-approximation. It follows that the minimal $\add T$-approximation of 
$N_{ij}$ has no summands from $\add U/B$. However, since $N_{ij}\in \GP B\backslash \add B$, the kernel of the approximation must contain summands from $\add U/B$. This is a contradiction, since $\gv(N_i)=\sum_j \gv(N_{ij})\in \supp(B\oplus V)$. 
So $N_i\in \catD_2$.

Finally, the basis elements of all three bases inherit the property of having pairwise distinct minimal and maximal exponents from 
those in $\bas$.
\end{proof}

\begin{remark}
We remark that  $\basCM$ is in general not a basis for $\algCM$. Indeed, suppose 
that $\GP B$ is a proper subcategory of $\CM B$ and 
let $M\in \CM B\backslash\GP B$. Following Proposition \ref{prop:mainproperty} and Theorem \ref{thm:genericbasis}, we have 
\begin{equation}\label{eq:bcmb}
\Psi_M\Psi_{B^r}=\sum_{X\in \catD} a_X\Psi_X =\sum_{\Psi_N\in \bas} a_N\Psi_N,
\end{equation}
where each $X$ with $a_X\not=0$ is an extension of $M$ and $B^r$. 
As $\Ext^1_C(M, B)=\Ext^1_B(M, B)$ (see \eqref{eq:extCB}),  any extension of $M$ by $B$ is 
a $B$-module. On the other hand, 
as $M\not\in \GP B$, any non-trivial extension of $B$ by $M$ is not a $B$-module. 
So there exists $X\not\in \CM B$ in \eqref{eq:bcmb} with $a_X\not=0$. 
Consequently,  $X\not\in \GP B$ and thus 
 there exists $N\not\in \GP B$ in \eqref{eq:bcmb} with $a_N\not=0$.
This implies that $N$ has a direct summand 
in $\catD_2\backslash \add B$ and $\Psi_N\not\in \basCM$. 
Therefore,  $\basCM$ is not a basis for $\algCM$ in this case. 
\end{remark}

\subsection{An ideal of $\algD$}
When  $\GP B=\CM C$ (i.e. $B=C$), we have $\Pi=\Gr(k, n)$. In this 
case $\CM C$ equipped with $\Psi$ gives a categorification of the 
Grassmannian cluster algebra $\CC[\Gr(k, n)]$. So for the remainder of this 
paper, we assume that $B\not=C$.%

Let $\ideal$ be the ideal of $\algD$ generated by $\{\Psi_N\colon N\in \catD_2\backslash \add B\}$, 
which is nonempty, as it contains  $\add C\backslash \add B$.
Let $\bas_{\ideal}=\bas\cap \ideal$.
\begin{proposition}\label{prop:ideal} We have the following.
\begin{enumerate}
\item\label{itm:idea1}
$ \bas_{\ideal}=\{\Psi_M\Psi_N\colon \Psi_M\in \basGP, ~ \Psi_N\in \bas_{\catD_2} 
\text{ and } N \in  \catD_2\backslash \add B\}$ and this is a basis of $\ideal$.
\item\label{itm:idea2} $\ideal\cap \algGP=0$.

\item\label{itm:idea3} $\algGP\cong \algD/\ideal$.
\end{enumerate}
\end{proposition}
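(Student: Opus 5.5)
The plan is to base everything on the decomposition of $\catD$ supplied by Proposition~\ref{prop:tem} and Lemma~\ref{lem:elabtem}. The first step is to show that every indecomposable $X\in\catD$ lies either in $\GP B$ or in $\catD_2$; this follows from \eqref{eq6} in the proof of Theorem~\ref{main1}. Consequently every $X\in\catD$ splits as $X\cong M\oplus N$ with $M\in\GP B$ and $N\in\catD_2$. If we put all summands in $\add B$ into $M$, then $N$ has no summands in $\add B$. Since $\Ext^1(M,N)=0$ by Proposition~\ref{prop:GPBV}, the cluster character satisfies $\Psi_X=\Psi_M\Psi_N$.

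For a generic basis element $\Psi_X\in\basD$, genericity should be compatible with this splitting. Concretely, $X$ generic in its component means $M$ and $N$ are generic in theirs, and then $\Psi_M\in\basGP$ and $\Psi_N\in\bas_{\catD_2}$. I would argue this via generic $g$-vectors: by Lemma~\ref{lem:gcogv} they add, with supports in $\supp(U)$ and $\supp(V\oplus B)$ respectively.

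\textbf{Part (1).} The inclusion ``$\supseteq$'' is immediate, since any such product lies in $\bas$ (as $\Psi_{M\oplus N}$ with $M\oplus N$ generic) and lies in $\ideal$. For ``$=$'' and the basis statement, take any element $y=\sum_X a_X\Psi_X$ of $\ideal$, expanded in $\basD$; this is possible because $\basD$ is a basis by Theorem~\ref{thm:genericbasis}. I claim every $X$ with $a_X\neq 0$ has a summand in $\catD_2\backslash\add B$. It suffices to check this for generators of $\ideal$, namely products $\Psi_Z\Psi_N$ with $Z\in\catD$ and $N\in\catD_2\backslash\add B$.

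\begin{itemize}
\item Such a product equals $\Psi_{Z\oplus N}$ whenever $\Ext^1(Z,N)=0$. In general, I would expand using the multiplication formula \eqref{eq:liftmult} together with Proposition~\ref{prop:CompExt}.
\item The key observation comes from Proposition~\ref{prop:CompExt} and Lemma~\ref{lem:gcogv}. The $g$-vector of $Z\oplus N$ has a nonzero component supported on $\supp(V)$ (as opposed to $\supp(B)$). Every $N_i$ appearing in its expansion has $g$-vector $\gv-\beta(d)$, where $\beta(d)$ is built from $\beta([S_j])$ for mutable $j$.
\item Since $\beta$ restricted to mutable $V$-vertices is supported in $\supp(B\oplus V)$ by Corollary~\ref{cor:mutinD2}, and $\beta$ restricted to $U$-vertices is supported in $\supp(U)$, the $\supp(V)$-part of $\gv(N_i)$ changes only by $V$-mutable contributions.
\item I would then use that $\catD_2$ without $B$-summands cannot have vanishing $V$-part of its $g$-vector. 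This should follow because the minimal $\add T$-presentation of an indecomposable $N\in\catD_2\backslash\add B$ involves $V$, by Proposition~\ref{mutgpBC}(3).
\end{itemize}

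This support bookkeeping is the main obstacle. An alternative route to the same conclusion is the ideal-like property of $\catD_2$ under extensions inside $\catD$.

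\textbf{Parts (2) and (3).} Given (1), both follow formally. The set $\basD$ is the disjoint union of $\basGP$ (elements with no summand in $\catD_2\backslash\add B$) and $\bas_\ideal$. Hence $\algD=\algGP\oplus\ideal$ as vector spaces, which gives (2). The composite $\algGP\hookrightarrow\algD\to\algD/\ideal$ is then a bijective algebra homomorphism, which gives (3).
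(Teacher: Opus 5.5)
Your skeleton is the paper's. The products $\Psi_M\Psi_N$ lie in $\bas$ because $\Ext^1(M,N)=\Ext^1(N,M)=0$ (Crawley-Boevey--Schr\"oer), and they obviously lie in $\ideal$. Parts (2) and (3) then follow formally once $\basD=\basGP\sqcup\bas_{\ideal}$ with $\bas_\ideal$ a basis of $\ideal$. Everything therefore hinges on the spanning claim, which you correctly isolate: for $Y\in\catD\backslash\GP B$ (equivalently, $Y$ has an indecomposable summand in $\catD_2\backslash\add B$), no $\Psi_{N_i}$ with $N_i\in\GP B$ may occur in the $\basD$-expansion of $\Psi_Y$. (Incidentally, $\Psi_Z\Psi_N=\Psi_{Z\oplus N}$ holds for all $Z,N$, so no multiplication formula is needed; the only issue is that $Z\oplus N$ need not be generic.) The paper's proof simply asserts this spanning (``clearly''), so there is no argument there that you missed. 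However, the argument you propose does not prove it, so as written (1), and with it (2) and (3), is not established.

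Your reformulation is sound: by Lemma~\ref{lem:gcogv} and the generic-presentation argument of Theorem~\ref{thm:genericbasis}, a generic $N_i\in\catD$ lies in $\GP B$ exactly when $\gv(N_i)$ has zero $V$-component. The problem is that you never control that component, and the bookkeeping fails at two places.

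\emph{First step.} The assertion that $\gv(Z\oplus N)$ has nonzero $V$-component is, via Proposition~\ref{prop:CompExt} ($\gv(N_1)=\gv(Z\oplus N)$), already equivalent to the leading term not lying in $\GP B$. Proposition~\ref{mutgpBC}(3) does not give it. A $g$-vector is a difference $[T,T']-[T,T'']$ with coordinates of both signs, and a minimal presentation of a non-generic $N$ may carry the same $V$-summands in $T'$ and $T''$. Moreover, the $\catD_2$-part of $Z$ can cancel the $V$-component of $\gv(N)$.

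\emph{Second step.} Even granting the first, $\gv(N_i)=\gv(Z\oplus N)-\beta(d)$ with $d$ supported at $V$-mutable vertices. Corollary~\ref{cor:mutinD2} only says that $\beta([S_j])=[T,E_j]-[T,F_j]$ is supported on $\supp(B\oplus V)$. Its $V$-components are in general nonzero, so nothing prevents $\gv(N_i)$ from landing in $\supp(U)$.

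The underlying point is that the method of Theorem~\ref{thm:genericbasis} propagates conditions bounded from above ($\dimv\Ext^1(T,-)$, support containment). ``Not in $\GP B$'' is a non-vanishing condition, and Proposition~\ref{prop:CompExt} gives no information about it. What is needed is an independent argument that, for arbitrary (possibly non-generic, non-rigid) $N\in\catD_2\backslash\add B$, the $\bas_{\catD_2}$-expansion of $\Psi_N$ contains no monomial $\Psi_{B'}$ with $B'\in\add B$. Your alternative route through extensions inside $\catD$ runs into the same issue, since the middle terms are again non-generic and must themselves be expanded.
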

\begin{proof} 
(1) Clearly the right hand side  is contained in and spans the ideal $I$. Consider the 
product $\Psi_M\Psi_N$ with $\Psi_M\in \basGP$ and $\Psi_N\in \bas_{\catD_2}$. 
By Proposition \ref{prop:GPBV}, $\Ext^1(M,N)=\Ext^1(N,M)=0$. So, 
by \cite[Thm. 1.2]{CS} (see also \cite[Thm. 1.1]{GLS05}),
$\Psi_{M\oplus N}=\Psi_M\Psi_N$ is an element of $\bas$. Hence, 
the right hand side is contained in $\bas$. So (1) holds.

(2) Each element in the basis $\bas_\ideal$ has a factor 
that does not belong to $\algGP$. So $\basGP\cap \bas_\ideal=\emptyset$. As they 
are both a subset of the basis  $\bas$, their union is linearly independent. 
So $\ideal\cap \algGP=0$.

(3) 
For any $X\in \catD$, we have $X=M\oplus N$ for some 
$M\in \GP B$,  $N\in \catD_2$, and 
\begin{equation*}\label{eq:charprod}\Psi_{X}=\Psi_M \Psi_N.\end{equation*}
Therefore, by Theorem \ref{thm:genericbasis}, for any $X\in \catD$, 
we have 
\begin{equation}\label{eq:PsiXD}
\Psi_X=\sum_{X_i\in \GP B} a_i\Psi_{X_i}+\sum_{Y_i\not\in \GP B} b_i\Psi_{Y_i}
\end{equation}
with the second sum contained in $\ideal$. So the composition
$\algGP\to \algD/\ideal$ is surjective. On the other hand, by (2), the projection is also injective. Therefore 
$\algGP\cong\algD/\ideal$. 
\end{proof}

\subsection{A generalised partition function}
We recall the definition of the generalised partition function defined in \cite{JKS3} 
(see also \cite{Pr2} and \cite{CKP}) and discuss some simple properties.
Let $M\in \CM B$ and consider the partial projective $B$-presentation
\begin{equation*}
\xymatrix{
0\ar[r]& \Omega_M\ar[r] & P_M\ar[r] & M\ar[r] &0.
}
\end{equation*}
Then $\Omega_M\in \GP B$ (see Lemma \ref{lem:GPBBapp}). Define 
\begin{equation}\label{eq:defptf}
\ptf_M=\frac{\Psi_{\Omega_M}}{\Psi_{P_M}}.
\end{equation}

Let $U$ be a cluster tilting object in $\GP B$ and $U_i$ be a mutable summand of $U$. 
Let $\Omega_{U_i}$ be the minimal syzygy of $U_i$. Define 
\[U^\Omega=B\oplus (\oplus_i \Omega_{U_i}).
\]
Similarly, we can define $U^{\Sigma}$, replacing syzygies by cosyzygies.

\begin{proposition} Let $\maxWS$ be a maximal collection of non-crossing sets in the positroid $\positr$ associated to $B$.
\begin{itemize}\item[(1)] $U^\Omega$ and $U^{\Sigma}$ are  cluster tilting objects in $\GP B$.
\item[(2)]  $(U_i, U_i^*)$ is a mutation pair in $\GP B$ if and only if  
so is $(\Omega_{U_i}, \Omega_{U_i^*})$.
\end{itemize}
Consequently, $U$ is reachable from $M_{\maxWS}=\oplus_{I\in \maxWS} M_I$ if and only $U^{\Omega}$ (resp. $U^{\Sigma}$)
is reachable from $M^{\Omega}_{\maxWS}$ (resp. $M^{\Sigma}_{\maxWS}$).
\end{proposition}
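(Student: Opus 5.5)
The plan is to use that $\GP B$ is Frobenius with projective-injectives $\add B$. Its stable category $\ul{\GP B}$ is then triangulated, and the syzygy $\Omega$ (with inverse the cosyzygy $\Sigma$) induces the shift $[-1]$, which is an autoequivalence. I take $U$ to be a cluster tilting object in $\GP B$; the prototypical case is $U=M_{\maxWS}$, which is cluster tilting by Theorem~\ref{main1}(1). Rigidity and maximality are then transported along this autoequivalence.

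For (1), write $U=B'\oplus(\oplus_i U_i)$ with the $U_i$ non-projective, so that
\[
U^\Omega=B\oplus(\oplus_i\Omega_{U_i}).
\]
Each $\Omega_{U_i}$ lies in $\GP B$ by Lemma~\ref{lem:GPBBapp}, so $U^\Omega\in\GP B$. Rigidity follows from $\Ext^1(\Omega X,\Omega Y)\cong\ul\Hom(\Omega X,\Omega Y[1])\cong\ul\Hom(X,Y[1])\cong\Ext^1(X,Y)$, using \eqref{eq:extCB} to identify $\Ext^1_B$ with $\Ext^1_C$. For the cluster tilting property, suppose $Z\in\GP B$ satisfies $\Ext^1(U^\Omega,Z)=0$. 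Then $\Ext^1(U,\Sigma Z)=0$, since the projective summands contribute nothing. Hence $\Sigma Z\in\add U$ because $U$ is cluster tilting in $\GP B$. Consequently $Z\in\add(\Omega U)\oplus\add B=\add U^\Omega$, where $\Omega$ of a module in $\GP B$ is well defined up to projective summands. The argument for $U^\Sigma$ is dual. Alternatively, it suffices to show $U^\Omega$ is maximal rigid and invoke Theorem~\ref{main1}(1). One should also check that $\Omega_{U_i}$ is indecomposable and non-projective when $U_i$ is. This holds since $\Omega$ is a stable autoequivalence and the syzygy is taken to be minimal.

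For (2), recall that $(U_i,U_i^*)$ is a mutation pair exactly when $U_i\not\cong U_i^*$ are indecomposable non-projective, $\dim\ul\Hom(U_i,U_i^*[1])=1$, i.e.\ $\dim\Ext^1(U_i,U_i^*)=1$, and $U/U_i\oplus U_i^*$ is cluster tilting. The exchange triangles $U_i^*\to E\to U_i\to U_i^*[1]$ in the stable category are sent by $\Omega$ to exchange triangles, because $\Omega$ preserves right $\add(U/U_i)$-approximations in the stable category. Lifting back to $\GP B$, these become the exact sequences \eqref{eq:mutsequences} up to projective summands in the middle term. Combined with (1) applied to $U/U_i\oplus U_i^*$, this shows $\mu_{\Omega U_i}(U^\Omega)=(\mu_iU)^\Omega$, and the converse direction follows by applying $\Sigma$.

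The consequence then follows by induction on the length of a mutation sequence. If $U=\mu_{i_s}\cdots\mu_{i_1}M_{\maxWS}$, then $U^\Omega=\mu_{i_s}\cdots\mu_{i_1}M^\Omega_{\maxWS}$ by repeated use of (2), and the same holds with $\Sigma$ in place of $\Omega$. Since $\Omega\Sigma\cong\mathrm{id}$ up to projectives, the correspondence is bijective, which gives the equivalence. The main obstacle I expect is the bookkeeping between $\GP B$ and its stable category: one must ensure that approximations and the indecomposability of the middle terms survive passing to minimal syzygies, with $\add B$ correctly accounted for. For this I would rely on Proposition~\ref{stabBC} and on the fact that $B$ is the projective-injective generator of $\GP B$.
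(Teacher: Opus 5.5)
Your proposal is correct and takes essentially the paper's route. The paper does not write out a proof; it says the argument of \cite[Prop 9.5]{JKS3} adapts. That argument transports rigidity, the cluster tilting property and the exchange sequences along the autoequivalence of $\ul{\GP B}$ induced by $\Omega$ (quasi-inverse $\Sigma$), with $\add B$ the projective-injectives of $\GP B$. Your extra checks are the bookkeeping the paper leaves implicit: that minimal syzygies of indecomposable non-projectives are indecomposable non-projective, and that $\Sigma\Omega U_i\cong U_i$, which gives the converse and the reachability statement.
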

The proof of  \cite[Prop 9.5]{JKS3} can be adapted to this setting and we skip the details.

\begin{proposition}\label{prop:chsum} 
 The map $\ptf\colon \GP B\to \CC[\opoid]$ is a cluster character. 
\end{proposition}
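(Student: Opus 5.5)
Recall that a cluster character on a Frobenius 2-CY category $\catC$ (in the sense of Fu--Keller/Palu) is a map $\chi$ from objects to a commutative ring satisfying three conditions. First, $\chi_M=\chi_{M'}$ whenever $M\cong M'$. Second, $\chi_{M\oplus N}=\chi_M\chi_N$. Third, for $M,N$ with $\dim\Ext^1(M,N)=1$ and the two non-split sequences $0\to N\to E\to M\to 0$ and $0\to M\to E'\to N\to 0$, we have $\chi_M\chi_N=\chi_E+\chi_{E'}$.

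Since $\GP B\subseteq \CM B$, the definition \eqref{eq:defptf} applies to every $M\in\GP B$. Here $\ptf_M$ is viewed as an element of $\CC[\opoid]$ via $p$: $\Psi_{P_M}$ is a product of minors $\Delta_I$ with $I\in\nlace$, which are invertible there. So $\ptf_M$ is well defined once we check independence of the choice of presentation. The plan is to reduce all three axioms to the corresponding properties of $\Psi$ on $\CM C$, which is a cluster character \cite{JKS1,FuKeller}, using the exactness properties of the syzygy functor on the Frobenius category $\GP B$.

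Step 1 (well-definedness and additivity). Any two partial projective presentations of $M$ differ by adding split summands $B'\xrightarrow{=}B'$. Such a change replaces $\Omega_M$ by $\Omega_M\oplus B'$ and $P_M$ by $P_M\oplus B'$. Multiplicativity of $\Psi$ on direct sums then shows that the quotient $\Psi_{\Omega_M}/\Psi_{P_M}$ is unchanged, so $\ptf$ is constant on isomorphism classes. Additivity follows by taking the direct sum of presentations of $M$ and $N$, again using $\Psi_{X\oplus Y}=\Psi_X\Psi_Y$.

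Step 2 (exchange relation). Let $M,N\in\GP B$ with $\dim\Ext^1(M,N)=1$, and take the non-split sequences with middle terms $E,E'\in\GP B$. Since $\GP B$ is Frobenius with projectives $\add B$, the syzygy functor $\Omega$ induces an autoequivalence of $\ul{\GP B}$. Hence $\Ext^1(\Omega M,\Omega N)\cong\Ext^1(M,N)$ is one-dimensional. Applying the horseshoe lemma to $0\to N\to E\to M\to0$ with presentations $P_N,P_M$ gives $P_E=P_N\oplus P_M$ and a short exact sequence $0\to\Omega_N\to\Omega_E\oplus B''\to\Omega_M\to0$, where $B''\in\add B$ is possibly nonzero because the horseshoe construction need not give the minimal syzygy. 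This sequence is non-split, since it corresponds to the given extension under $\Omega$. Doing the same for $E'$, the exchange relation for $\Psi$ in $\CM C$ applied to the pair $\Omega_M,\Omega_N$ yields
\[
\Psi_{\Omega_M}\Psi_{\Omega_N}=\Psi_{\Omega_E\oplus B''}+\Psi_{\Omega_{E'}\oplus B'''}.
\]
Dividing by $\Psi_{P_M}\Psi_{P_N}=\Psi_{P_E}=\Psi_{P_{E'}}$, where the denominators are adjusted by the extra $B''$ and $B'''$ to match Step 1, gives $\ptf_M\ptf_N=\ptf_E+\ptf_{E'}$.

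Main obstacle. The exchange relation for $\Psi$ holds in $\CM C$ only when $\dim\Ext^1_C(\Omega_M,\Omega_N)=1$. We must therefore check that $\Ext^1$ computed in $\CM C$ equals $\Ext^1$ in $\GP B$; this follows from \eqref{eq:extCB} since $\Omega_N\in\GP B$. We must also track the extra projective summands $B'',B'''$ appearing in the horseshoe sequences and confirm they cancel in the quotient. I would handle the latter by working throughout with non-minimal presentations and invoking the independence proved in Step 1.
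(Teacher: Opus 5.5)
Your proof is correct, but it takes a different route from the paper's. The paper's proof only notes, as you do, that $\ptf_M=\Psi_{\Omega_M}/\Psi_{P_M}$ is regular on $\opoid$, because $\Psi_{P_M}$ is a monomial in the boundary minors $\Delta_I$, $I\in\nlace$. It then imports the cluster character property wholesale from \cite[Prop.~6.8]{JKS3}.

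You instead verify the Fu--Keller/Palu axioms directly:
\begin{enumerate}
\item independence of the chosen presentation (this also follows in one line from Schanuel's lemma, $\Omega_M\oplus P'\cong\Omega'_M\oplus P_M$, together with multiplicativity of $\Psi$);
\item multiplicativity, via direct sums of presentations;
\item the exchange relation, by transporting the pair $(M,N)$ to $(\Omega_M,\Omega_N)$ with the horseshoe lemma and applying the exchange relation for $\Psi$ on $\CM C$. Here \eqref{eq:extCB} and the syzygy autoequivalence of the stable category of $\GP B$ guarantee $\dim\Ext^1_C(\Omega_M,\Omega_N)=1$.
\end{enumerate}

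Two small points to tighten. First, the non-splitness of the horseshoe sequence deserves an explicit line. That sequence is the pullback of $0\to\Omega_N\to P_N\to N\to 0$ along (minus) the map $\phi\colon\Omega_M\to N$ that classifies $E$. If it split, $\phi$ would factor through $P_N\to N$. Since $\Ext^1(M,P_N)=0$, the resulting map $\Omega_M\to P_N$ would then extend over $\Omega_M\hookrightarrow P_M$, which forces $E$ to split.

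Second, no ``adjustment of denominators'' is needed. The sequence $0\to\Omega_E\oplus B''\to P_N\oplus P_M\to E\to 0$ is itself a partial projective presentation of $E$, so your Step~1 gives $\Psi_{\Omega_E\oplus B''}/\Psi_{P_N\oplus P_M}=\ptf_E$ directly, and likewise for $E'$.

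What your route buys is self-containedness and a clear list of the inputs actually used: the Frobenius structure of $\GP B$ with projectives $\add B$, the comparison \eqref{eq:extCB}, and $\Psi$ being a cluster character on $\CM C$. It also shows that the exchange relations already hold in $\CC[\Gr(k,n)]_B$, before applying $p$. The paper's route is shorter and defers all of this to the general statement in \cite{JKS3}.
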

\begin{proof} 
By the description of $\CC[\opoid]$ in \eqref{eq:opoidring} and the definition of  
$\ptf_M$ in \eqref{eq:defptf}, $\ptf_M$ indeed defines a regular 
function on $\opoid$. Now by  \cite[Prop. 6.8]{JKS3}, 
$\ptf$ is a cluster character.
\end{proof}

\begin{remark} In the case of a connected positroid, Canakci--King--Pressland \cite{CKP}
show that when  $T=T_\maxNC$ and $M=M_I$, $\ptf_M$
is the same as  the twist defined by Muller-Speyer \cite{MS17} applied to 
the minor $\Delta_I$. This result was generalised in \cite{JKS3} to show 
that $\ptf_M$ is the same as the Muller-Speyer twist applied to $\Psi_M$
for an arbitrary module $M$, when $B=C$.
It would be interesting to explore the relation between  $\ptf_M$ and 
the twist defined by Muller-Speyer \cite{MS17} for an arbitrary positroid.
\end{remark}

\subsection{$\GP B$ and positroid varieties} \label{SEC:GPpositrod}
Denote by $\ideal_B$ the ideal in $(\algD)_B$ generated by $\ideal$, which 
can also be understood as the localisation of $\ideal$ at $\{ \Psi_{B'}\colon B'\in \add B\}$.

\begin{proposition}\label{prop:Idealloc} Let $M\in \CM C\backslash \CM B$. Then $\Psi_M\in \ideal_B$.
\end{proposition}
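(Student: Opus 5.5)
Using Proposition \ref{prop:mainproperty}, choose $r\geq 1$ such that
\[
\Psi_M\Psi_{B^r}=\sum_{X\in\catD} a_X\Psi_X .
\]
Since $\Psi_{B^r}$ is invertible in $(\algD)_B$, it is enough to show that the right hand side lies in $\ideal$.

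By Theorem \ref{thm:genericbasis}, expand the right hand side in the basis $\basD$. By Proposition \ref{prop:ideal}, every element of $\basD$ is either in $\basGP$ or in $\bas_\ideal$. This gives a decomposition
\[
\Psi_M\Psi_{B^r}=\alpha+\iota,\qquad \alpha\in\algGP,\ \iota\in\ideal .
\]
The task is therefore to prove $\alpha=0$, that is, that no element of $\basGP$ occurs in the expansion of $\Psi_M\Psi_{B^r}$ with respect to $\bas$.

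To prove $\alpha=0$ I would use Proposition \ref{prop:CompExt} applied to the module $M\oplus B^r$.

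\emph{Support of the $g$-vector of $M$.} Since $M\notin\CM B$, the $g$-vector of $M$ should not be supported in $\supp(U)$. To see this, take a minimal $\add T$-presentation
\[
0\to T''\to T'\to M\to 0 .
\]
If $T'$ and $T''$ were both in $\add U$, then $M$ would be a cokernel of a map in $\CM B$, hence a $B$-module, hence in $\CM B$, a contradiction. A cleaner route is via Corollary \ref{cor:cmbff}(1): let $\iota_M\colon Y\to M$ be the $\CM B$-approximation. It is a proper inclusion, so some summand of $T'$ in $\add V\setminus\add B$ is genuinely needed. Adding $B^r$ only adds $\add B$-coordinates. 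I conclude that $\gv(M\oplus B^r)$ has a nonzero coordinate at a vertex of $\supp(V)\setminus\supp(B)$, and it has no cancelling term in $T''$ because $T'$ and $T''$ share no summands (the generic presentation, \cite[Lem. 12.5]{JKS3}).

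\emph{Support of $\beta(d)$.} Proposition \ref{prop:CompExt} says that each basis element $N_i$ occurring satisfies $\gv(N_i)=\gv(M\oplus B^r)-\beta(d)$, with $d$ supported on $\supp\Ext^1(T,M\oplus B^r)$. The key observation is that $\beta(d)$ cannot cancel a $V$-coordinate that is forced to be nonzero. For mutable vertices in $U$, \eqref{eq:betaUT} shows that $\beta([S_i])$ is supported in $\supp(U)$. Hence, if every $N_i$ were in $\GP B$, then by Lemma \ref{lem:gcogv} we would need $\gv(M\oplus B^r)-\beta(d)$ to be supported in $\supp(U)$.

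\emph{Main obstacle.} The hard step is controlling the contributions of $\beta([S_j])$ for $j\in\supp(V)$, which can shift $V$-coordinates. So $\alpha=0$ cannot simply be read off coordinatewise. I would try two approaches.

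First, I would argue through the dominant term. By the first part of Proposition \ref{prop:CompExt}, the leading (minimal) term is $N_1$ with $\gv(N_1)=\gv(M\oplus B^r)$, and $N_1\notin\CM B$. One then wants an inductive subtraction that stays inside $\ideal$, using the ideal property to write each $\Psi_{N_i}$ with a $V$-summand as $\Psi_{N'}\Psi_{N''}$, as in Proposition \ref{prop:ideal}(1).

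Second, as an alternative, I would work directly with extensions. Every $X$ in the multiplication formula \eqref{eq:liftmult} for $\Psi_M\Psi_B$ is an extension of $M$ and $B$. Extensions of $B$ by $M$ are not in $\CM B$ because $M$ is a submodule. Extensions $0\to B\to X\to M\to 0$ have $M$ as a quotient, so $X\notin\CM B$ since $\CM B$ is closed under quotients that are CM. Arguing by induction on $\dim\Ext^1(-,B)$ as in Proposition \ref{prop:mainproperty}, every $X$ in the final sum lies in $\catD\setminus\CM B$. For such $X=X_1\oplus X_2$, with $X_1\in\GP B$ and $X_2\in\catD_2$ as in Proposition \ref{prop:ideal}(3), we must have $X_2\notin\add B$. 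Then $\Psi_X=\Psi_{X_1}\Psi_{X_2}\in\ideal$ directly. This second route avoids the basis expansion entirely, and I would adopt it as the main proof.
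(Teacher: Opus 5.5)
Your adopted second route is the paper's proof. Apply Proposition \ref{prop:mainproperty} and note that every $X$ occurring is an iterated extension of $M$ and copies of $B$, so $X\notin\CM B$. Hence the $\catD_2$-part of $X$ is not in $\add B$, so $\Psi_X\in\ideal$, and you finish by inverting $\Psi_{B^r}$. The basis-expansion attempt to show $\alpha=0$ is unnecessary.

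One small correction: for $0\to M\to X\to B\to 0$, the reason $X\notin\CM B$ is not merely that $M$ is a submodule, since $C\subset B$ is a $C$-submodule that is not a $B$-module. The reason is that $M$ would then be the kernel of the map $X\to B$, which is $B$-linear because $\CM B$ is a full subcategory of $\CM C$.
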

\begin{proof} By Proposition \ref{prop:mainproperty}, there exists $r>0$ such that 
\[
\Psi_M\Psi_{B^r}=\sum_{X\in\catD} a_X\Psi_X.
\]
Any $X$ in the sum with $a_X\not= 0$ is an extension of $M$ and $B^r$. As 
$M\not\in \CM B$, none of the X in the sum are in $\CM B$, as otherwise it would
imply that $M$ is the kernel or cokernel of a homomorphism between two 
$B$-modules and thus lead to a contradiction that $M\not\in \CM B$. Therefore, 
$X\not\in \GP B$ and so $\Psi_X\in \ideal$.  Hence $\Psi_M\in \ideal_B$.
\end{proof}

Recall the cluster algbra $\clalg$ constructed in Section \ref{sec:6.3}. In particular,  it is 
a subalgebra of $\CC[\Gr(k, n)]$ generated by the cluster variables $\Psi_M$ for 
reachable rigid objects $M\in \GP B$. Therefore, 
\begin{equation}\label{eq:alggpGr}
\clalg\subseteq \algGP.
\end{equation}

Let $\algGPptf$, $\algCMptf$ be the counterparts of $\algGP$ and $\algCM$ 
defined by $\ptf$, respectively.
Via the cluster character $\ptf$,  $M_\maxWS$ determines a seed 
$(\{\ptf_{M_I}\colon I\in \maxWS\}, B_{M_\maxWS}) $. The algebraic independence of 
the variables in the seed follows from \cite[Prop 8.3]{JKS3}, and we have 
$\ptf_{M_I}=\Delta_I^{-1}$ for any $M_I\in \add B$. 
Denote by $\clalgptf$ the cluster algebra defined by the seed. 
Similar to \eqref{eq:alggpGr}, 
\[
\clalgptf\subseteq \CC[\Gr(k, n)]_B.
\]

As the $k$-sets in $\nlace$ are the labels of the boundary faces of a Postnikov diagram 
(see \cite{OPS, Pos}), we say that the cluster variable $\Psi_{M_I}=\Delta_I$ is \emph{boundary} 
if $I\in \nlace$ (equivalently, if $M_I\in \add B$). 

The algebra $(\algGP)_B$ is the localisation of $\algGP$ at the boundary cluster 
variables $\{\Psi_{M_I}\colon M_I\in \add B\}$. Similarly, we can localise $\algGPptf$, $\algCMptf$ and 
$\clalgptf$ at the boundary cluster variables $\{\ptf_{M_I}=\Delta_I^{-1}\colon M_I\in \add B\}$, 
and denote the algebra obtained by $(\algGPptf)_{B^{-1}}$, $(\algCMptf)_{B^{-1}}$ and 
$(\clalgptf)_{B^{-1}}$, respectively. 

There is a natural projection, 
\begin{equation}\label{eq:pmap}
p\colon \CC[\Gr(k, n)]_B \to \CC[\opoid].
\end{equation}

\begin{theorem} \label{thm:cat} We have the following.
\begin{enumerate}
\item \label{itm:newcat1} $\algGP=\clalg$ and $\clalg\cong \clalgptf$ as cluster algebras.

\item The  map $p$ restricts to an isomorphism from $(\algGP)_B$ to $\CC[\opoid]$ and 
an injection  from $\algGP$ to $ \CC[\Pi] $.

\item The  map $p$ restricts to an isomorphism from $(\algGPptf)_{B^{-1}}$ to $\CC[\opoid]$ and 
an injection  from $\algGPptf$ to $ \CC[\opoid] $.

\end{enumerate}
\end{theorem}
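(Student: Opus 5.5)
The plan is to prove (2) first, derive (1) from it by a dimension and equality argument, and obtain (3) by transporting (2) through $\ptf$.

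\emph{Step 1: injectivity and surjectivity of $p$ on $(\algGP)_B$.} Restricted to $\CC[\Gr(k,n)]_B$, the map $p$ is the quotient by the ideal generated by $\{\Delta_I\colon M_I\not\in\CM B\}$, by \eqref{eq:opoidring}. By \eqref{eq:algDCGratB} we have $(\algD)_B=\CC[\Gr(k,n)]_B$. Proposition \ref{prop:Idealloc} shows that each such $\Delta_I=\Psi_{M_I}$ lies in $\ideal_B$, so $\ker p\subseteq \ideal_B$. Conversely, each generator $\Psi_N$ of $\ideal$ has $N\in\catD_2\setminus\add B$. I would show that such an $N$ has a summand outside $\CM B$, so $p(\Psi_N)=0$. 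To do this, I would use the decomposition \eqref{eq6} together with the fact that an indecomposable $N\in\CM B$ with $\Ext^1(N,B)=0$ lies in $\GP B$. I also need $\Psi_N$ to vanish on $\Pi$ whenever $N\not\in\CM B$; I would check this via the generic basis and Proposition \ref{prop:CompExt}, or by expanding $\Psi_N$ in minors, all of which lie outside the positroid. This gives $\ker p=\ideal_B$.

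Proposition \ref{prop:ideal}(3) then gives $(\algGP)_B\cong(\algD)_B/\ideal_B\cong\CC[\opoid]$, with surjectivity coming from \eqref{eq:PsiXD} after localisation. Since \eqref{eq:opoidring} presents $\CC[\Pi]$ as a quotient of $\CC[\Gr(k,n)]$, Proposition \ref{prop:ideal}(2) ($\ideal\cap\algGP=0$) gives injectivity of $\algGP\to\CC[\Pi]$. I expect the main obstacle to be the inclusion $\ideal_B\subseteq\ker p$, that is, showing $\Psi_N$ vanishes on $\Pi$ for $N\in\catD_2\setminus\add B$. If the direct route fails, I would fall back on the dimension argument in Step 2: prove only that $p$ maps $(\algGP)_B$ onto $\CC[\opoid]$, then compare Krull dimensions.

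\emph{Step 2: $\clalg=\algGP$.} We have $\clalg\subseteq\algGP$ by \eqref{eq:alggpGr}. Theorem \ref{thm:locacy} makes $\clalg$ locally acyclic, so $(\clalg)_B$ equals its upper cluster algebra and is normal. Its initial cluster $\{\Delta_I\colon I\in\maxNC\}$ is algebraically independent of size $|\maxNC|=\dim\opoid$ by \eqref{eqPositroidDim}. The composite $(\clalg)_B\to(\algGP)_B\cong\CC[\opoid]$ is injective. Every element of $\basGP$ becomes, after localisation, a Laurent polynomial in each cluster, since $\Psi_M=\Phi^U_M$ by Corollary \ref{rem:char}. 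It is also regular on $\opoid$, so $\algGP$ lands in the upper cluster algebra, which is $(\clalg)_B$. Hence $(\clalg)_B=(\algGP)_B$. To remove the localisation, I would use that both algebras have bases with distinct minimal leading exponents (Theorem \ref{thm:genericbasis}) and that frozen variables are not inverted in $\algGP$. This yields $\algGP=\clalg$.

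\emph{Step 3: the $\ptf$ statements.} By Proposition \ref{prop:chsum}, $\ptf$ is a cluster character on $\GP B$, and by Theorem \ref{main1} it produces the same exchange matrices $B_{M_\maxNC}$. It therefore defines a seed with the same mutation rules, giving $\clalg\cong\clalgptf$ as cluster algebras by sending $\Psi_M\mapsto\ptf_M$ for reachable rigid $M$. For (3), I would write $\ptf_M=\Psi_{\Omega_M}/\Psi_{P_M}$. Localising at boundary variables identifies $(\algGPptf)_{B^{-1}}$ with $(\algGP)_B$ inside $\CC[\Gr(k,n)]_B$, because $\Omega$ permutes $\GP B$ up to $\add B$ and $\Omega_{\Sigma M}\cong M$ up to projective summands. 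Statement (3) then follows from (2). Injectivity of $\algGPptf\to\CC[\opoid]$ follows from injectivity after localisation, since localisation at nonzerodivisors in a domain is injective.
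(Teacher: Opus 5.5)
Your Step 3 and the Laurent-plus-local-acyclicity core of Step 2 match the paper. Step 1, and with it your proof of (2), has a genuine gap.

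\textbf{The gap in Step 1.} From Proposition~\ref{prop:Idealloc} you correctly obtain $\ker p\subseteq\ideal_B$. You then need the reverse inclusion $\ideal_B\subseteq\ker p$, i.e.\ that $\Psi_N$ vanishes on $\Pi$ for every $N\in\catD_2\setminus\add B$, and you give no argument for it.
\begin{itemize}
\item Neither the generic basis nor Proposition~\ref{prop:CompExt} says anything about vanishing on $\Pi$.
\item If you expand $\Psi_N$ in Pl\"ucker coordinates (for instance as a Laurent polynomial in a cluster $\{\Delta_I\colon I\in\maxNC\cup\maxNC'\}$), there is no reason every monomial should contain a minor outside the positroid. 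Any cancellation on $\Pi$ would have to come from relations you do not control.
\end{itemize}
The vanishing is true, but in the paper it is an \emph{output} of the theorem, not an input.

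\textbf{Why the fallback does not help.} Since $(\algD)_B=(\algGP)_B+\ideal_B$ and $p$ is onto, showing that $p$ maps $(\algGP)_B$ onto $\CC[\opoid]$ is equivalent to $p(\ideal_B)\subseteq p((\algGP)_B)$. That is no more accessible than the missing vanishing itself. What you do get cheaply is injectivity of $p$ on $(\algGP)_B$, because $\ker p\cap(\algGP)_B\subseteq\ideal_B\cap(\algGP)_B=0$. But an injective map between domains of the same dimension need not be surjective, so comparing dimensions in that direction proves nothing.

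\textbf{The fix.} Run the dimension argument in the opposite direction, as the paper does.
\begin{itemize}
\item The inclusion $\ker p\subseteq\ideal_B$ yields a surjection $\sigma\colon\CC[\opoid]=\CC[\Gr(k,n)]_B/\ker p\to(\algD)_B/\ideal_B\cong(\algGP)_B$. This uses \eqref{eq:algDCGratB} and the localised form of Proposition~\ref{prop:ideal}.
\item The target is finitely generated, being a quotient of $\CC[\Gr(k,n)]_B$. It is a domain, being isomorphic to a subring of $\CC[\Gr(k,n)]_B$.
\item The target has dimension $|\maxNC|$: it contains the algebraically independent cluster $\{\Delta_I\colon I\in\maxNC\}$ and lies in the Laurent ring of that cluster.
\item The source $\CC[\opoid]$ is a domain of the same dimension, by \cite[Thm 5.9]{KLS} and \eqref{eqPositroidDim}.
\end{itemize}
A nonzero prime kernel would lower the dimension, so $\sigma$ is an isomorphism. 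This gives (2), and $\ideal_B=\ker p$ follows as a by-product. The argument uses only the Laurent property, not (1), so your ordering of (2) before (1) still works.

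\textbf{A smaller issue in Step 2.} Your de-localisation at the end of Step 2 is not an argument as stated. Theorem~\ref{thm:genericbasis} provides no basis of $\clalg$. Distinct leading exponents alone do not show that an element of $\algGP\cap(\clalg)_B$ lies in $\clalg$. The paper instead places $\algGP$ directly in the upper cluster algebra of $\clalg$ by the Laurent property and then invokes \cite[Thm 4.1]{Mu}.
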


\begin{proof} 
Let $\maxNC$ be a maximal collection of non-crossing $k$-sets in the positroid 
$\positr$.

(1) 
By \cite[Thm 9.11]{JKS3} and Corollary \ref{rem:char}, $\Psi_M$ for any $M\in \GP B$ 
is a Laurent polynomial in the cluster associated to any reachable cluster tilting object. Therefore, $\algGP$ is a subalgebra of the upper cluster algebra of $\clalg$. 

Following Theorem \ref{thm:locacy},  
$\clalg$ is locally acyclic, and so, by \cite[Thm 4.1]{Mu}, $\clalg$ is the same as its upper 
cluster algebra. Therefore $\algGP\leq \clalg$, which together  with \eqref{eq:alggpGr} implies
 $\algGP=\clalg$.

Note that $\ptf$ is a cluster character, for $U=M_\maxNC$, 
the cluster $\{\ptf^U_{M_I}\colon I\in \maxNC\}$ satisfies the mutation relations 
determined by the mutations of $U$, and so does $\{\Psi_{M_I}\colon I\in \maxNC\}$.
Therefore $\clalg\cong \clalgptf$ as cluster algebras (see \cite[Prop. 8.1]{JKS3}).

(2)
Note that $\CC[\Pi]=\CC[\Gr(k, n)]/\langle \{\Delta_I\colon I\not\in \CM B\} \rangle$, and 
 $(\algD)_{B}=\CC[\Gr(k, n)]_B$ (see \eqref{eq:algDCGratB}). By
 Proposition \ref{prop:Idealloc}, 
the ideal of $\CC[\Gr(k, n)]_B$ generated by  \[\{\Delta_I\colon I\not\in \CM B\}\] is contained in $\ideal_B$, 
hence there is a surjective map 
\[\xymatrix{
\sigma\colon \CC[\opoid]  \ar[r] &(\algD)_B/\ideal_B.
}\]
Now by (1) and Proposition \ref{prop:ideal}, 
the restriction of the composition $\sigma p$ to $(\algGP)_B$ is an isomorphism. 
Also, following (1), $(\algD)_B/\ideal_B$ 
is an integral domain 
and, by \eqref{eq:algDCGratB}, it is finitely generated. Therefore, 
$\spec (\algD)_B/\ideal_B$ and thus $\spec \algGP$ are irreducible. Furthermore, they 
have  dimension $ |\maxWS|$, following (1) that $\algGP$ is a cluster algebra of rank $|\maxNC|$.
Note that $\Pi$ and thus $\openpv$  are irreducible (\cite[Thm 5.9]{KLS}), 
and have dimension $|\maxNC|$ (see \eqref{eqPositroidDim}). Therefore, 
$\sigma$ is an isomorphism and so
\[
(\algGP)_B\cong\CC[\opoid].
\]
Hence, (2) follows.

(3) Recall that $\ptf_M=\Psi_{\Omega M}\Psi^{-1}_{P_M}$ and that $\Omega$ 
induces an equivalence on the stable category $\GP B/\add B$. Therefore 
$(\algGPptf)_{B^{-1}}=(\algGP)_B$. Thus, 
following (2),   \[(\algGPptf)_{B^{-1}}\cong \CC[\opoid],\]
and we have the injective map as described in (3). 
\end{proof}

\begin{corollary}
There is an injection $\CC[\opoid]\rightarrow \CC[\Gr(k,n)]_B$ so 
that the composition $\CC[\opoid]\rightarrow \CC[\Gr(k,n)]_B\rightarrow \CC[\opoid]$ 
is the identity on $\CC[\opoid]$.
\end{corollary}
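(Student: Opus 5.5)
The plan is to build the injection as the inverse of the isomorphism from Theorem \ref{thm:cat}(2), followed by the inclusion of $(\algGP)_B$ into $\CC[\Gr(k,n)]_B$.

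\textbf{Step 1: the inclusion.} By definition $\algGP\subseteq \CC[\Gr(k,n)]$, and the boundary elements $\Psi_{B'}$, for $B'\in\add B$, are products of minors $\Delta_I$ with $I\in\nlace$. So localising at the same multiplicative set gives a homomorphism
\[
j\colon (\algGP)_B\to \CC[\Gr(k,n)]_B .
\]
The map $j$ is injective. Indeed, $\CC[\Gr(k,n)]$ is an integral domain and the $\Delta_I$ are nonzero, so localisation is exact on the subring.

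\textbf{Step 2: the isomorphism.} By Theorem \ref{thm:cat}(2), the restriction $p|_{(\algGP)_B}$ is an isomorphism $\phi\colon (\algGP)_B\to\CC[\opoid]$. Here $p\colon \CC[\Gr(k,n)]_B\to\CC[\opoid]$ is the localised quotient map \eqref{eq:pmap}.

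\textbf{Step 3: the section.} Define $s=j\circ\phi^{-1}\colon \CC[\opoid]\to\CC[\Gr(k,n)]_B$. It is injective, as a composition of an isomorphism with an injection. Moreover
\[
p\circ s=p\circ j\circ \phi^{-1}=\phi\circ\phi^{-1}=\mathrm{id}_{\CC[\opoid]},
\]
because $p\circ j$ is exactly the restriction $p|_{(\algGP)_B}=\phi$.

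\textbf{The point needing care.} Only one thing must be checked: that ``restriction of $p$ to $(\algGP)_B$'' in Theorem \ref{thm:cat} really refers to the subalgebra $j((\algGP)_B)\subseteq\CC[\Gr(k,n)]_B$. In other words, the localisation in the theorem is taken inside $\CC[\Gr(k,n)]_B$ at $\{\Delta_I\colon I\in\nlace\}$, which is precisely what Step 1 verifies. Once that identification is made, the rest is formal. So I expect no real obstacle beyond this bookkeeping about localisation of a subring of a domain.
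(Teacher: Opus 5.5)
Your proposal is correct and is exactly the intended argument: the paper gives no separate proof, treating the corollary as an immediate consequence of Theorem \ref{thm:cat}(2). The section is the inverse of the isomorphism $p|_{(\algGP)_B}\colon (\algGP)_B\to\CC[\opoid]$ followed by the inclusion $(\algGP)_B\subseteq\CC[\Gr(k,n)]_B$, and your check that this inclusion is injective (localising a subring of a domain at nonzero elements) is the only bookkeeping required.
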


For $a=(a_I)\in \ZZ^{\nlace}$, let \[\Delta_B^a=\prod_{I\in \nlace} \Delta_I^{a_I}.\]
\begin{corollary} The algebra $\CC[\opoid]$ has the following basis, 
\[\{\Psi_M\Delta_B^a\colon \Psi_M\in \basGP, \text{$M$ has no summand from $\add B$ } \text{and }
a=(a_I)\in \ZZ^n\}.
\]
\end{corollary}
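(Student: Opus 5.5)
The plan is to transport the basis $\basGP$ of $\algGP$ (Theorem \ref{thm:genericbasis}) through the isomorphism $(\algGP)_B\cong\CC[\opoid]$ of Theorem \ref{thm:cat}(2), and then control what happens under localisation at the boundary minors $\Delta_I$, $I\in\nlace$. Since $p$ restricted to $(\algGP)_B$ is an isomorphism, it suffices to show that the displayed set is a basis of $(\algGP)_B$. Note that $\Psi_{M_I}=\Delta_I$ for $I\in\nlace$ (by \eqref{eq:Psi-minor}), and $M_I\in\add B$ is projective in $\GP B$, so it has no extensions with any object of $\GP B$.

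\emph{Key factorisation.} For any $X\in\GP B$, write $X=M\oplus B'$ with $B'\in\add B$ and $M$ having no summand in $\add B$. Since $\Ext^1(M,B')=\Ext^1(B',M)=0$, the multiplicativity of $\Psi$ on direct sums without extensions (\cite[Thm. 1.2]{CS}, as used in Proposition \ref{prop:ideal}) gives $\Psi_X=\Psi_M\Delta_B^{a}$ with $a\in\NN^{\nlace}$. Moreover, the generic basis element attached to the component of $X$ is $\Psi_{N}$ for a generic $N$, and adding a rigid projective summand commutes with genericity. Hence
\[
\basGP=\{\Psi_M\Delta_B^a\colon \Psi_M\in\basGP,\ M \text{ has no summand in } \add B,\ a\in\NN^{\nlace}\},
\]
and this is a basis of $\algGP$. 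To justify this I would argue with exponents: by Proposition \ref{Thm:GrassGen-gv-cogv}, the minimal exponent of $\Psi_X$ is $\gv(M)+\gv(B')$. Since $g$-vectors of generic objects determine the component, the component of $M\oplus B'$ is the direct sum of components, with $B'$ rigid.

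\emph{Localisation.} Elements of $(\algGP)_B$ are of the form $f\cdot\Delta_B^{-b}$ with $f\in\algGP$, so the set in the statement, now allowing $a\in\ZZ^{\nlace}$, spans $(\algGP)_B$. For linear independence, suppose a finite relation $\sum c_{M,a}\Psi_M\Delta_B^a=0$ holds. Multiplying by $\Delta_B^b$ for $b$ large clears the negative exponents and yields a relation among elements of $\basGP$. These elements are distinct, because the pairs $(M,a)$ are distinct and the minimal exponents $\gv(M)+\sum a_I\gv(M_I)$ are pairwise distinct by Theorem \ref{thm:genericbasis}. Hence all $c_{M,a}=0$. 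Here I use that $\algGP\subseteq\CC[\Gr(k,n)]$ is a domain, so multiplication by $\Delta_B^b$ is injective.

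\emph{Main obstacle.} The delicate step is the claim that if $\Psi_X\in\basGP$ with $X=M\oplus B'$, then $\Psi_M\in\basGP$, and conversely that $\Psi_M\Delta_B^a\in\basGP$ whenever $\Psi_M\in\basGP$. Equivalently, the generic basis is stable under adding and removing projective-injective summands of $\GP B$. I would try to deduce this from the fact that $M_I$ with $I\in\nlace$ is a summand of every cluster tilting object $U$, so $\gv(M_I)$ is a standard basis vector of the frozen part. Then the generic $g$-vector of $\hC\oplus\{M_I\}$ is $\gminvec{\hC}+e_I$, and injectivity of $\gminmap$ (Proposition \ref{prop:generic-gv}) identifies the component. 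Finally, the index set in the statement should be read as $a\in\ZZ^{\nlace}$; since $|\nlace|\le n$ with repetitions identified, $\ZZ^n$ in the statement amounts to the same set.
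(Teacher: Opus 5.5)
Your proposal is correct and follows the paper's proof: identify $\CC[\opoid]$ with $(\algGP)_B$ via Theorem \ref{thm:cat}(2), then localise the basis $\basGP$ of Theorem \ref{thm:genericbasis} at the boundary minors, clearing denominators to get independence. The stability you flag, which the paper leaves implicit, is settled as follows: adding $\add B$-summands preserves membership in the basis by the Crawley-Boevey--Schr\"oer argument already used in Proposition \ref{prop:ideal}(1), since $B^a$ is rigid and lies in $\catD_2$; removing them then follows by expanding $\Psi_M$ in $\basGP$ and multiplying by $\Psi_{B'}$. Once stability is in place, you do not need your separate claim that distinct pairs $(M,a)$ give distinct elements: a relation among distinct elements of the set remains a relation among distinct elements of $\basGP$ after multiplying by $\Delta_B^b$.
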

\begin{proof}
It follows from Theorem \ref{thm:cat} (2) and Theorem \ref{thm:genericbasis}.
\end{proof}

\begin{remark}
Theorem \ref{thm:cat} provides an alternative proof of Galashin--Lam's Theorem \cite{GL}, 
which confirms Muller-Speyer's Conjecture 
 \ref{conj:MS}. 
\end{remark}
\begin{remark}
Note that $\clalgptf\subseteq \algGPptf$. So Theorem \ref{thm:cat}
reveals two distinct cluster subalgebras in $\CC[\opoid]$. 
We remark that the inclusion becomes an equality, when $\Sigma U$ is reachable from $U$
for any reachable cluster tilting object in $\GP B$.
\end{remark}

\section{Comparing cluster structures and sets of generators} \label{sec:ex}

In this section, we discuss distinct cluster structures on the coordinate rings $\CC[\Pi]$ and $\CC[\opoid]$ and 
their generators. 

\subsection{Cluster structures}
Theorem \ref{thm:cat} shows that $\GP B$
provides a categorification of two distinct cluster structures on $\CC[\opoid]$ via the cluster 
characters $\Psi$ and $\ptf$. 
The second author constructed a subcategory $R(v, w)$ \cite{Rio} associated to certain 
permutations $v, w$, which together with \cite[Thm. A]{SSB} provides a distinct categorification of a cluster structure on $\CC[\opoid]$. In the example below, we explicitly construct the cluster algebras, arising 
from $\GP B$ and $R(v, w)$ associated to a positroid variety $\Pi$. These cluster structures 
are not isomorphic; however, they become isomorphic after localisation. 

For any $i\in [n-1]$, denote by $\sigma_i$ the transposition on $[n]$ swapping $i$ and $i+1$.  

\begin{example} \label{ex:lastex}
Let $\sigma=(135)(264)$, which is a decorated permutation of type $(k, n)$ with 
$k=3$ and $n=6$. The associated necklace is 
\[\nlace = \{124, 234, 346, 456, 256, 126\},\] 
and the positroid variety $\poid$  is defined 
by $\Delta_{123} = \Delta_{345} = \Delta_{156} = 0.$  
We have 
\[B = \oplus_{I\in \nlace} M_I,\] and  
the  indecomposable objects in $\GP B$ 
\[\bl{M_{124}}, \bl{M_{234}}, \bl{M_{346}}, \bl{M_{456}}, \bl{M_{256}}, \bl{M_{126}}, \bl{M_{246}}, 
\bl{X},\]
where $X$ is the indecomposable rank two module with profile $=\frac{135}{ 246}$.
The modules $M_{246}$ and $X$ form a mutation pair and are the only two mutable modules
in $\GP B$.
 
We refer the reader to the AR-quiver \cite[Fig. 10]{JKS1}
for the two mutation sequences between $X$ and $M_{246}$ and the short 
exact sequences between rank one modules $M_I$ and $M_J$ discussed below for 
crossing sets $I$ and $J$.

Lifting Leclerc's category  $\catC_{v, w}$ \cite{Lec} to $\CM C$, the second 
author constructed a subcategory $\rich$ \cite{Rio}. In this example, 
$v=w_0^K\sigma_3$ and $w=w_0\sigma_2\sigma_4$, where $w_0$
is the longest element in the symmetric group of permutations of the set 
$[6]$, and $w_0^K$
is the longest element in the Young subgroup associated to $K=[6]\backslash \{k\}$. 
The indecomposable objects in $\rich$ are
\[\bl{M_{124}}, \bl{M_{234}}, \bl{M_{245}}, \bl{M_{456}}, \bl{M_{146}}, \bl{M_{126}}, \bl{M_{246}}, \bl{M_{145}}\]
where $\bl{M_{246}}$ and $\bl{M_{145}}$ are the mutable modules.
By inspection, $\rich$ is cluster category (see \cite{Rio} for general results), and thus give rise to a cluster algebra 
$\mathsf{A_{Rvw}}$. Denote by $(\mathsf{A_{Rvw}})_P$ the localisation of $\mathsf{A_{Rvw}}$ at minors corresponding to the projective objects (i.e. non-mutable modules) $M_I\in \rich$.

The algebra $\clalg$ is generated by 
$\{\Delta_I\colon M_I\in \GP B\}\cup \{\Psi_X\}$ with mutation relation
\begin{equation}\label{eq:gen1}
\Delta_{246}\Psi_X=\Delta_{124}\Delta_{256}\Delta_{346}+\Delta_{126}\Delta_{234}\Delta_{456}; 
\end{equation}
while $\mathsf{A_{Rvw}}$ is generated by 
$\{\Delta_I\colon M_I\in \rich\}$ with mutation relation
\begin{equation}\label{eq:gen2}
\Delta_{246}\Delta_{145}=\Delta_{124}\Delta_{456}+\Delta_{146}\Delta_{245}.
\end{equation}
Therefore they are both of type $\mathbb{A}_1$, but they are not isomorphic
as cluster algebras. However, they become isomorphic 
as algebras after localisation. We give some more details on the isomorphism. 

We have  the Pl\"{u}cker  relation,  
\[
\Delta_{245}\Delta_{346} = \Delta_{234}\Delta_{456}+ \Delta_{246}\Delta_{345}.
\]
and when restricted to $\poid$, 
\begin{equation}\label{eq:pl0}
\Delta_{245}\Delta_{346} = \Delta_{234}\Delta_{456}.
\end{equation}
Similarly, when restricted to $\poid$, 
\begin{equation}\label{eq:pl}\Delta_{146}\Delta_{256} = \Delta_{126}\Delta_{456}~ 
\text{ and } ~\Delta_{146}\Delta_{125}=\Delta_{126}\Delta_{145}.
\end{equation}
Note that $\dim \Ext^1(M_{346}, M_{125})=1$ and we have short exact sequences, 
\begin{equation*}
\xymatrix{
0\ar[r] &M_{346} \ar[r] & X \ar[r] & M_{125}\ar[r] &0,}
\end{equation*}
\begin{equation*}\xymatrix{
0\ar[r] & M_{125} \ar[r] & M_{126}\oplus M_{345} \ar[r] & M_{346}\ar[r]&0. 
}
\end{equation*}
Therefore, by \eqref{eq:pl} and the fact that $\Psi$ is a cluster character, when restricted to $\poid$,	
\begin{equation}
\label{eq:pl2}\Delta_{146}\Psi_{X} = \Delta_{126}\Delta_{346}\Delta_{145}.
\end{equation}
Following the relations \eqref{eq:pl0}-\eqref{eq:pl2}, we have 
\[(\clalg)_B= (\mathsf{A_{Rvw}})_P=\CC[\opoid].\]
\end{example}

\begin{remark}\label{rem:GIB}
The permutation $\sigma$ in Example \ref{ex:lastex} also determines an \emph{reverse} necklace (see e.g. \cite{FSB}) 
\[\nlace^{\mathrm{op}} = \{456, 146, 126, 236, 234, 245\},\]
which contains the labels of the boundary faces of a Postnikov diagram of type $\sigma$, following 
the left-source labelling convention of faces. It can also be described 
by replacing $I_{j+1}$ by $(I_j\backslash\{\sigma^{-1}(j)\})\cup \{j\}$, with 
$I_1=456$. 
The rank one modules $M_I$ with $I\in \nlace^{\mathrm{op}} $ are 
the indecomposable projective-injective objects the category  (see \cite{Pr2})
\[\gi B =\{M\in \CM B\colon \Ext^1(B\check{\;} , M)=0\},\]
where $B \,\check{}=\Hom_\ring(B, \ring)$, the dual of $B$ as a right $B$-module. 
In this case, 
 the  indecomposable 
objects are  
\[
\bl{M_{456}}, \bl{M_{146}}, \bl{M_{126}}, \bl{M_{236}}, \bl{M_{234}}, \bl{M_{245}}, \bl{M_{246}}, \bl{Y},
\]
where  $Y$ is the indecomposable rank two module with profile $\frac{246}{135}$, and 
$\bl{M_{246}}$ and $\bl{Y}$ are the mutable modules. The category $\gi B$ also determines a cluster 
algebra $\mathsf{A_{GIB}}$ (see \cite{Pr} for more details), which is different from but isomorphic to $\clalg$.
Note that the images of  $\clalg$ and $\mathsf{A_{GIB}}$ via the two injective maps 
in Theorem \ref{thm:cat}  are, in general, distinct proper subalgebras of $\CC[\opoid]$.
\end{remark}

\begin{remark}
Continue from Example \ref{ex:lastex}. 
Let $a=w^{-1}w_0$ and $b=v^{-1}w_0$. 
This pair of permutations $(a, b)$ determines the same positroid as Example \ref{ex:lastex} and is exactly the pair considered in \cite[Ex. 1.1]{GL} for this positroid. 
In this case, one can also compute explicitly Leclerc's category $\catC_{a, b}$ and lift it to 
a subcategory $R(a, b)$ of $\CM C$, via the functor $\omega$ defined in \cite[\S 5]{JKS2}.
The category $R(a, b)$ is distinct from $R(v, w)$, $\GP B$ and $\gi B$. 
Indeed, the indecomposable objects in $\catC_{a, b}$  are
\[
M_{345}, M_{235}, M_{236}, M_{136}, M_{156}, M_{456}, M_{356}, Y,
\]
where $Y$ is the rank two module as in Remark \ref{rem:GIB}.
However, 
the three associated cluster algebras are isomorphic, 
because their mutation quivers are the same. In fact, by applying the twist 
defined by $a=\sigma_2\sigma_4$ to $R(a, b)$, we obtain $\gi B$.
\end{remark}

\subsection{Generators of $\CC[\opoid]$}
By \eqref{eq:opoidring}, $\{\Delta_I\colon M_I\in \CM B\}$ is a generating set of  the coordinate ring $\CC[\Pi]$
of the positroid variety $\Pi$ associated to $B$. After being localised at the boundary minors, 
some minors in the generating set can be expressed by the others. So $\CC[\opoid]$ can 
be generated by a smaller set. In this subsection, we explore efficient generating sets of  $\CC[\opoid]$. 

\begin{proposition} \label{rem:gr2n}
Let $k=2$ and $\Pi$ be positroid variety of type $(k, n)$. Then $\CC[\opoid]$ is generated by 
$
\{\Delta_I, ~ \Delta_J^{-1}\colon M_I\in \GP B, ~ M_J\in \add B\}.
$
\end{proposition}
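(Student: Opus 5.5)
Since $\CC[\opoid]$ is generated by $\Delta_I$ with $M_I\in\CM B$ (Corollary \ref{cor:rk1oid} and \eqref{eq:opoidring}) together with $\Delta_J^{-1}$ for $J\in\nlace$, it is enough to show the following. Let $I\in\positr$ with $M_I\in\CM B\setminus\GP B$. Then $p(\Delta_I)$ lies in the subalgebra $R$ generated by $\{\Delta_I,\Delta_J^{-1}\colon M_I\in\GP B,\ J\in\nlace\}$. The strategy is to use the three-term Pl\"ucker relations, which are the only relations needed for $k=2$. On $\Pi$ we also have $\Delta_K=0$ for every $K\notin\positr$.

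The first step is a combinatorial description of the rank one modules in $\GP B$ for $k=2$. Identify $2$-sets $\{a,b\}$ with chords of the $n$-gon. I expect $M_{ab}\in\GP B$ to mean $ab\in\positr$ together with $\Ext^1(M_{ab},M_J)=0$ for all $J\in\nlace$. By Scott's criterion this says that the chord $ab$ crosses no necklace chord $J\in\nlace$. Here a necklace chord is one coming from a genuine $2$-set $I_i$; fixed points behave as in the connected-component reduction of Theorem \ref{thm:locacy}. So I would first reduce, as in that proof, to the case where $\nlace$ is connected, i.e. the $I_i$ are distinct. The two-component case works because $\positr$ is a direct sum and the generators split accordingly. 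In the connected case the necklace chords bound a polygon region, and the chords in $\positr$ not crossing them are exactly the chords lying inside a single cell cut out by $\nlace$.

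Next I would argue by induction on the number of necklace chords that $ab$ crosses. Take $ab\in\positr$ crossing some $J=cd\in\nlace$, with $a,c,b,d$ cyclically ordered. The Pl\"ucker relation
\[
\Delta_{ab}\Delta_{cd}=\Delta_{ac}\Delta_{bd}+\Delta_{ad}\Delta_{bc}
\]
gives
\[
p(\Delta_{ab})=\Delta_{cd}^{-1}\bigl(\Delta_{ac}\Delta_{bd}+\Delta_{ad}\Delta_{bc}\bigr).
\]
Any term involving a set outside $\positr$ vanishes on $\Pi$. The remaining chords $ac,bd,ad,bc$ each cross strictly fewer necklace chords than $ab$. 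Indeed, any necklace chord crossing one of them must cross $ab$ or $cd$, and necklace chords are pairwise non-crossing, so none crosses $cd$; and $cd$ itself is no longer crossed. Hence the induction closes. Choosing $J$ to be an \emph{extremal} crossed necklace chord makes the decrease strict.

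The main obstacle is the first step: proving, with the paper's definitions, that for $k=2$ the condition $M_{ab}\in\GP B$ for a rank one module is exactly "$ab\in\positr$ and $ab$ is non-crossing with every element of $\nlace$". One direction follows from $\Ext^1(M_{ab},B)=0$ via \eqref{eq:BN}. The other direction is just the definition \eqref{eq:defpgb}, since $\Ext^1$ between rank one modules vanishes exactly for non-crossing pairs. The degenerate necklace elements coming from fixed points or repeated $I_i$ also need care; for these I would rely on the component decomposition \eqref{eq:divS}.
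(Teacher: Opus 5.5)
Your argument is correct, and it takes a more elementary route than the paper.

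\textbf{The paper's route.} The paper argues inside $\CM C$. For $M_I\in\CM B\setminus\GP B$ it picks $M_J\in\add B$ with $\Ext^1(M_I,M_J)\neq 0$, which is one-dimensional since $k=2$. It forms the two non-split extensions, with middle terms $X=M_{L_1}\oplus M_{L_2}$ and $Y=M_{L_3}\oplus M_{L_4}$. It notes that $X\in\CM B$ by \eqref{eq:extCB}, while $Y\notin\CM B$, since otherwise the sequence ending in the projective $M_J$ would split. It then restricts the exchange relation $\Psi_{M_I}\Psi_{M_J}=\Psi_X+\Psi_Y$ to $\Pi$. That exchange relation is exactly your three-term Pl\"ucker relation for the crossing pair. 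The categorical input gives the extra information that exactly one of the two terms dies on $\Pi$, which you never need.

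\textbf{What your route gives.} Your route gives a well-founded induction. The paper concludes $X\in\GP B$ ``following Proposition~\ref{prop:indstep}'', but that proposition only yields the decrease $\dim\Ext^1(X,B)<\dim\Ext^1(M_I,B)$. For an arbitrary admissible $J$, $X$ need not lie in $\GP B$. For instance, take $\nlace=\{13,23,35,45,15,16\}$, $I=24$ and $J=13$. Then $X=M_{14}\oplus M_{23}$, since the other middle term involves $12,34\notin\positr$, and $14$ crosses $35\in\nlace$. So the paper tacitly needs either an induction on $\dim\Ext^1(-,B)$ or a careful choice of $J$. Your induction on the number of crossed necklace chords is the combinatorial form of that induction.

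\textbf{Two simplifications.}
\begin{enumerate}
\item The step you call the main obstacle is immediate. By Corollary~\ref{cor:rk1oid}, \eqref{eq:extCB}, \eqref{eq:BN} and Scott's criterion, $M_{ab}\in\GP B$ if and only if $ab\in\positr$ and $ab$ crosses no $I_i$. For $k=2$ every $I_i$ is an honest $2$-set, so you can drop the reduction to connected $\nlace$ and the discussion of fixed points.
\item No extremal choice of $J$ is needed. Your own observations already give a strict decrease for every crossed $J\in\nlace$: a chord crossing a side of the quadrilateral on $a,c,b,d$ crosses one of its diagonals; the necklace chords are pairwise non-crossing because $\Ext^1(B,B)=0$; and $cd$ shares an endpoint with each of $ac,bd,ad,bc$.
\end{enumerate}
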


\begin{proof}
Note that in the case of $k=2$,  
the indecomposable modules in $\CM C$ all have rank one 
and $\dim \Ext^1(M_I, M_J)\leq 1$ \cite{JKS1}.

Suppose that $M_I\in \CM B\backslash \GP B$. It suffices to show that $M_I$ is generated by $\{\Delta_I, ~ \Delta_J^{-1}\colon M_I\in \GP B, ~ M_J\in \add B\}$.
By the description of $\GP B$ in \eqref{eq:defpgb},  there exists 
$M_J\in \add B$ such that $\Ext^1(M_I, M_J)\not=0$. So 
$\dim\Ext^1(M_I, M_J)=1$. We have the following two nonsplit short exact sequences, 
\[
\xymatrix{
0\ar[r]& M_J\ar[r] & X\ar[r] & M_I\ar[r] &0,
}\]
\[
\xymatrix{
0\ar[r]& M_I\ar[r] & Y\ar[r] & M_J\ar[r] &0,
}\]
where $X= M_{L_1}\oplus M_{L_2} $ and $ Y=M_{L_3}\oplus M_{L_4}$ 
are direct sums of two rank one modules.
By \eqref{eq:extCB},  $X\in \CM B$. Furthermore, following Proposition \ref{prop:indstep},   
we have $X\in \GP B$. So, $M_{L_1}, M_{L_2}\in \GP B$. 
On the other hand, $Y\not\in \CM B$; otherwise it would lead to a contradiction, as 
the second sequence would have to split, due to the fact that $B$ is projective in $\CM B$. 
So at least one of $M_{L_3}, M_{L_4}$ is not  in $\CM B$. 

As $\Psi$ is a cluster character, we have
\[
\Psi_{M_I}\Psi_{M_J}=\Psi_X+\Psi_Y=\Psi_{M_{L_1}}\Psi_{M_{L_2}}+\Psi_{M_{L_3}}\Psi_{M_{L_4}},
\]
and when restricted to $\Pi$, 
\[
\Psi_{M_I}\Psi_{M_J}=\Psi_{M_{L_1}}\Psi_{M_{L_2}}.
\]
So $\Psi_{M_I}=\Psi_{M_J}^{-1}\Psi_{M_{L_1}}\Psi_{M_{L_2}}$, equivalently, 
$\Delta_{I}=\Delta_{J}^{-1}\Delta_{L_1}\Delta_{L_2}$, 
as required. 
\end{proof}

\begin{remark}\label{rem:final}
Proposition \ref{rem:gr2n} is not true in general. 
Consider $\GP B$ from Example \ref{ex:lastex} and use the same notation. We claim that $\Psi_{X}$ can not be 
expressed as a polynomial in the minors $\Delta_I$,  $\Delta^{-1}_J$ with $M_I\in\GP B$ 
and $M_J\in \add B$.  Indeed, first 
note that there are exactly two basic cluster tilting objects in $\GP B$, with 
the corresponding clusters 
\[\{\Delta_{I}\colon I\in \nlace\}  \cup\{\Delta_{246}\}
\text{ and }\{\Psi_{X}\}\cup \{\Delta_{I}\colon I\in \nlace\},\] 
which are both algebraically independent sets, and the former
consist of all minors $\Delta_I$ with $M_I\in \GP B$. 
The only essential relation between the cluster variables is the mutation relation, 
\[\Psi_{X}=\Delta_{246}^{-1}(\Delta_{126}\Delta_{234}\Delta_{456}+
\Delta_{346}\Delta_{256}\Delta_{124}),\]
which involves inverting the mutable variable $\Delta_{246}$. So the 
claim holds. 
\end{remark}


\begin{thebibliography}{88}

\newcommand{\au}[1]{\textrm{#1},}
\newcommand{\ti}[1]{\textit{#1},}
\newcommand{\jo}[1]{\textrm{#1}}
\newcommand{\vo}[1]{\textbf{#1}}
\newcommand{\yr}[1]{(#1)}
\newcommand{\pg}[1]{#1}
\newcommand{\pp}[2]{#1--#2}

\bibitem{ARS} Auslander M., Reiten I. and Smal{\o} S., 
Representation theory of Artin algebras,
Cambridge University Press, 1997.

\bibitem{AS} Auslander M. and Smal{\o} S. 
{\it Almost split sequences in subcategories}, J. Alg. 69 (1981), 426--454.

\bibitem{BKM} Baur K., King A. and Marsh B., {\it Dimer models and cluster categories of Grassmannians},
P.L.M.S. 113 (2016), 213--260.

\bibitem{BIRS} Buan A. B., Iyama O.,  Reiten I.,  Scott J., 
{\it Cluster structures for 2-Calabi--Yau categories and unipotent groups}, 
Comp. Math. {145} (2009), 1035--1079.

\bibitem{CKP} Canakci I., King A. and Pressland M., 
{\it Perfect matching modules, dimer partition functions and cluster characters}, Adv. Math. 443 (2024), 109570.

\bibitem{CB} Crawley-Boevey W.,  {\it On the exceptional fibres of Kleinian singularities}, 
Amer. J. Math. 122 (2000), 1027--1037.

\bibitem{CS} Crawley-Boevey W. and Schr\"{o}er J., 
{\it Irreducible components of varieties of modules}, 
J. reine angew. Math. 553 (2002), 201--220.

\bibitem{FZ1}  Fomin S. and Zelevinsky A., 
{\it Cluster algebras I: Foundations},
J.A.M.S. 15 (2002), 497--529.  

\bibitem{FSB} 
Fraser C. and Sherman-Bennett M., {\it  Positroid cluster structures from relabeled plabic graphs},
Alg. Comb. 5 (2022), 469--513.

\bibitem{FuKeller} Fu C. and Keller B., 
{\it On cluster algebras with coefficients and 2-Calabi--Yau categories},
T.A.M.S. 362 (2010), {859}--{895}.

\bibitem{GL} Galashin P. and Lam T., 
{\it Positroid varieties and cluster algebras},
Ann. Scient. \'{E}cole Norm. Sup. 56 (2023), 859--844.

\bibitem{GSXX} Geiss C. Labardini-Fragoso D. and Schr\"{o}er J.,
{\it Semicontinous maps on module varieties}, J. reine angew. Math. 816 (2024), 1--17.

\bibitem{GLS05} Geiss C., Leclerc B., and Schr\"{o}er J., {\it Semicanonical bases and preprojective algebras}, 
Ann. Sci. Ecole. Norm. Sup. 38 (2005), 193--253. 

\bibitem{GLS07} Geiss C., Leclerc B., and Schr\"{o}er J., {\it Semicanonical bases and preprojective algebras II: a multiplication formula}, Comp. Math. 143 (2007), 1313--1334.

\bibitem{GLS08} Geiss C., Leclerc B., and Schr\"{o}er J., {\it Partial flag varieties and preprojective algebras}, Ann. de l'Institut Fourier 58 (2008), 825--876.

\bibitem{GLS12}  Geiss C., Leclerc B., and Schr\"{o}er J., {\it Generic basis for cluster algebras and 
the chamber Ansatz}, J.A.M.S. 25 (2012),  21--76.

\bibitem{JKS1} Jensen B. T., King A. and Su X., 
{\it A categorification of Grassmannian cluster algebras},
P. L. M. S. 113 (2016), 185--212.

\bibitem{JKS2} {Jensen B.T., King A. and Su X.},
{\it Categorification and the quantum Grassmannian},
Adv. Math. 406 (2022), 108577.

\bibitem{JKS3} Jensen B. T., King A. and Su X., {\it Categorification and the mirror symmetry for Grassmannians}, arXiv: 2404.14572. 

\bibitem{KR1} Keller B. and Reiten I., {\it Cluster-tilted algebras are Gorenstein and stably Calabi-Yau}, Adv. Math. 211 (2007), 123--151.

\bibitem{KLS} Knutson A., Lam T. and Speyer D. E.,  {\it Positroid varieties: juggling and
geometry}, Comp. Math. 149 (2013), 1710--1752.

\bibitem{Lec} Leclerc B., {\it Cluster structures on strata of flag varieties},
Adv. Math. 300 (2016), 190--228.

\bibitem{LZ} Leclerc B. and Zelevinsky A., 
{\it Quasicommuting families of quantum Pl\"ucker coordinates}, 
Kirillov’s Seminar on Representation Theory, A.M.S. Transl. Ser. 2 181 (1998), 85--108.

\bibitem{Lus00} Lusztig G., 
\ti{Semicanonical bases arising from enveloping algebras} 
\jo{Adv. Math. } 151 (2000),
 \pp{129}{139}.

\bibitem{MR04} Marsh B. and Rietsch K., {\it Parametrizations of flag varieties}, 
Rep. Theory 8 (2004), 212--242.

\bibitem{Mu} Muller G, {\it Locally acyclic cluster algebras}, 
Adv. Math. 233 (2013), 207--247.
 
\bibitem{MS} Muller G. and Speyer E. E., {\it Cluster algebras of Grassmannians are locally acyclic}, 
P.A.M.S. 144 (2015), 3267--3281.

\bibitem{MS17} Muller G. and Speyer D.E., {\it The twist for positroid varieties}, 
P.L.M.S. 115 (2017), 1014--1071.
 
\bibitem{OPS}  Oh~S., Postnikov~A. and Speyer~D. E.,
{\it Weak separation and plabic graphs}, P.L.M.S. 110 (2015), 721--754.

\bibitem{Pal} Palu Y.,  
{\it Cluster characters for 2-Calabi--Yau triangulated categories}, 
Ann. Inst. Fourier 58 (2008), 2221-2248. 

\bibitem{PL} Plamondon, P. G., 
{\it Generic bases for cluster algebras from the cluster category,}
Int. Math. Res. Not. 10 (2013), 2368-2420.

\bibitem{Pos} Postnikov A., {\it Total positivity, Grassmannians and networks}, 2006. \\
http://math.mit.edu/~apost/papers/tpgrass.pdf.

\bibitem{Pr} Pressland M., {\it Calabi-Yau properties of Postnikov diagrams}, 
Forum Math., Sigma (10) 2022, e56.

\bibitem{Pr2} Pressland M., {\it Quasi-coincidence of cluster structures on positroid varieties}, \\ arXiv: 2307.13369v2.

\bibitem{Rio} Riordan L. {\it Cohen-Macaulay-modules and Schubert varieties}, PhD thesis at the University of Bath.

\bibitem{Scott} Scott J., 
{\it Grassmannians and cluster algebras},
P.L.M.S. (92) 2006, 345--380. 

\bibitem{SSB} Serhiyenko K. and Sherman-Bennett M., 
{\it Leclerc's conjecture on a cluster structure for type A Richardson varieties}, 
Adv. Math. 447 (2024), 109698.
\end{thebibliography}
\end{document}